\documentclass{amsart}
\usepackage{amsmath,amssymb,amsthm,bm,bbm}
\usepackage{amscd}
\usepackage{graphicx,enumerate}
\usepackage{multirow}
\usepackage{layout}
\usepackage[all]{xy}
\usepackage[OT2,T1]{fontenc}
\usepackage[dvipsnames]{xcolor}
\usepackage{ulem}
\DeclareSymbolFont{cyrletters}{OT2}{wncyr}{m}{n}
\DeclareMathSymbol{\Sha}{\mathalpha}{cyrletters}{"58}
\usepackage[OT2,OT1]{fontenc}
\newcommand\cyr{\renewcommand\rmdefault{wncyr}T
\renewcommand\sfdefault{wncyss}
\renewcommand\encodingdefault{OT2}
\normalfont\selectfont}
\DeclareTextFontCommand{\textcyr}{\cyr}

\usepackage{hyperref}
\usepackage{url}

\theoremstyle{plain}
\newtheorem{theorem}{Theorem}[section]
\newtheorem*{theorem-nn}{Theorem}
\newtheorem{lemma}[theorem]{Lemma}
\newtheorem{proposition}[theorem]{Proposition}
\newtheorem*{proposition-nn}{Proposition}

\newtheorem*{question}{Question}

\theoremstyle{definition}
\newtheorem{definition}[theorem]{Definition}
\newtheorem{example}[theorem]{Example}
\newtheorem{remark}[theorem]{Remark}

\newtheorem*{acknowledgments}{Acknowledgments}

\theoremstyle{remark}

\newcommand{\bZ}{\mathbbm{Z}}\newcommand{\bQ}{\mathbbm{Q}}
\newcommand{\bC}{\mathbbm{C}}
\newcommand{\bG}{\mathbbm{G}}\newcommand{\bF}{\mathbbm{F}}
\newcommand{\bA}{\mathbbm{A}}\newcommand{\bP}{\mathbbm{P}}

\newcommand{\T}{\mathcal{T}}

\newcommand{\Hom}{\mathrm{Hom}}
\newcommand{\Ext}{\mathrm{Ext}}
\newcommand{\Gal}{\mathrm{Gal}}

\newcommand{\GL}{{\rm GL}}
\newcommand{\SL}{{\rm SL}}
\newcommand{\PGL}{{\rm PGL}}
\newcommand{\PSL}{{\rm PSL}}
\newcommand{\Aut}{\mathrm{Aut}}

\newcommand{\Id}{\mathrm{Id}}

\title[Norm one tori of tensor products of \'etale algebras and Hasse norm principle]
{Rationality problem for norm one tori of tensor products of \'etale algebras and Hasse norm principle}

\author[M. Florence]{Mathieu Florence}
\address{Universit\'e Paris Cit\'e and Sorbonne Universit\'e, CNRS, IMJ-PRG, F-75005 Paris, France}
\email{mflorence@imj-prg.fr}

\author[A. Hoshi]{Akinari Hoshi}
\address{Department of Mathematics, Niigata University, Niigata 950-2181, Japan}
\email{hoshi@math.sc.niigata-u.ac.jp}

\author[A. Yamasaki]{Aiichi Yamasaki}
\address{Department of Mathematics, Kyoto University, Kyoto 606-8502, Japan}
\email{aiichi.yamasaki@gmail.com}

\thanks{{\it Key words and phrases.} 
Algebraic tori, norm one tori, rationality problem, flabby resolution, 
stably rational, retract rational. 
This work was partially supported by JSPS KAKENHI Grant Numbers 
19K03418, 20H00115, 20K03511, 24K00519, 24K06647.
}

\subjclass[2010]{Primary 11E72, 12F20, 13A50, 14E08, 20C10, 20G15.}

\begin{document}
\maketitle
\begin{abstract}
Let $k$ be a field. 
Let $A=\prod_{i=1}^r K_i$ and $B=\prod_{j=1}^s E_j$ be \'etale $k$-algebras 
where $K_i$ and $E_j$ are finite separable field extensions of $k$ with 
$[K_i:k]=m_i$ and $[E_j:k]=n_j$. 
Let $\T_A=R^{(1)}_{A/k}(\bG_m)$ 
be the norm one torus of the \'etale $k$-algebra $A$. 
We prove that 
if $\gcd(m_i,n_j\mid 1\leq i\leq r, 1\leq j\leq s)=1$ and 
$\T_A$ and $\T_B$ are 
stably $($resp. retract$)$ $k$-rational, then 
the algebraic $k$-torus $\T_A\otimes \T_B$ 
and the norm one torus $\T_{A\otimes B}$ 
are stably $($resp. retract$)$ $k$-rational. 
In particular, if $k$ is a global field, then 
the Hasse norm principle holds for $(A\otimes B)/k$. 
We introduce a new invariant of $G$-lattices, the permutation order, 
whose triviality is equivalent to invertibility, 
and use it to study the rationality of tensor products $T_1\otimes T_2$ 
of algebraic $k$-tori. 
As an application, we obtain large families of field extensions $K/k$ 
for which the Hasse norm principle holds. 
\end{abstract}
\tableofcontents
%
\section{Introduction}\label{S1}

Let $k$ be a field and $A$ be an \'etale $k$-algebra,
i.e. a finite product of finite separable field extensions of $k$.
The norm one torus $\T_A:=R^{(1)}_{A/k}(\bG_m)$ is a linear algebraic group
over $k$. Its group of $k$-points is
\begin{align*}
\T_A(k)=\mathrm {Ker}(N_{A/k}:A^\times \to k^\times)
\end{align*}
where $N_{ A/k}$ is
the \textit{norm} of the finite  $k$-algebra $A$.\\
\noindent Norm one tori have been intensively studied, especially with regards
to rationality questions, as well as to other interesting invariants and
classical arithmetic questions,
e.g. $R$-equivalence and Hasse norm principle.
In this paper, we investigate some meaningful situations, where properties of 
algebraic tori are preserved {\it upon tensor product}, in two different ways.
Let us formulate this rigorously. 
For algebraic $k$-tori $T_1$ and $T_2$ with character lattices 
$\widehat{T}_1$ and $\widehat{T}_2$, denote by $T_1 \otimes T_2$ 
the algebraic $k$-torus whose character lattice is 
$\widehat{T}_1 \otimes_{\mathbb Z} \widehat{T}_2$.
\begin{question}
Let $\mathcal P$ be a property that an algebraic $k$-variety $X$
may possess $($for instance, stable $k$-rationality$)$.
Assume that $\mathcal P$  holds for two algebraic $k$-tori $T_1$ and $T_2$.
Does $\mathcal P$ hold for  $T_1 \otimes T_2$?\\
\noindent
If $T_1=\T_A$ and $T_2=\T_B$ $($for \'etale $k$-algebras $A$ and $B$$)$,
does $\mathcal P$ hold for $\T_{A\otimes_k B}$?
\end{question}
In general, both questions are of course answered in the negative.
However, when $\mathcal P$ is `stable rationality' or `retract rationality', 
there are interesting situations for which the answer is positive.
Accordingly, our first main result is Theorem \ref{th6.15}.
It states that  $T_1 \otimes T_2$ indeed has $\mathcal P$,
provided the \textit{permutation orders}  of the character lattices of $T_1$ and $T_2$
are coprime.
Our second main result is Theorem \ref{th7.5}.
It gives a sufficient condition on the \'etale $k$-algebras $A$ and $B$
(satisfied if their degrees  are coprime),
ensuring a positive answer to the second question above.
Proofs are constructive, by explicit and elementary manipulations of
flabby resolutions of $G$-lattices (with $G$ a finite group).
In that process, we introduce the useful invariant of a $G$-lattice referred to above:
its permutation order (see Definition \ref{DefiPermutationOrder}).
To our knowledge,  this simple invariant had not yet been systematically studied.\\
As an application, we provide  new examples of stably $k$-rational
norm one tori $\T_K$ where $K/k$ are finite separable field extensions.
If $k$ is a global field, we note that the Hasse norm principle holds
for these field extensions.\\

The paper is structured as follows. Section 2 gathers definitions and
recollections of well-known concepts and (cohomological) techniques.
These are used later on, to investigate rationality properties of
algebraic $k$-tori.
For the reader's convenience, most known results on that topic,
are recalled in Section 3.
Similarly, Section 4 contains classical material on the Hasse norm principle (for torsors under algebraic $k$-tori over global fields).
In Section 5, a bunch of new cohomological tools are developed. Among these,  some explicit results on tensor products of extensions of $G$-lattices, are presented in full detail.
We typically perform concrete constructions at the level of such extensions,
rather than merely dealing with their isomorphism classes
(living in suitable cohomology groups).
This is a computer-friendly approach to the cohomology theory of finite
groups.
In Section 6, we apply these constructions to flabby resolutions,
in order to prove our first main result,
Theorem \ref{th6.15}.
In Section 7, focus is laid on norm one tori,
in order to prove our second main result, Theorem \ref{th7.5}.
Concrete examples are provided.
Finally, Section 8 is an application of this work
to the Hasse norm principle.

\begin{acknowledgments}
The authors would like to thank Colliot-Th\'el\`ene
for giving them valuable remarks to clarify Definition \ref{deftensorT}. 
They also would like to thank Federico Scavia 
for giving them valuable comments to Proposition \ref{prop6.5} and 
Proposition \ref{prop6.6}. 
\end{acknowledgments}

\section{Notation and recollections}\label{S2}
In this paper, $k$ is a field, with fixed separable closure $\overline{k}$. 
Let $\mathcal{G}={\rm Gal}(\overline{k}/k)$ be the absolute Galois group 
of $k$.

\subsection{$G$-sets and $G$-lattices}
Let $G$ be a profinite group (often a finite group). 
A $G$-set is a set $X$, equipped with a continuous action of $G$. 
Here `continuous' means that the stabiliser of every element of $X$ is open 
in $G$. 
This condition is empty if $G$ is finite. 
In this paper, 
all actions of profinite groups are continuous.\\
A $G$-lattice is a free $\bZ$-module $M$, endowed with a (continuous) 
$G$-action. [Thus, some open subgroup $G_0 \subset G$ acts trivially on $M$.] 
The lattice dual to $M$ is $M^\circ:=\Hom_{\bZ\text{-}{\rm mod}}(M,\bZ)$, 
equipped with its 
$G$-action, given by $(g\phi)(m)=\phi(g^{-1}m)$ 
$(g\in G, \phi\in M^\circ, m\in M)$. 
More generally, for another $G$-lattice $N$, 
${\rm Hom}_{\bZ}(M,N)$ is a $G$-lattice, via the $G$-action 
\begin{align*}
(gu)(m)=g\left(u(g^{-1}m)\right)\ (g\in G, u\in {\rm Hom}(M,N), m\in M).
\end{align*}
Accordingly, the tensor product $G$-lattice $M\otimes N$ 
(where $\otimes=\otimes_\bZ$) is a $G$-lattice as well, 
with $G$-action given by $g(m \otimes n)=(gm) \otimes (gn)$. 
The following basic construction is essential. 
\begin{definition}\label{def8.1}
Let $X$ be a finite $G$-set. 
Denote by $\bZ[X]$ the $G$-lattice 
which is the free $\bZ$-module with basis $\{[x]\mid x \in X\}$, 
on which $G$ acts by permutation. 
Consider the 
exact sequence of $G$-lattices 
\begin{align*}
(E_X): 0\to I_X\to\bZ[X]\xrightarrow{\varepsilon}\bZ\to 0
\end{align*}
where $\varepsilon(\sum_{x\in X}a_x[x])=\sum_{x\in X}a_x$ 
is the \textit{augmentation map} and $I_X:={\rm Ker}(\varepsilon)$. 
Take $J_X:=I_X^\circ$, its dual sequence reads as
\begin{align*}
(F_X): 0 \to\bZ^\circ\simeq \bZ\xrightarrow{\varepsilon^\circ}
\bZ[X]^\circ\simeq \bZ[X]\to J_X\to 0
\end{align*}
with 
\begin{align*}
\varepsilon^\circ: \bZ&\to \bZ[X],\\
n&\mapsto nN_X
\end{align*} 
where $N_X:=\sum_{x\in X}[x]$ is called the \textit{norm} (element). \\
Say that $(E_X)$ (resp. $(F_X)$) is the augmentation (resp. co-augmentation) 
sequence, w.r.t. the finite $G$-set $X$.
\end{definition}

\begin{remark}
For two finite $G$-sets $X$ and $Y$, 
there is an 
isomorphism 
\begin{align*}
\varphi: &\ \bZ[X] \otimes \bZ[Y] \xrightarrow{\sim} \bZ[X\times Y],\\
&(\sum_{x\in X}a_x[x])\otimes(\sum_{y\in Y}b_y[y])\mapsto 
\sum_{x\in X}\sum_{y\in Y}a_xb_y[(x,y)].
\end{align*}
\end{remark}

\begin{definition}[The $G$-lattice $M_G$ of a finite subgroup $G$ of 
$\GL(n,\bZ)$]\label{defMG} 
Let $G$ be a finite subgroup of $\GL(n,\bZ)$. 
The $G$-lattice $M_G$ of rank $n$ 
is defined to be the $G$-lattice with $\bZ$-basis $\{u_1,\ldots,u_n\}$, 
on which $G$ acts by $u_i^{\sigma}=\sum_{j=1}^n a_{i,j}u_j$ for any $
\sigma=[a_{i,j}]\in G$. 
\end{definition}

\begin{definition}\label{def6.8}
Let $G_1$ (resp. $G_2$) be a finite group 
and $M_1$ (resp. $M_2$) be a $G_1$-lattice (resp. $G_2$-lattice). 
Let $G\leq G_1\times G_2$ be a subdirect product of $G_1, G_2$ 
with surjections 
$\varphi_1: G\rightarrow G_1$ and $\varphi_2: G\rightarrow G_2$.
We define the $G$-lattice 
$M_1\otimes M_2$, {\it tensor product of $M_1$, $M_2$}, 
by $g(m_1\otimes m_2)=\varphi_1(g)(m_1)\otimes \varphi_2(g)(m_2)$ 
$(m_1\in M_1, m_2\in M_2)$. 
Note that the action of $G$ on $M_1\otimes M_2$ may not be faithful 
and $M_1\otimes M_2$ may be regarded as $G/N$-lattice where $N\lhd G$ 
is the action kernel of $G$. 
\end{definition}

\begin{definition}
Let $G_1\leq \GL(m,\bZ)$ and $G_2\leq \GL(n,\bZ)$. 
Let $G\leq G_1\times G_2$ be a subdirect product of $G_1, G_2$ 
with surjections 
$\varphi_1: G\rightarrow G_1$ and $\varphi_2: G\rightarrow G_2$. 
For $g_1\in G_1$, $g_2\in G_2$, consider the Kronecker product 
$g_1\otimes g_2\in \GL(mn,\bZ)$, i.e.  
\begin{align*}
g_1\otimes g_2=
\left[\begin{array}{ccc}
   a_{11}\,g_2 & \cdots & a_{1m}\,g_2 \\
   \vdots & \ddots & \vdots\\
   a_{m1}\,g_2 & \cdots & a_{mm}\,g_2
\end{array}\right]\in \GL(mn,\bZ)
\end{align*}
where $g_1=[a_{ij}]_{1\leq i,j\leq m}\in \GL(m,\bZ)$ and 
$g_2\in \GL(n,\bZ)$ 
with $(g_1\otimes g_2)(g_1^\prime\otimes g_2^\prime)
=(g_1g_1^\prime)\otimes (g_2g_2^\prime)\in \GL(mn,\bZ)$. 
Define 
\begin{align*}
G_1\otimes G_2=\{g_1\otimes g_2\mid g_1=\varphi_1(g)\in G_1, g_2=\varphi_2(g)\in G_2\}
\leq \GL(mn,\bZ).
\end{align*}
Let $M_{G_1\otimes G_2}$ be as in Definition \ref{defMG}. 
Note that the action of $G$ on $M_{G_1\otimes G_2}$ may not be faithful. 
For example, when $m=n=1$, 
there is a 
surjection $G \to G_1\otimes G_2$, 
whose kernel is contained in $\{(\pm 1)\}\leq\GL(1,\bZ)$. 
As $G$-lattices, 
\begin{align*}
M_{G_1}\otimes M_{G_2}\simeq M_{G_1\otimes G_2}.
\end{align*}
\end{definition}

In the sequel, the letter $G$ always denotes a finite group.

\subsection{Classical terminology for $G$-lattices, and related basic results}
A $G$-lattice $M$ is a {\it permutation} $G$-lattice 
if it has a $\bZ$-basis permuted by $G$. This means that
 $M\simeq \oplus_{1\leq i\leq m}\bZ[G/H_i]$ 
for some subgroups $H_1,\ldots,H_m$ of $G$. Equivalently, in more concise terms, $M \simeq \bZ[X]$, for a finite $G$-set $X$.  \\
The $G$-lattice  $M$ is called {\it stably permutation} 
if $M\oplus P\simeq P^\prime$ 
for some permutation $G$-lattices $P$ and $P^\prime$. \\The $G$-lattice 
$M$ is called {\it invertible} 
if it is a direct summand of a permutation $G$-lattice, 
i.e. $P\simeq M\oplus M^\prime$ for some permutation $G$-lattice 
$P$ and a $G$-lattice $M^\prime$. \\
The $G$-lattice  $M$ is called {\it flabby} (or {\it flasque}) if $\widehat H^{-1}(H,M)=0$ 
for any subgroup $H$ of $G$ (here $\widehat H$ denotes Tate modified cohomology). 
It is called {\it coflabby} (or {\it coflasque}) if $H^1(H,M)=0$
for any subgroup $H$ of $G$. \\
Say that two $G$-lattices $M_1$ and $M_2$ are {\it similar} 
if there exist permutation $G$-lattices $P_1$ and $P_2$ such that 
$M_1\oplus P_1\simeq M_2\oplus P_2$. 
The set of similarity classes is a commutative monoid 
with respect to the sum $[M_1]+[M_2]:=[M_1\oplus M_2]$ 
and the zero $0=[P]$ where $P$ is a permutation $G$-lattice. \\
For a $G$-lattice $M$, there exists a {\it flabby resolution} of $M$; that is, an exact sequence of $G$-lattices 
\[
0 \rightarrow M \rightarrow P \rightarrow F \rightarrow 0
\]
where $P$ is permutation and $F$ is flabby. 
Flabby resolutions can be constructed explicitly;
see Endo and Miyata \cite[Lemma 1.1]{EM75}, 
Colliot-Th\'el\`ene and Sansuc \cite[Lemma 3]{CTS77}, 
Manin \cite[Appendix, page 286]{Man86}.
The similarity class $[F]$ of $F$ is determined uniquely 
and is called {\it the flabby class} of $M$. 
We denote the flabby class $[F]$ of $M$ by $[M]^{fl}$. 
We say that $[M]^{fl}$ is invertible if $[M]^{fl}=[E]$ for some 
invertible $G$-lattice $E$. 
For a $G$-lattice $M$, 
it is not difficult to see that 
\begin{align*}
\textrm{permutation}\ \ 
\Rightarrow\ \ 
&\textrm{stably\ permutation}\ \ 
\Rightarrow\ \ 
\textrm{invertible}\ \ 
\Rightarrow\ \ 
\textrm{flabby\ and\ coflabby}\\
&\hspace*{8mm}\Downarrow\hspace*{34mm} \Downarrow\\
&\hspace*{7mm}[M]^{fl}=0\hspace*{10mm}\Rightarrow\hspace*{5mm}[M]^{fl}\ 
\textrm{is\ invertible}.
\end{align*}
The above implications in each step cannot be reversed 
(see, for example, \cite[Section 1]{HY17}). 
For  basic facts on flabby $G$-lattices, 
see Colliot-Th\'el\`ene and Sansuc \cite{CTS77}, 
Swan \cite{Swa83}, \cite{Swa10}, Voskresenskii \cite[Chapter 2]{Vos98}, 
Lorenz \cite[Chapter 2]{Lor05}, Hoshi and Yamasaki \cite[Chapter 2]{HY17}.

\subsection{\'Etale $k$-algebras}\label{ss23}
This notion is an enhancement of Galois correspondence. 
Let $A$ be a commutative $k$-algebra and 
$X(A):={\rm Hom}_{k}(A,\overline{k})$ be the set of 
$k$-algebra homomorphisms from $A$ to $\overline{k}$. 
Then $X(A)$ becomes a $\mathcal{G}$-set, 
via the  $\mathcal{G}$-action on the target. 
Conversely, let $X$ be a \textit{finite} $G$-set. 
There is a 
$\mathcal{G}$-action on 
$\overline{k}^X=\mathrm{Maps}(X,\overline k)$. 
Define the $k$-algebra $k \{X \}:=H^0(\mathcal G,\overline{k}^X)$, 
to be the sub-$k$-algebra of $\mathcal{G}$-fixed points. 
Then $k \{X \}\simeq K_1\times\cdots\times K_r$ where each $K_i/k$ 
is a finite  separable field extension. 
Such a $k$-algebra is called \textit{\'etale}. 
The associations $A \mapsto X(A)$ and $X \mapsto k \{X \}$ are 
mutually inverse anti-equivalences, from the category of \'etale $k$-algebras 
(morphisms being $k$-algebra homomorphisms) and that of finite 
$\mathcal{G}$-sets (morphisms being $\mathcal{G}$-equivariant maps). 
Furthermore, this equivalence is compatible to monoidal structures. 
More precisely, it turns a product (resp. tensor product) of $k$-algebras, 
into a disjoint union (resp. product) of $\mathcal{G}$-sets. 
This is expressed by the correspondences
\begin{align*}
&X(A\times B)\leftrightarrow X(A)\cup X(B),\\
&X(A\otimes B)\leftrightarrow X(A)\times X(B).
\end{align*}
For details (and more) see 
Bourbaki {\cite[V. \S\S 6--8]{Bou81}, \cite[V. \S\S 6--8]{Bou90}, 
Waterhouse \cite[Theorem, page 47, page 48]{Wat79}, 
Knus, Merkurjev, Rost and Tignol \cite[Proposition 18.3, Theorem 18.4]{KMRT98}. 
A fundamental observation (to be used tacitly throughout) is that the $\mathcal{G}$-action on $X(A)$ is transitive if and only if $A$ is a field. 
More precisely, for a finite extension $L/k$, 
with $L\subset \overline k$, one has $X(L)=\mathcal{G}/\mathcal{H}$ 
where $\mathcal{H}=\Gal(\overline{k}/L)$. 

\subsection{Varieties over $k$} 
In this paper, 
a  $k$-variety is a $k$-scheme $X$ that is reduced, separated and 
of finite-type. 
Furthermore, we always assume that $X$ is quasi-projective 
(= an open subvariety of a projective $k$-variety). 

\subsection{Algebraic tori} 
An algebraic $k$-torus $T$ (of dimension $n$) is a $k$-group scheme, 
such that the base-change 
$T\times_k \overline{k}:=T\times_{{\rm Spec}\, k}\,{\rm Spec}\, \overline{k}$ 
is isomorphic to $(\bG_{m,\overline{k}})^n$ as a $\overline{k}$-group scheme. 
Equivalently, it is a $k$-form of the split torus $(\bG_{m,k})^n$. 
Observe that an algebraic $k$-torus is an affine $k$-variety.

Let $T$ be an algebraic $k$-torus. 
Its \textit{character lattice} is 
$\widehat{T}:={\rm Hom}(T\times_k \overline{k},\bG_{m,\overline{k}})$ 
(homomorphisms of $\overline{k}$-group schemes). 
It is a $\mathcal{G}$-lattice 
(note that ${\rm Hom}(\bG_{m,\overline{k}},\bG_{m,\overline{k}})\simeq \bZ$). 
By Galois correspondence, there exists a finite Galois extension $L/k$ 
with Galois group $G:={\rm Gal}(L/k)$ such that 
$T$ splits over $L$, i.e. $T\times_k L\simeq (\bG_{m,L})^n$. 
The kernel of the $\mathcal{G}$-action  on $\widehat{T}$ yields the minimal 
such extension $L/K$, called \textit{the minimal splitting field} of $T$. 
The association $T \mapsto \widehat T$ is an anti-equivalence (duality), 
between the category of algebraic $k$-tori which split over $L$ 
(morphisms are homomorphisms of $k$-group schemes), 
and the category of $G$-lattices (morphisms are $\bZ[G]$-linear maps). 
In fact, if $T$ is an algebraic $k$-torus which splits over $L$, 
then the character lattice $\widehat{T}$ 
may be regarded as a $G$-lattice with 
$\widehat{T}\simeq {\rm Hom}(T\times_k L,\bG_{m,L})$ 
where $G\simeq \mathcal{G}/\mathcal{H}$ and 
$\mathcal{H}={\rm Gal}(\overline{k}/L)$. 
Conversely, for a given $G$-lattice $M$, 
$T:={\rm Spec}(L[M]^G)$ becomes an algebraic 
$k$-torus which splits over $L$, and 
such that $\widehat{T}\simeq M$ as $G$-lattices. 
Here $L[M]^G$ denotes the invariant ring of the group algebra $L[M]$, 
under the action of $G$. 
For details, see Ono \cite[Section 1.2]{Ono61}, 
Voskresenskii \cite[page 27, Example 6]{Vos98} and 
Knus, Merkurjev, Rost and Tignol \cite[page 333, Proposition 20.17]{KMRT98}). 

\begin{definition}\label{deftensorT}
For algebraic $k$-tori $T_1$, $T_2$ with character lattices 
$\widehat{T}_1$, $\widehat{T}_2$, via the equivalence above, 
we define {\it the tensor product $T=T_1\otimes T_2$ of $T_1$, $T_2$} 
by $\widehat{T}\simeq \widehat{T}_1\otimes \widehat{T}_2$ 
where $\widehat{T}_1\otimes \widehat{T}_2$ is 
the tensor product of $\widehat{T}_1$, $\widehat{T}_2$ 
as in Definition \ref{def6.8} 
(the character lattice 
$\widehat{T}$ 
can be regarded as a $G$-lattice 
for some subdirect product $G\leq G_1\times G_2$ 
of $G_1$, $G_2$ with surjections 
$\varphi_1: G\rightarrow G_1$, $\varphi_2: G\rightarrow G_2$ 
for $G_1$-lattice $\widehat{T}_1$ 
and $G_2$-lattice $\widehat{T}_2$).  
\end{definition}

There is a useful interpretation of the preceding equivalence. 
Namely, isomorphism classes of $k$-tori of dimension $n$ 
correspond bijectively to the elements of the set 
$H^1(k,\GL_n(\bZ)):=H^1(\mathcal{G},\GL_n(\bZ))$, because 
${\rm Aut}((\bG_{m,\overline{k}})^n)=\GL_n(\bZ)$. 
Thus, an algebraic $k$-torus $T$ of dimension $n$ is uniquely 
determined by its associated (continuous) integral 
representation $h : \mathcal{G}\rightarrow \GL_n(\bZ)$ 
(defined up to conjugacy). 
Here the group $G=h(\mathcal{G})$ is a finite subgroup of $\GL_n(\bZ)$ 
(see Voskresenskii \cite[page 57, Section 4.9]{Vos98}). 
The minimal splitting field $L$ of $T$ corresponds to 
$\mathcal{H}={\rm Ker}(h)$.

\subsection{Weil (scalar) restriction}
Let $K/ k$ be a  finite  separable  extension of $k$. 
Let $Y$ be a $K$-variety (recall that $Y$ is quasi-projective by convention). 
Denote by $R_{K/k}(Y)$ the Weil (scalar) restriction of $Y$ 
(see Ono \cite[Section 1.4]{Ono61}, 
Voskresenskii \cite[page 37, Section 3.12]{Vos98}). 
It is a $k$-variety, characterised by the formula, 
for every $k$-algebra $A$: 
\begin{align*}
R_{K/k}(Y)(A)=Y(A \otimes_k K).
\end{align*}
In particular, a $k$-point of $R_{K/k}(Y)$ is the same thing 
as a $K$-point of $Y$.\\
The $\overline k$-variety $R_{K/k}(Y) \times_k {\overline k}$ 
is canonically isomorphic to 
the product of the $\overline k$-varieties $Y_\sigma $, 
for all $\sigma \in X(K)$. 
Here $Y_\sigma$ denotes the $\overline k$-variety obtained 
from the $K$-variety $Y$, upon the base-change 
$K \xrightarrow{\sigma} \overline k$. 

The functorial nature of the Weil (scalar) restriction, 
ensures that if $Y$ is a $K$-group scheme, then $R_{K/k}(Y)$ 
is a $k$-group scheme. 
From there, one sees that if $T$ is an algebraic $K$-torus, 
then $R_{K/k}(T)$ is an algebraic $k$-torus. 
In case $T=\bG_{m,K}$, the algebraic $k$-torus $R_{K/k}(\bG_{m,K})$ will 
be of interest to us.  
It is sometimes called a \textit{quasi-trivial} torus, 
and is simply denoted by $R_{K/k}(\bG_m)$. 
Its character lattice is the permutation $G$-lattice $\bZ[X]$, 
for $X=X(K)$. 
It is straightforward that this construction generalises to 
an \'etale $k$-algebra $A$, in place of $K$. 
For example, if $A=K_1\times\cdots\times K_r$ 
(for $K_i/k$ finite separable field extensions), then 
$R_{A/k}(\bG_{m})=R_{K_1/k}(\bG_m) \times \cdots \times R_{K_r/k}(\bG_m)$. 

\subsection{Norm one tori $\T_K=R^{(1)}_{K/k}(\bG_m)$ of field extensions $K/k$ and $\T_A=R^{(1)}_{A/k}(\bG_m)$ of \'etale algebras $A/k$}\label{ssRatNorm1} 
A reference for this construction is Ono \cite[Section 1.4]{Ono61}. 
Keep notation of the preceding paragraphs. 
Let $A$ be an \'etale $k$-algebra. 
Consider the finite $G$-set $X:=X(A)$.

Via the duality between algebraic $k$-tori which split over $L$ 
and $G$-lattices where $G={\rm Gal}(L/k)$, 
the sequence $(F_X)$ 
as in Definition \ref{def8.1} 
yields an extension of algebraic $k$-tori, denoted by 
\begin{align*}
1 \to \T_A:=R^{(1)}_{A/k}(\bG_m) \to R_{A/k}(\bG_m) 
\xrightarrow{N_{A/k}} \bG_{m,k} \to 1.
\end{align*}
Here $N_{A/k}$ is the \textit{multi-norm} (or just norm). 
It is a homomorphism of algebraic $k$-tori. 
Its  effect on $k$-rational points is the (usual) norm  of the finite 
$k$-algebra $A$, also denoted by $N_{A/k}: A^\times \to k^\times$. 
The algebraic $k$-torus $\T_A=R^{(1)}_{A/k}(\bG_m)$ is the 
\textit{$($multi-$)$norm one torus} of the \'etale algebra $A/k$. 
Its character lattice $\widehat{T}$ is the $G$-lattice $J_X$. 
The function field $k(\T_A)$ of 
$\T_A$ is isomorphic to the fixed field $L(J_X)^G$. 

Similarly, the sequence $(E_X)$ as in Definition \ref{def8.1} 
yields an extension (of algebraic $k$-tori), that reads as 
\begin{align*}
1 \to \bG_{m,k} \xrightarrow{\iota} R_{A/k}(\bG_m) \to R_{A/k}(\bG_m)/\bG_{m,k} \to 1
\end{align*}
where $\iota$ is the natural embedding. 
Thus, the character lattice of the algebraic $k$-torus 
$R_{A/k}(\bG_m)/\bG_{m,k}$ is the $G$-lattice $I_X$ and 
the function field $k(R_{A/k}(\bG_m)/\bG_{m,k})$ is isomorphic 
to the fixed field $L(I_X)^G$. 

\subsubsection{Case of field extensions $K/k$} 
Assume that $A=K \subset \overline k$ is a field, 
with Galois closure $L \subset \overline k$. 
Define $G:=\Gal(L /k)$, $H:=\Gal(L /K)$ and $\T_K:=R^{(1)}_{K/k}(\bG_m)$. 
The $k$-variety $\T_K$ is biregularly isomorphic to the norm hypersurface 
\begin{align*}
f(x_1,\ldots,x_n)=1
\end{align*}
where 
$f(x_1,\ldots,x_n)\in k[x_1,\ldots,x_n]$ 
is the polynomial of total degree $n=\vert G/H \vert$ 
defined as 
\begin{align*}
f(x_1,\ldots,x_n)=N_{K/k}(x_1w_1+\cdots+x_nw_n)=\prod_{\overline g\in G/H}\overline{g}(x_1w_1+\cdots+x_nw_n) 
\end{align*}
with $\{w_1,\ldots,w_n\}$ a basis of $K/k$ and 
$N_{K/k}: K^\times\to k^\times$ is the norm map. 
%

\subsubsection{Case of \'etale algebras $A/k$}\label{ss272}
Let $A$ be an \'etale $k$-algebra with ${\rm dim}_k\, A=n$. 
Write  $A=\prod_{i=1}^r K_i$ where $K_i/k$ $(1\leq i\leq r)$ 
is a finite separable field extension 
with $[K_i:k]=n_i$. One has $\sum_{i=1}^r n_i=n$. 
Let $L_i/k$ $(1\leq i\leq r)$ 
be the Galois closure of $K_i/k$ in $\overline{k}$, 
with Galois groups $G_i={\rm Gal}(L_i/k)$, $H_i={\rm Gal}(L_i/K_i)$. 
Then we have $\bigcap_{\sigma\in G_i} H_i^\sigma=\{1\}$ 
where $H_i^\sigma=\sigma^{-1}H_i\sigma$ and hence 
$H_i$ contains no non-trivial normal subgroup of $G_i$. 
Let $L=L_1\cdots L_r\subset \overline{k}$ be the composite field of 
$L_1,\ldots,L_r$, 
i.e. the smallest field which contains all $L_i$, 
with Galois group $G={\rm Gal}(L/k)$. 
We see that $G$ is a subdirect product of $G_1,\ldots,G_r$, i.e. 
a subgroup $G\leq G_1\times\cdots\times G_r$ 
with surjections $\varphi_i: G\rightarrow G_i$ $(1\leq i\leq r)$. 
We may regard $X(A)={\rm Hom}(A,\overline{k})$ as a $G$-set, 
i.e. $G$ acts on $X(A)=\bigcup_{i=1}^r X(K_i)$ 
via the surjections $\varphi_i$. 
We have
$G\simeq\mathcal{G}/\mathcal{H}$ 
where  $\mathcal{H}={\rm Gal}(\overline{k}/L)$. 

When $r=2$, the set of all subdirect products of $G_1$, $G_2$ 
can be described explicitly, see Lemma \ref{lem2.2}. 

There is also an explicit description of the (multi-)norm one torus 
$\T_A=R^{(1)}_{A/k}(\bG_m)$ with $\widehat{\T}_A\simeq J_X$ 
as a norm hypersurface. 
It is obtained as in the field case. 
Just replace $N_{K/k}$ by $N_{K_1/k} \times\cdots\times N_{K_r/k}$. 
Namely, the norm one torus $\T_A=R^{(1)}_{A/k}(\bG_m)$ 
is biregularly isomorphic to the norm hypersurface 
\begin{align*}
\prod_{i=1}^rf_i(x_{i,1},\ldots,x_{i,n_i})=1
\end{align*}
where 
$f_i(x_{i,1},\ldots,x_{i,n_i})\in k[x_{i,1},\ldots,x_{i,n_i}]$ 
$(1\leq i\leq r)$ is the polynomial of total degree $n_i$ 
defined as 
\begin{align*}
f_i(x_{i,1},\ldots,x_{i,n_i})=N_{K_i/k}(x_{i,1}w_{i,1}+\cdots+x_{i,n_i}w_{i,n_i})=\prod_{\overline{g}\in G_i/H_i}\overline{g}(x_{i,1}w_{i,1}+\cdots+x_{i,n_i}w_{i,n_i})
\end{align*}
with $\{w_{i,1},\ldots,w_{i,n_i}\}$ a basis of $K_i/k$ 
and 
$N_{K_i/k}:K_i^\times\to k^\times$ is the norm map. 

\subsection{Rationality and variants}
Let $K$ and $K^\prime$ 
be finitely generated field extensions of $k$. \\The two fields 
$K$ and $K^\prime$ are called 
{\it stably $k$-isomorphic} if 
$K(y_1,\ldots,y_m)\simeq K^\prime(z_1,\ldots,z_n)$ over $k$, 
for some algebraically independent elements $y_1,\ldots,y_m$ over $K$ 
and $z_1,\ldots,z_n$ over $K^\prime$. \\
The field $K$ is called {\it rational over $k$} 
(or {\it $k$-rational} for short) 
if $K$ is purely transcendental over $k$, 
i.e. $K$ is isomorphic to $k(x_1,\ldots,x_n)$, 
the rational function field over $k$ with $n$ variables $x_1,\ldots,x_n$ 
for some integer $n$. \\
The field $K$ is called {\it stably $k$-rational} 
if it is stably isomorphic to a $k$-rational field. \\The field $K$ is {\it a direct factor of a $k$-rational field} 
(or {\it factor $k$-rational} for short) 
if there exists a field $K^\prime$ such that 
$K\otimes_k K^\prime$ is a domain whose quotient field is $k$-rational 
(see also Kang \cite[page 23]{Kan12}). \\
When $k$ is an infinite field, $K$ is called {\it retract $k$-rational} 
if there exists a $k$-algebra $R$ contained in $K$ such that 
(i) $K$ is the quotient field of $R$, and (ii) 
the identity map $1_R : R\rightarrow R$ factors through a localized 
polynomial ring over $k$. Condition (ii) means there exists 
an element $f\in k[x_1,\ldots,x_n]$ 
(polynomial ring over $k$) and  $k$-algebra 
homomorphisms $\varphi : R\rightarrow k[x_1,\ldots,x_n][1/f]$ 
and $\psi : k[x_1,\ldots,x_n][1/f]\rightarrow R$ satisfying 
$\psi\circ\varphi=1_R$ (see Saltman \cite[Definition 3.1, page 180]{Sal84}). \\
The field $K$ is called {\it $k$-unirational} 
if $k\subset K\subset k(x_1,\ldots,x_n)$ for some integer $n$. 
It is well-known that
\begin{center}
$k$-rational \ \ $\Rightarrow$\ \ 
stably $k$-rational\ \ $\Rightarrow$ \ \ 
factor $k$-rational\ \ $\Rightarrow$ \ \ 
retract $k$-rational\ \ $\Rightarrow$ \ \ 
$k$-unirational.
\end{center} 
For an 
integral $k$-variety $X$, 
one says that $X$ is $k$-rational if and only if 
its function field $K=k(X)$ is $k$-rational, and similarly 
for all variants recalled above. 
For example, $X$ is stably $k$-birational to $X'$ 
if and only if the function fields 
$K=k(X)$ and $K'=k(X')$ are stably $k$-isomorphic. 
We recall that 
$X$ is $k$-rational if and only if $X$ is birational 
to the projective space $\bP^n$ of dimension $n$ for some integer $n$. 
$X$ is stably $k$-rational if and only if $X\times \bP^m$ is $k$-rational 
for some integer $m$. 
$X$ is factor $k$-rational if and only if 
there exists an integral $k$-variety $Y$ 
such that $X\times Y$ is $k$-rational. 
$X$ is retract $k$-rational if and only if 
there exists a dominant rational map 
$f: \bP^n\dashrightarrow X$ for some integer $n$ and 
a rational map $g: X\dashrightarrow \bP^n$ such that 
$f\circ g=\Id_X$. 
$X$ is $k$-unirational if and only if 
there exists a dominant rational map $f: \bP^n\dashrightarrow X$ 
for some integer $n$. 
For equivalent definitions in the language of algebraic geometry, 
see e.g. 
Manin \cite{Man74}, \cite{Man86}, 
Manin and Tsfasman \cite{MT86}, 
Colliot-Th\'{e}l\`{e}ne and Sansuc \cite[Section 1]{CTS07}, 
Kang \cite{Kan12},
Beauville \cite{Bea16}, 
Merkurjev \cite[Section 3]{Mer17}. 

\subsection{Some properties of algebraic tori related to rationality}

Let $T$ be an algebraic $k$-torus with character lattice 
$\widehat T\simeq {\rm Hom}(T\times_k L,\bG_{m,L})$ where 
$L$ is the minimal splitting field of $T$. 
It is well-known that $T$ is $k$-unirational 
(see Voskresenskii \cite[page 40, Example 21]{Vos98}).

It is known that 
$T$ is retract $k$-rational 
if and only if 
 $T$ is factor $k$-rational 
if and only if 
there exists an algebraic $k$-torus $T^\prime$ 
such that $T\times T^\prime$ is $k$-rational 
by Colliot-Th\'{e}l\`{e}ne and Sansuc \cite[Proposition 7.4]{CTS87}. 
%

The flabby class $[M]^{fl}$ 
plays a crucial role in the rationality problem 
for $L(M)^G\simeq k(T)$ 
as follows 
(see Colliot-Th\'el\`ene and Sansuc \cite[Section 2]{CTS77}, 
\cite[Proposition 7.4]{CTS87}, 
Voskresenskii \cite[Section 4.6]{Vos98}, 
Kunyavskii \cite[Theorem 1.7]{Kun07}, 
Colliot-Th\'el\`ene \cite[Theorem 5.4]{CT07}, 
Hoshi and Yamasaki \cite[Section 1]{HY17}, \cite[Section 1]{HY1}). 

%
\begin{theorem}[{Voskresenskii \cite[Section 4, page 1213]{Vos69}, \cite[Section 3, page 7]{Vos70}, see also 
\cite{Vos74}, \cite[Section 4.6]{Vos98}, Kunyavskii \cite[Theorem 1.9]{Kun07} and 
Colliot-Th\'el\`ene \cite[Theorem 5.1, page 19]{CT07} for any field $k$}]\label{thVos69}
Let $k$ be a field 
and $\mathcal{G}={\rm Gal}(\overline{k}/k)$. 
Let $T$ be an algebraic $k$-torus, 
$X$ be a smooth $k$-compactification of $T$ 
and $\overline{X}=X\times_k\overline{k}$. 
Then there exists an exact sequence of $\mathcal{G}$-lattices 
\begin{align*}
0\to \widehat{T}\to \widehat{Q}\to {\rm Pic}\,\overline{X}\to 0
\end{align*}
where $\widehat{Q}$ is permutation 
and ${\rm Pic}\ \overline{X}$ is flabby. 
\end{theorem}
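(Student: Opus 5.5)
We prove Theorem \ref{thVos69} by geometry, not lattice theory: build a smooth equivariant compactification, read off the sequence from divisors of characters, then transfer to an arbitrary smooth compactification. The plan has three steps.

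First, we reduce to a toric model. Let $L/k$ be a finite Galois splitting field of $T$ with $G={\rm Gal}(L/k)$. Over $\overline{k}$ we choose a smooth complete fan $\Sigma$ in $N_{\bQ}$, where $N=\widehat{T}^\circ$ is the cocharacter lattice, that is stable under the action of $G$. Such a fan exists: start from any complete fan, refine by the $G$-translates to get a $G$-stable complete fan, then apply equivariant resolution of toric singularities via $G$-stable subdivisions (Brylinski, Colliot-Th\'el\`ene--Harari--Skorobogatov). Galois descent then gives a smooth projective $T$-equivariant compactification $X_0$ of $T$ over $k$.

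Second, we compute on $X_0$. Let $\Sigma(1)$ be the set of rays; it is a $G$-set. The boundary $\overline{X}_0\setminus\overline{T}$ is the union of the prime divisors $D_\rho$, $\rho\in\Sigma(1)$, and $\mathcal{G}$ permutes them according to its action on $\Sigma(1)$. Put $\widehat{Q}:=\bZ[\Sigma(1)]$, the free abelian group on the boundary divisors; it is permutation. The standard toric sequence
\begin{align*}
0\to \widehat{T}\xrightarrow{\ {\rm div}\ } \bZ[\Sigma(1)]\to {\rm Pic}\,\overline{X}_0\to 0
\end{align*}
sends $\chi$ to ${\rm div}(\chi)=\sum_\rho\langle\chi,v_\rho\rangle D_\rho$. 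It is exact for the following reasons. Injectivity holds because the rays of a complete fan span $N$. The cokernel is the Picard group, since $\overline{T}\simeq\bG_m^n$ has trivial Picard group and its invertible functions are the constants times characters. Exactness on the left uses $\overline{k}[\overline{T}]^\times/\overline{k}^\times=\widehat{T}$. The map is $\mathcal{G}$-equivariant because everything is defined over $k$.

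Third, we show flabbiness and independence of the compactification. By a standard lemma, if $0\to M\to P\to F\to0$ with $P$ permutation, then $F$ is flabby if and only if the sequence restricted to every $H\leq G$ splits over the $H$-invariants in a suitable sense. The cleaner route is to prove $H^1(H,{\rm Pic}\,\overline{X}^\circ)=0$, i.e.\ $\widehat H^{-1}(H,{\rm Pic}\,\overline X)=0$, directly. Smooth projective varieties have torsion-free Picard group over $\overline{k}$, with ${\rm Pic}$ dual to the group of $1$-cycles modulo numerical equivalence. For $X_0$ this group is generated by the torus-invariant curves, which correspond to the walls of $\Sigma$. For any subgroup $H$ we pass to the fixed field $L^H$ and use that the fan is smooth: each cone is spanned by part of a basis of $N$, and $H$ permutes the cones. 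One then checks $\widehat H^{-1}(H,{\rm Pic}\,\overline{X}_0)=0$ by the argument of Voskresenskii/Colliot-Th\'el\`ene--Sansuc. This is local on cones, using that $N$ restricted to an $H$-stable maximal-dimensional smooth cone is permutation. To pass to an arbitrary smooth compactification $X$, note that ${\rm Pic}\,\overline{X}$ and ${\rm Pic}\,\overline{X}_0$ agree up to adding permutation modules. By weak factorization, or by dominating both by a third model, blow-ups of smooth centres change ${\rm Pic}$ by permutation summands $\bZ[\text{components}]$, which preserves flabbiness. The statement for general $X$ also follows more simply by replacing $\widehat{Q}$ with the divisor group supported on $\overline{X}\setminus\overline{T}$: the same divisor sequence is exact for any smooth $X$.

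The main obstacle is the flabbiness of ${\rm Pic}\,\overline{X}$. The cleanest approach is the birational invariance argument: $H^1(H,{\rm Pic}\,\overline{X})$, and its dual $\widehat H^{-1}$-type invariant, are stable birational invariants of smooth projective varieties. Applying this over the field $L^H$ reduces the computation to the case where the torus is split over the base, and then ${\rm Pic}$ is a permutation module there. If this reduction does not go through, the fallback is the cone-wise lattice computation described above.
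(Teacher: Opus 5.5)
The paper only quotes this theorem, so the benchmark is the standard Voskresenskii/Colliot-Th\'el\`ene--Sansuc proof. Your exact sequence is fine: for any smooth $X$, take $\widehat{Q}$ to be the divisors supported on $\overline{X}\setminus\overline{T}$.

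\textbf{Gap 1: flabbiness of the toric model.} The flabbiness of ${\rm Pic}\,\overline{X}_0$ is the real content of the theorem, and your proposal does not prove it.

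Your preferred route does not work. $T$ does not become split over $L^H$: it splits over $L$, and $H={\rm Gal}(L/L^H)$ still acts on $\widehat{T}$. So there is no reduction to a split torus. For $H=G$ your reduction would make ${\rm Pic}\,\overline{X}$ stably permutation for \emph{every} torus, i.e.\ every torus stably rational by Theorem~\ref{th2.7}, contradicting Example~\ref{ex7.7}. Birational invariance of $\widehat H^{-1}(H,{\rm Pic}\,\overline{X})$ is true, but it only moves the problem between models. Also, $\widehat H^{-1}(H,{\rm Pic}\,\overline{X})$ is dual to $H^1(H,({\rm Pic}\,\overline{X})^\circ)$, not to $H^1(H,{\rm Pic}\,\overline{X})$.

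Your fallback is deferred to the literature, and its stated mechanism is wrong. An $H$-stable maximal cone need not exist: if $h\in H$ acts on $N$ by $-1$, then $h\sigma=-\sigma\neq\sigma$ for every full-dimensional strongly convex cone $\sigma$.

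The argument you need is as follows. Your sequence dualizes to $0\to({\rm Pic}\,\overline{X}_0)^\circ\to\bZ[\Sigma(1)]\to N\to 0$, with $[\rho]\mapsto v_\rho$. Since $H^1(H,\bZ[\Sigma(1)])=0$, this gives $H^1(H,({\rm Pic}\,\overline{X}_0)^\circ)\simeq\mathrm{Coker}(\bZ[\Sigma(1)]^H\to N^H)$. For $n\in N^H$:
\begin{enumerate}
\item Let $\sigma$ be the unique cone containing $n$ in its relative interior; it exists by completeness.
\item Uniqueness makes $\sigma$ $H$-stable, though $\sigma$ may have any dimension (e.g.\ $\sigma=\{0\}$ for $n=0$).
\item Smoothness gives a unique expression $n=\sum_{\rho\in\sigma(1)}a_\rho v_\rho$ with $a_\rho\in\bZ_{>0}$.
\item Uniqueness again forces $a_{h\rho}=a_\rho$, so $\sum_\rho a_\rho[\rho]$ is an $H$-invariant preimage of $n$.
\end{enumerate}
Hence $({\rm Pic}\,\overline{X}_0)^\circ$ is coflabby, i.e.\ ${\rm Pic}\,\overline{X}_0$ is flabby.

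\textbf{Gap 2: transfer to an arbitrary smooth compactification.} The statement is over any field. Weak factorization, or a smooth model dominating both $X$ and $X_0$, needs resolution of singularities, i.e.\ characteristic zero.

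Use instead the characteristic-free comparison lemma of Colliot-Th\'el\`ene--Sansuc. Let $X$, $Y$ be smooth proper and $k$-birational. Let $\mathcal{E}_X$ be the finite $\mathcal{G}$-stable set of prime divisors of $\overline{X}$ whose valuations have no divisorial centre on $\overline{Y}$, and define $\mathcal{E}_Y$ symmetrically. Then ${\rm Pic}\,\overline{X}\oplus\bZ[\mathcal{E}_Y]\simeq{\rm Pic}\,\overline{Y}\oplus\bZ[\mathcal{E}_X]$.

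This can be proved using only local factoriality. For $E\in\mathcal{E}_Y$, the order along $E$ of a rational function is a linear function of its divisor on $\overline{X}$: evaluate $v_E$ on local equations at the generic point of the centre of $v_E$ on $\overline{X}$. The same holds with the roles exchanged. This gives a Galois-equivariant unipotent automorphism of $\bZ[\mathcal{E}_X]\oplus\mathcal{D}\oplus\bZ[\mathcal{E}_Y]$, where $\mathcal{D}$ is the free group on the common prime divisors. It carries the principal divisors of $\overline{X}$ onto those of $\overline{Y}$. Since $\widehat H^{-1}(H,-)$ is additive and vanishes on permutation lattices, flabbiness passes from $X_0$ to $X$.

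\textbf{A smaller error.} Your general claim that smooth projective varieties have torsion-free Picard group dual to $1$-cycles modulo numerical equivalence is false (abelian varieties, Enriques surfaces). It holds for $\overline{X}_0$, but the dual-sequence argument above does not need it.
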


Let $L$ be the minimal splitting field of 
$T$ with Galois group $G={\rm Gal}(L/k)\simeq \mathcal{G}/\mathcal{H}$ 
where $\mathcal{G}={\rm Gal}(\overline{k}/k)$ 
and $\mathcal{H}={\rm Gal}(\overline{k}/L)$. 
By Theorem \ref{thVos69}, 
we obtain a flabby resolution of 
$\widehat{T}\simeq {\rm Hom}(T\times_k L,\bG_{m,L})$: 
\begin{align*}
0\to \widehat{T}\to \widehat{Q}\to {\rm Pic}\, X_L\to 0
\end{align*}
as $G$-lattices with $[\widehat{T}]^{fl}=[{\rm Pic}\ X_L]$ 
where $\widehat{Q}$ is permutation 
and $X_L=X\times_k L$ 
(see also Voskresenskii \cite[Section 1]{Vos74}). 
By the inflation-restriction exact sequence 
$0\to H^1(G,{\rm Pic}\, X_L)\xrightarrow{\rm inf} 
H^1(k,{\rm Pic}\,\overline{X})\xrightarrow{\rm res} 
H^1(L,{\rm Pic}\,\overline{X})$, 
we get 
${\rm inf}: H^1(G,{\rm Pic}\, X_L)\xrightarrow{\sim} 
H^1(k,{\rm Pic}\,\overline{X})$ 
because $H^1(L,{\rm Pic}\,\overline{X})=0$. 

%
\begin{theorem}
\label{th2.7}
Let $L/k$ be a finite Galois extension with Galois group $G={\rm Gal}(L/k)$ 
and $M$ and $M^\prime$ be $G$-lattices. 
Let $T$ and $T^\prime$ be algebraic $k$-tori with $\widehat{T}\simeq M$ 
and $\widehat{T}^\prime\simeq M^\prime$, 
i.e. $L(M)^G\simeq k(T)$ and $L(M^\prime)^G\simeq k(T^\prime)$.\\
{\rm (1)} $(${\rm Endo and Miyata} \cite[Theorem 1.6]{EM73}$)$ 
$[M]^{fl}=0$ if and only if $k(T)$ is stably $k$-rational.\\
{\rm (2)} $(${\rm Voskresenskii} \cite[Theorem 2]{Vos74}$)$ 
$[M]^{fl}=[M^\prime]^{fl}$ if and only if $k(T)$ and $k(T^\prime)$ 
are stably $k$-isomorphic.\\
{\rm (3)} $(${\rm Saltman} \cite[Theorem 3.14]{Sal84}$)$ 
$[M]^{fl}$ is invertible if and only if $k(T)$ is 
retract $k$-rational.
\end{theorem}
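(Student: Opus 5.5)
The three parts reduce to one construction: attach to a flabby resolution of $M$ the field extension $L(P)^G/L(M)^G$ and show that it is purely transcendental. Parts (1) and (3) then follow formally from (2).

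\emph{Step 1 (key lemma).} Let $0\to M\to E\to P\to 0$ be an exact sequence of $G$-lattices with $P$ permutation. I would show that $L(E)^G$ is rational over $L(M)^G$. Choose a $\bZ$-basis $\{p_x\}_{x\in X}$ of $P$ permuted by $G$, and lift each $p_x$ to $e_x\in E$. Then $L(E)=L(M)(e_x\mid x\in X)$, the $e_x$ are algebraically independent over $L(M)$, and
\[
g(e_x)=c_{g,x}\, e_{gx}\quad\text{with } c_{g,x}\in M\subset L(M)^\times .
\]
This is a semilinear action on the multiplicative group of monomials in the $e_x$, twisted by a cocycle with values in the $G$-module $\bigoplus_{x}L(M)^\times\simeq \bigoplus_i {\rm Ind}_{H_i}^G L(M)^\times$. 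By Shapiro's lemma and Hilbert 90 (Speiser), $H^1$ of this module vanishes. Hence we may rescale the $e_x$ by elements of $L(M)^\times$ so that $G$ permutes them. By the no-name lemma, or Speiser's lemma applied to the $G$-stable $L(M)$-span of the $e_x$, $L(E)^G$ is then rational over $L(M)^G$.

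\emph{Step 2 (``if'' in (2)).} Take flabby resolutions $0\to M\to P\to F\to 0$ and $0\to M'\to P'\to F'\to 0$. Adding permutation lattices to $P$ and $P'$, we may assume $F\simeq F'$. Form the pullback $E:=P\times_F P'$. It sits in two exact sequences
\[
0\to M\to E\to P'\to 0 \quad\text{and}\quad 0\to M'\to E\to P\to 0 .
\]
Applying Step 1 to each, $L(E)^G$ is rational over both $k(T)=L(M)^G$ and $k(T')=L(M')^G$. So $k(T)$ and $k(T')$ are stably $k$-isomorphic.

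\emph{Step 3 (``only if'' in (2)).} By Theorem \ref{thVos69}, $[M]^{fl}=[{\rm Pic}\,X_L]$ for a smooth compactification $X$ of $T$. I then need that ${\rm Pic}\,\overline{X}$, up to adding permutation modules, is a stable birational invariant of smooth projective varieties. For this I would show that a birational map, factored through blow-ups with smooth centres (or via an open-subset argument), changes ${\rm Pic}$ only by adding permutation lattices, namely the classes of exceptional divisors. I would also show that $\overline{X}\times\bP^n$ adds only a copy of $\bZ$. This is the step I expect to be the main obstacle. I would try to quote it from Voskresenskii's original argument rather than redo it in arbitrary characteristic.

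Part (1) is the case $M'=0$ of (2), since $T'=\bG_m^0$ is a point and $[0]^{fl}=0$.

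\emph{Part (3).} Suppose $k(T)$ is retract rational. By \cite[Proposition 7.4]{CTS87} there is a torus $T'$ with $T\times T'$ rational. Part (1) then gives $[M]^{fl}+[M']^{fl}=[M\oplus M']^{fl}=0$, so $[M]^{fl}$ is invertible. Conversely, suppose $F\oplus F''\simeq Q$ with $Q$ permutation. Choose a surjection from a permutation lattice $P''\to F''$ and let $M''$ be its kernel. Then $0\to M''\to P''\to F''\to 0$ is a flabby resolution, so $[M\oplus M'']^{fl}=[F\oplus F'']=0$. By (1), $T\times T''$ is stably rational, hence $k(T)$ is factor rational and therefore retract rational.
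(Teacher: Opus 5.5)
The paper gives no proof of this theorem; it quotes it from \cite{EM73}, \cite{Vos74} and \cite{Sal84}. So your proposal has to stand on its own. Step 1 (Hilbert 90 for the induced module $\bigoplus_x L(M)^\times$, then Speiser) and Step 2 (the pullback $P\times_F P'$) are correct. Your ``if'' half of (3) is also correct. Together these give every ``if'' direction. The ``only if'' directions, which carry the real content, are not established.

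\textbf{The two gaps.}
\begin{itemize}
\item \emph{Part (3) is circular.} Citing \cite[Proposition 7.4]{CTS87} does not help. By your Part (1) and Lenstra's result (Proposition \ref{prop2.8}\,(2)), ``there is a torus $T'$ with $T\times T'$ rational'' is equivalent to ``$[M]^{fl}$ is invertible''. Hence ``retract rational $\Rightarrow$ such a $T'$ exists'' is Saltman's direction in disguise, not an independent input.
\item \emph{Step 3 is only a plan, and the plan fails here.} Factoring a birational map of smooth projective varieties into blow-ups and blow-downs with smooth centres is the weak factorization theorem. That is available in general only in characteristic $0$, whereas $k$ is arbitrary in the statement.
\end{itemize}

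\textbf{The missing idea.} Your ``open-subset argument'' is the right instinct, but run it on the torus itself, with units in place of ${\rm Pic}$ of a compactification. All lattices below are modules over a finite quotient $\widetilde{G}$ of ${\rm Gal}(\overline{k}/k)$.
\begin{itemize}
\item \emph{Basic sequence.} For a dense open $V\subset T$ put $U(V):=\overline{k}[V]^\times/\overline{k}^\times$. Since $\overline{k}[T]$ is a Laurent polynomial ring (a UFD with units $\overline{k}^\times\cdot\widehat{T}$), taking divisors gives an exact sequence $0\to\widehat{T}\to U(V)\to P_V\to 0$. Here $P_V$ is the permutation lattice on the codimension-one components of $\overline{T}\setminus\overline{V}$.
\item \emph{Same flabby class.} Compose with a flabby resolution $0\to U(V)\to Q\to F\to 0$ to get $0\to\widehat{T}\to Q\to C\to 0$, where $C$ is an extension of $F$ by $P_V$. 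This extension splits, because ${\rm Ext}^1(F,\bZ[\widetilde{G}/H])\simeq H^1(H,F^\circ)=0$ for $F$ flabby. Hence $[\widehat{T}]^{fl}=[U(V)]^{fl}$.
\item \emph{Part (2).} Apply this to a common dense open subset of $T\times\bG_m^a$ and $T'\times\bG_m^b$, and use $[\bZ]^{fl}=0$.
\item \emph{Part (3).} After inverting a suitable element of $R$ (this keeps the factorization through a localized polynomial ring), you may take $V={\rm Spec}\,R$ open in $T$. The retraction $R\to k[x_1,\ldots,x_n][1/f]\to R$ then makes $U(V)$ a direct summand of $\overline{k}[x][1/f]^\times/\overline{k}^\times$. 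That group is the permutation lattice on the irreducible factors of $f$ over $\overline{k}$, up to scalars. So $U(V)$ is invertible, and so is its flabby class: if $U(V)\oplus I'$ is permutation, then $[U(V)]^{fl}=[I']$.
\item \emph{Descent to $G$.} Inflation preserves flabby and permutation lattices. Taking ${\rm Gal}(\overline{k}/L)$-invariants of a permutation lattice gives a permutation $G$-lattice.
\end{itemize}
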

%

\begin{remark}
For algebraic $k$-tori, retract $k$-rationality is easier to decide 
than stable $k$-rationality. 
By Theorem \ref{th2.7}, retract $k$-rationality (resp. stable $k$-rationality) 
amounts to checking whether some $G$-lattice $F$ is invertible 
(resp. stably permutation). 
For $G$-lattices, invertibility is `local', in the sense that $F$ is 
$G$-invertible if and only if $F$ is $G_p$-invertible 
for every $p$-Sylow subgroup $G_p\leq G$ 
(see Endo and Miyata \cite[Lemma 1.4]{EM75}). 
Assuming that $F$ is invertible, $F$ being stably permutation is then 
a much stronger property, which is not local. 
Actually, there is no known general procedure to check it. 
In specific situations (e.g. for norm one tori in Theorem \ref{thS}), 
stable $k$-rationality can be disproved 
by computing relevant cohomology groups. 
\end{remark}
\begin{remark}
There is no known criterion for an algebraic $k$-torus $T$ to be $k$-rational.  The fact that $T$ is (or not) stably/retract $k$-rational, is determined 
by the $\mathcal G$-lattice $\widehat T$. 
To our knowledge, it is very much unclear that such should be the case 
for $k$-rationality. 
Voskresenskii conjectured that stably $k$-rational tori should be 
$k$-rational. 
While no counter-examples are known, 
there is no evidence to support this widely open conjecture.
\end{remark}

If $M_1\oplus M_2\simeq M_3$ is an exact sequence of $G$-lattices, 
then $L(M_2)^G\simeq L(M_1\oplus M_3)^G$, i.e. 
$k(T_2)\simeq k(T_1\times T_3)$ with $\widehat{T}_i=M_i$ $(i=1,2,3)$, 
this means that $T_2$ and $T_1\times T_3$ are birationally $k$-equivalent. 
In particular, we have $[M_2]^{fl}=[M_1]^{fl}+[M_3]^{fl}$. 

In general, it follows from 
Lenstra \cite[Proposition 1.5]{Len74} that 
if $0\to M_1\to M_2\to M_3\to 0$ is an exact sequence of $G$-lattices 
with $M_3$ invertible, 
then  
$L(M_2)^G\simeq L(M_1\oplus M_3)^G$, i.e. 
$k(T_2)\simeq k(T_1\times T_3)$ with $\widehat{T}_i=M_i$ $(i=1,2,3)$ 
(see also Ono \cite[Proposition 1.2.2]{Ono63}, 
Endo and Miyata \cite[Theorem 1.6, Proposition 1.10]{EM73}, 
Swan \cite[Lemma 3.1]{Swa10}, 
Hoshi, Kang and Kitayama \cite[Proof of Theorem 6.5]{HKK14}). 
In particular, we have $[M_2]^{fl}=[M_1]^{fl}+[M_3]^{fl}$. 

\begin{proposition}[{Endo and Miyata \cite[Proposition 1.1, Corollary 1.3]{EM73}, Lenstra \cite[Proposition 1.5]{Len74}}]\label{prop2.8}
Let $M$ be a $G$-lattice and 
$0\to M\to P\to F\to 0$ be a flabby resolution of $M$ 
with $P$ permutation and $F$ flabby. 
Let $T$ and $S$ be algebraic $k$-tori with $\widehat{T}\simeq M$, 
$\widehat{S}\simeq F$.\\
{\rm (1)} 
{\rm (Endo and Miyata \cite{EM73})}
If $F$ is stably permutation, then $T$ is stably $k$-rational;\\
{\rm (2)} 
{\rm (Lenstra \cite{Len74})} 
If $F$ is invertible, then $S\times T$ and $T_P$ are 
birationally $k$-equivalent 
where $T_P$ is an algebraic $k$-torus which is $k$-rational 
with 
$\widehat{T}_P\simeq P$ and $[P]^{fl}=0$. 
In particular, $S\times T$ is $k$-rational with 
$[\widehat{S\times T}]^{fl}=[\widehat{T}]^{fl}+[\widehat{S}]^{fl}
=[M]^{fl}+[F]^{fl}=[P]^{fl}=0$. 
\end{proposition}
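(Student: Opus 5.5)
The plan is to work throughout with the no-name lemma in Lenstra's form, as recalled just before the statement: if $0\to M_1\to M_2\to M_3\to 0$ is exact with $M_3$ invertible, then $L(M_2)^G\simeq L(M_1\oplus M_3)^G$. Fix a flabby resolution $0\to M\to P\to F\to 0$ with $P$ permutation, and let $T$, $S$, $T_P$ be the tori with character lattices $M$, $F$, $P$.

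For (1), suppose $F$ is stably permutation, say $F\oplus P_1\simeq P_2$ with $P_1,P_2$ permutation. Stably permutation implies invertible, so Lenstra's isomorphism applies to the flabby resolution and gives $L(P)^G\simeq L(M\oplus F)^G$. Adding variables for $P_1$ gives
\[
L(P\oplus P_1)^G\simeq L(M\oplus F\oplus P_1)^G\simeq L(M\oplus P_2)^G .
\]
Two facts remain. First, $L(Q)^G$ is $k$-rational for every permutation lattice $Q$, since it is the function field of a quasi-trivial torus $R_{A/k}(\bG_m)$, an open subset of affine space. Second, $L(M\oplus Q)^G$ is a purely transcendental extension of $L(M)^G$ for $Q$ permutation. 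The second fact is Speiser's lemma / Hilbert 90: the $G$-semilinear $L(M)$-vector space spanned by the basis of $Q$ has a basis of invariants. Combining these, $k(T)(y_1,\dots,y_m)$ is $k$-rational, so $T$ is stably $k$-rational. Alternatively, (1) follows directly from Theorem~\ref{th2.7}(1), because $[M]^{fl}=[F]=0$.

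For (2), suppose $F$ is invertible. Lenstra's statement applied to the resolution gives $L(P)^G\simeq L(M\oplus F)^G$. This says exactly that $S\times T$ and $T_P$ are birationally $k$-equivalent, since $\widehat{S\times T}=F\oplus M$. The torus $T_P$ is $k$-rational by the first fact above, so $S\times T$ is $k$-rational. The flabby-class identity then follows from additivity $[M_1\oplus M_3]^{fl}=[M_1]^{fl}+[M_3]^{fl}$, together with three computations. First, $[P]^{fl}=0$, using the resolution $0\to P\to P\to 0\to 0$. Second, $[F]^{fl}=[F]$ whenever $F$ is flabby, using $0\to F\to F\to0\to0$. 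Third, $[M]^{fl}=[F]$ by definition.

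The main obstacle is Lenstra's isomorphism itself, if one wants to prove it rather than cite it. I would first reduce to the case where $M_3$ is a direct summand of a permutation lattice $Q=M_3\oplus M_3'$, by pushing out along $M_3\oplus M_3'$. This reduces the problem to extensions by a permutation lattice $Q$. There $H^1(G,\mathrm{Hom}(Q,M_1))$, which is a sum of terms $H^1(H,M_1)$ by Shapiro's lemma, need not vanish. So the key step is a multiplicative "no-name" argument: an extension of $L(M_1)$-points by a permutation lattice splits rationally, by choosing invariant generators via Hilbert 90 on the field $L(M_1)$. That step is where I would concentrate the work.
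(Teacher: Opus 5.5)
The birational content of your proposal is sound and follows the paper's route. The paper gives no separate proof: it applies Lenstra's lemma, recalled just before the statement, to the flabby resolution $0\to M\to P\to F\to 0$, which is exactly your argument for (2). Your derivation of (1) via the no-name/Speiser lemma is also correct.

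The gap is in the final flabby-class identity. Your ``second computation'' $[F]^{fl}=[F]$ rests on $0\to F\to F\to 0\to 0$. That is not a flabby resolution, because the middle term of a flabby resolution must be permutation.

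In fact, choose $F'$ with $F\oplus F'\simeq Q$ permutation. Then $0\to F\to Q\to F'\to 0$ is a flabby resolution ($F'$ is invertible, hence flabby), so $[F]^{fl}=[F']$. This is the inverse of $[F]$ in the group of similarity classes of invertible lattices, not $[F]$ itself. Your three computations therefore give $[M]^{fl}+[F]^{fl}=2[F]$, which equals $[P]^{fl}=0$ only when $2[F]=0$. Nothing in your write-up actually links $[\widehat{S\times T}]^{fl}$ to $[P]^{fl}$.

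Any of the following repairs works:
\begin{itemize}
\item Compute directly: $[M]^{fl}+[F]^{fl}=[F]+[F']=[F\oplus F']=[Q]=0$.
\item Use the birational equivalence $S\times T\sim T_P$ you already proved, together with Theorem~\ref{th2.7}(2).
\item Use the remark before the statement that $[M_2]^{fl}=[M_1]^{fl}+[M_3]^{fl}$ whenever $0\to M_1\to M_2\to M_3\to 0$ has $M_3$ invertible, applied to the flabby resolution itself.
\end{itemize}

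One caution on your optional sketch of Lenstra's lemma. The reduction from invertible $M_3$ to a permutation $Q=M_3\oplus M_3'$ must be made on the obstruction class, not on field isomorphisms. Pulling back along $Q\to M_3$ only yields $L(M_2\oplus M_3')^G\simeq L(M_1\oplus M_3\oplus M_3')^G$, and $M_3'$ cannot be cancelled from that.

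What works is the following. The subgroup of $L(M_2)^\times$ generated by $L(M_1)^\times$ and the monomials of $M_2$ is an extension of $M_3$ by $L(M_1)^\times$. Its class lies in $H^1(G,\Hom(M_3,L(M_1)^\times))$. This group is a direct summand of $H^1(G,\Hom(Q,L(M_1)^\times))$, which vanishes by Shapiro's lemma and Hilbert 90. A resulting equivariant section of $M_3$ gives $L(M_2)\simeq L(M_1\oplus M_3)$ as $G$-fields.
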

%

Let $G$ be a finite group and $M$ be a $G$-lattice. We define 
\begin{align*}
\Sha^i_\omega(G,M):={\rm Ker}\left\{H^i(G,M)\xrightarrow{{\rm res}}\bigoplus_{g\in G}H^i(\langle g\rangle,M)\right\}\quad (i\geq 1) .
\end{align*}

Let $X$ be a smooth $k$-compactification of $T$, 
i.e. smooth projective $k$-variety $X$ 
containing $T$ as a dense open subvariety, 
and set $\overline{X}=X\times_k\overline{k}$ and $X_K=X\times_k K$. 
There exists such a smooth $k$-compactification of an algebraic $k$-torus $T$ 
over any field $k$, that can be  constructed explicitly,  without the general theory of resolution of singularities  (see Colliot-Th\'{e}l\`{e}ne, Harari and Skorobogatov 
\cite[Corollaire 1]{CTHS05}). 

\begin{theorem}[{Colliot-Th\'{e}l\`{e}ne and Sansuc \cite[Proposition 9.5 (ii)]{CTS87}, see also \cite[Proposition 9.8]{San81}, \cite[page 98]{Vos98}, \cite[Corollaire 1]{CTHS05}, \cite[Theorem 2.3]{BP20}}]\label{thCTS87}
Let $k$ be a field 
and $K/k$ be a finite Galois extension with Galois group $G={\rm Gal}(K/k)$. 
Let $T$ be an algebraic $k$-torus which splits over $K$ and 
$X$ be a smooth $k$-compactification of $T$. 
Then we have 
\begin{align*}
\Sha^2_\omega(G,\widehat{T})\simeq 
H^1(G,{\rm Pic}\, X_K)\simeq {\rm Br}(X)/{\rm Br}(k)
\end{align*}
where 
${\rm Br}(X)$ is the Brauer group of $X$ (in \'etale cohomology). 
\end{theorem}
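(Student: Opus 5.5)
The plan is to prove the two isomorphisms $\Sha^2_\omega(G,\widehat{T})\simeq H^1(G,{\rm Pic}\, X_K)$ and $H^1(G,{\rm Pic}\, X_K)\simeq {\rm Br}(X)/{\rm Br}(k)$ separately.

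\textbf{First isomorphism.} Start from the flabby resolution of Theorem \ref{thVos69}, descended to $K$:
\begin{align*}
0\to \widehat{T}\to \widehat{Q}\to {\rm Pic}\, X_K\to 0,
\end{align*}
with $\widehat{Q}$ permutation. In fact $\widehat{Q}$ is the lattice of divisors supported on $X_K\setminus T_K$, and its components are permuted by $G$. Take the long exact cohomology sequence for every subgroup $H\leq G$. By Shapiro's lemma $H^1(H,\widehat{Q})=0$, so
\begin{align*}
H^1(H,{\rm Pic}\, X_K)\simeq {\rm Ker}\{H^2(H,\widehat{T})\to H^2(H,\widehat{Q})\}.
\end{align*}
Apply this to $H=G$ and to each cyclic $H=\langle g\rangle$, and compare them by restriction.

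For cyclic $H$, the lattice ${\rm Pic}\, X_K$ is flabby, so $\widehat H^{-1}(H,{\rm Pic}\,X_K)=0$. By periodicity of cyclic cohomology this gives $H^1(H,{\rm Pic}\,X_K)=\widehat H^{-1}(H,{\rm Pic}\,X_K)=0$. Hence $H^2(H,\widehat{T})\to H^2(H,\widehat{Q})$ is injective for cyclic $H$. It follows that $\Sha^2_\omega(G,\widehat{T})$ maps into $\Sha^2_\omega(G,\widehat{Q})$.

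Next observe that $\Sha^2_\omega(G,\widehat{Q})=0$. Write $\widehat{Q}=\oplus\bZ[G/H_i]$. Then $H^2(G,\bZ[G/H_i])=H^2(H_i,\bZ)=\Hom(H_i,\bQ/\bZ)$, and a character of $H_i$ that vanishes on all cyclic subgroups is zero. The restriction compatibility here comes from the Mackey formula.

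Consequently $\Sha^2_\omega(G,\widehat{T})\subseteq {\rm Ker}(H^2(G,\widehat{T})\to H^2(G,\widehat{Q}))\simeq H^1(G,{\rm Pic}\,X_K)$. For the reverse inclusion, an element of this kernel restricts to the kernel over each cyclic $H$, and that kernel is $H^1(H,{\rm Pic})=0$. So the kernel lies in $\Sha^2_\omega$, which proves the first isomorphism.

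\textbf{Second isomorphism.} Use the Hochschild--Serre spectral sequence for $\bG_m$ on $X$. Because $X$ is smooth, projective and geometrically rational (toric), we have $\overline{k}[X]^\times=\overline{k}^\times$ and ${\rm Br}(\overline{X})=0$. The low-degree exact sequence then reads
\begin{align*}
{\rm Br}(k)\to{\rm Ker}({\rm Br}X\to{\rm Br}\overline X)={\rm Br}X\to H^1(k,{\rm Pic}\,\overline X)\to H^3(k,\bG_m)\to H^3(X,\bG_m).
\end{align*}
The map ${\rm Br}(k)\to{\rm Br}(X)$ is injective because $X(k)\neq\emptyset$: the identity of $T$ is a rational point. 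That same point gives a section of $H^3(k,\bG_m)\to H^3(X,\bG_m)$, so this map is injective too. Therefore ${\rm Br}X/{\rm Br}k\simeq H^1(k,{\rm Pic}\,\overline X)$.

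Finally, the inflation--restriction argument already recalled identifies $H^1(k,{\rm Pic}\,\overline X)$ with $H^1(G,{\rm Pic}\,X_K)$. This uses $H^1(K,{\rm Pic}\,\overline X)=0$, which holds because ${\rm Pic}\,\overline X$ is a permutation lattice over ${\rm Gal}(\overline{k}/K)$ once $T$ splits.

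\textbf{Main obstacle.} The delicate input is the geometric one: ${\rm Br}(\overline{X})=0$, and ${\rm Pic}\,\overline X$ being torsion-free and permutation over the splitting field. I would cite these from the toric-variety literature (smooth projective toric varieties are rational) rather than reprove them.
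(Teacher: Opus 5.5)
Your argument is correct and is essentially the standard proof from the sources the paper cites; the paper itself gives no proof of this statement, only the references to Colliot-Th\'el\`ene--Sansuc, Sansuc and Voskresenskii. As there, the first isomorphism comes from the flabby resolution together with $H^1(\langle g\rangle,{\rm Pic}\,X_K)=\widehat H^{-1}(\langle g\rangle,{\rm Pic}\,X_K)=0$ and $\Sha^2_\omega(G,\widehat{Q})=0$, and the second from Hochschild--Serre with ${\rm Br}(\overline{X})=0$ and the rational point $1\in T(k)$.

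One claim is stated too strongly: ${\rm Pic}\,\overline{X}$ is permutation (indeed trivial) over ${\rm Gal}(\overline{k}/K)$ for a toric model, but not obviously for an arbitrary smooth compactification. In the general case you should instead use that $H^1(K,{\rm Pic}\,\overline{X})$ is a birational invariant of smooth projective varieties, and it vanishes because $X_K$ is $K$-rational.
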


In other words, 
we have 
$H^1(k,{\rm Pic}\, \overline{X})\simeq H^1(G,{\rm Pic}\, X_K)\simeq 
H^1(G,[\widehat{T}]^{fl})\simeq \Sha^2_\omega(G,\widehat{T})\simeq {\rm Br}(X)/{\rm Br}(k)$. 
Of interest is also the unramified Brauer group ${\rm Br}_{\rm nr}(k(X)/k)={\rm Br}(X)\subset {\rm Br}(k(X))$ 
(see 
Colliot-Th\'{e}l\`{e}ne and Skorobogatov 
\cite[Proposition 6.2.7]{CTS21}). 
In particular, 
for an algebraic $k$-torus $T$ which splits over $K$, 
with character lattice $\widehat{T}\simeq {\rm Hom}(T\times_k K,\bG_{m,K})$, 
we have: 
\begin{align*}
T\ \textrm{is\ retract}\ k\textrm{-rational}\ \ 
\Rightarrow\ \ 
\Sha^2_\omega(G,\widehat{T})=0. 
\end{align*}

\section{Known results of stably/retract $k$-rational algebraic $k$-tori $T$}\label{S3}

\subsection{Case of invertible character lattices $\widehat{T}\simeq {\rm Hom}(T\times_k L,\bG_{m,L})$}

Simple examples of 
rational $k$-tori $T$ with ${\rm dim}\, T=n$ 
are $T=R_{A/k}(\bG_m)$, for an $n$-dimensional \'etale $k$-algebra $A$. 
Setting $X:=X(A)$, one then has $\widehat{T}\simeq \bZ[X]$ and $|X|=n$. 
In other words, $\widehat{T}\simeq  \oplus_{i=1}^r\bZ[G/H_i]$ 
for various subgroups $H_i\leq G\leq S_{n_i}$, with $[G:H_i]=n_i$ 
and $n=\sum_{i=1}^r n_i$. 
There exists exactly $1$ 
(resp. $2$, $4$, $11$, $19$, $56$) such $k$-tori $T=R_{A/k}(\bG_m)$ 
in dimension $1$ (resp. $2$, $3$, $4$, $5$, $6$) 
(see Hoshi and Yamasaki \cite[Theorem 6.3, Table 8]{HY17}). 
Hoshi and Yamasaki \cite[Theorem 6.3, Table 8]{HY17}
also determine all of the stably permutation $G$-lattice $M_G$ with 
${\rm rank}_\bZ\, M_G\leq 6$ as in Theorem \ref{th6.16}. 

Let $S_n$ (resp. $A_n$, $D_n$, $C_n$) be the symmetric 
(resp. the alternating, the dihedral, the cyclic) group 
of degree $n$ of order $n!$ (resp. $n!/2$, $2n$, $n$). 
Let $Q_{4m}$ be the 
generalized quaternion group of order $4m$ $(m\geq 2)$. 
Let $p$ be a prime number and 
$F_{pl}\simeq C_p\rtimes C_l$ $(2<l\mid p-1)$ 
be the Frobenius group of order $pl$ where $l\mid p-1$ 
means that $l$ is a (positive) divisor of $p-1$. 

\begin{theorem}[{Hoshi and Yamasaki \cite[Theorem 6.2, Theorem 6.3, Table 8]{HY17}}]\label{th6.16}
Let $G$ be a finite subgroup of $\GL(n,\bZ)$ and 
$M_G$ be the $G$-lattice as in Definition \ref{defMG}. 
For $1\leq n\leq 6$, the following conditions are equivalent:\\
{\rm (1)} $M_G$ is stably permutation, i.e. $[M_G]=0$;\\
{\rm (2)} $M_G$ is invertible;\\ 
{\rm (3)} $M_G$ is flabby and coflabby.\\ 
Indeed, there exist $1=1+0$ $($resp. $2=2+0$, $4=4+0$, 
$15=11+4$, $23=19+4$, $106=56+50$$)$ 
stably permutation $($resp. invertible, flabby and coflabby$)$ 
$G$-lattices of rank $n=1$ $($resp. $2$, $3$, $4$, $5$, $6$$)$. 
%
They consist of $1$ $($resp. $2$, $4$, $11$, $19$, $56$$)$ 
permutation $G$-lattices $M_G\simeq \oplus_{i=1}^r\bZ[G/H_i]$ 
where $H_i\leq G\leq S_n$ with $[G:H_i]=n_i$ of rank $n=\sum_{i=1}^r n_i$ 
with $n=1$ $($resp. $2$, $3$, $4$, $5$, $6$$)$ which are 
given as in 
Hoshi and Yamasaki \cite[Example 6.1]{HY17} 
and 
$0$ $($resp. $0$, $0$, $4$, $4$, $50$$)$
not permutation but 
stably permutation $($resp. invertible, flabby and coflabby$)$ 
$G$-lattices of rank $n=1$ $($resp. $2$, $3$, $4$, $5$, $6$$)$ which are 
given as in 
Hoshi and Yamasaki \cite[Table 8]{HY17}. 
\end{theorem}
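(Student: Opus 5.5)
The implications (1) $\Rightarrow$ (2) $\Rightarrow$ (3) hold for every finite group $G$ and every $G$-lattice, as recalled in the diagram of Section \ref{S2}. So the plan is to prove (3) $\Rightarrow$ (1) for $n\leq 6$. There is no structural argument available, so this is a finite classification, carried out by computer in the style of Hoshi and Yamasaki \cite{HY17}.

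First I will enumerate the $\GL(n,\bZ)$-conjugacy classes of finite subgroups $G\leq \GL(n,\bZ)$ for $1\leq n\leq 6$. This is the CARAT/GAP database, with $1,2,13,73,710,6079$ classes; each class gives a $G$-lattice $M_G$ up to isomorphism. For each class I test flabbiness and coflabbiness. This reduces to computing $\widehat H^{-1}(H,M_G)$ and $H^1(H,M_G)$ for $H$ running over representatives of the conjugacy classes of subgroups of $G$. It suffices to take $H$ a $p$-subgroup, by restriction–corestriction injectivity on $p$-primary parts, and these are finite linear algebra computations over $\bZ$ (Smith normal forms). This produces the lists of $1,2,4,15,23,106$ lattices claimed in the statement.

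Next I separate out the permutation ones. For each surviving $M_G$ I check whether a $\bZ$-basis is permuted by $G$. Concretely, I compare the character and the fixed-point ranks ${\rm rank}\,M_G^H$ with those of each candidate $\oplus_i\bZ[G/H_i]$ of rank $n$, and then search for an explicit $G$-equivariant unimodular intertwiner. This should give the counts $1,2,4,11,19,56$, and it should show that the remaining $0,0,0,4,4,50$ lattices are not permutation.

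The main obstacle is proving that these $4+4+50$ non-permutation flabby and coflabby lattices are stably permutation. For each one I will look for permutation lattices $P,P'$ with $M_G\oplus P\simeq P'$. To choose $P,P'$, I will use the necessary condition that the ranks of the $H$-invariants agree for all $H\leq G$. This is an equation in the Burnside ring tensored with $\bQ$, and I solve it for nonnegative combinations of $[\bZ[G/H]]$. Given a solution, I will search for an integer matrix intertwining the two actions with determinant $\pm1$. I will do this by writing the general element of $\Hom_G(M_G\oplus P,P')$ as an integer combination of a $\bZ$-basis of the Hom-lattice, then randomly or systematically sampling small coefficients until the determinant is $\pm1$.

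If the direct search fails for some $G$, I will try two fallbacks. The first is to exploit the structure of $G$, e.g. reducing via a normal subgroup or using a known decomposition for groups such as $F_{pl}$ and $D_n$. The second is to enlarge $P$ and $P'$ further. The computed isomorphisms then give $[M_G]=0$ in every case, which establishes (3) $\Rightarrow$ (1), and the counts follow from the enumeration.
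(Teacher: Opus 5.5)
Your plan is essentially the computation behind the cited result of Hoshi--Yamasaki \cite{HY17}; the paper gives no independent proof. That computation enumerates the $\bZ$-classes, tests flabbiness and coflabbiness (restricting to $p$-subgroups is fine), and certifies the non-permutation flabby and coflabby cases by explicit isomorphisms $M_G\oplus P\simeq P'$, found by solving an $H^0$-rank equation for $P,P'$ and then searching $\Hom_G(M_G\oplus P,P')$ for a unimodular element.

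Two small corrections. First, the numbers of $\bZ$-classes of finite subgroups of $\GL(n,\bZ)$ for $n=1,\ldots,6$ are $2,13,73,710,6079,85308$; your list is shifted by one index. Second, a failed intertwiner search does not prove non-permutation. Certify it by exclusion instead: match each of the $1,2,4,11,19,56$ conjugacy classes of subgroups of $S_n$ to its unique class in the irredundant list (or use CARAT's $\bZ$-class identification).
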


The corresponding algebraic $k$-torus $T$ with 
$\widehat{T}\simeq M_G$ and ${\rm dim}_k\, T=n$ as in Theorem \ref{th6.16} 
is $k$-rational (resp. stably $k$-rational) if $M_G$ is permutation 
(resp. $M_G$ is stably permutation). 

\subsection{Case of dim $T\leq 5$} 
It is easy to see that all  $1$-dimensional algebraic $k$-tori $T$ 
are $k$-rational. 
These are: the trivial torus $\bG_{m,k}$ and the norm one torus 
$\T_K=R^{(1)}_{K/k}(\bG_{m,K})$ of a separable quadratic field extension $K/k$.

Voskresenskii \cite{Vos67} showed that 
all  $2$-dimensional algebraic $k$-tori $T$ are $k$-rational. 
There exist $13$ cases of algebraic $k$-tori 
of dimension $2$ which corresponds to 
$\bZ$-class of subgroups (up to conjugacy) $G\leq \GL(2,\bZ)$. 
Kunyavskii \cite{Kun90} gave a 
rational (stably rational, retract rational) classification of 
$3$-dimensional algebraic $k$-tori: 
among $73$ cases, 
there exist $58$ (resp. $15$) 
$k$-rational (resp. not retract $k$-rational) cases in dimension $3$. 
\begin{theorem}[{Kunyavskii \cite[Theorem 1, Theorem 2]{Kun90}, see Kang \cite[page 25,  fifth paragraph]{Kan12} and Hoshi and Yamasaki \cite[Lemma 7.3]{HY1} for the last statement}]\label{thKu}
Let $L/k$ be a finite Galois extension with Galois group 
$G={\rm Gal}(L/k)\leq \GL(3,\bZ)$ 
and $M_G$ be the $G$-lattice as in Definition \ref{defMG}. 
Let $T$ be an algebraic $k$-torus with $\widehat{T}\simeq M_G$ 
and $\dim_k\, T=3$. 
Then $k(T)\simeq L(M_G)^G$ is not $k$-rational 
if and only if 
$G$ is conjugate to one of the $15$ groups which are given 
as in {\rm Table} $1$. 
Moreover, if $k(T)\simeq L(M_G)^G$ is not $k$-rational, 
then it is not retract $k$-rational, 
i.e. $[M_G]^{fl}$ is not invertible. 
\end{theorem}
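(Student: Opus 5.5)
The plan is a case-by-case analysis over the finite list of $\bZ$-conjugacy classes of finite subgroups $G\leq\GL(3,\bZ)$. There are exactly $73$ such classes, by the classical classification of arithmetic crystal classes in dimension $3$. For each class I would compute a flabby resolution $0\to M_G\to P\to F\to 0$ explicitly, following Endo and Miyata \cite[Lemma 1.1]{EM75}. I would then sort the $73$ cases according to whether $[M_G]^{fl}$ is invertible. By Theorem \ref{th2.7}, this already decides retract $k$-rationality of $k(T)\simeq L(M_G)^G$. The proof then splits into a negative part, covering the $15$ groups of Table $1$, and a positive part, covering the remaining $58$.

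\emph{Negative part.} For each of the $15$ groups I would show that $[M_G]^{fl}$ is not invertible. Since $k$-rational implies retract $k$-rational, this gives non-rationality and the ``moreover'' assertion at once. Invertibility is local: $F$ is invertible if and only if it is $G_p$-invertible for every Sylow subgroup $G_p$ \cite[Lemma 1.4]{EM75}. Moreover, restricting a flabby resolution to a subgroup $H\leq G$ gives a flabby resolution over $H$. So it suffices to find, for each of the $15$ groups, a subgroup $H$ (typically a $2$-group such as $C_2\times C_2$) on which the flabby class is non-invertible. The most efficient certificate is cohomological. If $H^1(H',F)\simeq\Sha^2_\omega(H',M_G)\neq 0$ for some $H'\leq H$ (Theorem \ref{thCTS87}), then $F$ is not coflabby, hence not invertible. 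I expect the $15$ groups to fall into a few families, all containing a common ``bad'' subgroup. That subgroup should realise $M_G|_H$ as the character lattice of the norm one torus of a biquadratic extension, or a close relative. In that case a single computation of $\Sha^2_\omega$ on the subgroup settles every member of the family.

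\emph{Positive part.} For the $58$ remaining groups I must prove genuine $k$-rationality, not just that $[M_G]^{fl}=0$, because rationality is not detected by the flabby class. I would proceed by reduction.
\begin{enumerate}[(a)]
\item If $M_G$ is permutation, or if $M_G\simeq M_1\oplus M_2$ with $\operatorname{rank} M_1\leq 2$, then rationality follows from the quasi-trivial case together with Voskresenskii's theorem that all tori of dimension $\leq 2$ are $k$-rational \cite{Vos67}.
\item If there is an exact sequence $0\to M_1\to M_G\to M_2\to 0$ with $M_2$ permutation (or invertible and rational), then Lenstra's result gives $L(M_G)^G\simeq L(M_1\oplus M_2)^G$, which reduces to smaller rank.
\item Cases that remain indecomposable would be handled by explicit birational coordinate changes on $L(x_1,x_2,x_3)^G$, in the style of Hajja--Kang and Kang \cite{Kan12}. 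This means exhibiting a transcendence basis of the fixed field directly.
\end{enumerate}

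The main obstacle is this last step: proving honest rationality, rather than stable rationality, for the indecomposable lattices among the $58$ cases. There the cohomological machinery gives no help, and one has to find explicit generators of the invariant field. For those cases I would first try to realise $T$ as a norm one torus $R^{(1)}_{K/k}(\bG_m)$ with $[K:k]=3$ or $4$, or as a quotient of such a torus, and reuse the known rational parametrisations of these tori.
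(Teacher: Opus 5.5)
The paper gives no proof of this statement. It quotes Kunyavskii \cite{Kun90}, and credits the final non-retract-rationality clause separately to Kang \cite{Kan12} and Hoshi--Yamasaki \cite{HY1}. Your overall plan is sensible: run through the $73$ $\bZ$-classes, show $[M_G]^{fl}$ is non-invertible for the $15$ groups of Table $1$, and prove honest rationality for the other $58$. However, the certificate you propose for the negative part cannot work for three of the fifteen groups.

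\textbf{The gap: the certificate for non-invertibility.} You expect every group in Table $1$ to contain a ``bad'' subgroup $H'$ with $H^1(H',[M_G]^{fl})\simeq\Sha^2_\omega(H',M_G)\neq 0$, typically the biquadratic lattice $J_{C_2\times C_2}$.

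\emph{Where it works.} This holds exactly for $U_1,\dots,U_{12}$. These are precisely the twelve $\bZ$-classes containing a conjugate of $U_1\simeq J_{C_2\times C_2}$, and there $\Sha^2_\omega\simeq H^3(C_2\times C_2,\bZ)\simeq\bZ/2\bZ$.

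\emph{Where it fails.} It breaks down for $W_1\simeq C_4\times C_2$, $W_2\simeq C_2^3$ and $W_3\simeq A_4\times C_2$. For these groups $\Sha^2_\omega(H,M_G)=0$ for \emph{every} subgroup $H\leq G$. For $W_1$ and $W_2$ this is checked as follows:
\begin{enumerate}
\item cyclic subgroups contribute nothing, since $F=[M_G]^{fl}$ is flabby;
\item the proper non-cyclic subgroups are copies of $C_2\times C_2$ not conjugate to $U_1$, so they give rational tori;
\item a direct computation gives $\Sha^2_\omega(G,M_G)=0$.
\end{enumerate}
For $W_3$, the $2$-Sylow subgroup is a conjugate of $W_2$ and the $3$-Sylow subgroup is cyclic, so the same vanishing follows.

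\emph{Consequence.} For the three $W$ groups, $F$ is flabby \emph{and} coflabby, yet not invertible. No computation of $H^1(\cdot,F)$ on any subgroup can detect the obstruction. Equivalently, it is invisible to ${\rm Br}(X)/{\rm Br}(k)$, so over a global field these tori even satisfy weak approximation and the Hasse principle. This is exactly why the ``moreover'' clause needs separate credit.

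\emph{What is needed instead.} You need a genuinely finer test for $W_1$ and $W_2$ (which then covers $W_3$ by locality). Some options:
\begin{itemize}
\item Show that a coflabby resolution $0\to C\to P\to F\to 0$ does not split, i.e.\ ${\rm p}$-${\rm ord}(F)\neq 1$ in the sense of Proposition \ref{prop6.5}\,(6). This means exhibiting a nonzero class in ${\rm Ext}^1_{\bZ[G]}(F,C)\simeq H^1(G,F^\circ\otimes C)$, rather than in $H^1(G,F)$.
\item Use a Krull--Schmidt argument over $\bZ_2[G]$.
\item Use the algorithmic invertibility test of Hoshi--Yamasaki.
\end{itemize}

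\textbf{Positive part.} Steps (a)--(c) are a reasonable outline, but step (c) is where all the real work of the $58$ rationality proofs lies, and it is not supplied.
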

%
\begin{center}
Table $1$: $L(M)^G$ not retract $k$-rational, 
rank $M=3$, $M$: indecomposable ($15$ cases)\vspace*{2mm}\\
\fontsize{9pt}{12pt}\selectfont
\begin{tabular}{lll} 
${}^tG$ in \cite{Kun90} & GAP ID& $G$ \\\hline
$U_1$ & $(3,3,1,3)$ & $C_2^2$\\
$U_2$ & $(3,3,3,3)$ & $C_2^3$\\
$U_3$ & $(3,4,4,2)$ & $D_4$\\
$U_4$ & $(3,4,6,3)$ & $D_4$\\
$U_5$ & $(3,7,1,2)$ & $A_4$
\end{tabular}\quad
\begin{tabular}{lll}
${}^tG$ in \cite{Kun90} & GAP ID& $G$ \\\hline
$U_6$ & $(3,4,7,2)$ & $D_4\times C_2$\\
$U_7$ & $(3,7,2,2)$ & $A_4\times C_2$\\
$U_8$ & $(3,7,3,3)$ & $S_4$\\
$U_9$ & $(3,7,3,2)$ & $S_4$\\
$U_{10}$ & $(3,7,4,2)$ & $S_4$
\end{tabular}\quad
\begin{tabular}{lll} 
${}^tG$ in \cite{Kun90} & GAP ID& $G$ \\\hline
$U_{11}$ & $(3,7,5,3)$ & $S_4\times C_2$\\
$U_{12}$ & $(3,7,5,2)$ & $S_4\times C_2$\\
$W_1$ & $(3,4,3,2)$ & $C_4\times C_2$\\
$W_2$ & $(3,3,3,4)$ & $C_2^3$\\
$W_3$ & $(3,7,2,3)$ & $A_4\times C_2$
\end{tabular}
\end{center}

For the GAP ID, see Hoshi and Yamasaki \cite[Chapter 3]{HY17}, 
see also Hoshi and Yamasaki \cite[Theorem 1.2, Table 1, 
the second paragraph in page 3, Theorem 13.4, Table 10]{HY17}. 
In particular, if we adopt the action of $G$ as in Definition \ref{defMG}, 
we should take the transpose ${}^tG$ of the matrix group $G$ 
as in Kunyavskii \cite{Kun90}. 

There exist $710$ (resp. $6079$) cases of algebraic $k$-tori 
of dimension $4$ (resp. $5$) which corresponds to 
$\bZ$-class of subgroups (up to conjugacy) 
$G\leq \GL(4,\bZ)$ (resp. $\GL(5,\bZ)$)). 
Hoshi and Yamasaki \cite[Theorem 1.9 and Theorem 1.12]{HY17} 
gave a stably rational (retract rational) classification of 
algebraic $k$-tori of dimension $4$ and $5$: 
\begin{theorem}[{Hoshi and Yamasaki \cite[Theorem 1.9]{HY17}}]\label{th1M}
Let $L/k$ be a finite Galois extension with Galois group 
$G={\rm Gal}(L/k)\leq \GL(4,\bZ)$  
and $M_G$ be the $G$-lattice as in Definition \ref{defMG}. 
Let $T$ be an algebraic $k$-torus with $\widehat{T}\simeq M_G$ 
and $\dim_k\, T=4$. 
Then\\ 
{\rm (1)} $k(T)\simeq L(M_G)^G$ is stably $k$-rational, 
i.e. $[M_G]^{fl}=0$, if and only if 
$G$ is conjugate to one of the $487$ groups which are not in 
\cite[{\rm Tables} $2$, $3$ and $4$]{HY17}.\\
{\rm (2)} $k(T)\simeq L(M_G)^G$ is not stably but retract $k$-rational, 
i.e. $[M_G]^{fl}$ is not zero but invertible, 
if and only if 
$G$ is conjugate to one of the $7$ groups which are 
given as in \cite[{\rm Table} $2$]{HY17}.\\
{\rm (3)} $k(T)\simeq L(M_G)^G$ is not retract $k$-rational, 
i.e. $[M_G]^{fl}$ is not invertible, 
if and only if 
$G$ is conjugate to one of the $216$ groups which are given as 
in \cite[{\rm Tables} $3$ and $4$]{HY17}. 
\end{theorem}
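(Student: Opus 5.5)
Since this statement is a classification over all $710$ $\bZ$-conjugacy classes of finite subgroups $G\leq \GL(4,\bZ)$, I would prove it by a systematic, computer-assisted case analysis of the flabby class $[M_G]^{fl}$. By Theorem \ref{th2.7}, stable $k$-rationality of $T$ is equivalent to $[M_G]^{fl}=0$, and retract $k$-rationality is equivalent to $[M_G]^{fl}$ being invertible. So the problem is purely about $G$-lattices, and the three parts (1)--(3) amount to sorting the $710$ lattices $M_G$ into three classes: $[M_G]^{fl}=0$, $[M_G]^{fl}$ invertible but nonzero, and $[M_G]^{fl}$ not invertible.

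\textbf{Step 1: flabby resolutions.} I would list representatives of the $710$ classes (from the crystallographic tables, e.g.\ in GAP/CARAT). For each representative I would build an explicit flabby resolution
\begin{align*}
0\to M_G\to P\to F\to 0
\end{align*}
following the constructions of Endo--Miyata \cite[Lemma 1.1]{EM75} or Colliot-Th\'el\`ene--Sansuc \cite[Lemma 3]{CTS77}. Concretely, take $P=\bigoplus_H \bZ[G/H]\otimes(\text{suitable }H\text{-invariants of }M_G^\circ)$ and dualize. Then I would reduce $F$ modulo obvious permutation summands to obtain a small representative of $[F]=[M_G]^{fl}$.

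\textbf{Step 2: detecting non-invertibility.} Invertibility is local (Endo--Miyata \cite[Lemma 1.4]{EM75}), so it suffices to check it on $G_p$ for each Sylow subgroup $G_p\leq G$. For non-invertibility I would use cheap obstructions first. The first is $H^1(H,F)\neq 0$ for some $H\leq G$, i.e.\ $F$ is not coflabby. The second is $\Sha^2_\omega(G,M_G)\simeq H^1(G,F)\neq 0$ (Theorem \ref{thCTS87}). These detect most of the non-retract cases. As a further check I would reduce to subgroups: if $[M_G]^{fl}$ fails to be invertible on some $H\leq G$, it fails on $G$. This lets one propagate from minimal bad subgroups, for instance those already occurring in Kunyavskii's list (Theorem \ref{thKu}) after taking direct sums with rank $1$ lattices. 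This should isolate the $216$ groups of part (3).

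\textbf{Step 3: separating zero from nonzero invertible classes.} For the remaining groups, where $F$ is invertible, I would try to exhibit an explicit isomorphism $F\oplus P_1\simeq P_2$ with $P_1,P_2$ permutation. In practice I would search for such an isomorphism by solving for a $\bZ[G]$-module map between candidate permutation lattices with the right ranks and characters, and then verifying that its determinant is $\pm 1$. In the $7$ exceptional cases I would instead prove that no such isomorphism exists. Here I expect the main obstacle, since stable permutability is not local and has no general criterion. I would first compare the rational characters of $F$ with those of all possible $P_2-P_1$, and then, where this does not decide the matter, use finer invariants: non-triviality of the class in the locally free class group, or an obstruction from $\widehat H^0$ of suitable subgroups mimicking the Swan/Endo--Miyata arguments for the quaternion or $F_{20}$-type groups.

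Summing the three classes then gives $487+7+216=710$.
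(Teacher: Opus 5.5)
The paper does not prove this statement. It quotes it from Hoshi--Yamasaki's memoir, where it is established by an exhaustive machine computation over the $710$ $\bZ$-classes: explicit flabby resolutions, an invertibility test, a search for stable-permutation isomorphisms, and separate non-existence arguments. Your plan has the same shape, but two of its steps are not actually carried out.

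\textbf{Invertibility is never certified.} Step~2 lists only obstructions: $H^1(H,F)\neq 0$, $\Sha^2_\omega\neq 0$, and propagation from bad subgroups. Step~3 then treats ``the remaining groups'' as those where $F$ is invertible. That inference is invalid, because flabby and coflabby does not imply invertible (Section~\ref{S2}). So the absence of your obstructions proves nothing.
\begin{itemize}
\item For the $487$ groups this does no harm, since an explicit $F\oplus P_1\simeq P_2$ certifies invertibility.
\item For the seven groups of part~(2), you only plan to show $[F]\neq 0$; you never show that $F$ is invertible.
\item Consequently you also have not shown that part~(3) contains exactly $216$ groups.
\end{itemize}
You need a positive test. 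For example, $F$ is invertible iff its coflabby resolution $0\to C\to P\to F\to 0$ splits, i.e.\ iff ${\rm p}$-${\rm ord}(F)=1$ (Proposition~\ref{prop6.5}). This is a finite integral linear-algebra question: does $\Id_F$ lie in the image of $\Hom_{\bZ[G]}(F,P)\to\Hom_{\bZ[G]}(F,F)$? It can be checked one Sylow subgroup at a time, and for cyclic Sylow subgroups coflabby already suffices (Remark~\ref{rem15}).

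\textbf{The exceptional cases are not handled.} Showing that the seven exceptional lattices are not stably permutation is the real content of part~(2), and your first test cannot detect it. Take $G=F_{20}$ acting on $J_{F_{20}/C_4}$; this case must land in part~(2), cf.\ $5T3$ in Theorem~\ref{th1.23}. The rational irreducibles of $F_{20}$ are $\mathbf{1},\chi,\rho,V$, with
$\chi=\bQ[G/D_5]-\bQ[G/G]$, $\rho=\bQ[G/C_5]-\bQ[G/D_5]$, and $V=\bQ[G/C_4]-\bQ[G/G]$.
So every rational character is a $\bZ$-combination of permutation characters. No comparison of characters, or of ranks of fixed sublattices, can rule out $F\oplus P_1\simeq P_2$. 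Everything therefore rests on the ``finer invariants'', which you do not specify.

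A concrete way forward is restriction. We have $[M]^{fl}|_H=[M|_H]^{fl}$, and restrictions of stably permutation lattices are stably permutation. It therefore suffices to find, inside each exceptional $G$, a subgroup on which $M_G$ becomes a lattice already known to have nonzero flabby class. One such lattice is $F_{20}$ acting on $J_{F_{20}/C_4}$. It is not stably rational by Theorem~\ref{th1.14} (all Sylow subgroups are cyclic and $H=C_4\not\simeq C_2$), or by Theorem~\ref{th1.18}(2). Only for cases that cannot be reduced this way (e.g.\ sign-twisted forms of this lattice) must you exhibit and compute an actual invariant, such as the Endo--Miyata/Swan type invariant you allude to.
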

\begin{theorem}[{Hoshi and Yamasaki \cite[Theorem 1.12]{HY17}}]\label{th2M}
Let $L/k$ be a finite Galois extension with Galois group 
$G={\rm Gal}(L/k)\leq \GL(5,\bZ)$ 
and $M_G$ be the $G$-lattice as in Definition \ref{defMG}. 
Let $T$ be an algebraic $k$-torus with $\widehat{T}\simeq M_G$ 
and $\dim_k\, T=5$. 
Then\\ 
{\rm (1)} $k(T)\simeq L(M_G)^G$ is stably $k$-rational, 
i.e. $[M_G]^{fl}=0$, if and only if 
$G$ is conjugate to one of the $3051$ groups which are not in 
\cite[{\rm Tables} $11$, $12$, $13$, $14$ and $15$]{HY17}.\\
{\rm (2)} $k(T)\simeq L(M_G)^G$ is not stably but retract $k$-rational, 
i.e. $[M_G]^{fl}$ is not zero but invertible, 
if and only if 
$G$ is conjugate to one of the $25$ groups which are given as 
in \cite[{\rm Table} $11$]{HY17}.\\
{\rm (3)} $k(T)\simeq L(M_G)^G$ is not retract $k$-rational, 
i.e. $[M_G]^{fl}$ is not invertible, 
if and only if 
$G$ is conjugate to one of the $3003$ groups which are given as 
in \cite[{\rm Tables} $12$, $13$, $14$ and $15$]{HY17}. 
\end{theorem}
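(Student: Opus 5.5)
The statement is a classification of 5-dimensional tori, proved in \cite{HY17} by computer. I would therefore follow the same computational strategy rather than look for a conceptual argument.

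\textbf{Step 1: enumerate the groups and reduce to lattices.} The $\bZ$-conjugacy classes of finite subgroups $G\leq\GL(5,\bZ)$ number $6079$ and can be listed from the crystallographic group database (CARAT/GAP). For each $G$, Theorem \ref{th2.7} turns the problem into one about lattices. Stable $k$-rationality of $T$ is equivalent to $[M_G]^{fl}=0$, and retract $k$-rationality is equivalent to $[M_G]^{fl}$ being invertible. So the task is to compute a flabby resolution $0\to M_G\to P\to F\to 0$ and decide the status of $F$.

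\textbf{Step 2: compute $F$.} I would construct $F$ explicitly by the Endo--Miyata/Colliot-Th\'el\`ene--Sansuc recipe. One embeds $M_G$ into $P=\bigoplus_H \bZ[G/H]\otimes$ suitable generators, chosen so that the cokernel has vanishing $\widehat H^{-1}$ on every subgroup.

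\textbf{Step 3: decide invertibility and non-invertibility.} I would reduce to Sylow subgroups using locality of invertibility (Endo--Miyata \cite[Lemma 1.4]{EM75}). To prove non-invertibility, the first invariant to compute is $H^1(H,F)\simeq\Sha^2_\omega(H,M_G)$ for subgroups $H\leq G$ (Theorem \ref{thCTS87}); nonvanishing for some $H$ shows $F$ is not coflabby, hence not invertible. When $F$ is coflabby but this test is inconclusive, I would test instead whether $F$ is a direct summand of a permutation lattice. I would search for an explicit splitting $\bigoplus\bZ[G/H_i]\to F$, or compare $F$ with $\bigoplus\bZ[G/H_i]$ via $\Hom$-dimension counts. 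Maximal groups should be handled first, since invertibility restricts to subgroups; conversely, non-invertibility for a subgroup propagates upward. This cuts the work to a manageable number of cases and should produce the $3003$ non-retract-rational groups.

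\textbf{Step 4: separate stably permutation from merely invertible.} For the remaining invertible $F$, I would try to exhibit an isomorphism $F\oplus P_1\simeq P_2$ directly. Often this is easiest after showing $F$ is similar to $M_{G'}$ for some lattice $M_{G'}$ of rank at most $6$ listed in Theorem \ref{th6.16}, or by factoring $M_G$ as a direct sum or extension of smaller lattices; Proposition \ref{prop2.8} and Lenstra's additivity then give $[M_G]^{fl}=0$. For the candidates expected to be not stably permutation, I would use a similarity invariant: compare the ranks of $F^H$ and $(F\otimes\bZ/p)^H$ over all $H$ against those realisable by permutation modules, i.e.\ solve for possible $P_1,P_2$ in the Burnside ring and show no solution exists.

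This separation in Step 4 is the main obstacle. Invertibility is local, but stable permutation is not, and there is no general algorithm for it. I expect exactly the $25$ groups of \cite[Table 11]{HY17} to survive as invertible but not stably permutation. For each of them I would take the Burnside-ring/fixed-rank obstruction as the first line of attack, falling back on a finer invariant, such as the class of $F$ in a locally free class group, if that fails. The count of stably rational cases, $3051=6079-3003-25$, then follows.
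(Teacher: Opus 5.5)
The paper gives no argument of its own here; it quotes the computer classification of \cite{HY17}. Your Steps 1--3 follow essentially that strategy: explicit flabby resolutions, $H^1(H,F)$, and passage to Sylow subgroups and subgroups.

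\textbf{The gap is in Step 4.} This is exactly where the $25$ groups of Table 11 must be certified as invertible but not stably permutation, and the obstruction you propose cannot certify them.
\begin{itemize}
\item The function $H\mapsto\mathrm{rank}_\bZ F^H$ depends only on $F\otimes\bQ$. It can therefore obstruct only when $[F\otimes\bQ]$ lies outside the image of the Burnside ring $B(G)\to R_\bQ(G)$.
\item For invertible $F$ you have $\dim_{\bF_p}(F/pF)^H=\mathrm{rank}_\bZ F^H+\dim_{\bF_p}H^1(H,F)[p]=\mathrm{rank}_\bZ F^H$. So the mod $p$ data add nothing.
\item Take $M_G=J_{F_{20}/C_4}\oplus\bZ$ with $G\simeq F_{20}$. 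It is necessarily one of the $25$, since its flabby class is that of the quintic norm one torus $5T3$ in Table $3$.
\item For this $G$, the permutation characters $\bQ[G/G]=1$, $\bQ[G/D_5]=1+\epsilon$, $\bQ[G/C_5]=1+\epsilon+\rho$ and $\bQ[G/C_4]=1+V$ form a $\bZ$-basis of $R_\bQ(F_{20})$. Here $\epsilon$ is the sign character through $G/D_5$, $\rho$ is the $2$-dimensional rational representation of $G/C_5\simeq C_4$, and $V$ is the $4$-dimensional irreducible representation.
\item Consequently every $F_{20}$-lattice passes your test. The same happens for $S_5$ by Young's rule.
\end{itemize}
Your fallback, a locally free class group, is not made concrete. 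Moreover, in this case the obstruction is actually local at $p=5$, not a class-group phenomenon.

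\textbf{What you need instead} is an integral invariant that is still additive and computable. One choice is the Tate groups $\widehat H^0(H,F)=F^H/N_HF$ together with the action of $N_G(H)/H$. Here is how it settles the example. Let $J=J_{F_{20}/C_4}$ and take a flabby resolution $0\to J\to P\to F\to 0$.
\begin{itemize}
\item Restricted to $C_5$, the lattice $\bZ[G/C_4]$ is free. Hence $\widehat H^0(C_5,J)\simeq\widehat H^1(C_5,\bZ)=0$. Also $H^1(C_5,P)=0$.
\item Therefore $\widehat H^0(C_5,F)\simeq\widehat H^0(C_5,P)\oplus\widehat H^1(C_5,J)$ as $\bF_5[C_4]$-modules.
\item The summand $\widehat H^1(C_5,J)\simeq\widehat H^2(C_5,\bZ)=\Hom(C_5,\bQ/\bZ)$ is a faithful character $\chi$ of $C_4=G/C_5$.
\item For every $G$-set $X$, $\widehat H^0(C_5,\bZ[X])\simeq\bF_5[X^{C_5}]$ is a permutation $\bF_5[C_4]$-module. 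In it, $\chi$ and $\chi^{-1}$ occur equally often.
\item Hence $F\oplus P_1\not\simeq P_2$ for all permutation lattices $P_1,P_2$.
\end{itemize}
Even bare isomorphism types suffice here. The number of $\bZ/5$-summands of $\widehat H^0(C_5,-)$ minus that of $\widehat H^0(D_5,-)$ is even on permutation lattices and odd on $F$. Stable permutation survives restriction, so this propagates to other groups through a suitable subgroup; for example $J_{S_5/S_4}|_{F_{20}}=J_{F_{20}/C_4}$.

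\textbf{A smaller point of the same kind in Step 3.} Rank counts or $\Hom$-dimension counts cannot certify that a coflabby $F$ is non-invertible. The decisive and finite test is whether the canonical coflabby resolution $\bigoplus_{H\leq G}\bZ[G/H]\otimes F^H\to F$ admits a $G$-section, by Lemma \ref{lem6.3} and Proposition \ref{prop6.5} (6). That is an integral linear system.
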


\subsection{Norm one tori $\T_K=R^{(1)}_{K/k}(\bG_m)$ of field extensions $K/k$}
The rationality problem for norm one tori $\T_K=R^{(1)}_{K/k}(\bG_m)$ 
with $\widehat{\T}_K\simeq J_{G/H}$ and ${\rm dim}_k\, \T_K=n-1$ 
is investigated 
by Endo and Miyata \cite{EM75}, 
Colliot-Th\'{e}l\`{e}ne and Sansuc \cite{CTS77}, \cite{CTS87}, 
H\"{u}rlimann \cite{Hur84}, 
Le Bruyn \cite{LeB95}, 
Cortella and Kunyavskii \cite{CK00}, 
Lemire and Lorenz \cite{LL00}, 
Endo \cite{End11}, 
Hoshi and Yamasaki \cite{HY17}, 
\cite{HY21}, \cite{HY24}, \cite{HY1}, \cite{HY2}, 
Hasegawa, Hoshi and Yamasaki \cite{HHY20}. 
Let us recall some significant results.

\begin{theorem}[{Endo and Miyata \cite[Theorem 1.5]{EM75}, Saltman \cite[Theorem 3.14]{Sal84}}]\label{th1.11}
Let $K/k$ be a finite Galois field extension with Galois group 
$G={\rm Gal}(K/k)$. 
Then the following conditions are equivalent:\\
{\rm (1)} $R^{(1)}_{K/k}(\bG_m)$ is retract $k$-rational;\\
{\rm (2)} All the Sylow subgroups of $G$ are cyclic. 
\end{theorem}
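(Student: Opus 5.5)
The plan is to translate the statement into lattice language and then reduce to Sylow subgroups. The character lattice of $R^{(1)}_{K/k}(\bG_m)$ is $J_G:=J_{G/\{1\}}$. By Theorem \ref{th2.7}(3), condition (1) holds if and only if $[J_G]^{fl}$ is invertible. I would use the fact recalled in the Remark after Theorem \ref{th2.7}: a $G$-lattice is invertible if and only if its restriction to every Sylow subgroup $G_p$ is invertible \cite[Lemma 1.4]{EM75}. Two further ingredients are needed.

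\emph{Flabby resolutions restrict.} If $0\to J_G\to P\to F\to 0$ is a flabby resolution over $G$, then its restriction to any $H\leq G$ is a flabby resolution over $H$. Hence $[J_G]^{fl}|_H=[J_G|_H]^{fl}$.

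\emph{Computing $J_G|_{G_p}$.} As a $G_p$-lattice, $\bZ[G]\simeq\bZ[G_p]^m$ with $m=[G:G_p]$, and the norm $N_G$ corresponds to $(N_{G_p},\dots,N_{G_p})$. An elementary change of basis of $\bZ[G_p]^m$ carries this element to $(N_{G_p},0,\dots,0)$. This gives
\[
J_G|_{G_p}\simeq J_{G_p}\oplus\bZ[G_p]^{m-1},\qquad\text{so}\qquad [J_G]^{fl}|_{G_p}=[J_{G_p}]^{fl}.
\]
Consequently (1) holds if and only if $[J_{G_p}]^{fl}$ is invertible for every prime $p$.

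For (2)$\Rightarrow$(1), assume $G_p=\langle\sigma\rangle$ is cyclic. The map $x\mapsto x(\sigma-1)$ induces an isomorphism $\bZ[G_p]/\bZ N_{G_p}\simeq I_{G_p}$, so $J_{G_p}\simeq I_{G_p}$. The augmentation sequence $0\to J_{G_p}\to\bZ[G_p]\to\bZ\to0$ is then a flabby resolution with permutation cokernel. Therefore $[J_{G_p}]^{fl}=0$, which is certainly invertible.

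For (1)$\Rightarrow$(2), suppose some $G_p$ is not cyclic. Invertibility passes to subgroups, so it suffices to find $H\leq G_p$ with $[J_H]^{fl}$ not invertible. An invertible lattice $F$ is coflabby, so it is enough to show $H^1(H,F)\neq0$.

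\emph{Case $H\simeq C_p\times C_p$.} Every non-cyclic $p$-group other than a generalized quaternion $2$-group contains such an $H$. By the discussion after Theorem \ref{thCTS87},
\[
H^1(H,[J_H]^{fl})\simeq\Sha^2_\omega(H,J_H).
\]
The sequence $(F_X)$, namely $0\to\bZ\to\bZ[H]\to J_H\to0$, identifies $H^2(C,J_H)\simeq H^3(C,\bZ)$ for every subgroup $C\leq H$. For cyclic $C$ we have $H^3(C,\bZ)=0$, so $\Sha^2_\omega(H,J_H)=H^3(H,\bZ)\simeq H^2(H,\bQ/\bZ)$. This is the Schur multiplier $\bZ/p\neq0$.

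\emph{Case $G_p$ generalized quaternion.} I expect this to be the main obstacle. One reduces to $H=Q_8$, but there $H^3(Q_8,\bZ)=0$ and all proper subgroups of $Q_8$ are cyclic, so the Brauer-group obstruction vanishes. What I would try first is to construct an explicit flabby resolution of $J_{Q_8}$, for instance following \cite[Lemma 1.1]{EM75}, and show directly that its flabby cokernel $F$ is not invertible. The test would be that $F$ is not a direct summand of a permutation lattice, detected either through $\widehat H^0$ of $F$ or of $F^\circ$, or by reduction mod $2$ and a comparison with projective modules. Failing that, I would invoke the original argument of Endo and Miyata \cite[Theorem 1.5]{EM75} for this case.
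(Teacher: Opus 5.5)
The paper does not prove this statement; it cites Endo--Miyata and Saltman, and Remark~\ref{rem15} sketches only the invertibility direction. Most of your argument is correct:
\begin{itemize}
\item the translation via Theorem~\ref{th2.7}(3);
\item the decomposition $J_G|_H\simeq J_H\oplus\bZ[H]^{[G:H]-1}$, which you need for every $H\leq G$ (not only $G_p$) when you pass to subgroups;
\item the direction (2)$\Rightarrow$(1) via $J_{C_n}\simeq I_{C_n}$;
\item the $C_p\times C_p$ case.
\end{itemize}

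\textbf{The gap.} The direction (1)$\Rightarrow$(2) is not proved, because the generalized quaternion case is left open. It also cannot be reached with the tool you are using. For every $H\leq Q_8$ one has $H^1(H,[J_{Q_8}]^{fl})\simeq\Sha^2_\omega(H,J_H)\subseteq H^3(H,\bZ)=0$, the last group vanishing for cyclic $H$ and also for $H=Q_8$ since the Schur multiplier of $Q_8$ is trivial. So $[J_{Q_8}]^{fl}$ is flabby \emph{and} coflabby. No argument based on $H^1$ or $\widehat H^{-1}$ (the Brauer-group obstruction) can show it is not invertible. A plan to ``construct a resolution and test it, or else invoke the original proof'' does not supply the missing step.

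\textbf{The missing idea.} Work with $\widehat H^{-2}$ and compare exponents.
\begin{itemize}
\item From $0\to\bZ\to\bZ[G]\to J_G\to0$ you get $\widehat H^{-1}(G,J_G)\simeq\widehat H^{0}(G,\bZ)=\bZ/|G|$.
\item Take a flabby resolution $0\to J_G\to Q\to F\to 0$. Since $\widehat H^{-1}(G,Q)=0$ for permutation $Q$, the connecting map $\widehat H^{-2}(G,F)\to\bZ/|G|$ is surjective.
\item Suppose $[J_G]^{fl}$ is invertible. Then $F$ itself is invertible, say $F\oplus F'\simeq\bigoplus_j\bZ[G/G_j]$.
\item By Shapiro's lemma in Tate cohomology, $\widehat H^{-2}(G,F)$ is then a direct summand of $\bigoplus_j\widehat H^{-2}(G_j,\bZ)=\bigoplus_j G_j^{\mathrm{ab}}$. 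This group is killed by $\exp(G)$.
\item Hence $|G|$ divides $\exp(G)=\prod_p\exp(G_p)$. This forces $\exp(G_p)=|G_p|$ for every $p$, i.e.\ every Sylow subgroup is cyclic.
\end{itemize}
For $Q_8$ this is just the observation that $\bZ/8$ is not a quotient of a group of exponent $4$. The argument proves (1)$\Rightarrow$(2) for all $G$ at once. It makes the Sylow reduction and the Schur-multiplier computation unnecessary for that direction.
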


\begin{remark}\label{rem3.6}
If all the Sylow subgroups of $G$ are cyclic, 
then $G$ is metacyclic 
(see e.g. Gorenstein \cite[Theorem 6.2, page 258]{Gor80}) 
and can be presented as 
\begin{align*}
G=\langle a,b\mid a^m=1, b^n=1, bab^{-1}=a^r\rangle\simeq C_m\rtimes C_n
\end{align*}
where 
$r^n\equiv 1\ ({\rm mod}\ m)$, $m$ is odd, $0\leq r<m$, 
$\gcd(m,n)=\gcd(m,r-1)=1$ 
(see Zassenhaus \cite[Satz 5, page 198]{Zas35}, 
see also Hall \cite[Theorem 9.4.3]{Hal59}, 
Robinson \cite[10.1.10, page 290]{Rob96}). 
\end{remark}

Let $\gcd(m,n)$ be the greatest common divisor of integers $m,n$. 

\begin{theorem}[{Endo and Miyata \cite[Theorem 2.3]{EM75}, Colliot-Th\'{e}l\`{e}ne and Sansuc \cite[Proposition 3]{CTS77}}]\label{th1.12}
Let $K/k$ be a finite Galois field extension with Galois group 
$G={\rm Gal}(K/k)$. 
The following conditions are equivalent:\\
{\rm (1)} $R^{(1)}_{K/k}(\bG_m)$ is stably $k$-rational;\\
{\rm (2)} all  Sylow subgroups of $G$ are cyclic and $H^4(G,\bZ)\simeq \widehat H^0(G,\bZ)$ 
where $\widehat H$ denotes Tate cohomology;\\
{\rm (3)} $G\simeq C_m$ or $G\simeq C_n\times \langle\sigma,\tau\mid\sigma^k=\tau^{2^d}=1,
\tau\sigma\tau^{-1}=\sigma^{-1}\rangle$ where $d\geq 1, k\geq 3$, 
$n,k$: odd, and $\gcd(n,k)=1$;\\
{\rm (4)} $G=\langle s,t\mid s^m=t^{2^d}=1, tst^{-1}=s^r, m: odd,\ 
r^2\equiv 1\pmod{m}\rangle$.
\end{theorem}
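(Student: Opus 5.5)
The plan is to prove the cycle of implications in two parts: the lattice-theoretic equivalence (1)$\Leftrightarrow$(2), and then the purely group-theoretic equivalences (2)$\Leftrightarrow$(4)$\Leftrightarrow$(3).

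\emph{Reduction for (1)$\Leftrightarrow$(2).} Since $K/k$ is Galois, $X(K)=G$ and $\widehat{\T}_K\simeq J_G$. By Theorem \ref{th2.7}(1), condition (1) is equivalent to $[J_G]^{fl}=0$. Stable rationality implies retract rationality, so Theorem \ref{th1.11} shows that (1) forces all Sylow subgroups of $G$ to be cyclic. Hence in both directions we may assume $G$ has cyclic Sylow subgroups. Such a $G$ has periodic Tate cohomology: there is a minimal $q$ with $\widehat H^{i+q}(G,-)\simeq \widehat H^i(G,-)$. Moreover, by Swan there is an exact sequence
\begin{align*}
0\to\bZ\to P_{q-1}\to\cdots\to P_0\to\bZ\to 0
\end{align*}
with projective terms, which for Sylow-cyclic groups can be chosen with permutation or free terms after adding permutation summands. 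The condition $H^4(G,\bZ)\simeq\widehat H^0(G,\bZ)=\bZ/|G|$ says exactly that the period $q$ divides $4$. Since $q$ is even, this means $q\in\{2,4\}$.

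\emph{(2)$\Rightarrow$(1).} Take a length-$4$ resolution $0\to\bZ\to F_3\to F_2\to F_1\to F_0\to\bZ\to 0$ with free $F_i$. Splice it with $(F_G)$ and $(E_G)$, using Schanuel's lemma to identify the outer kernels, up to free summands, with $J_G$ and $I_G$. This yields an exact sequence $0\to J_G\to F\to F'\to I_G\to 0$ with $F,F'$ free. Cutting it in the middle gives a lattice $N$ with $0\to J_G\to F\to N\to 0$ and $0\to N\to F'\to I_G\to 0$.

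Next I would check that $N$ is invertible. It is flabby and coflabby by dimension shifting in Tate cohomology, using that all $H\le G$ have cyclic Sylow subgroups, so $\widehat H^{\pm1}(H,I_G)=0$ for cyclic-Sylow reasons. Invertibility then follows by the Endo--Miyata local criterion, since for cyclic $p$-groups flabby coflabby lattices are invertible. Hence $[J_G]^{fl}=[N]$. From the second sequence, and since $I_G$ and $J_G$ have opposite flabby classes via $(E_G)$ and $(F_G)$, I expect to get $[N]=-[N]$ together with an explicit splitting that forces $[N]=0$. This last identification is the step I expect to be the main obstacle. If it does not close directly, the fallback is to follow Endo--Miyata: when $q\mid 4$, show directly that $N$ is stably permutation by constructing the resolution with permutation modules $\bZ[G/H]$ for cyclic $H$.

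\emph{(1)$\Rightarrow$(2).} If $[J_G]^{fl}=0$, then the flabby lattice $F$ in a flabby resolution of $J_G$ is stably permutation. Therefore $\widehat H^i(H,F)$ coincides with that of a permutation lattice for all $H$ and all $i$. Relating this via the resolution to $\widehat H^{i}(G,\bZ)$ shifted by $4$ should give $\widehat H^{4}(G,\bZ)\simeq\widehat H^0(G,\bZ)$. Concretely, I would compare the orders of $\widehat H^0(G,F)$ with those of a permutation lattice, using that $\widehat H^0(G,\bZ[G/H])=\bZ/[G\!:\!H]$.

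\emph{Group theory: (2)$\Leftrightarrow$(4)$\Leftrightarrow$(3).} By Remark \ref{rem3.6}, write $G=C_m\rtimes C_n$ with $bab^{-1}=a^r$ and $m$ odd. The cohomological period of such a metacyclic group is $2\cdot{\rm ord}_m(r)$, computed via the Lyndon--Hochschild--Serre spectral sequence or via the Sylow subgroups and their normalizers. Thus $q\mid 4$ iff $r^2\equiv1\pmod m$. If we moreover write $n=2^d n'$ with $n'$ odd, the odd part $C_{n'}$ must act trivially, since $r^2\equiv 1$ and $n'$ is odd. Absorbing $C_{n'}$ into the cyclic normal part gives presentation (4). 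Finally, decomposing $C_m$ into the $\pm1$ eigenspaces of $t$ gives $C_{n}\times(C_k\rtimes C_{2^d})$ with $C_{2^d}$ acting by inversion, which is (3). The degenerate case where $t$ acts trivially gives the cyclic case $C_m$.
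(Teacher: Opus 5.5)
The paper does not prove this statement; it quotes it from Endo--Miyata and Colliot-Th\'el\`ene--Sansuc. So your argument has to stand on its own, and it does not in the direction (2)$\Rightarrow$(1).

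\textbf{The gap in (2)$\Rightarrow$(1).} Your reduction to an invertible $N$ with $[J_G]^{fl}=[N]$ is correct in substance. But proving $[N]=0$ is the whole content of the theorem, and the heuristic you offer for it is false. The sequence $(E_G)$ is itself a flabby resolution of $I_G$ whose flabby term $\bZ$ is permutation, so $[I_G]^{fl}=0$ for every $G$. ``Opposite flabby classes'' would therefore give $[J_G]^{fl}=0$ for all $G$, contradicting $G=C_2\times C_2$ (Example \ref{ex7.7}(2)). What your two sequences actually give is this: dualizing $0\to N\to F'\to I_G\to 0$ yields a second flabby resolution $0\to J_G\to (F')^{\circ}\to N^{\circ}\to 0$, hence $[N]=[N^{\circ}]$. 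Neither this nor $2[N]=0$ by itself forces an invertible class to vanish. The fallback ``construct the resolution as Endo--Miyata do'' is a placeholder, not an argument.

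\textbf{A way to close it.} Prove (3)$\Rightarrow$(1) with this paper's tools. Your group theory for (2)$\Leftrightarrow$(4)$\Leftrightarrow$(3) is fine, granting the known fact that $C_m\rtimes C_n$ has period $2\,\mathrm{ord}_m(r)$.
\begin{itemize}
\item If $G$ is cyclic, then $J_G\simeq I_G$ and $[I_G]^{fl}=0$.
\item If $G=C_n\times\langle\sigma,\tau\rangle$, let $K_1,K_2,K_3$ be the fixed fields of $\langle\sigma,\tau\rangle$, $C_n\times\langle\tau\rangle$ and $C_n\times\langle\sigma\rangle$. These subgroups intersect trivially, so $K\simeq K_1\otimes K_2\otimes K_3$ with pairwise coprime degrees $n$, $k$, $2^d$.
\item $K_1$ and $K_3$ are cyclic, so their norm one tori are rational.
\item The normal core of $C_n\times\langle\tau\rangle$ is $C_n\times\langle\tau^2\rangle$. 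Hence the Galois closure of $K_2$ has group $D_k$ acting on $k$ points, and $\T_{K_2}$ is stably rational. This follows from Theorem \ref{th1.14}, or directly from Proposition \ref{prop5.14} together with Theorem \ref{thKly}.
\item Theorem \ref{th8.1} then gives stable rationality of $\T_K$.
\end{itemize}

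\textbf{The direction (1)$\Rightarrow$(2).} This is only gestured at, and the concrete version you propose would not work. First, $\widehat H^0(G,\bZ[G/H])\simeq\widehat H^0(H,\bZ)=\bZ/|H|$, not $\bZ/[G:H]$. Second, degree $0$ is the wrong degree to compare anyway. To make it precise:
\begin{itemize}
\item Take a flabby resolution $0\to J_G\to F_0\to N\to 0$ with $F_0$ free. Its cokernel is automatically flabby, since $\widehat H^{-1}(H,N)\simeq\widehat H^{1}(H,\bZ)=0$. Moreover $\widehat H^i(H,N)\simeq\widehat H^{i+2}(H,\bZ)$ for all $H\le G$ and all $i$.
\item If $[J_G]^{fl}=0$, then $N\oplus P\simeq P'$ with $P,P'$ permutation.
\item For a permutation lattice, $\widehat H^{2}(G,\bZ[G/K])\simeq\Hom(K,\bQ/\bZ)$ and $\widehat H^{-2}(G,\bZ[G/K])\simeq K^{\mathrm{ab}}$ are isomorphic finite groups.
\item Cancellation for finite abelian groups then gives $H^4(G,\bZ)\simeq\widehat H^{2}(G,N)\simeq\widehat H^{-2}(G,N)\simeq\widehat H^0(G,\bZ)$.
\end{itemize}
Together with Theorem \ref{th1.11}, which gives cyclic Sylow subgroups, this is (2).
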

\begin{theorem}[Endo {\cite[Theorem 2.1]{End11}}]\label{th1.13}
Let $K/k$ be a finite non-Galois, separable field extension 
and $L/k$ be the Galois closure of $K/k$ 
with Galois group $G={\rm Gal}(L/k)$. 
If $G$ is nilpotent, then 
$R^{(1)}_{K/k}(\bG_m)$ is not retract $k$-rational.
\end{theorem}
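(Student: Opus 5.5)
We would reduce to a $p$-group, using that invertibility of $G$-lattices is local (the remark after Theorem \ref{th2.7}, citing Endo and Miyata \cite[Lemma 1.4]{EM75}), and then exhibit an explicit cohomological obstruction over a suitable $p$-subgroup. Write $H=\Gal(L/K)$. Since $K/k$ is not Galois, $H\neq 1$, and $H$ is core-free in $G$.

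First, by Theorem \ref{th2.7}(3), $\T_K$ is retract $k$-rational if and only if $[J_{G/H}]^{fl}$ is invertible. By locality, this holds if and only if, for every prime $p$, the restriction $[J_{G/H}]^{fl}|_{G_p}$ is invertible, where $G_p$ is a $p$-Sylow subgroup of $G$.

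Next we use nilpotency, so that $G=\prod_p G_p$. Every subgroup decomposes as $H=\prod_p H_p$ with $H_p\leq G_p$, and the core of $H$ is the product of the cores of the $H_p$. Hence each $H_p$ is core-free in $G_p$. Since $H\neq 1$, some $H_p\neq 1$; fix this $p$. As a $G_p$-set,
\[
G/H\simeq G_p/H_p\times G_{p'}/H_{p'},
\]
on which $G_p$ acts only through the first factor. So $G/H$ is a disjoint union of $m=[G_{p'}:H_{p'}]$ copies of $G_p/H_p$, with $m$ prime to $p$.

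It therefore suffices to prove the following. For a $p$-group $P$, a nontrivial core-free subgroup $Q\leq P$, and $X$ a disjoint union of $m$ copies of $P/Q$ with $p\nmid m$, the class $[J_X]^{fl}$ is not invertible. An invertible lattice has $H^1(D,-)=0$ for every subgroup $D$. So we aim to show
\[
H^1(D,[J_X]^{fl})\simeq \Sha^2_\omega(D,J_X)\neq 0
\]
for a well-chosen $D\leq P$, using Theorem \ref{thCTS87} applied to $D$.

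The candidate is $D=\langle z,h\rangle\simeq C_p\times C_p$. Here $z\in Z(P)$ has order $p$, and $h\in Z(Q)$ has order $p$. Core-freeness forces $z\notin gQg^{-1}$ for all $g$, because $z$ is central, so indeed $h\notin\langle z\rangle$. Consequently every $D$-stabilizer $D\cap gQg^{-1}$ has order at most $p$, and at least one $D$-orbit has stabilizer $\langle h\rangle$. We would then compute $\Sha^2_\omega(D,J_X)$ from the sequence $0\to\bZ\to\bZ[X]\to J_X\to 0$. This identifies it with the kernel of
\[
H^3(D,\bZ)\to \bigoplus_{x} H^3(D_x,\bZ)
\]
modulo the corresponding cyclic restrictions. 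Since every $D_x$ is cyclic and $H^3(D,\bZ)\simeq\bZ/p$ for $D=C_p\times C_p$, we expect the resulting group to be nonzero.

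The main obstacle is this last computation. The orbits with trivial stabilizer are harmless, but we must check that the contributions of $\bZ[X]$ do not kill the class. If $D=C_p\times C_p$ turns out to be too small (for instance, if $X|_D$ is a union of orbits with different cyclic stabilizers that together cover $H^3$), we would instead take $D=P$ itself or $D=QZ(P)$. There we would compute $\Sha^2_\omega(D,J_{D/Q'})$ via the identification $\Sha^2_\omega\simeq \Sha^3_\omega$-type kernels of $H^3(-,\bZ)$, using that $p\nmid m$ makes the multiple copies behave like a single one after transfer.
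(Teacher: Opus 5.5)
The paper gives no proof of this statement; it only quotes it from Endo \cite{End11}. So the question is whether your argument stands on its own, and it does not: the final step is never established, and the $H^1$ strategy cannot establish it in general.

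Your reduction to a $p$-group is sound. Theorem \ref{th4.2} makes it cleaner: as $G_p$-lattices $J_{G/H}\simeq J_{G_p/H_p}\oplus\bZ[G_p/H_p]^{\oplus m-1}$, so $[J_{G/H}]^{fl}|_{G_p}=[J_{G_p/H_p}]^{fl}$, with no transfer argument needed.

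The gap is the last step. It carries the whole content of the theorem, you only state it as an expectation, and for your $D$ it is false. Use Theorem \ref{th4.2} to discard free and repeated orbits. Up to a permutation summand, $J_X|_D$ is then the multinorm lattice of the $r$ distinct stabilisers $\langle z^ih\rangle$ that occur, with $1\le r\le p$. The formula of Bayer-Fluckiger, Lee and Parimala recalled after Theorem \ref{th4.3} then gives $\Sha^2_\omega(D,J_X)\simeq(\bZ/p\bZ)^{\max(r-2,0)}$.
For $p=2$ one always has $r\le 2$. Theorem \ref{th4.1}(1) even gives $[J_X]^{fl}|_D=0$, so no subgroup of $D$ carries any obstruction. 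For odd $p$, whether $r\ge 3$ depends on the choice of $h$ and is not automatic.

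Your fallbacks already fail for $G=D_4$ acting on the vertices of a square, with $H=\langle s\rangle$. Here $QZ(P)=\langle r^2,s\rangle$ is the same $D$. Also $\Sha^2_\omega(D_4,J_{D_4/C_2})=0$: indeed $H^2(D_4,J_{D_4/C_2})\simeq H^3(D_4,\bZ)\simeq\bZ/2\bZ$, and its generator restricts nontrivially to $\langle r^2,s\rangle$, where $\Sha^2_\omega$ vanishes. In this example the obstruction lives on the other Klein four-group $\langle r^2,rs\rangle$. That subgroup acts regularly, so $J_X$ restricts to $J_{V_4}$ and Theorem \ref{th1.11} applies.

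More fundamentally, no choice of $D$ can work in general, because $[J_{P/Q}]^{fl}$ can be flabby and coflabby without being invertible. Take $P=\langle a,b\mid a^8=b^2=1,\ bab^{-1}=a^5\rangle$, the modular group of order $16$, and $Q=\langle b\rangle$. This $Q$ is core-free since $aba^{-1}=a^4b$. The action is transitive of degree $8$ and $P\not\simeq C_8$, so the norm one torus is not retract rational by \cite[Theorem 1.1]{HHY20}. The noncyclic subgroups of $P$ are $\langle a^4,b\rangle\simeq C_2\times C_2$, $\langle a^2,b\rangle\simeq C_4\times C_2$, and $P$ itself.
\begin{itemize}
\item On $\langle a^4,b\rangle$ the stabilisers are $\langle b\rangle$ and $\langle a^4b\rangle$, so the restricted flabby class is $0$ as above.
\item On $\langle a^2,b\rangle$, $P/Q$ consists of two orbits with stabilisers $\langle b\rangle$ and $\langle a^4b\rangle$. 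These are two cyclic quartic extensions meeting in a quadratic field, and Theorem \ref{th4.3}(7) gives $\Sha^2_\omega=0$.
\item On $P$ itself, $H^2(P,J_{P/Q})$ injects into $H^3(P,\bZ)\simeq M(P)=0$.
\end{itemize}
Together with flabbiness on cyclic subgroups, this gives $H^1(D,[J_{P/Q}]^{fl})=0$ for every $D\le P$.

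The $D_4$ remedy does not help here either: every noncyclic subgroup of this $P$ contains $b$, so none acts freely on $P/Q$. What your proposal is missing is an argument that detects non-invertibility of $[J_{P/Q}]^{fl}$ beyond the vanishing of $H^1$ on subgroups.
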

\begin{theorem}[Endo {\cite[Theorem 3.1]{End11}}]\label{th1.14}
Let $K/k$ be a finite non-Galois, separable field extension 
and $L/k$ be the Galois closure of $K/k$ with Galois groups 
$G={\rm Gal}(L/k)$ and $\{1\}\lneq H={\rm Gal}(L/K)\lneq G$. 
If all  Sylow subgroups of $G$ are cyclic, then 
$R^{(1)}_{K/k}(\bG_m)$ is retract $k$-rational, 
and the following conditions are equivalent:\\
{\rm (1)} 
$R^{(1)}_{K/k}(\bG_m)$ is stably $k$-rational;\\
{\rm (2)} 
$G\simeq D_n$ with $n$ odd $(n\geq 3)$ 
or $G\simeq C_m\times D_n$ where $m,n$ are odd, 
$m,n\geq 3$, $\gcd(m,n)=1$, and $H\leq D_n$ is of order $2$;\\
{\rm (3)} 
$H\simeq C_2$ and $G\simeq C_r\rtimes H$, $r\geq 3$ odd, where 
$H$ acts non-trivially on $C_r$. 
\end{theorem}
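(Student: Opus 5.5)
The plan is to treat the two assertions separately: first retract rationality, then the equivalences, the latter by the cycle (3)$\Rightarrow$(2)$\Rightarrow$(1)$\Rightarrow$(3).

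\emph{Retract rationality.} By Theorem \ref{th2.7}(3) it suffices to show that $[J_{G/H}]^{fl}$ is invertible. I would use the locality of invertibility (Endo and Miyata \cite[Lemma 1.4]{EM75}, recalled in the Remark after Theorem \ref{th2.7}). For a $p$-Sylow subgroup $G_p$, which is cyclic, every flabby $G_p$-lattice is invertible. Indeed, for a cyclic $p$-group the vanishing of $\widehat H^{-1}$ on all subgroups forces invertibility; this is the standard Endo--Miyata fact for groups with cyclic Sylow subgroups. Applying this to a flabby resolution $0\to J_{G/H}\to P\to F\to 0$ and restricting to each $G_p$ shows that $F$ is invertible.

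\emph{(3)$\Rightarrow$(2).} This is pure group theory. By Remark \ref{rem3.6}, $G\simeq C_r\rtimes C_2$ with $r$ odd. The involution $\tau$ acts on $C_r$ by an automorphism of order $2$, so $C_r$ decomposes as $C_r=C_m\times C_n$ with $\tau$ trivial on $C_m$ and acting by inversion on $C_n$. Hence $G\simeq C_m\times D_n$, with $D_n=C_n\rtimes H$. Since the action is nontrivial, $n\ge 3$, and $\gcd(m,n)=1$ because $m$ and $n$ divide $r$ coprimely.

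\emph{(2)$\Rightarrow$(1).} Here $H\simeq C_2$, so $[G:H]$ is odd. I would build an explicit flabby resolution of $J_{G/H}$ and show the resulting flabby lattice is stably permutation. The tool is the exact sequences $0\to J_{G/H}\to \bZ[G/H]^{\oplus ?}\to\cdots$ combined with Lenstra's splitting principle (Proposition \ref{prop2.8}). An alternative route is to compare with the Galois case $G'=D_n$ of Theorem \ref{th1.12}, using $\bZ[G]\otimes_{\bZ[H]}$-induction and the identity $J_{G/H}\oplus\bZ[G/H]$-type relations available because $|H|=2$ is prime to $[G:H]$.

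\emph{(1)$\Rightarrow$(3), the main obstacle.} Stable rationality is not local, so here I would look for a global invariant of the flabby class. The natural candidate, by analogy with condition (2) of Theorem \ref{th1.12}, is Tate cohomology $\widehat H^0(G',F)$ or $H^1(G',F)$ over subgroups $G'$ (equivalently $\Sha^2_\omega$), computed from $J_{G/H}$ via the resolution. For Sylow-cyclic groups these invariants vanish, so they are too weak. I would therefore instead use the Swan/locally free class group description of invertible lattices over metacyclic $G=C_m\rtimes C_n$: one computes the class of $[J_{G/H}]^{fl}$ in that group and shows it is zero only when $H$ has order $2$ acting by inversion. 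I expect this computation to be the genuinely hard step. As a first attempt I would reduce, by restriction to subgroups $C_q\rtimes H$ with $q\mid m$ prime and using the Mackey decomposition of $J_{G/H}$, to the case $G=C_q\rtimes C_\ell$ of Frobenius type $F_{q\ell}$. In that case I would check non-stable-permutationness directly, using that $|H|>2$ there.
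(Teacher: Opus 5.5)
Your retract-rationality argument is the one the paper gives in Remark \ref{rem15}: invertibility is Sylow-local, and over a cyclic $G_p$ flabby implies coflabby implies invertible. Your (3)$\Rightarrow$(2) is also correct group theory. The paper does not prove the equivalence of (1)--(3); it quotes Endo. Judged on its own, your (2)$\Rightarrow$(1) is only a list of tools. In particular, the comparison with the Galois case via $0\to J_{G/H}\to J_G\to \mathrm{Ind}_H^G J_H\to 0$ is not automatic: for $|H|=2$, $\mathrm{Ind}_H^G J_H$ contains $\bZ^-$ as an $H$-summand, so it is not invertible and Lenstra's additivity is unavailable. This direction can, however, be closed with the paper's own results. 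For $G=D_n$, use the proposition following Proposition \ref{prop5.14}. For $G=C_m\times D_n$, put $K_1=L^{D_n}$, which is cyclic of degree $m$ and stably rational by Theorem \ref{th1.12}. Put $K_2=L^{C_m\times H}$, which has degree $n$ and Galois closure group $D_n$. Then $K=K_1\otimes K_2$ with $\gcd(m,n)=1$, and Theorem \ref{th7.5} applies.

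The genuine gap is (1)$\Rightarrow$(3). Your restriction to $G'=C_q\rtimes H$ can be made precise. Theorem \ref{th4.2} plus Schur--Zassenhaus give $J_{G/H}|_{G'}\simeq J_{G'/H}\oplus(\text{permutation})$. You then divide by $C_H(C_q)$ and choose $q$ on which $H$ acts with order $\ell\ge 3$; such $q$ exists when $|H|>2$, since $H$ is cyclic and acts faithfully on $C_m$. Even so, the base case $(C_q\rtimes C_\ell,C_\ell)$ is only asserted, and Theorem \ref{th1.18}(2) is itself a special case of the statement you are proving. More importantly, this route can at best force $|H|=2$. Nothing in your proposal explains why $|H|=2$ then forces $G\simeq C_r\rtimes H$. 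The criterion you aim at, ``zero only when $H$ has order $2$ acting by inversion'', is false. For $(G,H)=(F_{20},C_2)$, i.e.\ $10T4$, the subgroup $H=\langle b^2\rangle$ has order $2$ and inverts $C_5$, yet $\T_K$ is not stably rational (Theorem \ref{th1.23}, Table $3$). Moreover, no restriction argument can detect this case, because every proper-subgroup restriction has trivial flabby class:
\begin{itemize}
\item to $C_2$, the restriction of $J_{G/H}$ is permutation, since there is a fixed point;
\item to $C_4$, it is $J_{C_4/C_2}\oplus\bZ[C_4]^{\oplus 2}$;
\item to $C_5$, it is $J_{C_5}\oplus\bZ[C_5]$;
\item to $D_5$, it is $J_{D_5/C_2}\oplus\bZ[D_5/C_2]$.
\end{itemize}
So you need a genuinely global invariant of $[J_{G/H}]^{fl}$. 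It must be sensitive to the order of the action of a complement of the cyclic normal Hall subgroup $C_m$, not just to the action of $H$. That is exactly the class-group computation your proposal announces but does not carry out.
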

\begin{remark}\label{rem15}
Endo \cite[page 84, line 7]{End11} explained that 
the retract $k$-rationality of $R^{(1)}_{K/k}(\bG_m)$ 
in Theorem \ref{th1.14} 
is already obtained in Endo and Miyata \cite[Theorem 1.5]{EM75} 
and Saltman \cite[Theorem 3.14]{Sal84}. 
Indeed, we can check that $F=[J_{G/H}]^{fl}$ is invertible 
 by combining the following results:\\
(1) (\cite[Lemma 1.4]{EM75}) $F$ is invertible if and only if $F|_{G_p}$ is invertible for any $p\mid |G|$ 
where $G_p$ is a $p$-Sylow subgroup of $G$;\\
(2) (\cite[Theorem 1.5: $(1)\Leftrightarrow (2)$]{EM75}) $F|_{G_p}$ is invertible if and only if $F|_{G_p}$ is coflabby 
because $G_p$ is cyclic;\\
(3) $F|_{G_p}$ is coflabby. 
Indeed, since $G_p$ is cyclic, it is standard that 
$H^1(H,F)=\widehat{H}^{-1}(H,F)$ for any $H\leq G_p$ 
(see e.g. Neukirch, Schmidt and Wingberg \cite[Theorem 1.6.12, page 68]{NSW00}). 
\end{remark}

When $G\simeq S_n$ or $A_n$ and $[G:H]=[K:k]=n$, we have:
\begin{theorem}[{Colliot-Th\'{e}l\`{e}ne and Sansuc \cite[Proposition 9.1]{CTS87}, \cite[Theorem 3.1]{LeB95}, \cite[Proposition 0.2]{CK00}, \cite{LL00}, Endo \cite[Theorem 4.1]{End11}, see also \cite[Remark 4.2 and Theorem 4.3]{End11}}]\label{thS}
Let $K/k$ be a non-Galois separable field extension 
of degree $n$ and $L/k$ be the Galois closure of $K/k$. 
Assume that ${\rm Gal}(L/k)=S_n$, $n\geq 3$, 
and ${\rm Gal}(L/K)=S_{n-1}$ is the stabilizer of one of the letters 
in $S_n$. 
Then we have:\\
{\rm (1)}\ 
$R^{(1)}_{K/k}(\bG_m)$ is retract $k$-rational 
if and only if $n$ is a prime number;\\
{\rm (2)}\ 
$R^{(1)}_{K/k}(\bG_m)$ is $($stably$)$ $k$-rational 
if and only if $n=3$.
\end{theorem}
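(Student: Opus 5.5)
The character lattice of $\T_K=R^{(1)}_{K/k}(\bG_m)$ is $J_X$ with $X=S_n/S_{n-1}=\{1,\dots,n\}$. By Theorem \ref{th2.7}, everything reduces to statements about the flabby class $[J_X]^{fl}$. I will use two facts repeatedly.

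First, restricting a flabby resolution $0\to J_X\to P\to F\to 0$ to a subgroup $H\leq S_n$ gives a flabby resolution of the $H$-lattice $J_X|_H$. So if $[J_X]^{fl}$ is invertible (resp. zero) for $S_n$, then $[J_X|_H]^{fl}$ is invertible (resp. zero) for every $H$.

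Second, $J_X|_H$ depends only on the $H$-set $X$. If $H$ has a fixed point $x\in X$, then $\bZ[X]=\bZ[x]\oplus\bZ[X\setminus\{x\}]$ and the norm $N_X$ has coefficient $1$ on $[x]$. Hence $J_X=\bZ[X]/\bZ N_X\simeq \bZ[X\setminus\{x\}]$ is permutation, and $[J_X|_H]^{fl}=0$.

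\emph{Proof of (1), $n=p$ prime.} By the local criterion \cite[Lemma 1.4]{EM75} it suffices to check invertibility of $[J_X]^{fl}$ on each Sylow subgroup $G_\ell$.

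For $\ell\neq p$, the $G_\ell$-orbits on $X$ have $\ell$-power sizes summing to $p$, which is prime to $\ell$. So some orbit has size $1$, and by the second fact $[J_X|_{G_\ell}]^{fl}=0$.

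For $\ell=p$, the Sylow subgroup is $C_p$ acting regularly on $X$. Then $J_X|_{C_p}=J_{C_p}$, whose flabby class is invertible by Theorem \ref{th1.11}, since $C_p$ is cyclic.

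Hence $\T_K$ is retract $k$-rational.

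\emph{Proof of (1), $n$ composite.} I need a subgroup $H\leq S_n$ with $[J_X|_H]^{fl}$ not invertible. Write $n=pm$ with $p$ prime and $m\geq 2$. I would look for a $p$-subgroup $H$, for instance $C_p\times C_p$ or $C_p\wr C_m$-type subgroups, acting on blocks of $X$. On such an $H$ I would show $\Sha^2_\omega(H,J_X)\neq 0$; this equals $H^1(H,[J_X]^{fl})$ by Theorem \ref{thCTS87}, and its nonvanishing rules out invertibility. In the bicyclic regular case this nonvanishing comes from $H^3(C_p\times C_p,\bZ)\neq 0$ being not detected on cyclic subgroups. Alternatively, I would invoke Theorem \ref{th1.13} for a nilpotent transitive non-regular subgroup acting on a suitable orbit.

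This step is the main obstacle. Handling all composite $n$ uniformly, including $n=pq$ where $C_p\times C_p$ need not embed transitively, requires a careful choice of $H$ and orbit decomposition. If I cannot find a uniform choice, I would fall back on the computations of \cite{CTS87}, \cite{LeB95} and \cite{LL00}.

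\emph{Proof of (2).} For $n=3$, $S_3=D_3$ with $H=C_2$, so $\T_K$ is stably rational by Theorem \ref{th1.14}. By (1), stable rationality forces $n=p$ prime. Suppose $p\geq 5$ and restrict to the Frobenius group $F=C_p\rtimes C_{p-1}=\mathrm{AGL}(1,p)\leq S_p$. This group acts transitively on $X$ with point stabiliser $C_{p-1}$, and all its Sylow subgroups are cyclic. By Theorem \ref{th1.14}, $[J_X|_F]^{fl}=0$ would force the stabiliser to be $C_2$, i.e. $p=3$. So for $p\geq 5$ we get $[J_X|_F]^{fl}\neq 0$, hence $[J_X]^{fl}\neq 0$ for $S_p$ by the first fact, and $\T_K$ is not stably $k$-rational.
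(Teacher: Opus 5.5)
Your proof of the ``if'' half of (1) is correct. For $\ell\neq p$ the $\ell$-Sylow subgroup has a fixed point, so $J_X$ becomes permutation; for $\ell=p$ the Sylow subgroup is a regular $C_p$. The non-stable-rationality for primes $p\geq 5$ via $\mathrm{AGL}(1,p)$ and Theorem \ref{th1.14} is also correct. (The paper itself gives no proof; it only quotes the result.)

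\textbf{The gap.} The ``only if'' half of (1) is not proved: for composite $n$ you never exhibit a subgroup $H\leq S_n$ with $[J_X|_H]^{fl}$ non-invertible. Since your proof of (2) starts with ``by (1), stable rationality forces $n$ prime'', the composite case of (2) is unproved as well. Neither route you sketch closes it as stated:
\begin{itemize}
\item A faithful transitive action of $C_p\times C_p$ is regular, so it needs $n=p^2$.
\item Theorem \ref{th1.13} fails for every squarefree composite $n$. A transitive nilpotent $H=\prod_\ell P_\ell\leq S_n$ has core-free point stabiliser $\prod_\ell H_\ell$ with $[P_\ell:H_\ell]=\ell$. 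An index-$\ell$ subgroup of an $\ell$-group is normal, so each $H_\ell$ is trivial. Hence $H\simeq C_n$ acts regularly, and $[J_{C_n}]^{fl}=0$.
\end{itemize}

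\textbf{How to close it.} Transitivity is irrelevant; only absence of fixed points matters. Let $p$ be the smallest prime divisor of $n$ and $a=n/p$.
\begin{itemize}
\item If $n=4$, the normal $V_4\leq S_4$ acts regularly, so $J_X|_{V_4}=J_{V_4}$. Its flabby class is not invertible by Theorem \ref{th1.11}.
\item Otherwise $a\geq 3$. Let $P=C_p\times C_p$ act on $X$ with $a$ orbits of size $p$. Choose the stabilisers among three distinct subgroups $P_1,P_2,P_3$ of order $p$, each used at least once; this is possible since $P$ has $p+1\geq 3$ such subgroups. The action is faithful ($P_1\cap P_2=1$) and fixed-point free, so $P\leq S_n$.
\item Then $J_X|_P$ is exactly the multinorm lattice of Theorem \ref{th4.1} (with $P$ of rank two, $r=3$, and multiplicities $h_i$). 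That theorem says its flabby class is not invertible, for $p=2$ and for $p\geq 3$ alike. In your language: after discarding repeated orbits via Theorem \ref{th4.2}, the remark following Theorem \ref{th4.3} gives $\Sha^2_\omega(P,J_X|_P)\simeq\bZ/p\bZ\neq 0$.
\item Your first fact then shows $[J_X]^{fl}$ is not invertible for $S_n$.
\end{itemize}

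\textbf{A smaller omission.} For $n=3$ the statement asserts $k$-rationality, not only stable rationality. This follows from Voskresenskii's theorem, recalled in Section \ref{S3}, that all two-dimensional tori are rational.
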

\begin{theorem}[Endo {\cite[Theorem 4.4]{End11}, Hoshi and Yamasaki \cite[Corollary 1.11]{HY17}}]\label{th1.17}
Let $K/k$ be a non-Galois separable field extension 
of degree $n$ and $L/k$ be the Galois closure of $K/k$. 
Assume that ${\rm Gal}(L/k)=A_n$, $n\geq 4$, 
and ${\rm Gal}(L/K)=A_{n-1}$ is the stabilizer of one of the letters 
in $A_n$. 
Then we have:\\
{\rm (1)}\ 
$R^{(1)}_{K/k}(\bG_m)$ is retract $k$-rational 
if and only if $n$ is a prime number.\\
{\rm (2)}\ $R^{(1)}_{K/k}(\bG_m)$ is stably $k$-rational 
if and only if $n=5$.
\end{theorem}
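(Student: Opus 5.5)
The plan is to reduce both parts to results already recorded above. The tools are the local criterion for invertibility of flabby classes (Remark \ref{rem15}(1), i.e.\ \cite[Lemma 1.4]{EM75}), the behaviour of $J_X$ under restriction to subgroups, and Theorem \ref{th1.14} applied to a suitable normaliser. Put $G=A_n$, $H=A_{n-1}$ and $X=G/H=\{1,\dots,n\}$, so that $\widehat{\T}_K\simeq J_X$. The key elementary observation is the following. If a subgroup $P\leq G$ fixes a point $x_0\in X$, write $X=\{x_0\}\sqcup X'$. Then $\bZ[X]=\bZ[x_0]\oplus\bZ[X']$, and the norm map $\bZ\to\bZ[X]$ splits $P$-equivariantly. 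Hence $J_X|_P\simeq\bZ[X']$ is permutation, and so $[J_X|_P]^{fl}=0$.

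For (1), suppose first that $n=p$ is prime. Let $q\neq p$ be a prime dividing $|G|$. A Sylow $q$-subgroup $G_q$ has orbits on $X$ of $q$-power size summing to $p$, so it has a fixed point, and therefore $[J_X]^{fl}|_{G_q}=0$. The Sylow $p$-subgroup $G_p\simeq C_p$ acts regularly on $X$. Since $G_p$ is cyclic, Remark \ref{rem15}(2),(3) shows that $[J_X]^{fl}|_{G_p}$ is invertible. By the local criterion, $[J_X]^{fl}$ is invertible, and Theorem \ref{th2.7}(3) gives retract rationality.

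Now let $n$ be composite. It suffices to find a $p$-subgroup $P\leq A_n$ with $\Sha^2_\omega(P,J_X)\neq 0$. Indeed, restriction preserves invertibility, and an invertible flabby class forces $H^1(P,F)=0$, which is isomorphic to $\Sha^2_\omega(P,J_X)$ by Theorem \ref{thCTS87}. I would take a prime $p\mid n$ and a subgroup $P\simeq C_p\times C_p$ of $A_n$ whose orbits on $X$ are all nontrivial. For example, use products of disjoint $p$-cycles arranged as one regular orbit of size $p^2$ together with orbits of size $p$ on which the two generators act through non-proportional characters; for $p=2$, double transpositions keep $P$ inside $A_n$. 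Then I would compute $\Sha^2_\omega(P,J_X)$ from the augmentation sequence. This reduces to $\Sha^3_\omega(P,\bZ)$ modulo contributions of the stabilisers, and the model case $n=4$, $P=V_4$ gives $\bZ/2$. This computation, together with the choice of orbit structure valid for every composite $n$, is the step I expect to be the main obstacle. Failing a uniform choice, I would fall back on Endo's case analysis \cite[Theorem 4.4]{End11}.

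For (2), stable rationality implies retract rationality, so $n=p$ must be prime by (1). Let $N=N_G(C_p)\simeq C_p\rtimes C_{(p-1)/2}$. Stable rationality over $k$ is preserved by base change to the fixed field $L^N$, and over $L^N$ the torus $\T_K$ is the norm one torus of a degree-$p$ extension. Its Galois closure has group $N$, acting transitively on $X$ with point stabiliser $C_{(p-1)/2}$; this extension is non-Galois when $p\ge 5$, and $p=3$ is excluded since $n\geq 4$. All Sylow subgroups of $N$ are cyclic, so Theorem \ref{th1.14}(3) says that this norm one torus is stably rational only if $C_{(p-1)/2}\simeq C_2$, that is, $p=5$. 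Conversely, for $n=5$ I would show that $[J_{A_5/A_4}]^{fl}=0$ by an explicit flabby resolution, as in \cite[Corollary 1.11]{HY17} where this is verified computationally, and conclude by Theorem \ref{th2.7}(1).
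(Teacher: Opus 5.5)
The paper gives no proof of this theorem; it quotes it from \cite[Theorem 4.4]{End11} and \cite[Corollary 1.11]{HY17}, so your proposal has to stand on its own. Your prime case of (1) is sound. So is your argument for (2): you restrict to $N_{A_p}(C_p)\simeq C_p\rtimes C_{(p-1)/2}$, apply Theorem \ref{th1.14}, and cite \cite{HY17} for $n=5$.

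The genuine gap is the composite case of (1). The problem is not only that you leave the choice of $P$ open; the configuration you suggest does not give $\Sha^2_\omega(P,J_X)\neq 0$.

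By Theorem \ref{th4.2}, an orbit whose stabiliser is contained in another orbit's stabiliser splits off as a permutation summand of $J_X|_P$. Since $\{1\}$ lies in every stabiliser, a regular orbit placed next to orbits of size $p$ contributes nothing. So your heuristic that $\Sha^2_\omega$ comes from $H^3(P,\bZ)$ via the regular orbit fails as soon as other orbits are present.

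What remains is $r$ orbits of size $p$ with pairwise distinct stabilisers. By part (2) of the Remark following Theorem \ref{th4.3}, $\Sha^2_\omega(C_p\times C_p,J_X)\simeq(\bZ/p\bZ)^{\oplus r-2}$, which vanishes for $r\le 2$. So the regular orbit is irrelevant, and you need at least three pairwise distinct stabilisers, not merely ``non-proportional characters''.

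Already $n=6$ defeats the proposal:
\begin{itemize}
\item For $p=2$, a regular $V_4$-orbit plus a $2$-orbit does not lie in $A_6$, because the elements moving the $2$-orbit are odd.
\item For $p=3$, the only non-cyclic choice is $\langle(123),(456)\rangle$. It has $r=2$, so $\Sha^2_\omega=0$, even though $[J_X]^{fl}|_P$ is not invertible by Theorem \ref{th4.1}(2). Here $\Sha^2_\omega$ simply does not detect the obstruction.
\end{itemize}

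Your strategy does go through with the following choices of $P$, all inside $A_n$:
\begin{itemize}
\item If $4\mid n$, let $V_4$ act regularly and simultaneously on $n/4$ blocks of four letters. Then $J_X|_{V_4}\simeq J_{V_4}\oplus\bZ[V_4]^{\oplus n/4-1}$ and $\Sha^2_\omega(V_4,J_X)\simeq H^3(V_4,\bZ)\simeq\bZ/2\bZ$.
\item If $n\equiv 2\pmod 4$ and $n\ge 6$, let $V_4$ act with $n/2$ orbits of size $2$. Their stabilisers should be the three subgroups of order $2$, each occurring an odd number of times. This parity condition is exactly what makes every element even, and it gives $r=3$.
\item If $n$ is odd and composite, let $p$ be its smallest prime factor, so $n/p\ge 3$. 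Let $C_p\times C_p$ act with $n/p$ orbits of size $p$ carrying at least three distinct stabilisers. Every element is then a product of $p$-cycles, hence even, and $r\ge 3$.
\end{itemize}
With these choices your reduction via Theorem \ref{thCTS87} and restriction of invertibility completes (1).
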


When $G\leq S_p$ with prime $[G:H]=[K:k]=p$, we have:

\begin{theorem}[{Hoshi and Yamasaki \cite[Theorem 1.9]{HY21}, see Theorem \ref{th1.11} and Theorem \ref{th1.14} for (1), see Theorem \ref{th1.17} for (4)}]\label{th1.18}
Let $p\geq 3$ be a prime number, 
$K/k$ be a separable field extension of degree $p$ 
and $L/k$ be the Galois closure of $K/k$. 
Let $G={\rm Gal}(L/k)$ be a transitive subgroup of $S_p$ 
and $H={\rm Gal}(L/K)$ with $[G:H]=p$. 
Then norm one tori $\T_K=R_{K/k}^{(1)}(\bG_m)$ with 
$\widehat{\T}_K\simeq J_{G/H}$ and $\dim_k\, \T_K=p-1$ 
are retract $k$-rational, and 
a stably rational classification of $\T_K$ is given as follows:\\ 
{\rm (1)} $\T_K$ is stably $k$-rational 
for $G\simeq C_p\leq S_p$ and $G\simeq D_p\leq S_p$;\\
{\rm (2)} $\T_K$ is not stably $k$-rational 
for $G\simeq C_p\rtimes C_m\leq S_p$ with $3\leq m\mid p-1$;\\
{\rm (3)} 
$\T_K$ is not stably $k$-rational for $G\simeq S_p$ where $p\geq 5$;\\
{\rm (4)} $\T_K$ is stably $k$-rational for $G\simeq A_5\leq S_5$ 
and $\T_K$ is not stably $k$-rational for $G\simeq A_p\leq S_p$ where $p\geq 7$;\\
{\rm (5)} $\T_K$ is not stably $k$-rational 
for $G\simeq \PSL_2(\bF_{11})\leq S_{11}$;\\
{\rm (6)} $\T_K$ is not stably $k$-rational 
for $G\simeq M_{11}\leq S_{11}$ and $G\simeq M_{23}\leq S_{23}$;\\
{\rm (7)} 
$\T_K$ is not stably $k$-rational 
for $\PSL_d(\bF_q)\leq G\leq 
{\rm P\Gamma L}_d(\bF_q)\simeq \PGL_d(\bF_q)\rtimes C_e$
where $d\geq  3$, $p=\frac{q^d-1}{q-1}$ and $q=l^e$ is a prime power;\\
{\rm (8)} 
$\T_K$ is not stably $k$-rational 
for $\PSL_2(\bF_{2^e})< G\leq 
{\rm P\Gamma L}_2(\bF_{2^e})\simeq \PSL_2(\bF_{2^e})\rtimes C_e$ 
where $p=2^e+1$ is a Fermat prime. 
\end{theorem}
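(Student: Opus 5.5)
The plan is to split the statement into its two halves. First, retract $k$-rationality for every transitive $G\leq S_p$. Second, the stable rationality classification. The positive cases of the classification come from the earlier theorems; the negative cases will be obtained by restricting to a suitable subgroup.

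\emph{Retract rationality.} By Theorem \ref{th2.7}(3), I must show that $F=[J_{G/H}]^{fl}$ is invertible. By the local criterion of Remark \ref{rem15}(1), it suffices to check $F|_{G_q}$ for each Sylow subgroup $G_q$.
\begin{enumerate}
\item For $q=p$: since $p^2\nmid p!$, the group $G_p\simeq C_p$ is cyclic, and the argument of Remark \ref{rem15}(2),(3) applies verbatim.
\item For $q\neq p$: the composite $\bZ\xrightarrow{\varepsilon^\circ}\bZ[G/H]\xrightarrow{\varepsilon}\bZ$ is multiplication by $p$, which is prime to $q$. I will use the standard transfer/corestriction argument of Colliot-Th\'el\`ene--Sansuc for extensions of degree prime to $q$. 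It shows that $J_{G/H}|_{G_q}$ has the same flabby class as a permutation lattice up to a direct summand, so $F|_{G_q}$ is invertible.
\end{enumerate}
I expect this step to be routine, given the results quoted.

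\emph{Positive stable rationality.} These cases are already in the earlier results.
\begin{enumerate}
\item $G=C_p$ is the Galois case, covered by Theorem \ref{th1.12}(3).
\item $G=D_p$ has $H\simeq C_2$ acting non-trivially on $C_p$, so Theorem \ref{th1.14}(3) applies.
\item $G=A_5\leq S_5$ is Theorem \ref{th1.17}(2).
\end{enumerate}

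\emph{Negative cases: restriction.} The key observation is the following. Let $G'\leq G$ be a subgroup that is still transitive on the $p$ letters, and put $H'=G'\cap H$. Then $\bZ[G/H]|_{G'}\simeq\bZ[G'/H']$, hence $J_{G/H}|_{G'}\simeq J_{G'/H'}$. Restricting a flabby resolution gives a flabby resolution, and stably permutation lattices restrict to stably permutation lattices. So if $\T_K$ is stably $k$-rational for $G$, the same holds for $G'$.

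\emph{Negative cases: base case (2).} Take $G'=C_p\rtimes C_m$ with $3\leq m\mid p-1$. All its Sylow subgroups are cyclic, but $H'\simeq C_m$ is not of order $2$. Theorem \ref{th1.14}(3) then says $\T_K$ is not stably rational.

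\emph{Negative cases: reduction of (3)--(8).} It remains to find, inside each remaining $G$, the normalizer $N_G(P)$ of a Sylow $p$-subgroup $P$ of the form $C_p\rtimes C_m$ with $m\geq 3$. Any such subgroup contains $P$, so it is transitive on the $p$ letters.
\begin{enumerate}
\item $S_p$ with $p\geq 5$: $N=C_p\rtimes C_{p-1}$, and $p-1\geq 4$.
\item $A_p$ with $p\geq 7$: $N=C_p\rtimes C_{(p-1)/2}$, and $(p-1)/2\geq 3$.
\item $\PSL_2(\bF_{11})$: $N$ has order $55$, so $m=5$.
\item $M_{11}$ and $M_{23}$: $N=11\!:\!5$ and $23\!:\!11$ respectively.
\item $\PSL_d(\bF_q)\leq G$ with $d\geq 3$: $P$ is generated by a Singer cycle and its normalizer contains $C_p\rtimes C_d$, so $m\geq d\geq 3$.
\item $\PSL_2(\bF_{2^e})<G$ with $p=2^e+1$: inside $\PSL_2(\bF_{2^e})$ alone the normalizer is only $D_p$. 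Since $G$ is strictly larger, it contains a field automorphism of order $f>1$. The normalizer then becomes $C_p\rtimes C_{2f}$ with $2f\geq 4$.
\end{enumerate}

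The main obstacle is making the transfer step for $q\neq p$ airtight at the level of flabby classes. The second delicate point is the case-by-case group theory for (7) and (8). For (8) I would first check that the field automorphism can be chosen to normalize the same Sylow $p$-subgroup $P$ and to act on it with order $2f$. I would do this by choosing $P$ inside a non-split torus stable under the Frobenius map.
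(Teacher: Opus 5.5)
\paragraph{Verdict.} Your argument is correct. The paper gives no proof of this statement; it imports it from \cite{HY21} and points only to Theorems \ref{th1.11}, \ref{th1.14}, \ref{th1.17} for the positive cases. Your write-up is therefore a self-contained reconstruction consistent with those pointers, not a competing route. Your reduction also explains the Remark after the statement. For $G=\PSL_2(\bF_{2^e})$ itself one has $N_G(P)\simeq D_p$, which is stably rational, so restriction to $N_G(P)$ gives no information, and that case remains open.

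\paragraph{Points to tighten.}
\begin{enumerate}
\item \emph{Retract rationality, $q\neq p$.} You can replace the vague ``transfer'' step with a one-line argument. The orbits of $G_q$ on the $p$ letters have $q$-power sizes summing to $p$. If none had size $1$, then $q$ would divide $p$. So $G_q$ fixes a point, and $J_{G/H}|_{G_q}$ is permutation by Proposition \ref{prop4.3}. Hence $F|_{G_q}$ is stably permutation.

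\item \emph{Case (7).} Since $G$ may equal $\PSL_d(\bF_q)$, the $C_d$ must lie in $\PSL_d(\bF_q)$, not merely in $\PGL_d(\bF_q)$. This does hold. We have $p=1+q+\cdots+q^{d-1}\equiv d \pmod{q-1}$, so a common prime factor of $d$ and $q-1$ would divide $p>d$. Hence $\gcd(d,q-1)=1$ and $\PSL_d(\bF_q)=\PGL_d(\bF_q)$. Also, $x\mapsto x^q$ acts on $P$ with order exactly $d$, because $q$ has order $d$ modulo $p$ (as $p>q^j-1$ for $1\leq j<d$).

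\item \emph{Case (8).} No Frobenius-stable torus needs to be chosen. Write $f=[G:\PSL_2(\bF_{2^e})]$, with $1<f\mid e$. Then $p\nmid f$, so every Sylow $p$-subgroup of $G$ lies in $\PSL_2(\bF_{2^e})$. The Frattini argument gives $|N_G(P)|=f\cdot|N_{\PSL_2(\bF_{2^e})}(P)|=2pf$, so $m=2f\geq 4$.

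\item \emph{Applying Theorem \ref{th1.14} to $N=N_G(P)$.} Use it in its lattice form $[J_{N/(N\cap H)}]^{fl}\neq 0$, via Theorem \ref{th2.7}(1). That is exactly what your restriction argument needs, and it avoids realizing $N$ as a Galois group.
\end{enumerate}
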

\begin{remark}
We do not know whether $\T_K$ is stably $k$-rational 
in the case {\rm (8)} in Theorem \ref{th1.18} when 
$G=\PSL_2(\bF_{2^e})$ and $p\geq 17$. 
Note that for Fermat primes $p=3$ and $5$, 
$\T_K$ is stably $k$-rational for $G=\PSL_2(\bF_{2^e})$ 
by Theorem \ref{th1.18} (1), (4) (note that 
$\PSL_2(\bF_2)\simeq D_3\simeq S_3$, 
$\PSL_2(\bF_4)=\PGL_2(\bF_4)\simeq A_5$).
\end{remark}

When $G\leq S_{2^e}$ with $[G:H]=[K:k]=2^e$ $(e\geq 1)$, we have:
\begin{theorem}[{Hasegawa, Hoshi and Yamasaki \cite[Theorem 1.1]{HHY20}}]
Let $K/k$ be a separable field extension of degree $n$ 
and $L/k$ be the Galois closure of $K/k$. 
Let $G={\rm Gal}(L/k)$ be a transitive subgroup of $S_n$ 
where $n=2^e$ $(e\geq 1)$ 
and $H={\rm Gal}(L/K)$ with $[G:H]=n$. 
Then $R_{K/k}^{(1)}(\bG_m)$ is stably $k$-rational 
if and only if $G\simeq C_n$. 
Moreover, if $R_{K/k}^{(1)}(\bG_m)$ is not stably $k$-rational, 
then it is not retract $k$-rational. 
\end{theorem}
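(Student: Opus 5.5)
The plan is to treat the two directions separately, reducing the hard direction to a Sylow $2$-subgroup and then using Endo's non-Galois criterion together with Burnside's transfer theorem. Since stably $k$-rational implies retract $k$-rational, it suffices to prove two things. First, if $G\simeq C_n$ then $\T_K$ is stably $k$-rational. Second, if $\T_K$ is retract $k$-rational then $G\simeq C_n$. This gives both the equivalence and the "moreover" statement.

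The first implication is immediate. If $G\simeq C_n$ is transitive of degree $n$, it acts regularly, so $H=\{1\}$, $K=L$ and $K/k$ is cyclic Galois. Theorem \ref{th1.12} (condition (3) with $G\simeq C_m$) then gives stable $k$-rationality.

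For the converse, assume $\T_K$ is retract $k$-rational. By Theorem \ref{th2.7}(3), $[J_{G/H}]^{fl}$ is invertible.

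\emph{Step 1: reduce to a Sylow $2$-subgroup $G_2\leq G$.} Restricting a flabby resolution of $J_{G/H}$ to any subgroup gives a flabby resolution, and invertibility is preserved under restriction. Hence $[J_{G/H}|_{G_2}]^{fl}$ is invertible.

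\emph{Step 2: show $G_2$ is transitive on $G/H$.} Since $[G:H]=2^e$ is the full $2$-part of $|G|/|H|$, we get $[G_2:G_2\cap H]\geq 2^e$, so $G_2$ acts transitively on $G/H$. Thus $J_{G/H}|_{G_2}\simeq J_{G_2/H_2}$ with $H_2=G_2\cap H$. Passing to the fixed field $k'$ of $G_2$, the norm one torus of the degree-$2^e$ extension $K'/k'$ corresponding to $H_2$ is retract $k'$-rational. Its Galois closure has group $\overline{G_2}=G_2/\mathrm{core}_{G_2}(H_2)$, a $2$-group and hence nilpotent.

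\emph{Step 3: conclude that $G_2$ is cyclic and regular.} By Theorem \ref{th1.13}, $K'/k'$ must be Galois, so $H_2\lhd G_2$. Then $H_2$ fixes every coset in the single $G_2$-orbit $G/H$, so $H_2\leq \mathrm{core}_G(H)=\{1\}$ (recall $\bigcap_\sigma H^\sigma=1$). Hence $G_2$ acts regularly on $G/H$ with $|G_2|=2^e$, and $|H|$ is odd. Theorem \ref{th1.11} applied to the Galois extension $K'/k'$ with group $G_2$ shows that $G_2$ is cyclic.

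\emph{Step 4: finish the group theory.} Since $G$ has a cyclic Sylow $2$-subgroup, Burnside's normal complement theorem gives a normal subgroup $N\lhd G$ of odd order with $G=N\rtimes G_2$. Every odd-order subgroup lies in $N$ (its image in $G/N\simeq G_2$ is trivial), so $H\leq N$. Comparing orders, $|N|=|G|/2^e=|H|$, so $H=N$ is normal in $G$. Faithfulness then forces $H=\{1\}$, hence $G=G_2\simeq C_{2^e}=C_n$.

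The step I expect to need the most care is the passage in Steps 1–2. One must check that restricting to $G_2$ really corresponds to base change to $k'$ and yields the norm one torus of a field extension of $k'$, so that Endo's Theorem \ref{th1.13} applies to the quotient $\overline{G_2}$ acting faithfully. This relies on transitivity of $G_2$ (index counting) and on the compatibility of flabby classes with restriction and inflation. Everything after that is the short group-theoretic argument above.
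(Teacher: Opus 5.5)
Your proof is correct. The paper gives no proof of this statement; it only quotes \cite[Theorem 1.1]{HHY20}, so there is no in-paper route to compare against. Your chain of steps is a complete argument, modulo the recalled results:
\begin{itemize}
\item transitivity of a Sylow $2$-subgroup $G_2$ on $G/H$, by index counting;
\item Endo's nilpotent criterion (Theorem \ref{th1.13}), forcing $H_2=G_2\cap H\lhd G_2$ and hence $H_2=1$;
\item the Galois criterion (Theorem \ref{th1.11}), forcing $G_2\simeq C_{2^e}$;
\item Burnside's normal $2$-complement theorem, giving $H=N\lhd G$ and hence $H=1$.
\end{itemize}
The two-way reduction via ``stably $\Rightarrow$ retract'' correctly yields both the equivalence and the ``moreover'' clause.

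A few small points would tidy it up.

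\emph{Base change.} Since $G$ acts faithfully on $G/H$ and $G_2$ acts transitively on it, $\mathrm{core}_{G_2}(H_2)$ is already trivial. So $\overline{G_2}=G_2$, and $L$ itself is the Galois closure of $K'=L^{H_2}$ over $k'=L^{G_2}$. Moreover $K\otimes_k k'\simeq Kk'=L^{H\cap G_2}=K'$ is a field exactly because $G_2$ is transitive. This settles the base-change concern you flagged.

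\emph{Finite fields.} Saltman's criterion (Theorem \ref{th2.7}(3)) is formulated for infinite $k$. When $k$ is finite, $G$ is cyclic, and a transitive cyclic subgroup of $S_n$ is regular. So $G\simeq C_n$, and the statement holds trivially.

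\emph{Burnside step.} It helps to record why it applies. Since $G_2\le C_G(G_2)$, the quotient $N_G(G_2)/C_G(G_2)$ has odd order. It also embeds in the $2$-group $\mathrm{Aut}(C_{2^e})$, so it is trivial and $G_2$ is central in its normalizer.
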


Some special cases where $G\leq S_n$ are also given in Hasegawa, Hoshi and Yamasaki \cite{HHY20}: 

\begin{theorem}[{Hasegawa, Hoshi and Yamasaki \cite[Theorem 1.3]{HHY20}}]
Let $K/k$ be a separable field extension of degree $n$ 
and $L/k$ be the Galois closure of $K/k$. 
Let $G={\rm Gal}(L/k)$ be a transitive subgroup of $S_n$ 
and $H={\rm Gal}(L/K)$ with $[G:H]=n$. 
Assume that $n=q+1$ where $q=l^e\equiv 1\pmod{4}$ 
is an odd prime power 
and 
$\PSL_2(\bF_q)\leq G\leq {\rm P\Gamma L}_2(\bF_q)\simeq 
\PGL_2(\bF_q)\rtimes C_e$. 
Then $R_{K/k}^{(1)}(\bG_m)$ is not retract $k$-rational. 
\end{theorem}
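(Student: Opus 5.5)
The plan is to reduce to a small subgroup and exhibit a cohomological obstruction there. Retract $k$-rationality of $\T_K$ means, by Theorem \ref{th2.7} (3), that the flabby class $F=[J_{G/H}]^{fl}$ is invertible. Invertibility is inherited by restriction to any subgroup $D\leq G$, since restricting a flabby resolution of $J_{G/H}$ to $D$ gives a flabby resolution of ${\rm Res}^G_D J_{G/H}$. Moreover ${\rm Res}^G_D J_{G/H}\simeq J_X$, where $X$ is the $D$-set $G/H\simeq \bP^1(\bF_q)$. An invertible lattice is coflabby, so it suffices to find one $D\leq G$ with
\begin{align*}
H^1(D,[J_X]^{fl})\simeq \Sha^2_\omega(D,J_X)\neq 0,
\end{align*}
where the isomorphism is Theorem \ref{thCTS87}.

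I would take $D\simeq C_2\times C_2\leq \PSL_2(\bF_q)\leq G$; such a Klein four subgroup exists for every odd $q$. The hypothesis $q\equiv 1\pmod 4$ enters through the orbit structure on $\bP^1(\bF_q)$. In this case each involution of $\PSL_2(\bF_q)$ lies in a split torus, so it has exactly two fixed points on $\bP^1(\bF_q)$. The other two involutions of $D$ commute with it and act freely on the remaining points, so they swap these two fixed points. Hence $X$ splits as three $D$-orbits of size $2$, with stabilizers the three distinct subgroups of order $2$, together with $(q+1-6)/4$ regular orbits. (When $q=5$ there are no regular orbits; the argument below does not need any.)

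The main step is to show $\Sha^2_\omega(D,J_X)\neq 0$. I would use the long exact sequence of $(F_X)$,
\begin{align*}
H^2(D,\bZ[X])\to H^2(D,J_X)\xrightarrow{\delta} H^3(D,\bZ)\to H^3(D,\bZ[X]).
\end{align*}
By Shapiro's lemma the last map is the sum of restrictions to the point stabilizers. These stabilizers are cyclic (of order $2$ or trivial), and $H^3(C,\bZ)\simeq H^2(C,\bQ/\bZ)=0$ for cyclic $C$. Hence $\delta$ is onto $H^3(D,\bZ)\simeq \bZ/2$, the Schur multiplier of $C_2\times C_2$.

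Choose $x\in H^2(D,J_X)$ with $\delta(x)\neq 0$. For each cyclic $C\leq D$ the image $\delta_C(x|_C)$ vanishes, so $x|_C$ comes from $H^2(C,\bZ[X])$. The obstacle is to modify $x$ by a global class from $H^2(D,\bZ[X])$ so that all these local restrictions vanish simultaneously; it is not automatic that a locally-from-$\bZ[X]$ class can be corrected globally. I would do this by direct computation with the explicit orbit decomposition.

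One route is to compare with the dual description
\begin{align*}
\Sha^2_\omega(D,J_X)\simeq {\rm Ker}\bigl(\widehat H^{-3}... \bigr)
\end{align*}
but I would rather compute directly. The regular orbits contribute $\bZ[D]$ summands, which are cohomologically trivial. The three orbits of size $2$ give $\bZ[D/C_i]$, and $H^2(D,\bZ[D/C_i])=H^2(C_i,\bZ)=\bZ/2$. Thus one has to check a finite linear-algebra condition over $\bF_2$: the image of $H^2(D,\bZ[X])$, together with a lift of the nonzero element of $H^3(D,\bZ)$, must contain a class that dies on all three cyclic subgroups of order $2$. If this direct check becomes unwieldy, my fallback is to compute $\Sha^2_\omega(D,J_X)$ by machine (GAP) for the cases $q+1=6$, and for general $q$ to use the fact that only the three non-free orbit types matter. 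I expect the answer $\Sha^2_\omega(D,J_X)\simeq \bZ/2$, which gives non-invertibility of $F$ and hence that $\T_K$ is not retract $k$-rational.
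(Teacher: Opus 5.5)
Your reduction is sound. Retract rationality forces $[J_{G/H}]^{fl}$ to be invertible, and invertibility restricts to $D\simeq C_2\times C_2\leq\PSL_2(\bF_q)$. Invertible lattices are coflabby, and $X=\bP^1(\bF_q)$ splits over $D$ into $D/C_1$, $D/C_2$, $D/C_3$ (the $C_i$ being the subgroups of order $2$) plus $(q-5)/4$ regular orbits. One justification should be corrected: the other involutions swap the two fixed points of a given one because a two-point stabilizer in $\PSL_2(\bF_q)$ is cyclic and so contains a unique involution, not because they act freely elsewhere.

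The genuine gap is the decisive step $\Sha^2_\omega(D,J_X)\neq 0$. You only state it as an expectation, and the route you propose cannot work. Every class in the image of $H^2(D,\bZ[X])\to H^2(D,J_X)$ already restricts to $0$ in each $H^2(C_j,J_X)$. By Shapiro and Mackey, $H^2(D,\bZ[D/C_j])=H^2(C_j,\bZ)$ restricts to the diagonal of $H^2(C_j,\bZ)^{\oplus 2}$ on the two $C_j$-fixed points. That diagonal is exactly the image of $H^2(C_j,\bZ)$ under $1\mapsto N_X$. The orbits $D/C_i$ with $i\neq j$, and the regular orbits, are $C_j$-free and contribute nothing. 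So correcting a lift $x$ of the generator of $H^3(D,\bZ)$ by global classes never changes any $x|_{C_j}$. As shown next, such an $x$ is in fact never locally trivial.

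The missing idea is that the nonzero element of $\Sha^2_\omega$ lies in $\ker\delta$, not among the lifts. Each nontrivial character of $D$ is trivial on exactly one of $C_1,C_2,C_3$. Hence the image of
\[
H^2(D,\bZ)=\Hom(D,\bQ/\bZ)\longrightarrow H^2(D,\bZ[X])=\bigoplus_{i=1}^{3}H^2(C_i,\bZ)\simeq(\bZ/2\bZ)^{3}
\]
is the even-weight subspace. A class supported on a single orbit $D/C_i$ therefore has nonzero image in $H^2(D,J_X)$. By the restriction computation above, that image is locally trivial, which gives $\Sha^2_\omega(D,J_X)\neq 0$ and completes your argument.

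In fact $\Sha^2_\omega(D,J_X)\simeq\bZ/2\bZ$. Theorem \ref{th4.2} gives $J_X\simeq J_{X'}\oplus\bZ[D]^{\oplus (q-5)/4}$ with $X'=\bigsqcup_{i=1}^3 D/C_i$, and $\Sha^2_\omega(D,J_{X'})\simeq\bZ/2\bZ$ is Colliot-Th\'el\`ene's computation, Theorem \ref{th4.3}~(5). So $\Sha^2_\omega$ is exactly this image, which confirms that no lift of the generator of $H^3(D,\bZ)$ is locally trivial. You could also finish at once with Theorem \ref{th4.2} together with Endo's Theorem \ref{th4.1}~(1) for $p=2$, $r=3$, which says $[J_{X'}]^{fl}$ is not invertible.

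The paper gives no proof of this statement; it only quotes it from Hasegawa--Hoshi--Yamasaki.
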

\begin{theorem}[{Hasegawa, Hoshi and Yamasaki \cite[Theorem 1.4]{HHY20}}]
Let $p$ be a prime number, 
$K/k$ be a separable field extension of degree $2p$ 
and $L/k$ be the Galois closure of $K/k$. 
Assume that $G={\rm Gal}(L/k)$ is a primitive 
subgroup of $S_{2p}$ 
and $H={\rm Gal}(L/K)$ with $[G:H]=2p$. 
Then $R_{K/k}^{(1)}(\bG_m)$ is not retract $k$-rational. 

More precisely, $R_{K/k}^{(1)}(\bG_m)$ is 
not retract $k$-rational for the following primitive 
groups $G\leq S_{2p}$:\\ 
{\rm (1)} $G=S_{2p}$ or $G=A_{2p}\leq S_{2p}$;\\
{\rm (2)} $G=S_5\leq S_{10}$ or $G=A_5\leq S_{10}$;\\
{\rm (3)} $G=M_{22}\leq S_{22}$ 
or $G=\Aut(M_{22})\simeq M_{22}\rtimes C_2\leq S_{22}$ 
where $M_{22}$ is the Mathieu group of degree $22$;\\ 
{\rm (4)} $\PSL_2(\bF_q)\leq G\leq {\rm P\Gamma L}_2(\bF_q)\simeq 
\PGL_2(\bF_q)\rtimes C_e$ 
where $2p=q+1$ and $q=l^e$ is an odd prime power.
\end{theorem}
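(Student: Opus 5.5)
The plan is to reduce to an explicit list of groups and then rule out retract rationality group by group. Throughout, $G$ acts on $X=G/H$ with $|X|=2p$ and $\widehat{\T}_K\simeq J_{G/H}$. By Theorem \ref{th2.7} (3), the torus $\T_K$ is retract $k$-rational exactly when $[J_{G/H}]^{fl}$ is invertible.

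\emph{Reduction to the list (1)--(4).} I would invoke the classification of primitive permutation groups of degree $2p$, going back to Wielandt and completed via CFSG (Liebeck--Saxl). It says that a primitive subgroup of $S_{2p}$ is either $A_{2p}$ or $S_{2p}$, or one of the almost simple groups in (2), (3) or (4). So it suffices to treat those cases.

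\emph{Non-invertibility is detected on a subgroup.} If $[J_{G/H}]^{fl}$ were invertible, then its restriction to any subgroup $D\leq G$ would be invertible. It would also be coflabby, so
\[
H^1(D,[J_{G/H}]^{fl})\simeq \Sha^2_\omega(D,J_{G/H})
\]
would vanish, by Theorem \ref{thCTS87} applied to the restricted lattice. It is therefore enough to exhibit, in each case, a subgroup $D$ (typically a $2$-group) with $\Sha^2_\omega(D,J_X)\neq 0$.

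The restriction $J_X|_D$ depends only on the decomposition of $X$ into $D$-orbits $\bigcup_i D/D_i$. For $D\simeq C_2\times C_2$ I would use the standard computation from the augmentation sequence $(E_X)$ and its dual. It gives $\Sha^2_\omega(D,J_X)\simeq \bZ/2$ whenever every stabiliser $D_i$ is non-trivial, all the $D_i$ are cyclic, and the images of the maps $\widehat H^{0}(D_i,\bZ)\to \widehat H^0(D,\bZ)$ (via corestriction) do not generate. In practice this means every orbit has length $2$ or $4$, no orbit is regular, and no orbit is a single fixed point; I will verify the precise condition when carrying this out.

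\emph{Case-by-case.} For (1), the result follows from Theorem \ref{thS} (1) and Theorem \ref{th1.17} (1), since $2p$ is not prime. The exception is $2p=4$, which is covered by the theorem for degree $2^e$.

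For (4), write $q+1=2p$. If $q\equiv 1\pmod 4$, the previous theorem of \cite{HHY20} applies directly. If $q\equiv 3\pmod 4$, then $4\mid q+1$, which forces $p=2$ and $q=3$, and the degree-$4$ theorem applies.

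For (2) and (3), I would choose a Klein four subgroup $D$ explicitly:
\begin{itemize}
\item in $A_5$ acting on the $10$ $2$-subsets of $\{1,\dots,5\}$, the group $D=\langle(12)(34),(13)(24)\rangle$ has orbit lengths $2,2,2,4$;
\item in $M_{22}$, a $2$-subgroup of type $2^2$ whose fixed-point structure on $22$ points is read off from the Atlas, using that involutions of $M_{22}$ fix $6$ points.
\end{itemize}
I would then check the $\Sha^2_\omega$ criterion for these orbit data. Because the non-vanishing passes to any overgroup containing $D$, this also handles $S_5$ and ${\rm Aut}(M_{22})$.

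The main obstacle is cases (2) and (3). There the criterion must be checked against concrete orbit data, and if no Klein four subgroup works, one has to pass to a larger $2$-subgroup (e.g.\ a Sylow $2$-subgroup). In that situation I would fall back on a direct GAP computation of $H^1(D,[J_X]^{fl})$, or of the flabby class restricted to the Sylow $2$-subgroup.
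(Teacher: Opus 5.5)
Your handling of (1), (2) and (4) is essentially fine. The plan for (3) does not work, however, and the $C_2\times C_2$ criterion you state is wrong.

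The decisive problem is $M_{22}$: \emph{no} Klein four subgroup of $M_{22}$ acts without fixed points on the $22$ points. View $M_{22}$ as the pointwise stabiliser of $\infty_1,\infty_2$ in $M_{24}$.
An involution $a\in M_{22}$ fixes pointwise an octad $O_a\ni\infty_1,\infty_2$. Any $b$ commuting with $a$ stabilises $O_a$ and acts on it through $A_8$. So $b$ fixes $0$, $4$ or $8$ points of $O_a$, hence $4$ or $8$, because $\infty_1,\infty_2$ are fixed.
Thus every $D\simeq C_2\times C_2$ in $M_{22}$ fixes $2$ or $6$ of the $22$ points. By Proposition \ref{prop4.3}, $J_X|_D$ is then permutation, so $\Sha^2_\omega(D,J_X)=0$. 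Your Atlas data (involutions fix $6$ points) cannot produce the subgroup you need, and your deduction for $\Aut(M_{22})$ loses its input as well.

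The tool itself is also too weak here, since $H^1(D,[J_X]^{fl})\simeq\Sha^2_\omega(D,J_X)$ only detects failure of coflabbiness. The natural fixed-point-free candidate is $D\simeq C_2^3$, the normal subgroup of the stabiliser $2^3{:}\PSL_3(\bF_2)\leq M_{22}$ of an octad avoiding $\infty_1,\infty_2$. Identify the complement of that octad with $\bF_2^4$, with $\infty_1=0$ and $\infty_2=v$; then $D$ is the group of transvections $x\mapsto x+\phi(x)v$ with $\phi(v)=0$.
Its orbits are the octad (one regular orbit) and the seven pairs $\{x,x+v\}$, whose stabilisers are the seven subgroups of index $2$. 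Every proper subgroup of $D$ has a fixed point. Moreover $\Sha^2_\omega(D,J_X)=0$ by part (2) of the Remark following Theorem \ref{th4.3}, since $D\not\simeq C_2\times C_2$. So $[J_X]^{fl}|_D$ is coflabby, and your method detects nothing on this subgroup.

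The missing idea is to detect non-invertibility directly. By Theorem \ref{th4.2} the regular orbit splits off as $\bZ[D]$. Endo's Theorem \ref{th4.1} with $p=2$, $r=7$ then shows that $[J_X]^{fl}|_D$ is not invertible. This settles $M_{22}$, and by restriction also $\Aut(M_{22})$. Your GAP fallback would also settle it if it means an invertibility test on a Sylow $2$-subgroup, but that is a computation, not the argument you outlined.

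The same tool gives the correct criterion for a fixed-point-free $D\simeq C_2\times C_2$. After discarding regular and repeated orbits by Theorem \ref{th4.2}, $[J_X]^{fl}|_D$ is non-invertible (and then $\Sha^2_\omega(D,J_X)\simeq\bZ/2$, cf.\ Theorem \ref{th4.3} (5)) if and only if either all orbits are regular or all three subgroups of order $2$ occur as stabilisers.
Your corestriction condition only measures the gcd of the orbit lengths, i.e.\ ${\rm p}$-${\rm ord}(J_X)$ (Theorem \ref{th8.3}). It is satisfied by $X=D/\langle a\rangle$, yet there $[J_X]^{fl}=0$. Also, ``length $2$ or $4$ but not regular'' is self-contradictory for $C_2\times C_2$.
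Your $A_5$ example does work, but for the correct reason: the three length-$2$ orbits $\{\{1,2\},\{3,4\}\}$, $\{\{1,3\},\{2,4\}\}$, $\{\{1,4\},\{2,3\}\}$ have the three distinct stabilisers of order $2$.
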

%
Let $nTm$ be the $m$-th transitive subgroup of the symmetric group $S_n$ 
of degree $n$ up to conjugacy (see Butler and McKay \cite{BM83}, \cite{GAP}).
The number of transitive subgroups $nTm$ of $S_n$ 
$(2\leq n\leq 16)$ up to conjugacy is given as follows 
(see Butler and McKay \cite{BM83} for $n\leq 11$, 
Royle \cite{Roy87} for $n=12$, 
Butler \cite{But93} for $n=14,15$, 
Hulpke \cite[Tabelle 1]{Hul96}, \cite[Table 1]{Hul05} for $n=16$ 
and \cite{GAP}):\\
\begin{center}
\begin{tabular}{r|rrrrrrrrrrrrrrr}
$n$ & $2$ & $3$ & $4$ & $5$ & $6$ & $7$ & $8$ & $9$ & $10$ & $11$ & 
$12$ & $13$ & $14$ & $15$ & $16$\\\hline
$\#$ of $nTm$ & $1$ & $2$ & $5$ & $5$ & $16$ & $7$ & $50$ & $34$ & $45$ & $8$ &
$301$ & $9$ & $63$ & $104$ & $1954$\vspace*{3mm}\\
\end{tabular}
\end{center}

A classification of stably/retract rational norm one tori 
$\T_K=R^{(1)}_{K/k}(\bG_m)$ in dimension $n-1$ where $[K:k]=n\leq 16$
is given completely as follows:
%
\begin{theorem}[{Hoshi and Yamasaki \cite[Theorem 1.10, Theorem 1.14, Theorem 8.5]{HY17} for $n=5,6,7,11$, \cite[Theorem 1.9, Theorem 1.11]{HY21} for $n=8,9,10,13$, \cite[Theorem 1.1]{HY2} for the stable $k$-rationality of $G=9T27\simeq \PSL_2(\bF_8)$,  Hasegawa, Hoshi and Yamasaki \cite[Theorem 1.2]{HHY20} for $n=12,14,15,16$ and the stable $k$-rationality of $G=10T11$ $\simeq A_5\times C_2$}]\label{th1.23}
Let $K/k$ be a separable field extension of degree $n$ 
and $L/k$ be the Galois closure of $K/k$. 
Let $G={\rm Gal}(L/k)=nTm$ be a transitive subgroup of $S_n$ 
and $H={\rm Gal}(L/K)$ with $[G:H]=n$. 
Let $\T_K=R_{K/k}^{(1)}(\bG_m)$ be norm one tori of $K/k$ 
with $\widehat{\T}_K\simeq J_{G/H}$ and $\dim_k\, \T_K=n-1$. 
Assume that $2\leq n\leq 16$. 
Then\\ 
{\rm (1)} $\T_K$ is stably $k$-rational if and only if $G$ is given as in Table $2$;\\
{\rm (2)} $\T_K$ is not stably but retract $k$-rational if and only if $G$ is given as in Table $3$;\\
{\rm (3)} $\T_K$ is not retract $k$-rational if and only if $G$ is not in 
Tables $2$ and Table $3$;
\end{theorem}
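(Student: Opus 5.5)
This classification is assembled from the cited works, so the plan is to reproduce their strategy: a finite case analysis over the transitive groups $G=nTm$ with $2\leq n\leq 16$ listed in the table above. For each such $G$, with $H$ the stabilizer of a letter, I will compute a flabby resolution $0\to J_{G/H}\to P\to F\to 0$ explicitly, following the constructions of Endo--Miyata and Colliot-Th\'el\`ene--Sansuc. By Theorem \ref{th2.7}, $\T_K$ is stably $k$-rational iff $[F]=0$, and retract $k$-rational iff $F$ is invertible. So the whole problem is to decide, group by group, whether $F$ is stably permutation, merely invertible, or not invertible.

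First I will dispose of every case covered by general theorems already recalled.
\begin{itemize}
\item Galois extensions ($H=1$) are settled by Theorem \ref{th1.11} and Theorem \ref{th1.12}.
\item Non-Galois $K/k$ with nilpotent $G$ are not retract $k$-rational by Theorem \ref{th1.13}.
\item When all Sylow subgroups of $G$ are cyclic, Theorem \ref{th1.14} applies.
\item $S_n$ and $A_n$ are handled by Theorem \ref{thS} and Theorem \ref{th1.17}.
\item Prime degrees $n=5,7,11,13$ are handled by Theorem \ref{th1.18}.
\item Degrees $n=2^e$ are handled by the theorem of Hasegawa--Hoshi--Yamasaki.
\end{itemize}

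For the remaining groups, I will prove non-retract-rationality in two steps. First I compute $H^1(G,F)\simeq\Sha^2_\omega(G,J_{G/H})$ via Theorem \ref{thCTS87}; this is a finite group-cohomology computation. Whenever it is nonzero, $F$ is not invertible. Otherwise I test invertibility Sylow-locally, using \cite[Lemma 1.4]{EM75}: $F$ is invertible iff $F|_{G_p}$ is invertible for all $p$. Each restriction $F|_{G_p}$ is checked by searching for a decomposition of a permutation lattice containing $F|_{G_p}$ as a direct summand, for instance by computing $\Hom$ and $\mathrm{End}$ over $\bZ[G_p]$ and looking for split surjections. Non-invertibility is also inherited upward: if $F|_{G'}$ is not invertible for some subgroup $G'\le G$, then neither is $F$. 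This lets me reduce to a list of minimal groups.

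For the invertible cases, I must separate Table $2$ from Table $3$. To show $[F]\neq 0$, I will use obstructions that vanish on stably permutation lattices. The main tool is the comparison of $\widehat H^0(G',F)$ and $H^1(G',F)$ with those of permutation lattices over all subgroups $G'$, in the spirit of the argument behind Theorem \ref{thS}(2). A second tool is restriction to a subgroup where a lower-rank classification (Theorems \ref{th6.16}, \ref{th1M}, \ref{th2M}) already shows non-vanishing. To show $[F]=0$, I will exhibit an explicit isomorphism $F\oplus P_1\simeq P_2$ with $P_1,P_2$ permutation. The candidate lattices $P_1,P_2$ are guessed from the ranks and the fixed-point data of $F$, and the isomorphism is found by solving for an integral $G$-equivariant matrix with determinant $\pm 1$.

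The main obstacle is this last step in the few delicate cases, such as $9T27\simeq\PSL_2(\bF_8)$ and $10T11\simeq A_5\times C_2$. There all cohomological obstructions vanish, yet stable permutation-ness is not a local property, so no general procedure applies. For these I would first try to find the required isomorphism $F\oplus P_1\simeq P_2$ by a computer search. If that search becomes too large, I would use the multiplicative relation $[M_2]^{fl}=[M_1]^{fl}+[M_3]^{fl}$ for exact sequences with invertible cokernel, together with Proposition \ref{prop2.8}. The aim is to express $[J_{G/H}]^{fl}$ as a sum of flabby classes of smaller lattices attached to intermediate subgroups $H\le H'\le G$, which are already known to be zero.
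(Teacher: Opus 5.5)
The paper gives no proof of this statement. It is quoted from [HY17], [HY21], [HY2] and [HHY20], and your overall architecture is the architecture of those sources: dispose of cases by the general results of Section \ref{S3}, compute flabby resolutions by machine, test invertibility Sylow-locally, and certify $[F]=0$ by explicit isomorphisms $F\oplus P_1\simeq P_2$. The step that does not work as you state it is your main tool for separating Table 2 from Table 3. Every $F$ in either table is invertible. Hence $H^1(G',F)=0$ for all $G'\leq G$ in both tables, and this invariant carries no information. Likewise, for each $G'$ the group $\widehat H^0(G',F)$ is a direct summand of $\widehat H^0(G',P)$ for some permutation $P$. So, taken one subgroup at a time, these invariants look exactly like those of a summand of a permutation lattice, which every $F$ in Table 3 is anyway. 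An obstruction to $[F]=0$ appears only when you require one fixed pair $(P_1,P_2)$ with $F\oplus P_1\simeq P_2$ to be compatible with these torsion invariants for all $G'$ simultaneously. (Ranks of fixed sublattices alone never obstruct, since $\bQ\otimes F$ is always a virtual permutation module.) That requirement is a linear system over $\bZ$ in the multiplicities of the $\bZ[G/U]$ in $P_1,P_2$. Its insolubility is the kind of test performed by \texttt{PossibilityOfStablyPermutationF} [HY17, Algorithm F4], which the paper itself invokes in the proof of Theorem \ref{th8.8}. Your restriction tool is sound, but restriction does not lower rank. To land in an already classified case you must first split off permutation summands of $J_{G/H}|_{G'}$ by Theorem \ref{th4.2}, reaching for instance a prime-degree Frobenius case of Theorem \ref{th1.18}. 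Alternatively, restrict to a direct product and use Proposition \ref{propconv}, as in the proof of Theorem \ref{th8.8}. Similarly, your invertibility test is decisive only for a fixed surjection from a permutation lattice with coflabby kernel: it splits iff $F$ is invertible (Lemma \ref{lem6.3}), i.e. iff ${\rm p}$-${\rm ord}(F)=1$ (Proposition \ref{prop6.5}).

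For the delicate positive cases, your fallback through intermediate subgroups is empty for $9T27$. There $H\simeq C_2^3\rtimes C_7$ is the Borel subgroup, which is maximal in $\PSL_2(\bF_8)$ because the action on the $9$ points of $\bP^1(\bF_8)$ is $2$-transitive; so only an explicit isomorphism will do. For $10T11\simeq A_5\times C_2$, the needed relation is not a sum of flabby classes of lattices attached to $A_4\times C_2$ and $A_5$ alone. Write $G/H=X\times Y$ with $|X|=5$ and $|Y|=2$. One uses $J_{X\times Y}\oplus(J_X\otimes J_Y)\simeq(J_X\otimes\bZ[Y])\oplus(\bZ[X]\otimes J_Y)$ from Theorem \ref{th7.4}, and controls the three tensor products by Theorem \ref{th6.15}. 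This needs the permutation orders ${\rm p}$-${\rm ord}(J_X)=5$ and ${\rm p}$-${\rm ord}(J_Y)=2$ to be coprime; it is Theorem \ref{th7.5}, see Example \ref{ex8.3}(2). More generally, this tensor-product route reduces the positive direction to the known results for the factors, for all rows of Tables 2 and 3 with $n\in\{6,10,12,14,15\}$ except $15T5$, $14T16$ and $15T10$ (Examples \ref{ex8.3}, \ref{ex8.6} and Remarks \ref{rem8.4}, \ref{rem8.7}). It does not reach prime-power degrees such as $9T27$. Beyond the direct-product cases of Proposition \ref{propconv}, it gives nothing on the negative directions, which still require computations of the kind you describe.
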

\begin{center}
Table $2$: $\T_K=R_{K/k}^{(1)}(\bG_m)$ 
is stably $k$-rational where $G={\rm Gal}(L/k)=nTm\leq S_n$ $(2\leq n\leq 16)$\vspace*{2mm}\\
\begin{tabular}{l} 
$G=nTm$ $(2\leq n\leq 16)$: $\T_K=R_{K/k}^{(1)}(\bG_m)$ is stably $k$-rational 
\\\hline
$2T1\simeq C_2$\\
$3T1\simeq C_3$, $3T2\simeq S_3$\\
$4T1\simeq C_4$\\
$5T1\simeq C_5$, $5T2\simeq D_5$, $5T4\simeq A_5$\\ 
$6T1\simeq C_6$, $6T2\simeq S_3$, $6T3\simeq D_6$\\
$7T1\simeq C_7$, $7T2\simeq D_7$\\ 
$8T1\simeq C_8$\\
$9T1\simeq C_9$, $9T3\simeq D_9$, $9T27\simeq \PSL_2(\bF_8)$\\
$10T1\simeq C_{10}$, $10T2\simeq D_5$, $10T3\simeq D_{10}$, 
$10T11\simeq A_5\times C_2$\\
$11T1\simeq C_{11}$, $11T2\simeq D_{11}$\\ 
$12T1\simeq C_{12}$, $12T5\simeq Q_{12}\simeq C_3\rtimes C_4$, $12T11\simeq S_3\times C_4$\\
$13T1\simeq C_{13}$, $13T2\simeq D_{13}$\\ 
$14T1\simeq C_{14}$, $14T2\simeq D_{7}$, $14T3\simeq D_{14}$\\ 
$15T1\simeq C_{15}$, $15T2\simeq D_{15}$, $15T3\simeq D_5\times C_3$, $15T4\simeq S_3\times C_5$, $15T5\simeq A_5$, $15T7\simeq D_5\times S_3$,\\
$15T16\simeq A_5\times C_3\simeq \GL_2(\bF_4)$, $15T23\simeq A_5\times S_3$\\ 
$16T1\simeq C_{16}$
\end{tabular}
\end{center}\vspace*{0mm}

\begin{center}
Table $3$: $\T_K=R_{K/k}^{(1)}(\bG_m)$ 
is not stably but retract $k$-rational where $G={\rm Gal}(L/k)=nTm\leq S_n$ $(2\leq n\leq 16)$\vspace*{2mm}\\
\begin{tabular}{l} 
$G=nTm$ $(2\leq n\leq 16)$: $\T_K=R_{K/k}^{(1)}(\bG_m)$ is not stably but retract $k$-rational\\\hline
$5T3\simeq F_{20}$, $5T5\simeq S_5$\\
$7T3\simeq F_{21}$, $7T4\simeq F_{42}$, $7T5\simeq \PSL_2(\bF_7)$, $7T6\simeq A_7$, $7T7\simeq S_7$\\
$10T4\simeq F_{20}$, $10T5\simeq F_{20}\times C_2$, $10T12\simeq S_5$, $10T22\simeq S_5\times C_2$\\
$11T3\simeq F_{55}$, $11T4\simeq F_{110}$, $11T5\simeq \PSL_2(\bF_{11})$, 
$11T6\simeq M_{11}$, $11T7\simeq A_{11}$, $11T8\simeq S_{11}$\\
$13T3\simeq F_{39}$, $13T4\simeq F_{52}$, $13T5\simeq F_{78}$, $13T6\simeq F_{156}$, $13T7\simeq \PSL_2(\bF_{13})$, $13T8\simeq A_{13}$, $13T9\simeq S_{13}$\\
$14T4\simeq F_{42}$, $14T5\simeq F_{21}\times C_2$, $14T7\simeq F_{42}\times C_2$, $14T16\simeq \PSL_2(\bF_7)\rtimes C_2$,\\
$14T19\simeq\PSL_2(\bF_7)\times C_2$, 
$14T46\simeq S_{7}$, $14T47\simeq A_{7}\times C_2$, $14T49\simeq S_{7}\times C_2$\\
$15T6\simeq C_{15}\rtimes C_4$, $15T8\simeq F_{20}\times C_3$, $15T10\simeq S_5$, $15T11\simeq F_{20}\times S_3$,\\
$15T22\simeq (A_5\times C_3)\rtimes C_2\simeq \GL_2(\bF_4)\rtimes C_2$, $15T24\simeq S_5\times C_3$, $15T29\simeq S_5\times S_3$
\end{tabular}
\end{center}\vspace*{2mm}

In Table $3$, 
$F_{pl}\simeq C_p\rtimes C_l$ $(2<l\mid p-1)$ 
is the Frobenius group of order $pl$ where $p$ is a prime number 
and $M_{11}$ is the Mathieu group of degree $11$.\\

For particular groups $G$, and for general $H\leq G$, we have:

\begin{theorem}[{Hoshi and Yamasaki \cite[Theorem 1.1]{HY2}}]\label{th1.24}
Let $k$ be a field, $K/k$ be a finite 
separable field extension 
of degree $n$ 
and $L/k$ be the Galois closure of $K/k$ with 
$G={\rm Gal}(L/k)$ and $H={\rm Gal}(L/K)\lneq G$. 
Let $\T_K=R^{(1)}_{K/k}(\bG_m)$ be the norm one torus of $K/k$ 
with $\widehat{\T}_K\simeq J_{G/H}$ and ${\rm dim}_k\, \T_K=n-1=[K:k]-1=[G:H]-1$.\\ 
{\rm (1)} When $G\simeq A_4\simeq \PSL_2(\bF_3)$, 
$A_5\simeq\PSL_2(\bF_5)\simeq \PSL_2(\bF_4)\simeq \PGL_2(\bF_4)\simeq \SL_2(\bF_4)$, 
$A_6\simeq\PSL_2(\bF_9)$, 
$\T_K$ is not retract $k$-rational except for the two cases $(G,H)\simeq (A_5, V_4)$, $(A_5, A_4)$ 
with $|G|=60$, $n=[G:H]=15$, $5$. 
For the two exceptional cases, $\T_K$ is stably $k$-rational;\\ 
{\rm (2)} When $G\simeq S_3\simeq\PSL_2(\bF_2)\simeq \PGL_2(\bF_2)\simeq 
\SL_2(\bF_2)\simeq \GL_2(\bF_2)$, 
$S_4\simeq\PGL_2(\bF_3)$, $S_5\simeq\PGL_2(\bF_5)$, $S_6$, 
$\T_K$ is not retract $k$-rational except for the six cases 
$(G,H)\simeq (S_3,\{1\})$, $(S_3,C_2)$, 
$(S_5, V_4)$ satisfying $V_4\leq D(S_5)\simeq A_5$, 
$(S_5, D_4)$, $(S_5, A_4)$, $(S_5, S_4)$ with $|S_3|=6$, $n=[S_3:H]=6$, $3$, 
$|S_5|=120$, $n=[S_5:H]=30$, $15$, $10$, $5$. 
For the two exceptional cases $(S_3,\{1\})$, $(S_3, C_2)$, 
$\T_K$ is stably $k$-rational. 
For the four exceptional cases $(S_5, V_4)$ satisfying 
$V_4\leq D(S_5)\simeq A_5$, $(S_5, D_4)$, $(S_5, A_4)$, $(S_5, S_4)$, 
$\T_K$ is not stably but retract $k$-rational;\\
{\rm (3)} When $G\simeq \GL_2(\bF_3)$, $\GL_2(\bF_4)\simeq A_5\times C_3$, $\GL_2(\bF_5)$, 
$\T_K$ is not retract $k$-rational except for the case $(G,H)\simeq  (\GL_2(\bF_4), A_4)$ 
satisfying $A_4\leq D(G)\simeq A_5$ with $|G|=180$, $n=[G:H]=15$. 
For the exceptional case, $\T_K$ is stably $k$-rational;\\
{\rm (4)} When $G\simeq \SL_2(\bF_3)$, $\SL_2(\bF_5)$, $\SL_2(\bF_7)$, 
$\T_K$ is not retract $k$-rational;\\
{\rm (5)} When $G\simeq \PSL_2(\bF_7)\simeq \PSL_3(\bF_2)$, 
$\T_K$ is not retract $k$-rational except for the two cases $H\simeq D_4$, $S_4$ with $|G|=168$, $n=[G:H]=21, 7$. 
For the two exceptional cases, 
$\T_K$ is not stably but retract $k$-rational;\\
{\rm (6)} When 
$G\simeq \PSL_2(\bF_8)\simeq \PGL_2(\bF_8)\simeq \SL_2(\bF_8)$, 
$\T_K$ is not retract $k$-rational except for the two cases $H={\rm Sy}_2(G)\simeq (C_2)^3$, 
$N_G({\rm Sy}_2(G))\simeq (C_2)^3\rtimes C_7$ 
with $|G|=504$, $n=[G:H]=63$, $9$.   
For the two exceptional cases, $\T_K$ is stably $k$-rational. 
\end{theorem}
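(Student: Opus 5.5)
The statement is a classification over a finite list of groups, so I would argue group by group, and the invariants involved make this a finite computation. Fix $G$ from the list. Because $L/k$ is the Galois closure of $K/k$, the subgroup $H$ is core-free in $G$, and the character lattice is $\widehat{\T}_K\simeq J_{G/H}$. So I would first enumerate, up to conjugacy, the core-free subgroups $H\lneq G$. For each pair $(G,H)$ I would construct an explicit flabby resolution $0\to J_{G/H}\to P\to F\to 0$, following Endo--Miyata \cite[Lemma 1.1]{EM75}. By Theorem \ref{th2.7}, everything then reduces to two questions about $F=[J_{G/H}]^{fl}$: is it invertible, and is it stably permutation?

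\textbf{Step 1: ruling out retract rationality in the generic case.} I would exhibit a subgroup $G'\leq G$ with $H^1(G',F)\neq 0$. Since $F$ is flabby, this shows $F$ is not coflabby, hence not invertible. Several easy cases come directly from earlier results:
\begin{itemize}
\item when $H=\{1\}$ and some Sylow subgroup of $G$ is non-cyclic, Theorem \ref{th1.11} applies;
\item the case $G=A_4$ with nilpotent Sylow structure is covered via Theorem \ref{th1.13} whenever applicable;
\item the standard-degree actions $S_n$, $A_n$ are covered by Theorems \ref{thS} and \ref{th1.17}.
\end{itemize}
For the remaining pairs I would use the identification $H^1(G',F)\simeq\Sha^2_\omega(G',J_{G/H})$ from Theorem \ref{thCTS87}, applied to suitable subgroups $G'$ (typically a Sylow $2$- or $3$-subgroup, or a small non-cyclic abelian subgroup). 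This cohomology can be computed mechanically, for instance with GAP.

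\textbf{Step 2: the exceptional pairs.} For the exceptional pairs I would first prove invertibility. By \cite[Lemma 1.4]{EM75} it suffices to check that $F$ restricted to each Sylow subgroup $G_p$ is invertible, and this is a finite check. Where $G_p$ is cyclic it follows as in Remark \ref{rem15}. Where $G_p$ is not cyclic, I would decompose $F|_{G_p}$ explicitly as a direct summand of a permutation lattice.

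I would then separate the stably rational cases from the merely retract rational ones.
\begin{itemize}
\item \emph{Stably rational cases.} These are $(A_5,V_4)$, $(A_5,A_4)$, $(S_3,\{1\})$, $(S_3,C_2)$, $(\GL_2(\bF_4),A_4)$, and the two $\PSL_2(\bF_8)$ cases. Several of them are already known: $(A_5,A_4)$ is $5T4$, $(S_3,C_2)$ is $3T2$, $(S_3,\{1\})$ is $6T2$, $(\PSL_2(\bF_8),N_G({\rm Sy}_2))$ is $9T27$, and $(\GL_2(\bF_4),A_4)$ is $15T16$, all in Table 2. For the rest I would produce an explicit isomorphism $F\oplus P_1\simeq P_2$ with $P_1,P_2$ permutation. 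To find it I would use Proposition \ref{prop2.8}, or an explicit exact sequence $0\to J_{G/H}\to J_{G/H'}\oplus Q\to\cdots$ relating the index-$[G:H]$ case to an already-known one.
\item \emph{Not stably rational cases.} These are the $S_5$ cases and the $\PSL_2(\bF_7)$ cases with $H\simeq D_4,S_4$. Here I would find an obstruction: an invariant of similarity classes that vanishes on permutation lattices but not on $F$. The intended invariant is the value of $\widehat{H}^0$ or $H^1$ of $F$, or of $F\otimes F^\circ$, restricted to a suitable subgroup. The cases $(S_5,S_4)$ and $(\PSL_2(\bF_7),S_4)$ are already known from Theorem \ref{thS} and Table 3.
\end{itemize}

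\textbf{Main obstacle.} Both halves of Step 2 are hard. Proving stable permutation requires actually finding the isomorphism $F\oplus P_1\simeq P_2$, since no local criterion exists; for large indices such as $[G:H]=63$ in $\PSL_2(\bF_8)$ I would search computationally for candidate $P_1,P_2$ by matching ranks and the numbers of $H'$-fixed points over all $H'\leq G$. Disproving it requires choosing the right obstruction; there I would first try the Endo--Miyata style comparison of $\widehat{H}^0$ of $F$ restricted to each subgroup, and fall back on checking whether the relevant idempotent classes exist in $\bZ[G]$-module terms.
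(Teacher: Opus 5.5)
The paper quotes this result from \cite{HY2} without giving a proof, so I assess your plan on its own terms. Step~1 is a sound (computational) strategy, but Step~2 has two genuine gaps.

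\emph{The obstruction you propose for the non-stably-rational cases cannot work.} In these cases $F$ is invertible (this is the first half of your Step~2), hence coflabby, so $H^1(G',F)=0$ for every $G'\le G$. Likewise $F\otimes F^\circ$ is invertible by Lemma~\ref{lem6.5}, so $H^1(G',F\otimes F^\circ)\simeq\Ext^1_{\bZ[G']}(F,F)=0$. Finally, $\widehat H^0(G',-)$ is nonzero on permutation lattices ($\widehat H^0(G',\bZ)=\bZ/|G'|\bZ$), so by itself it is not a similarity invariant. Thus none of your candidates can show $[F]\neq0$.

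What works is restriction to a subgroup where the flabby class is already known to be nonzero, using $[J_{G/H}]^{fl}|_{G'}=[J_{G/H}|_{G'}]^{fl}$. The paper itself does this in the proof of Theorem~\ref{th8.8}\,(2)(i).
\begin{itemize}
\item For $(\PSL_2(\bF_7),D_4)$: the subgroup $F_{21}=N_G(C_7)$ has order prime to $8$, so it acts simply transitively on the $21$ cosets. Hence $J_{G/H}|_{F_{21}}\simeq J_{F_{21}}$, whose flabby class is nonzero by Theorem~\ref{th1.12}.
\item For $(S_5,V_4)$ with $V_4\le A_5$: every conjugate of $V_4$ is the Klein subgroup of a point stabiliser. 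It meets $F_{20}$ in the order-$2$ subgroup generated by the unique involution of $F_{20}$ fixing that point. So $S_5/V_4$ splits into three $F_{20}$-orbits $\simeq F_{20}/C_2$. Theorem~\ref{th4.2} then gives $[J_{G/H}]^{fl}|_{F_{20}}=[J_{F_{20}/C_2}]^{fl}$, which is nonzero by Theorem~\ref{th1.14} (this is $10T4$).
\item The pairs $(S_5,A_4)=10T12$ and $(S_5,D_4)=15T10$ are already in Table~3.
\end{itemize}

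\emph{The stably rational $\PSL_2(\bF_8)$ cases are not handled.} You cannot quote Table~2 for $(\PSL_2(\bF_8),N_G({\rm Sy}_2(G)))=9T27$. Theorem~\ref{th1.23} credits exactly that entry to \cite[Theorem 1.1]{HY2}, i.e.\ to the statement you are proving. Conversely, $(A_5,V_4)=15T5$ is already in Table~2. So the genuinely new stable-rationality content is the pair of $\PSL_2(\bF_8)$ cases, $n=9$ and $n=63$, and for these your plan offers only a search heuristic:
\begin{itemize}
\item Matching ranks of $H'$-fixed sublattices yields candidate $P_1,P_2$, but no isomorphism.
\item Proposition~\ref{prop2.8} gives no way to construct $F\oplus P_1\simeq P_2$.
\item The natural reduction of $n=63$ to $n=9$ uses $0\to J_{G/B}\to J_{G/U}\to{\rm Ind}_B^G J_{B/U}\to0$, where $B=C_2^3\rtimes C_7$ is the Borel subgroup and $U=C_2^3\lhd B$. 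Its cokernel is not invertible, since $J_{B/U}$ is $J_{C_7}$ inflated. So Lenstra's additivity of flabby classes is unavailable, and in any case this reduction only lands on the unproven $9T27$.
\end{itemize}
What is needed is an explicit certificate, namely an exhibited isomorphism $F\oplus P_1\simeq P_2$, verified by machine or by hand.

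Two minor points. Theorem~\ref{th1.13} never applies, since none of the listed groups is nilpotent. And $H^1(G',F)\neq0$ is only a sufficient test for non-invertibility, so Step~1 must actually be checked to succeed on every non-exceptional pair.
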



\subsection{Norm one tori $\T_A=R^{(1)}_{A/k}(\bG_m)$ of \'etale algebras $A/k$}

Norm one tori $R^{(1)}_{A/k}(\bG_m)$ of \'etale algebras $A/k$ 
(multinorm one tori $R^{(1)}_{A/k}(\bG_m)$ 
if $r\geq 2$, see Section \ref{ssRatNorm1}) are 
investigated by Colliot-Th\'{e}l\`{e}ne and Sansuc \cite[page 208]{CTS77},
Endo \cite{End01}, \cite{End11}. 
\begin{theorem}[{Endo \cite[Theorem 2]{End01}}]\label{th4.1}
Let $p$ be a prime number, 
$P=(\bZ/p\bZ)^{\oplus m}$ and 
$P_i\leq P$ $(1\leq i\leq r)$ be distinct subgroup of index $p$. 
Let $\varepsilon=(\varepsilon_1^{h_1},\ldots,\varepsilon_r^{h_r}): 
\oplus_{i=1}^r \bZ[P/P_i]^{\oplus h_i}\to \bZ$ 
be the multiaugmentation map where 
$\varepsilon_i: \bZ[P/P_i]\to \bZ$ $(1\leq i\leq r)$ 
is the augmentation map, 
$\widetilde{I}={\rm Ker}(\varepsilon)$ and 
$\widetilde{J}=(\widetilde{I})^{\circ}={\rm Hom}(\widetilde{I},\bZ)$. 
Let $\widetilde{L}$ be a field with Galois group 
${\rm Gal}(\widetilde{L}/k)\simeq P$ and 
$K_i=\widetilde{L}^{P_i}$ with $[K_i:k]=p$ $(1\leq i\leq r)$. 
Let $\T_A=R^{(1)}_{A/k}(\bG_m)$ be the 
norm one torus of an \'etale algebra $A/k$ 
where $A=\prod_{i=1}^rK_i^{h_i}$ and $J\simeq \widehat{\T}_A$. 
%
Let $L=K_1\cdots K_r\subset \overline{k}$ be the composite field of 
$K_1,\ldots, K_r$ with Galois group $G={\rm Gal}(L/k)\simeq P/P^\prime$ 
where $P^\prime=\bigcap_{i=1}^r P_i$.\\
{\rm (1)} 
When $p=2$, $k(\T_A)\simeq \widetilde{L}(\widetilde{J})^P\simeq L(J)^G$ 
is stably $($retract$)$ $k$-rational 
if and only if $r=1,2$;\\
{\rm (2)}
When $p\geq 3$, $k(\T_A)\simeq \widetilde{L}(\widetilde{J})^P\simeq L(J)^G$ 
is stably $($retract$)$ $k$-rational 
if and only if $r=1$.
\end{theorem}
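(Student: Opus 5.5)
The plan is to work entirely with the character lattice $\widetilde{J}$ and Theorem \ref{th2.7}. Sufficiency will come from explicit identifications, and necessity from the obstruction $\Sha^2_\omega(G,J)\simeq H^1(G,[J]^{fl})$ of Theorem \ref{thCTS87}. That group vanishes whenever $[J]^{fl}$ is invertible.

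\emph{Step 1: normalisation.} First I remove the multiplicities $h_i$. For a finite $G$-set $X$, the kernel of $(\varepsilon,\varepsilon):\bZ[X]\oplus\bZ[X]\to\bZ$ maps onto $\bZ[X]$ via $(a,b)\mapsto b$ with kernel $I_X$. This map has the $G$-equivariant section $b\mapsto(-b,b)$, so the kernel is $I_X\oplus\bZ[X]$. Iterating, $\widetilde{I}$ agrees with the multiaugmentation kernel for $h_1=\cdots=h_r=1$ up to a permutation summand. The same therefore holds for the duals, and the flabby class is unchanged. Since $P'$ acts trivially, I may also pass from $P$ to $G=P/P'$, an elementary abelian group. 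Each $G/P_i$ is a $G$-set of size $p$ on which $G$ acts through the quotient $G\to C_p$.

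\emph{Step 2: sufficiency.} For $r=1$, $\T_A=R^{(1)}_{K_1/k}(\bG_m)$ with $K_1/k$ cyclic of degree $p$, and Theorem \ref{th1.12} gives stable rationality. For $p=2$, $r=2$, we have $G\simeq C_2^2$ and $J$ has rank $3$. Dually to $I_{X_1}\oplus I_{X_2}\subset I$, there is an exact sequence
\[
0\to\bZ\to J\to J_{X_1}\oplus J_{X_2}\to 0,
\]
where each $J_{X_i}\simeq\bZ^-$ is a sign lattice. I would either exhibit an explicit flabby resolution of $J$ with stably permutation cokernel, or identify $J=M_G$ with a $\bZ$-class of $C_2^2\leq\GL(3,\bZ)$ outside Table $1$ and apply Theorem \ref{thKu}. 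Geometrically, $N_{k(\sqrt a)/k}(x)\,N_{k(\sqrt b)/k}(y)=1$ should already be rational via a direct parametrisation.

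\emph{Step 3: necessity.} This is the main obstacle. From $0\to\bZ\to\oplus_i\bZ[G/P_i]\to J\to 0$ and Shapiro's lemma, I get
\[
\oplus_i H^2(P_i,\bZ)\to H^2(G,J)\to H^3(G,\bZ)\to\oplus_i H^3(P_i,\bZ).
\]
Here $H^3(G,\bZ)\simeq H^2(G,\bQ/\bZ)$ is essentially $\wedge^2\widehat{G}$, i.e.\ alternating forms on $G$. Restriction to cyclic subgroups kills every class in $H^3$, so locally trivial classes come from alternating forms vanishing on every $P_i/P'$, together with a contribution from the cokernel term. For $p$ odd and $r\geq2$, I will produce a nonzero element of $\Sha^2_\omega(G,J)$. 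When $G\simeq C_p^2$, the hyperplanes $P_i/P'$ are lines and the form $e_1^*\wedge e_2^*$ restricts to zero on them. In general I take a form pulled back from a suitable rank-$2$ quotient, and show it is not hit by $\oplus_i H^2(P_i,\bZ)$ and stays locally trivial. For $p=2$, I expect the $r=2$ case to be exactly where the $\oplus H^2(P_i,\bZ)$ term absorbs this class, which explains why the dichotomy differs between $p=2$ and $p$ odd. For $r\geq3$ (e.g.\ three lines in $C_2^2$, or configurations in $C_2^3$), a nonzero class survives.

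The delicate point is the bookkeeping of the image of $\oplus_i H^2(P_i,\bZ)=\oplus_i\Hom(P_i,\bQ/\bZ)$ and of local triviality on all cyclic subgroups. I would handle it by computing explicitly with $\widehat{G}=\Hom(G,\bF_p)$, the hyperplanes as kernels of characters $\chi_i$, and the cup products $\chi_i\cup\chi_j$. If this becomes unwieldy, I would reduce to small configurations by restricting to a subgroup $G_0\leq G$, since invertibility is preserved by restriction. The candidates are $G_0\simeq C_p^2$ meeting two of the $P_i$, or $C_2^2$ meeting three, where the computation is finite.
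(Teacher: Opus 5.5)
This statement is only quoted in the paper, from Endo \cite{End01}, and no proof is given there, so your argument has to stand on its own. Steps 1 and 2 are fine. Step 1 is essentially Theorem \ref{th4.2}. For $p=2$, $r=2$, the torus is birational to the affine cone $N_{K_1/k}(x)=N_{K_2/k}(z)$ over a smooth quadric surface with the rational point $(1:0:1:0)$, hence $k$-rational. The gap is in Step 3: the obstruction $\Sha^2_\omega(G,J)\simeq H^1(G,[J]^{fl})$ vanishes in exactly the case that separates odd $p$ from $p=2$.

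\emph{The case $p$ odd, $r=2$.} Here $G=P/P'\simeq C_p\times C_p$ and $K_1\cap K_2=k$. Theorem \ref{th4.3}~(4), or the formula $(\bZ/p\bZ)^{\oplus r-2}$ in the Remark following Theorem \ref{th4.3}, gives $\Sha^2_\omega(G,J)=0$. Every proper subgroup $C<G$ is cyclic, and $H^1(C,F)\simeq\widehat H^{-1}(C,F)=0$ for flabby $F$. Hence $F=[J]^{fl}$ is flabby \emph{and coflabby}, and its restriction to each proper subgroup is invertible (Remark \ref{rem15}). So no group $H^1(H,F)$ with $H\leq G$, and no restriction argument, can detect non-invertibility.

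The concrete error is that you test local triviality in $H^3(C,\bZ)=0$. A lift $x\in H^2(G,J)$ of $e_1^*\wedge e_2^*$ must vanish in $H^2(C,J)$ instead. For $C=P_1/P'$,
$H^2(C,J)\simeq{\rm Coker}\bigl(H^2(C,\bZ)\to H^2(C,\bZ[X_1])\bigr)\simeq(\bZ/p\bZ)^{\oplus p-1}\neq0$,
and by the vanishing just quoted, no nonzero lift is locally trivial. The same computation holds verbatim for $p=2$: there too $H^2(G,\bZ)\to\oplus_iH^2(P_i,\bZ)$ is onto, so the image of $\oplus_iH^2(P_i,\bZ)$ in $H^2(G,J)$ is zero and absorbs nothing. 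Thus your explanation of the dichotomy is wrong. For $r=2$, the difference between $p=2$ and odd $p$ is invisible to $\Sha^2_\omega$.

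\emph{The case $r\geq 3$.} By the same Remark, $\Sha^2_\omega(G,J)=0$ whenever $G\not\simeq C_p\times C_p$. For example, three independent quadratic fields give $G\simeq C_2^3$, contrary to your claim that a class survives there. Restricting to some $G_0\simeq C_p\times C_p$ helps only if two conditions hold. First, $G_0$ must lie in none of the $P_i$; otherwise $J|_{G_0}$ is permutation by Proposition \ref{prop4.3}. Second, $G_0$ must meet the $P_i$ in at least three distinct lines. Your other candidate, $G_0$ meeting two of the $P_i$, is exactly the failing $r=2$ case. If the $P_i$ are all the hyperplanes of $G\simeq(\bZ/p\bZ)^{\oplus m}$ with $m\geq3$, every proper subgroup lies in some $P_i$, and again $[J]^{fl}$ is flabby and coflabby.

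What is missing is a genuinely finer tool for showing that a flabby and coflabby lattice is not invertible. For instance, you could exhibit a coflabby $C$ with $\Ext^1_{\bZ[G]}([J]^{fl},C)\neq0$ (Lemma \ref{lem6.3}), equivalently show ${\rm p}$-${\rm ord}([J]^{fl})\neq1$ (Proposition \ref{prop6.5}). You need this for $C_p\times C_p$ with $p$ odd and $r=2$, and also a way to handle configurations that cannot be reduced to $C_p\times C_p$ by restriction.
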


\begin{theorem}[{Endo \cite[Proposition 1.3, Corollary 1.4]{End11}, see also Endo and Miyata \cite[Proposition 1.1]{EM73}, Hajja and Kang \cite[Theorem 1]{HK95}, Miyata \cite[Lemma]{Miy71}, Saltman \cite[Proposition 3.6 (a)]{Sal84}}]\label{th4.2}
Let $G$ be a finite group, $H_1,\ldots,H_t\leq G$ $(t\geq 2)$ be 
subgroups with $H_{t-1}\geq H_t$ and 
$\varepsilon=(\varepsilon_1,\ldots,\varepsilon_t): 
\oplus_{i=1}^t \bZ[G/H_i]\to \bZ$, 
$\varepsilon^\prime=(\varepsilon_1,\ldots,\varepsilon_{t-1}): 
\oplus_{i=1}^{t-1} \bZ[G/H_i]\to \bZ$ 
be the multiaugmentation maps 
where 
$\varepsilon_i: \bZ[G/H_i]\to \bZ$ $(1\leq i\leq t)$ 
is the augmentation map, 
$I={\rm Ker}(\varepsilon)$, $I^\prime={\rm Ker}(\varepsilon^\prime)$, 
$J=I^\circ={\rm Hom}(I,\bZ)$ and 
$J^\prime=(I^\prime)^{\circ}={\rm Hom}(I^\prime,\bZ)$. 
Then we have $I\simeq I^\prime\oplus\bZ[G/H_t]$ and 
$J\simeq J^\prime\oplus\bZ[G/H_t]$ 
and hence $L(J)^G\simeq L(J^\prime)^G(u_1,\ldots,u_s)$ 
$($$L(J)^G$ is rational over $L(J^\prime)$$)$ where 
$s=[G:H_t]$ and ${\rm Gal}(L/k)\simeq G$. 
In particular, $\T_A=R^{(1)}_{A/k}(\bG_m)$ and 
$\T_{A^\prime}=R^{(1)}_{A^\prime/k}(\bG_m)$ 
are stably birationally $k$-equivalent where 
$A=\prod_{i=1}^tK_i$, $A^\prime=\prod_{i=1}^{t-1}K_i$, 
$\widehat{T}_A\simeq J$, 
$\widehat{T}_{A^\prime}\simeq J^\prime$. 
For example, if $H_i=H$ for any $1\leq i\leq t$, 
i.e. $K_i=K$ for any $1\leq i\leq t$, 
then 
$J\simeq J_{G/H}\oplus\bZ[G/H]^{\oplus t-1}$ and hence 
$\T_A=R^{(1)}_{A/k}(\bG_m)$ is stably $($resp. retract$)$ $k$-rational 
if and only if $\T_K=R^{(1)}_{K/k}(\bG_m)$ 
is stably $($resp. retract$)$ $k$-rational. 
\end{theorem}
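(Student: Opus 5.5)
The plan is to split the augmentation kernel explicitly at the level of $G$-lattices, dualize, and then pass to function fields using the standard fact that adding a permutation summand only adjoins independent variables.

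\emph{Step 1 (a map $\bZ[G/H_t]\to I$).} Since $H_t\leq H_{t-1}$, the map $G/H_t\to G/H_{t-1}$, $gH_t\mapsto gH_{t-1}$, is a well-defined $G$-equivariant surjection of $G$-sets. It induces a $G$-homomorphism $\pi:\bZ[G/H_t]\to\bZ[G/H_{t-1}]$ satisfying $\varepsilon_{t-1}\circ\pi=\varepsilon_t$. Define
\begin{align*}
\phi:\bZ[G/H_t]\to \bigoplus_{i=1}^t\bZ[G/H_i],\quad x\mapsto (0,\ldots,0,-\pi(x),x).
\end{align*}
Then $\varepsilon(\phi(x))=-\varepsilon_t(x)+\varepsilon_t(x)=0$, so $\phi$ lands in $I$. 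It is $G$-equivariant, and it is injective because of the last coordinate.

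\emph{Step 2 (the splitting $I=I'\oplus\phi(\bZ[G/H_t])$).} Regard $I'\subset I$ as the elements whose $t$-th coordinate is $0$. For $(a_1,\ldots,a_t)\in I$ write
\begin{align*}
(a_1,\ldots,a_t)=(a_1,\ldots,a_{t-2},a_{t-1}+\pi(a_t),0)+\phi(a_t).
\end{align*}
The first summand has total augmentation $\varepsilon(a)-\varepsilon_t(a_t)+\varepsilon_{t-1}(\pi(a_t))=0$, so it lies in $I'$. Moreover $I'\cap\phi(\bZ[G/H_t])=0$, since an element of $\phi(\bZ[G/H_t])$ with vanishing last coordinate is $\phi(0)=0$. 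Hence $I\simeq I'\oplus\bZ[G/H_t]$ as $G$-lattices. Dualizing, and using that $\bZ[X]^\circ\simeq\bZ[X]$ for any finite $G$-set $X$, gives $J\simeq J'\oplus\bZ[G/H_t]$.

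\emph{Step 3 (fields and tori).} For a $G$-lattice $M$ and $P=\bZ[G/H_t]$ of rank $s$, the field $L(M\oplus P)^G$ is purely transcendental of degree $s$ over $L(M)^G$. This is the classical Hilbert~90 / no-name lemma argument (Endo--Miyata, Lenstra), which I would simply cite; it is also the split-sequence case of the Lenstra statement recalled before Proposition~\ref{prop2.8}. This yields $L(J)^G\simeq L(J')^G(u_1,\ldots,u_s)$, so $\T_A$ and $\T_{A'}$ are stably birationally $k$-equivalent.

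For the example $H_i=H$ for all $i$, apply Steps~1--2 repeatedly, removing one copy at a time; at each stage the last two subgroups coincide, so the hypothesis $H_{t-1}\geq H_t$ holds. Induction then gives $J\simeq J_{G/H}\oplus\bZ[G/H]^{\oplus t-1}$. Therefore $[J]^{fl}=[J_{G/H}]^{fl}$, and Theorem~\ref{th2.7} (1) and (3) transfer stable and retract $k$-rationality in both directions.

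The only step requiring real care is Step~2: checking that the explicit decomposition respects the augmentation and is $G$-equivariant. This is where $H_t\leq H_{t-1}$ is used, since it is what makes $\pi$ well defined. Everything else is formal.
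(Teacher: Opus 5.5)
Your proof is correct and is the standard argument behind Endo's result, which the paper only cites without giving a proof of its own. Your splitting $I=I'\oplus\phi(\bZ[G/H_t])$ with $\phi(x)=(0,\ldots,0,-\pi(x),x)$ is the same device the paper uses in its alternative proof of Proposition~\ref{prop4.3}: there it is your map in the case $H_{t-1}=G$, applied to all the other orbits at once. One small quibble: the Lenstra statement recalled before Proposition~\ref{prop2.8} is tautological for a split sequence, so $L(M\oplus P)^G\simeq L(M)^G(u_1,\ldots,u_s)$ really rests on the no-name/Hilbert~90 lemma (Endo--Miyata, Miyata, Hajja--Kang), which you also cite.
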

\begin{proposition}[{Endo \cite[Proposition 1.3]{End11}}]\label{prop4.3}
Let 
$\T_A=R^{(1)}_{A/k}(\bG_m)$ be the norm one torus 
of an \'etale algebra $A/k$ 
with ${\rm dim}_k\, A=n$ where $A=\prod_{i=1}^rK_i$ with $[K_i:k]=n_i$. 
Let $X=X(A)=\Hom_k(A,\overline{k})$ be the corresponding 
$G$-set with $\widehat{T}_A\simeq J_X$ and $|X|=n$ where 
$X=\coprod_{i=1}^r X_i$ is the disjoint union of $G$-orbits 
with $|X_i|=n_i$ and $n=\sum_{i=1}^r n_i$. 
If there exists $1\leq i\leq r$ such that $n_i=1$, 
i.e. $K_i=k$, that is, $X_i=\{x_i\}$ where $x_i$ is a fixed point, 
then $J_X\simeq \bZ[X\setminus X_i]$ is permutation. 
In particular, $k(\T_A)\simeq L(J_X)^G$ 
is $k$-rational where $\T_A=R^{(1)}_{A/k}(\bG_m)$. 
\end{proposition}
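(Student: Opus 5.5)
We must prove: if $X=\coprod X_i$ has a fixed point $x_i$ (so $X_i=\{x_i\}$), then $J_X\simeq \bZ[X\setminus X_i]$ is permutation.

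The idea is to exhibit an explicit $G$-equivariant isomorphism and dualize. We work on the augmentation side: consider $I_X=\mathrm{Ker}(\varepsilon:\bZ[X]\to\bZ)$, and set $Y:=X\setminus\{x_i\}$, a $G$-stable subset.

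\emph{Step 1.} Define the map
\begin{align*}
\psi:\bZ[Y]\to I_X,\qquad [y]\mapsto [y]-[x_i].
\end{align*}
It is $G$-equivariant, since $g[x_i]=[x_i]$ and $G$ permutes $Y$. It is injective, because the coefficients on $Y$ are preserved. It is surjective: for $\sum_{x}a_x[x]\in I_X$ we have $a_{x_i}=-\sum_{y\in Y}a_y$, so the element equals $\sum_{y}a_y([y]-[x_i])$. Hence $I_X\simeq\bZ[Y]$.

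\emph{Step 2.} Dualize. Permutation lattices are self-dual, via the basis dual to $\{[y]\}$, which $G$ permutes in the same way. Therefore
\begin{align*}
J_X=I_X^\circ\simeq\bZ[Y]^\circ\simeq\bZ[Y]=\bZ[X\setminus X_i],
\end{align*}
which is permutation.

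Alternatively, one can argue directly on the co-augmentation sequence $(F_X)$. The composite $\bZ[Y]\hookrightarrow\bZ[X]\to J_X$ is an isomorphism, because $\bZ[X]=\bZ[Y]\oplus\bZ N_X$: indeed $[x_i]=N_X-\sum_y[y]$. This route avoids dualization. It is also the special case $H_t=G$ of Theorem \ref{th4.2}, where $J\simeq J'\oplus\bZ[G/G]$ with the empty-index convention.

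\emph{Step 3.} Since $J_X\simeq\widehat{\T}_A$ is permutation, $\T_A$ is a quasi-trivial torus $R_{A'/k}(\bG_m)$ with $X(A')=Y$, that is, $A'=\prod_{j\neq i}K_j$. Its function field $k(\T_A)\simeq L(J_X)^G$ is therefore $k$-rational, by the standard fact recalled in Section \ref{S3} that permutation character lattices give rational tori (Hilbert 90 / the rationality of $R_{A'/k}(\bG_m)$).

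No step is a real obstacle. The only point needing care is to check $G$-equivariance, which uses that $x_i$ is fixed, and to correctly identify the dual of a permutation lattice.
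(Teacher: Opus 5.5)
Your argument is correct and matches the paper's proof. The paper uses the same isomorphism $\bZ[X\setminus X_i]\xrightarrow{\sim} I_X$, $[y]\mapsto[y]-[x_i]$, dualizes to get $J_X\simeq\bZ[X\setminus X_i]$, and cites Endo--Miyata for rationality; your direct $G$-stable splitting $\bZ[X]=\bZ[X\setminus X_i]\oplus\bZ N_X$ is also valid.

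One small slip in your aside: Theorem~\ref{th4.2} with $H_t=G$ forces $H_{t-1}=G$, so it only removes a redundant second fixed point. The relevant specialization is $H_{t-1}=G$: put the fixed point in position $t-1$, so that $H_{t-1}\geq H_t$ holds for every other orbit, and peel those orbits off one at a time.
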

\begin{proof}
This is a special case of Theorem \ref{th4.2} 
(Endo \cite[Proposition 1.3]{End11}), 
see also Endo and Miyata \cite[Proposition 1.1]{EM73}, Hajja and Kang \cite[Theorem 1]{HK95}, Miyata \cite[Lemma]{Miy71}. 
We give an alternative proof here for the reader's convenience.  
We have $0\to I_X\xrightarrow{\iota}\bZ[X]\xrightarrow{\varepsilon}\bZ\to 0$ 
where $\varepsilon$ is the augmentation map and 
$X=X_i\cup Y$ where $Y=X\setminus X_i$ and $X_i=\{x_i\}$. 
We see that the restriction map of 
$\varepsilon$ to $X_i$, $\varepsilon |_{X_i}: \bZ[X_i]\xrightarrow{\sim} \bZ$, 
$a_i[x_i]\mapsto a_i$ becomes isomorphism. 
For $a_y[y]\in \bZ[Y]$, there exists 
$-a_y[x_i]\in \bZ[X_i]$ such that $a_y[y]-a_y[x_i]\in I_X$. 
Hence we can get the isomorphism $\varphi: \bZ[Y]\xrightarrow{\sim} I_X$, 
$a_y[y]\mapsto a_y[y]-a_y[x_i]$ with $\varphi\circ\iota=\Id_{I_X}$: 
\begin{align*}
\xymatrix@C=18pt@R=18pt{
 & & 0\ar[d] & & \\
 & \bZ[Y]\ar@{=}[r]\ar[d]^{\simeq} & \bZ[Y]\ar[d] & & \\
0 \ar[r] & I_X\ar[r]^-{\iota} & \bZ[X]\simeq \bZ[X_i]\oplus\bZ[Y]\ar[r]^-{\varepsilon} \ar[d]& \bZ\ar[r]\ar@{=}[d] & 0.\\
  &  & \bZ[X_i]\ar[r]^-{\varepsilon|_{X_i}}_\simeq\ar[d] &  \bZ & \\
 & & 0 & &
}\end{align*}
This implies that $J_X=(I_X)^\circ\simeq \bZ[Y]$ 
and hence it follows from Endo and Miyata \cite[Proposition 1.1]{EM73} that 
$k(\T_A)\simeq L(J_X)^G$ is $k$-rational 
(see also Hajja and Kang \cite[Theorem 1]{HK95}, Miyata \cite[Lemma]{Miy71}). 
\end{proof}

\section{Hasse norm principle for field extensions $K/k$ and \'etale algebras $A/k$}\label{S4}

Let $k$ be a global field, 
i.e. a number field (a finite extension of $\bQ$) 
or a function field of an algebraic curve over 
$\bF_q$ (a finite extension of $\bF_q(t))$. 
Let $v$ be a place of $k$ and $k_v$ be the completion of $k$ at $v$. 

Let $X$ be a $k$-variety. 
We say that 
{\it Hasse principle holds for $X$} 
if $X$ has a $k_v$-rational point for any place $v$ of $k$, 
then $X$ has a $k$-rational point. 
Hasse-Minkowski theorem says that 
Hasse principle holds for $X$ where 
$X$ is defined as $Q(x_1,\ldots,x_n)=0$ 
for a quadratic form $Q(x_1,\ldots,x_n)$ in $\bP^{n-1}$ 
(see Hasse \cite{Has23a}, \cite{Has23b}, \cite{Has24a}, \cite{Has24b} 
over any number field $k$). 
Lind \cite{Lin40} and Reichardt \cite{Rei42} 
gave a counterexample to Hasse principle for $X$ 
where $X$ is a genus $1$ curve defined as $2y^2=1-17x^4$ 
(see also Colliot-Th\'{e}l\`{e}ne and Poonen \cite{CTP00}, 
Poonen \cite{Poo01}). 
Selmer \cite{Sel51} also gave a counterexample to Hasse principle 
for $X$ where $X$ is a plane cubic curve of genus $1$ in $\bP^2$ 
defined as $3x^3+4y^3+5z^3=0$. 

\begin{definition}
Let $T$ be an algebraic $k$-torus 
and $T(k)$ be the group of $k$-rational points of $T$. 
Then $T(k)$ 
embeds into $\prod_{v\in V_k} T(k_v)$ by the diagonal map 
where 
$V_k$ is the set of all places of $k$ and 
$k_v$ is the completion of $k$ at $v$. 
Let $\overline{T(k)}$ be the closure of $T(k)$  
in the product $\prod_{v\in V_k} T(k_v)$. 
The group 
\begin{align*}
A(T)=\left(\prod_{v\in V_k} T(k_v)\right)/\overline{T(k)}
\end{align*}
is called {\it the kernel of the weak approximation} of $T$. 
We say that {\it $T$ has the weak approximation property} if $A(T)=0$. 
\end{definition}

\begin{definition}
Let $E$ be a principal homogeneous space (= torsor) under $T$.  
{\it Hasse principle holds for $E$} means that 
if $E$ has a $k_v$-rational point for all $k_v$, 
then $E$ has a $k$-rational point. 
The set $H^1(k,T)$ classifies all such torsors $E$ up 
to (non-unique) isomorphism. 
We define {\it the Shafarevich-Tate group} of $T$: 
\begin{align*}
\Sha(T)={\rm Ker}\left\{H^1(k,T)\xrightarrow{\rm res} \bigoplus_{v\in V_k} 
H^1(k_v,T)\right\}.
\end{align*} 
Then 
Hasse principle holds for all torsors $E$ under $T$ 
if and only if $\Sha(T)=0$. 
\end{definition}

%
\begin{theorem}[{Voskresenskii \cite[Theorem 5, page 1213]{Vos69}, 
\cite[Theorem 6, page 9]{Vos70}, see also \cite[Section 11.5, page 120, Section 11.6, Theorem, page 120]{Vos98}}]\label{thV}
Let $k$ be a global field, 
$T$ be an algebraic $k$-torus and $X$ be a smooth $k$-compactification of $T$. 
Then there exists an exact sequence
\begin{align*}
0\to A(T)\to H^1(k,{\rm Pic}\,\overline{X})^{\vee}\to \Sha(T)\to 0
\end{align*}
where $M^{\vee}={\rm Hom}(M,\bQ/\bZ)$ is the Pontryagin dual of $M$. 
In particular, if $T$ is retract $k$-rational, then $ H^1(k,{\rm Pic}\,\overline{X})=0$ and hence 
$A(T)=0$ and $\Sha(T)=0$. 
Moreover, if $L$ is the splitting field of $T$ and $L/k$ 
is an unramified extension, then $A(T)=0$ and 
$H^1(k,{\rm Pic}\,\overline{X})^{\vee}\simeq \Sha(T)$. 
\end{theorem}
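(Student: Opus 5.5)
The plan is to run the classical argument through a flabby resolution of $\widehat{T}$, in the torus form of Theorem \ref{thVos69}, and then apply Poitou--Tate duality. Let $L$ be a splitting field of $T$ with $G={\rm Gal}(L/k)$. Dualising $0\to \widehat{T}\to \widehat{Q}\to {\rm Pic}\,\overline{X}\to 0$ gives an exact sequence of $k$-tori
\begin{align*}
1\to S\to Q\to T\to 1.
\end{align*}
Here $Q$ is quasi-trivial, i.e. a product of $R_{K_i/k}(\bG_m)$, and $\widehat{S}\simeq {\rm Pic}\,\overline{X}=:F$ is flabby. For quasi-trivial $Q$ we have:
\begin{itemize}
\item[(a)] $H^1(k,Q)=H^1(k_v,Q)=0$, by Shapiro's lemma and Hilbert 90;
\item[(b)] $Q$ has weak approximation, being an open subset of affine space;
\item[(c)] $H^2(k,Q)\to\prod_v H^2(k_v,Q)$ is injective, by Shapiro and the Albert--Brauer--Hasse--Noether theorem for the Brauer groups of the $K_i$.
\end{itemize}

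\textbf{Step 1 (cohomology sequences).} The long exact cohomology sequences over $k$ and over each $k_v$ give $T(k)/{\rm Im}\,Q(k)\simeq H^1(k,S)$ and $T(k_v)/{\rm Im}\,Q(k_v)\simeq H^1(k_v,S)$. Using (a) and (c), they also give $\Sha(T)\simeq \Sha^2(k,S):={\rm Ker}(H^2(k,S)\to\prod_v H^2(k_v,S))$.

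\textbf{Step 2 (weak approximation).} The image of $Q(k_v)$ is open of finite index in $T(k_v)$, and $H^1(k_v,S)=0$ for almost all $v$ (those unramified in $L$, by Tate--Nakayama and flabbiness). Together with (b), this shows that the closure $\overline{T(k)}$ contains the image of $\prod_v Q(k_v)$. We then identify
\begin{align*}
A(T)\simeq {\rm Coker}\Big(H^1(k,S)\to \bigoplus_v H^1(k_v,S)\Big).
\end{align*}
This identification, in particular the topology needed to pass from the product to the direct sum, is the step I expect to be the main obstacle. The approach is to reduce to the finitely many ramified places and the archimedean ones, and use that $Q(k)$ is dense in $\prod_{v\in S_0}Q(k_v)$ for every finite set $S_0$ of places.

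\textbf{Step 3 (Poitou--Tate).} Apply the Poitou--Tate exact sequence for the torus $S$ (valid for number fields and, in Milne's form, for global function fields):
\begin{align*}
H^1(k,S)\to \bigoplus_v H^1(k_v,S)\to H^1(k,\widehat{S})^{\vee}\to H^2(k,S)\to \prod_v H^2(k_v,S).
\end{align*}
Since $\widehat{S}=F={\rm Pic}\,\overline{X}$, Steps 1 and 2 turn this into the exact sequence $0\to A(T)\to H^1(k,{\rm Pic}\,\overline{X})^{\vee}\to\Sha(T)\to 0$.

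\textbf{Consequences.} If $T$ is retract $k$-rational, then $F$ is invertible by Theorem \ref{th2.7}(3). An invertible lattice is a direct summand of a permutation one, so it is coflabby, which gives $H^1(G,F)=0$. Inflation--restriction, as after Theorem \ref{thVos69}, then gives $H^1(k,{\rm Pic}\,\overline{X})=0$, and hence $A(T)=\Sha(T)=0$.

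If $L/k$ is unramified, every decomposition group $D_v$ is cyclic. For cyclic $D_v$, Tate--Nakayama gives $H^1(k_v,S)\simeq \widehat{H}^{-1}(D_v,F)$, which vanishes because $F$ is flabby. So the middle term $\bigoplus_v H^1(k_v,S)$ is $0$, hence $A(T)=0$ and $H^1(k,{\rm Pic}\,\overline{X})^{\vee}\simeq\Sha(T)$.
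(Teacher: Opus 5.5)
The paper gives no proof of this statement. It is quoted from Voskresenskii, and your argument is the standard proof in the cited sources: dualise the flabby resolution to $1\to S\to Q\to T\to 1$, identify $\Sha(T)\simeq\Sha^2(k,S)$ and $A(T)\simeq\mathrm{Coker}\bigl(H^1(k,S)\to\bigoplus_v H^1(k_v,S)\bigr)$, then read the sequence off from Poitou--Tate for $S$. It is correct.

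The step you flagged as the main obstacle closes at once:
\begin{itemize}
\item $U:=\prod_v \mathrm{Im}\bigl(Q(k_v)\to T(k_v)\bigr)$ is an open subgroup of $\prod_v T(k_v)$. Each factor is open, and all but finitely many factors equal $T(k_v)$, because $H^1(k_v,S)=0$ there.
\item The quotient of $\prod_v T(k_v)$ by $U$ is the finite discrete group $\bigoplus_v H^1(k_v,S)$.
\item Weak approximation for $Q$ gives $U\subseteq\overline{T(k)}$. Since $T(k)\,U$ is open, hence closed, this gives $\overline{T(k)}=T(k)\,U$.
\item Since $T(k)\to H^1(k,S)$ is onto by (a), $A(T)$ is exactly the stated cokernel.
\end{itemize}

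One point to tidy concerns the unramified clause. There you use that $S$ splits over $L$, i.e.\ that $\mathrm{Pic}\,\overline{X}$ is split by $L$, so that the relevant decomposition groups are those of $L/k$. This holds for a smooth toric compactification, but not for an arbitrary one. For example, blowing up a boundary point whose residue field is not contained in $L$ adds a permutation summand that is split only by a larger, possibly ramified, field. So either choose $X$ toric, or run the argument with any flabby $G$-lattice $F$ satisfying $[F]=[\widehat{T}]^{fl}$. This costs nothing: $F$ and $\mathrm{Pic}\,\overline{X}$ are similar and permutation lattices have trivial $H^1$, so $H^1(k,F)\simeq H^1(k,\mathrm{Pic}\,\overline{X})$.
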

It follows that 
$H^1(k,{\rm Pic}\,\overline{X})=0$ if and only if $A(T)=0$ and $\Sha(T)=0$, 
i.e. $T$ has the weak approximation property and 
Hasse principle holds for all torsors $E$ under $T$. 

\begin{definition}
Let $k$ be a global field, 
$K/k$ be a finite extension and $\bA_K^\times$ be the idele group of $K$.
Let $A=\prod_{i=1}^r K_i$ be an 
\'etale algebra where $K_i/k$ $(1\leq i\leq r)$ 
is a finite separable field extension and 
$\bA_A^\times$ be the idele group of $A$.\\ 
(1) We say that {\it the Hasse norm principle holds for $K/k$} 
if $(N_{K/k}(\bA_K^\times)\cap k^\times)/N_{K/k}(K^\times)=1$ 
where $N_{K/k}$ is the norm map.\\ 
(2) We say that {\it the Hasse norm principle holds for $A/k$}, 
also {\it the Hasse multinorm principle holds for $A/k$} if $r\geq 2$, 
if $(\prod_{i=1}^rN_{K_i/k}(\bA_{K_i}^\times)\cap k^\times)/\prod_{i=1}^rN_{K_i/k}(K_i^\times)=1$ 
where $N_{K_i/k}$ $(1\leq i\leq r)$ is the norm map. 
\end{definition}

Hasse \cite[Satz, page 64]{Has31} proved that 
the Hasse norm principle holds for any cyclic extension $K/k$ 
but does not hold for bicyclic extension $\bQ(\sqrt{-39},\sqrt{-3})/\bQ$. 
For Galois extensions $K/k$, Tate \cite{Tat67} gave the following theorem: 

\begin{theorem}[{Tate \cite[page 198]{Tat67}}]\label{thTate}
Let $k$ be a global field, $K/k$ be a finite Galois extension 
with Galois group $G={\rm Gal}(K/k)$. 
Let $V_k$ be the set of all places of $k$ 
and $G_v$ be the decomposition group of $G$ at $v\in V_k$. 
Then 
\begin{align*}
(N_{K/k}(\bA_K^\times)\cap k^\times)/N_{K/k}(K^\times)\simeq 
{\rm Coker}\left\{\bigoplus_{v\in V_k}\widehat H^{-3}(G_v,\bZ)\xrightarrow{\rm cores}\widehat H^{-3}(G,\bZ)\right\}
\end{align*}
where $\widehat H$ is the Tate cohomology. 
In particular, the Hasse norm principle holds for $K/k$ 
if and only if the restriction map 
$H^3(G,\bZ)\xrightarrow{\rm res}\bigoplus_{v\in V_k}H^3(G_v,\bZ)$ 
is injective. 
In particular, if $H^3(G,\bZ)\simeq M(G)=0$, 
then the Hasse norm principle holds for $K/k$ 
where 
$M(G)=H^2(G,\bC^\times)$ 
is the Schur multiplier of $G$. 
\end{theorem}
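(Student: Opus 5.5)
The plan is to prove the formula using the class field theory of the idele class group, together with the Tate--Nakayama theorem in both its local and global forms.

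\textbf{Step 1: express the knot group as a cokernel.} Write $C_K=\bA_K^\times/K^\times$. The short exact sequence of $G$-modules $1\to K^\times\to\bA_K^\times\to C_K\to 1$ gives a long exact sequence in Tate cohomology, which contains the segment
\begin{align*}
\widehat H^{-1}(G,\bA_K^\times)\to \widehat H^{-1}(G,C_K)\to \widehat H^0(G,K^\times)\to \widehat H^0(G,\bA_K^\times).
\end{align*}
Since $(\bA_K^\times)^G=\bA_k^\times$, we have $\widehat H^0(G,K^\times)=k^\times/N_{K/k}(K^\times)$ and $\widehat H^0(G,\bA_K^\times)=\bA_k^\times/N_{K/k}(\bA_K^\times)$. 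The kernel of the last map is therefore exactly $(N_{K/k}(\bA_K^\times)\cap k^\times)/N_{K/k}(K^\times)$. By exactness, this kernel is isomorphic to ${\rm Coker}\{\widehat H^{-1}(G,\bA_K^\times)\to \widehat H^{-1}(G,C_K)\}$.

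\textbf{Step 2: identify the two groups via Tate--Nakayama.} For each place $v$ of $k$, fix a place $w\mid v$ of $K$, so that $G_v={\rm Gal}(K_w/k_v)$. Write $\bA_K^\times$ as a restricted product. At places $v$ unramified in $K$, the local unit groups are cohomologically trivial. Combining this with Shapiro's lemma gives
\begin{align*}
\widehat H^{q}(G,\bA_K^\times)\simeq\bigoplus_{v\in V_k}\widehat H^{q}(G_v,K_w^\times).
\end{align*}
Cup product with the local fundamental classes $u_v\in H^2(G_v,K_w^\times)$ then gives $\widehat H^{-1}(G_v,K_w^\times)\simeq \widehat H^{-3}(G_v,\bZ)$. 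Likewise, cup product with the global fundamental class $u\in H^2(G,C_K)$ gives $\widehat H^{-1}(G,C_K)\simeq\widehat H^{-3}(G,\bZ)$. Substituting these isomorphisms into Step 1 yields the stated cokernel formula.

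\textbf{Step 3 (main obstacle): check the diagram commutes.} We must verify that under these identifications the map of Step 1 becomes $\sum_v {\rm cores}$. This rests on three facts:
\begin{itemize}
\item the image of $u_v$ under the local-to-global map $H^2(G_v,K_w^\times)\to H^2(G_v,C_K)$ is ${\rm res}^G_{G_v}(u)$ (compatibility of local and global invariants);
\item the Shapiro isomorphism on the idele side is a corestriction from $G_v$ to $G$;
\item the projection formula ${\rm cores}(x\cup{\rm res}\,u)={\rm cores}(x)\cup u$.
\end{itemize}
This compatibility is the step that needs the most care with signs and conventions; everything else is formal.

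\textbf{Step 4: the ``in particular'' statements.} We have $\widehat H^{-3}(G,\bZ)=H_2(G,\bZ)$. Also $H^3(G,\bZ)\simeq H^2(G,\bQ/\bZ)$, which is the Pontryagin dual of $H_2(G,\bZ)$ by the universal coefficient theorem. Under this duality, corestriction in homology is dual to restriction in cohomology. Hence the cokernel vanishes if and only if $H^3(G,\bZ)\xrightarrow{\rm res}\bigoplus_v H^3(G_v,\bZ)$ is injective. Finally, $H^3(G,\bZ)\simeq H^2(G,\bC^\times)=M(G)$, so $M(G)=0$ gives the Hasse norm principle.
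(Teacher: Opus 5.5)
Your proof is correct, and it is essentially Tate's own argument in the cited reference: the long exact sequence of $1\to K^\times\to\bA_K^\times\to C_K\to 1$, together with local and global Tate--Nakayama, Shapiro's lemma realised as corestriction, and the projection formula. The paper gives no proof of its own and simply cites Tate. One small detail should be added to Step 4. For almost all $v$ the decomposition group $G_v$ is cyclic, so $H^3(G_v,\bZ)=\widehat H^{-3}(G_v,\bZ)=0$; this is what makes the restriction map land in $\bigoplus_v$ rather than $\prod_v$, and it lets you identify the Pontryagin dual of $\bigoplus_v\widehat H^{-3}(G_v,\bZ)$ with $\bigoplus_v H^3(G_v,\bZ)$.
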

If $G\simeq C_n$ is cyclic, then 
$\widehat H^{-3}(G,\bZ)\simeq H^3(G,\bZ)\simeq H^1(G,\bZ)=0$ 
and hence the Hasse's original theorem follows. 

For algebraic $k$-tori $T$, 
we also obtain the group $T(k)/R$ of $R$-equivalence classes 
over a local field $k$ via 
$T(k)/R\simeq H^1(k,{\rm Pic}\,\overline{X})\simeq 
H^1(G,[\widehat{T}]^{fl})$ 
(see Colliot-Th\'{e}l\`{e}ne and Sansuc \cite[Corollary 5, page 201]{CTS77}, 
Voskresenskii \cite[Section 17.2]{Vos98} and Hoshi, Kanai and Yamasaki \cite[Section 7, Application 1]{HKY22}). 

Ono \cite{Ono63} established the relationship 
between the Hasse norm principle for $K/k$ 
and the Hasse principle for all torsors $E$ under
the norm one torus $R^{(1)}_{K/k}(\bG_m)$ 
(see Platonov and Rapinchuk \cite[Section 6.3]{PR94} for $r\geq 2$): 
\begin{theorem}[{Ono \cite[page 70]{Ono63}, see also Platonov \cite[page 44]{Pla82}, 
Kunyavskii \cite[Remark 3]{Kun84}, Platonov and Rapinchuk \cite[Section 6.3, page 307, page 313]{PR94}}]\label{thOno}
Let $k$ be a global field 
and 
$A=\prod_{i=1}^r K_i$ be an 
\'etale algebra where $K_i/k$ $(1\leq i\leq r)$ 
is a finite separable field extension. 
Then 
\begin{align*}
\Sha(R^{(1)}_{A/k}(\bG_m))\simeq 
(\prod_{i=1}^rN_{K_i/k}(\bA_{K_i}^\times)\cap k^\times)/\prod_{i=1}^rN_{K_i/k}(K_i^\times).
\end{align*}
In particular, $\Sha(R^{(1)}_{A/k}(\bG_m))=0$ if and only if 
the Hasse norm principle holds for $A/k$. 
\end{theorem}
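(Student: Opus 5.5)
The plan is to compute both $H^1(k,\T_A)$ and the local groups $H^1(k_v,\T_A)$ explicitly from the defining exact sequence of $\T_A$, and then identify the kernel of the restriction map with the quotient in the statement. We start from the extension of algebraic $k$-tori recalled in Section \ref{ssRatNorm1},
\begin{align*}
1 \to \T_A \to R_{A/k}(\bG_m) \xrightarrow{N_{A/k}} \bG_{m,k} \to 1.
\end{align*}
Taking Galois cohomology over $k$ gives an exact sequence
\begin{align*}
A^\times \xrightarrow{N_{A/k}} k^\times \to H^1(k,\T_A) \to H^1(k,R_{A/k}(\bG_m)).
\end{align*}
By Shapiro's lemma, $H^1(k,R_{A/k}(\bG_m))\simeq\prod_{i=1}^r H^1(K_i,\bG_m)$, and this vanishes by Hilbert's Theorem 90. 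Hence
\begin{align*}
H^1(k,\T_A)\simeq k^\times/\prod_{i=1}^r N_{K_i/k}(K_i^\times).
\end{align*}

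The same argument works after base change to each completion $k_v$. Since $A\otimes_k k_v\simeq\prod_{i}\prod_{w\mid v}K_{i,w}$ is \'etale over $k_v$, with norm equal to the product of the local norms, we get
\begin{align*}
H^1(k_v,\T_A)\simeq k_v^\times/\prod_{i=1}^r\prod_{w\mid v} N_{K_{i,w}/k_v}(K_{i,w}^\times).
\end{align*}
These identifications are compatible with the restriction maps, because the connecting homomorphisms are functorial in the field. Consequently $\Sha(\T_A)$ is the set of classes of $x\in k^\times$ that are local (multi)norms at every place $v$, taken modulo $\prod_i N_{K_i/k}(K_i^\times)$. One technical point should be noted: restriction must land in the direct sum over all places. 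This holds because a given $x$ is a unit at almost all $v$, and for almost all $v$ all $K_{i,w}/k_v$ are unramified, where local units are norms.

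It remains to show that $x\in k^\times$ is a local norm at every $v$ if and only if $x\in\prod_i N_{K_i/k}(\bA_{K_i}^\times)$. The ``if'' direction is immediate by projecting to the $v$-component. For ``only if'', choose local preimages $y_v\in\prod_{i,w}K_{i,w}^\times$ for each $v$. At the almost all places where $x$ is a unit and every $K_{i,w}/k_v$ is unramified, we use surjectivity of the norm on unit groups of unramified local extensions to pick $y_v$ integral units. This makes $(y_v)_v$ an idele of $A$.

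This restricted-product bookkeeping is the only real obstacle, and it reduces to the standard fact from local class field theory that units are norms in unramified extensions. Combining it with the identifications above yields the isomorphism in the statement, and in particular $\Sha(\T_A)=0$ exactly when the Hasse norm principle holds for $A/k$.
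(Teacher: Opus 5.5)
Your proof is correct. The long exact sequence of $1\to\T_A\to R_{A/k}(\bG_m)\to\bG_{m,k}\to 1$, combined with Shapiro's lemma and Hilbert 90, gives $H^1(k,\T_A)\simeq k^\times/N_{A/k}(A^\times)$ and the analogous local isomorphisms, compatibly with restriction. The remaining step is to show that an element $x\in k^\times$ which is a local norm at every place is the norm of an idele of $A$; you handle this correctly by choosing unit preimages at the almost all places where $x$ is a unit and every $K_{i,w}/k_v$ is unramified.

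The paper gives no proof of its own for this statement. It only cites Ono and Platonov--Rapinchuk, and your argument is essentially the standard one found in those references.
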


For norm one tori $\T_A=R^{(1)}_{A/k}(\bG_m)$, 
recall that 
the function field $k(\T_A)$ may be regarded as $L(M)^G$ 
for the character lattice $M=\widehat{\T}_A\simeq J_X$ and hence we have: 
\begin{align*}
&\hspace*{2.5mm}[J_X]^{fl}=0\hspace*{10mm}
\ \ \Rightarrow\ \ \hspace*{0mm}[J_X]^{fl}\ \textrm{is\ invertible}
\ \ \Rightarrow\ \ H^1(G,[J_X]^{fl})=0
\ \ \Rightarrow\ \ A(\T_A)=0\ \textrm{and}\ \Sha(\T_A)=0\\
&\hspace*{10mm}\Updownarrow\hspace*{37mm} \Updownarrow\\
&\hspace*{-4mm}\T_A\ \textrm{is\ stably}\ k\textrm{-rational}\ \ \hspace*{0mm}
\Rightarrow\ \ \hspace*{0mm}\T_A\ \textrm{is\ retract}\ k\textrm{-rational}\ 
\end{align*}
where the last implication holds over a global field $k$ 
(see Theorem \ref{thV}, 
see also Colliot-Th\'{e}l\`{e}ne and Sansuc \cite[page 29]{CTS77}). 
The last conditions mean that 
$\T_A$ has the weak approximation property, 
Hasse principle holds for all torsors $E$ under $\T_A$ and 
the Hasse norm principle holds for $A/k$ as above.  


By Poitou-Tate duality (see Milne \cite[Theorem 4.20]{Mil86}, 
Platonov and Rapinchuk \cite[Theorem 6.10]{PR94}, 
Neukirch, Schmidt and Wingberg \cite[Theorem 8.6.8]{NSW00}, 
Harari \cite[Theorem 17.13]{Har20}), 
we also have 
\begin{align*}
\Sha(T)^\vee\simeq\Sha^2(G,\widehat{T})
\end{align*}
where $\Sha(T)^\vee={\rm Hom}(\Sha(T),\bQ/\bZ)$, 
\begin{align*}
\Sha^i(G,\widehat{T})={\rm Ker}\left\{H^i(G,\widehat{T})\xrightarrow{\rm res} \bigoplus_{v\in V_k} 
H^i(G_v,\widehat{T})\right\}\quad (i\geq 1)
\end{align*}
is {\it the $i$-th Shafarevich-Tate group} 
of $\widehat{T}\simeq {\rm Hom}(T\times_k L,\bG_{m,L})$, 
$G={\rm Gal}(L/k)$ and $L$ is the minimal splitting field of 
algebraic $k$-torus $T$. 
Note that $\Sha(T)\simeq \Sha^1(G,T)$. 
In the special case where 
$\T_K=R^{(1)}_{K/k}(\bG_m)$ and $K/k$ is Galois with $G={\rm Gal}(K/k)$, 
we have $\widehat{\T}_K\simeq J_{G}$ and 
$H^2(G,J_{G})\simeq H^3(G,\bZ)$ and hence 
we get Tate's theorem (Theorem \ref{thTate}). 

By taking the duals of Voskresenskii's exact sequence as in Theorem \ref{thV}, 
we get the following exact sequence
\begin{align*}
0\to \Sha^2(G,\widehat{T})\to \Sha^2_\omega(G,\widehat{T})\to A(T)^\vee\to 0
\end{align*}
where the map $\Sha^2(G,\widehat{T})\to \Sha^2_\omega(G,\widehat{T})$ 
is the natural inclusion arising from the Chebotarev density theorem 
(see also Macedo and Newton \cite[Proposition 2.4]{MN22}). 

\subsection{Case of field extensions $K/k$} 
The Hasse norm principle for Galois extensions $K/k$ 
was investigated by Tate \cite{Tat67}, 
Gerth \cite{Ger77}, \cite{Ger78} and 
Gurak \cite{Gur78a}, \cite{Gur78b}, \cite{Gur80} 
(see also \cite[pages 308--309]{PR94}). 
Gurak \cite{Gur78a} showed that 
the Hasse norm principle holds for Galois extension $K/k$ 
if all the Sylow subgroups of $G={\rm Gal}(K/k)$ are cyclic 
(see also Remark \ref{rem3.6}). 
Note that this also follows from Theorem \ref{thOno} 
and the retract $k$-rationality of 
$\T_K=R^{(1)}_{K/k}(\bG_m)$ due to Endo and Miyata \cite[Theorem 2.3]{EM75}. 

For non-Galois extension $K/k$ of degree $n$, 
the Hasse norm principle was investigated by 
Bartels \cite{Bar81a} (holds for $n=p$; prime), 
\cite{Bar81b} (holds for $G\simeq D_n$), 
Voskresenskii and Kunyavskii \cite{VK84} (holds for $G\simeq S_n$), 
Kunyavskii \cite{Kun84} $(n=4)$, 
Drakokhrust and Platonov \cite{DP87} $(n=6)$, 
Macedo \cite{Mac20} (holds for $G\simeq A_n$ ($n\neq 4)$), 
Macedo and Newton \cite{MN22} 
($G\simeq A_4$, $S_4$, $A_5$, $S_5$, $A_6$, $A_7$ $($general $n))$, 
Hoshi, Kanai and Yamasaki \cite{HKY22} $(n\leq 15$ $(n\neq 12))$ 
(holds for $G\simeq M_n$ ($n=11,12,22,23,24$; $5$ Mathieu groups)), 
\cite{HKY23} $(n=12)$, 
\cite{HKY25} $(n=16)$, 
\cite{HKY} $(G\simeq M_{11}$, $J_1$ $($general $n))$, 
Hoshi and Yamasaki \cite{HY2} 
(holds for $G\simeq {\rm PSL}_2(\bF_7)$ $(n=21)$, 
${\rm PSL}_2(\bF_8)$ $(n=63)$), 
\cite{HY3} (holds for metacyclic groups $G$ 
with trivial Schur multiplier $M(G)=0$ $($general $n))$, 
\cite{HY4} ($G\simeq (C_p)^2\rtimes C_p$ ($n=p^2$, $p\geq 3$; prime)) 
where $G={\rm Gal}(L/k)$ and $L/k$ is the Galois closure of $K/k$. 
Recall that the 
case where $n=p$ 
also follows from Theorem \ref{thOno} and 
the retract $k$-rationality of 
$\T_K=R^{(1)}_{K/k}(\bG_m)$ due to 
Colliot-Th\'{e}l\`{e}ne and Sansuc \cite[Proposition 9.1]{CTS87}. 

\subsection{Case of \'etale algebras $A/k$}

We collect known results about 
Hasse norm principle for \'etale algebras $A/k$: 
\begin{theorem}[{H\"urlimann \cite[Proposition 3.3]{Hur84}, Prasad and Rapinchuk \cite[Proposition 4.2]{PR10}, Pollio and Rapinchuk \cite[Proposition 15, Section 5]{PR13}, Colliot-Th\'{e}l\`{e}ne \cite[Th\'eor\`eme 4.1]{CT14}, Wei \cite[Corollary 3.3]{Wei14}, Demarche and Wei \cite[Theorem 1, Theorem 6]{DW14}, Bayer-Fluckiger, Lee and Parimala \cite{BLP19}}]\label{th4.3}
%
Let $A=\prod_{i=1}^rK_i$ be an \'etale $k$-algebra 
with $X={\rm Hom}_k(A,\overline{k})$ 
and $L_i/k$ $(1\leq i\leq r)$ 
be the Galois closure of $K_i/k$ with Galois groups 
$G_i={\rm Gal}(L_i/k)$, $H_i={\rm Gal}(L_i/K_i)\leq G_i$  
and $X_i={\rm Hom}_k(K_i,\overline{k})$ $(1\leq i\leq r)$ 
with $X=\bigcup_{i=1}^r X_i$. 
Let $L=L_1\cdots L_r\subset\overline{k}$ be the composite field of 
$L_1,\ldots,L_r$, i.e. 
the smallest field which contains all $L_i$, 
with Galois group $G={\rm Gal}(L/k)$ 
and $\T_A=R^{(1)}_{A/k}(\bG_m)$ be the norm one torus of $A/k$ 
with $\widehat{\T}_A\simeq J_X$. 
Let $G_v$ be the decomposition group of $G$ at a place $v$ of $k$. 
Let $p$ be a prime number.\\
{\rm (1)} $(${\rm H\"urlimann} \cite{Hur84}$;$ $r=2)$ 
If $K_1=L_1$, $K_2=L_2$; Galois, $G_1\simeq C_n$; cyclic and 
$K_1\cap K_2=k$, then $G\simeq C_n\times G_2$ and 
$\Sha^2_\omega(G,J_X)=0$;\\
{\rm (2)} $(${\rm Platonov and Rapinchuk} \cite{PR94}$;$ $r=2)$ 
If $k$ is a global field, 
the Hasse norm principle holds for $K_1/k$ and $L_1\cap L_2=k$, 
then $G\simeq G_1\times G_2$ 
and $\Sha(\T_A)=0$, i.e. the Hasse norm principle holds for $A/k$;\\
{\rm (3)} $(${\rm Prasad and Rapinchuk} \cite{PR10}$;$ $r=2)$ 
If $k$ is a global field, 
$K_1=L_1$; Galois, $G_1={\rm Gal}(L_1/k)$; abel, 
the Hasse norm principle holds for $L_1/k$ and $K_1\cap K_2=k$, 
then $G\leq G_1\times G_2$ is a subdirect product of $G_1$, $G_2$ 
and $\Sha(\T_A)=0$, i.e. the Hasse norm principle holds for $A/k$;\\
{\rm (4)} $(${\rm Pollio\ and\ Rapinchuk} \cite{PR13}$)$ 
If $L_1\cap L_2=k$, then $G\simeq G_1\times G_2$ and 
$\Sha^2_\omega(G,J_X)=0$. 
In particular, if $(L_1\cdots L_{i-1})\cap L_i=k$ for any $2\leq i\leq r$, 
then $G\simeq G_1\times \cdots\times G_r$ and $\Sha^2_\omega(G,J_X)=0$;\\
{\rm (5)} $(${\rm Colliot-Th\'{e}l\`{e}ne} \cite{CT14}$;$ $r=3)$ 
If $K_1=L_1, K_2=L_2, K_3=L_3$, $G_1\simeq G_2\simeq G_3\simeq C_2$, 
$L_1\cap L_2=L_1\cap L_3=L_2\cap L_3=k$ and $L_1L_2\supset L_3$, 
then $G\simeq C_2\times C_2$ and 
$\Sha^2_\omega(G,J_X)\simeq\bZ/2\bZ$;\\
{\rm (6)} $(${\rm Wei} \cite{Wei14}; $r=2)$ 
If $K_1\cap L_2=k$, then $G\leq G_1\times G_2$ is a subdirect product of $G_1$, $G_2$ and 
$\Sha^2_\omega(G,J_X)=0$;\\
{\rm (7)} $(${\rm Demarche and Wei} \cite{DW14}$)$ 
Assume that $\{1,\ldots,r\}=I\cup J$ 
$(I\cap J=\emptyset, I,J\neq\emptyset)$ 
and $F:=\bigcap_{i=1}^r K_i/k$ is Galois. 
Let $F_i/K_i$ be a separable field extension with the natural map 
${\rm Aut}_k(F_i)\to {\rm Aut}_k(F)$ surjective. 
Let $F_I$ $($resp. $F_J)$ be the composite field of $L_i$ $(i\in I)$ 
(resp. $j\in J$), i.e. 
the smallest field which contains all $F_i$ $(i\in I)$ 
$($resp. $(i\in J))$ and 
$L_I/F$ $($resp. $L_J/F)$ be the Galois closure of 
$F_I/F$ $($resp. $F_J/F)$ with 
$G_I={\rm Gal}(L_I/F)$ $($resp. $G_J={\rm Gal}(L_J/F))$. 
If $F_I\cap L_J=F$, then $H={\rm Gal}(L/F)\leq G_I\times G_J$ is a subdirect product 
of $G_I$, $G_J$ and $\Sha^2_\omega(G,J_X)\simeq 
\Sha^2_\omega(\overline{G},\widehat{\T}_F)$ 
where $\overline{G}=G/H$ and 
$\T_F=R^{(1)}_{F/k}(\bG_m)$ is the norm one torus of $F/k$. 
As the special case, if $F=k$ $(G=H)$, then $\Sha^2_\omega(G,J_X)=0$;\\
{\rm (8)} $(${\rm Bayer-Fluckiger, Lee and Parimala} \cite{BLP19}$)$ 
If $K_i=L_i$; Galois, $G_i\simeq C_p$ $(1\leq i\leq r)$, 
$L_i\cap L_j=k$ $(1\leq i<j\leq r)$, then 
\begin{align*}
\Sha(\T_A)=\begin{cases}
(\bZ/p\bZ)^{\oplus r-2}& {\rm if}\ G\simeq C_p\times C_p, 
G_v\simeq C_p\ {\rm for\ any\ place}\ v\ {\rm of}\ k\ (2\leq r\leq p+1)\\
0& {\rm otherwise}.
\end{cases}
\end{align*}

In particular, by Theorem \ref{thV} 
(see also Section \ref{S2} and Section \ref{S4}), 
if $k$ is a global field 
and 
$\Sha^2_\omega(G,J_X)\simeq H^1(k,{\rm Pic}\, \overline{X})\simeq 
H^1(G,{\rm Pic}\, X_K)\simeq H^1(G,[J_X]^{fl})\simeq 
{\rm Br}(X)/{\rm Br}(k)\simeq 
{\rm Br}_{\rm nr}(k(X))/{\rm Br}(k)=0$ 
where $X$ is a smooth $k$-compactification of $\T_A$, 
then 
$A(\T_A)=0$, i.e. $\T_A$ has the weak approximation property, 
and $\Sha(\T_A)=0$, i.e. the Hasse norm principle holds for $A/k$ 
$($that is, Hasse principle holds for all torsors $E$ under $\T_A$$)$ 
where $\Sha(\T_A)^\vee\leq \Sha^2_\omega(G,J_X)$ 
and $\Sha(\T_A)^\vee={\rm Hom}(\Sha(\T_A),\bQ/\bZ)$. 
The condition $\Sha(\T_A)=0$ means that 
for the corresponding norm hypersurface 
\begin{align*}
\prod_{i=1}^r f_i(x_{i,1},\ldots,x_{i,n_i})=b, 
\end{align*}
it has a $k$-rational point 
if and only if it has a $k_v$-rational point 
for any place $v$ of $k$ where 
$f_i(x_{i,1},\ldots,x_{i,n_i})\in k[x_{i,1},\ldots,x_{i,n_i}]$ 
$(1\leq i\leq r)$ 
is the polynomial of total degree $n_i$ 
defined as 
\begin{align*}
f_i(x_{i,1},\ldots,x_{i,n_i})=N_{K_i/k}(x_{i,1}w_{i,1}+\cdots+x_{i,n_i}w_{i,n_i})=\prod_{\overline{g}\in G_i/H_i}\overline{g}(x_{i,1}w_{i,1}+\cdots+x_{i,n_i}w_{i,n_i})
\end{align*}
where 
$\{w_{i,1},\ldots,w_{i,n_i}\}$ is a basis of $K_i/k$, 
$N_{K_i/k}:K^\times\to k^\times$ is the norm map 
and $b\in k^\times$ $($see Voskresenskii \cite[Example 4, page 122]{Vos98}$)$. 
\end{theorem}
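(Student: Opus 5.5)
This statement collects results from the literature, so the proof consists of the cited arguments. I would organise it around one reduction and then treat each item through the cohomology of $J_X$.

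\emph{Common reduction.} By Theorem \ref{thCTS87} and the remarks after Theorem \ref{thV}, we have
$$H^1(k,{\rm Pic}\,\overline{X})\simeq H^1(G,[J_X]^{fl})\simeq \Sha^2_\omega(G,J_X).$$
Poitou--Tate duality gives the exact sequence
$$0\to \Sha^2(G,J_X)\to \Sha^2_\omega(G,J_X)\to A(\T_A)^\vee\to 0.$$
So whenever $\Sha^2_\omega(G,J_X)=0$, both $A(\T_A)$ and $\Sha(\T_A)$ vanish. By Theorem \ref{thOno}, the latter says that the Hasse norm principle holds for $A/k$. This is exactly the final ``in particular'' paragraph. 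What remains in each item is a purely group-theoretic statement about $G$ and the $G$-set $X$.

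\emph{Group-theoretic description.} The group-theoretic descriptions of $G$ ($G\simeq G_1\times G_2$ when $L_1\cap L_2=k$, and a subdirect product in general) follow from Galois theory of composita. The hypotheses of the form $K_1\cap L_2=k$ translate as follows: the stabiliser $H_1'\leq G$ of a point $x\in X_1$ surjects onto $G_2$ under $\varphi_2$. To compute $H^2(G,J_X)$ I would dualise $(E_X)$, giving
$$0\to\bZ\to\bZ[X_1]\oplus\cdots\oplus\bZ[X_r]\to J_X\to 0.$$
Since $H^i(G,\bZ[X_j])=H^i(H_j',\bZ)$ by Shapiro's lemma, the long exact sequence identifies
$$H^2(G,J_X)\hookrightarrow \mathrm{Ker}\Bigl\{H^3(G,\bZ)\to\bigoplus_j H^3(H_j',\bZ)\Bigr\},$$
modulo the image of $\bigoplus_j H^2(H_j',\bZ)$. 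The subgroup $\Sha^2_\omega$ is then cut out by the additional local conditions on cyclic subgroups.

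\emph{Vanishing arguments.} For (1), (4), (6) and (7), I would show that a class locally trivial on all cyclic subgroups lies in the image of the norm maps. The key point is that the surjection $H_1'\twoheadrightarrow G_2$ lets every cyclic subgroup of $G$ be conjugated into something controlled by $H_1'$ and $G_2$. A transfer/corestriction argument then kills the class. In (7), the same argument is run over the common Galois subfield $F$, which yields the isomorphism with $\Sha^2_\omega(\overline{G},\widehat{\T}_F)$.

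\emph{Arithmetic and explicit items.} Items (2) and (3) are arithmetic statements. There I would use the Hasse norm principle for $K_1/k$ (respectively $L_1/k$) directly on the multinorm equation: reduce to showing that every local multinorm is a product of a global norm from $K_1$ and one from $K_2$. Item (5) and the formula in (8) are explicit computations with $G\simeq C_2\times C_2$, respectively $G\simeq (C_p)^m$ with $G_v$ cyclic. For these I would compute $H^3(G,\bZ)$ and the restriction maps directly. In (8) the cases are distinguished by whether all decomposition groups are cyclic, and Chebotarev density is what makes this distinction matter.

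\emph{Main obstacle.} I expect the hardest step to be the general vanishing of $\Sha^2_\omega(G,J_X)$ in (6) and (7), where $G$ is only a subdirect product. There, one must handle the cyclic subgroups that are not contained in $\varphi_2^{-1}$ of anything simple. I would first try to reduce to the case where $H_1'\cdot\mathrm{Ker}(\varphi_2)=G$ is used via a double-coset (Mackey) decomposition of $X_1$ restricted to $\mathrm{Ker}(\varphi_2)$.
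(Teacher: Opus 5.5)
The paper does not prove items (1)--(8). It quotes them from the cited sources, and the only reasoning it supplies is the closing deduction. You reproduce that deduction correctly: Theorem \ref{thCTS87} identifies $\Sha^2_\omega(G,J_X)$ with $H^1(G,[J_X]^{fl})$, the dual of Voskresenskii's sequence (Theorem \ref{thV}) gives $0\to\Sha^2(G,J_X)\to\Sha^2_\omega(G,J_X)\to A(\T_A)^\vee\to 0$, and Theorem \ref{thOno} translates $\Sha(\T_A)=0$ into the Hasse norm principle. Insofar as you defer to the literature, you match the paper.

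The arguments you sketch in place of the citations, however, would not work as stated. From $(F_X)$ and Shapiro's lemma one gets
\begin{align*}
0\to{\rm Coker}\Bigl\{H^2(G,\bZ)\xrightarrow{\rm res}\bigoplus_j H^2(H_j',\bZ)\Bigr\}\to H^2(G,J_X)\to{\rm Ker}\Bigl\{H^3(G,\bZ)\xrightarrow{\rm res}\bigoplus_j H^3(H_j',\bZ)\Bigr\}\to 0.
\end{align*}
You treat the first term as something to quotient away, and you propose to settle (5) and (8) by computing $H^3(G,\bZ)$. But the first term is where those answers live. Take $G\simeq C_p\times C_p$ with $r\geq 2$ distinct stabilisers $H_j'$ of order $p$. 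The restriction ${\rm Hom}(G,\bQ/\bZ)\to\bigoplus_j{\rm Hom}(H_j',\bQ/\bZ)$ is injective with cokernel $(\bZ/p\bZ)^{\oplus r-2}$. These classes restrict to zero on every cyclic subgroup $C$: either $C=H_j'$ for a single $j$, fixing $X_j$ pointwise and acting freely on the other orbits, or $C$ acts freely on all of $X$. So the groups $\bZ/2\bZ$ in (5) and $(\bZ/p\bZ)^{\oplus r-2}$ in (8) are accounted for by this $H^2$-cokernel, and a computation centred on $H^3(G,\bZ)$ would miss them.

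For the vanishing statements (1), (4), (6), (7), you must show that every nonzero class of $H^2(G,J_X)$ survives restriction to some cyclic subgroup. Restriction--corestriction cannot deliver this, since ${\rm res}^G_C(\alpha)=0$ only yields $[G:C]\alpha=0$. Already in (4) with $K_1,K_2$ distinct quadratic fields, $H^2(G,J_X)\simeq H^3(C_2\times C_2,\bZ)\simeq\bZ/2\bZ$. The third subgroup of order $2$ acts freely on $X$, so the generator dies there. One must therefore show it is nonzero on ${\rm Gal}(L/K_1)$ or ${\rm Gal}(L/K_2)$, and since $[G:C]=2$ already kills the class, ${\rm cores}\circ{\rm res}$ gives no information. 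In general you also need a gluing statement for characters of the stabilisers, because of the cokernel term. Your sketch never says where $\varphi_2(H_1')=G_2$ or $L_1\cap L_2=k$ actually enters.

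For (2) and (3), ``reduce to showing that every local multinorm is a product of a global norm from $K_1$ and one from $K_2$'' is the multinorm principle itself, so nothing has been reduced. What is missing is a step in which the disjointness hypothesis lets you correct $b$ by a global norm from $K_2$, so that the Hasse norm principle for $K_1/k$ (resp.\ $L_1/k$) becomes applicable. As written, your proposal establishes only the concluding paragraph. For (1)--(8) you must either rely on the cited papers, as the paper does, or supply these arguments.
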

\begin{remark}
(1) 
Theorem \ref{th4.3} (6) implies that if 
$\Sha^2_\omega(\overline{G},\widehat{S})=0$ with $\overline{G}=G/H$, 
then $\Sha^2_\omega(G,J_X)=0$. 
For example, it follows from the retract $k$-rationality of 
$\T_F=R^{(1)}_{F/k}(\bG_m)$ 
due to Endo and Miyata \cite[Theorem 2.3]{EM75} 
(also follows from Gurak \cite{Gur78a}) that the Hasse norm principle holds 
for Galois extension $F/k$ 
if all the Sylow subgroups of ${\rm Gal}(F/k)$ are cyclic 
(see also Remark \ref{rem3.6}). 
Moreover, by Tate's theorem (Theorem 
\ref{thTate}), 
if $M(\overline{G})=0$, then $\Sha(\widehat{S})=0$, 
i.e. Hasse norm principle holds for $F/k$, 
where $M(\overline{G})$ is the Schur multiplier of $\overline{G}$. 
It also follows from Volskresenskii's theorem (Theorem \ref{thV}) that 
if $M(\overline{G})=0$, then $\Sha^2_\omega(\overline{G},\widehat{S})
=H^2(\overline{G},J_{\overline{G}})\simeq M(\overline{G})=0$. 
See Hoshi and Yamasaki \cite{HY3} for further examples.\\
(2) By considering an unramified extension $L/k$, 
it follows from Theorem \ref{th4.3} (8) that 
if $K_i=L_i$; Galois, $G_i\simeq C_p$ $(1\leq i\leq r)$, 
$L_i\cap L_j=k$ $(1\leq i<j\leq r)$, then 
\begin{align*}
\Sha^2_\omega(G,J_X)=\begin{cases}
(\bZ/p\bZ)^{\oplus r-2}& {\rm if}\ G\simeq C_p\times C_p\ (2\leq r\leq p+1)\\
0& {\rm otherwise} 
\end{cases}
\end{align*}
(see also Macedo \cite[Theorem 4.9]{Mac25}). 
\end{remark}

\section{Group cohomology and tensor products of extensions of $G$-lattices}\label{S5} 

\subsection{Recollections from group cohomology} 
%
Let $M$ be a $G$-lattice and $N$ be a $G$-module. 
For any $i\geq 0$, it is classical  that there exists an isomorphism 
\begin{align*}
\Ext^i_{\bZ[G]}(M,N)\xrightarrow{\sim} H^i(G,{\rm Hom}_{\bZ}(M,N))\simeq H^i(G,M^\circ\otimes N)
\end{align*}
where $M^\circ:={\rm Hom}(M,\bZ)$ and $G$ acts on ${\rm Hom}_{\bZ}(M,N)$ by 
\begin{align*}
(gu)(m)=g\left(u(g^{-1}m)\right)\ (g\in G, u\in {\rm Hom}(M,N), m\in M)
\end{align*}
(see Brown \cite[Proposition 8.3 (b), page 28, page 56, Proposition 2.2, page 61]{Bro82}, see also Colliot-Th\'{e}l\`{e}ne and Sansuc \cite[Section 0.5, page 155]{CTS87}, Voskresenskii \cite[page 72]{Vos98}). Let us describe it explicitly.
\begin{lemma}\label{lem5.1}
Let $A$ and $C$ be $G$-lattices. 
For $1$-cocycle $f\in Z^1(G,{\rm Hom}_\bZ(C,A))$, 
we define the $G$-lattice $B_f=A\times C$ by the $G$-action 
\begin{align*}
g(a,c)=(ga+f(g)(gc),gc)\ (g\in G, a\in A, c\in C).
\end{align*}
Then we have an exact sequence of $G$-lattices
\begin{align*}
(E_f): 0\to A\xrightarrow{\iota}B_f\xrightarrow{\pi}C\to 0
\end{align*}
where 
\begin{align*}
\iota(a)&=(a,0),\\
\pi(a,c)&=c.
\end{align*}
Then, for $1$-cocycles $f_1,f_2\in Z^1(G,{\rm Hom}_\bZ(C,A))$, 
$(E_{f_1})$ is equivalent to $(E_{f_2})$ 
$($see Brown \cite[Chapter IV, page 86]{Bro82}$)$ 
if and only if $f_1-f_2\in B^1(G,{\rm Hom}_\bZ(C,A))$. 
In particular, the association
\begin{align*}
H^1(G,{\rm Hom}_\bZ(C,A))\to {\rm Ext}^1_{\bZ[G]}(C,A),\ [f]\mapsto E_f
\end{align*}
is bijective. 
\end{lemma}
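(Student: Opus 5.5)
The plan is to verify three things in turn: that $B_f$ is a $G$-lattice, that $(E_f)$ is an exact sequence of $G$-lattices, and that $[f]\mapsto E_f$ is a well-defined bijection onto ${\rm Ext}^1_{\bZ[G]}(C,A)$, viewed as equivalence classes of extensions.

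First, $B_f=A\times C$ is free of finite rank over $\bZ$, and each $g$ acts additively, so only the axiom $g(h(a,c))=(gh)(a,c)$ needs checking. Expanding gives
\begin{align*}
g(h(a,c))=\bigl(gha+g\bigl(f(h)(hc)\bigr)+f(g)(ghc),\,ghc\bigr).
\end{align*}
By the definition of the action on ${\rm Hom}_\bZ(C,A)$ we have $(g\cdot f(h))(ghc)=g\bigl(f(h)(hc)\bigr)$. The cocycle identity $f(gh)=f(g)+g\cdot f(h)$ then turns the first coordinate into $gha+f(gh)(ghc)$, as required. The cocycle condition also forces $f(1)=0$, so $1$ acts as the identity. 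Next, $\iota$ and $\pi$ are visibly $G$-equivariant, injective and surjective respectively, and $\ker\pi={\rm im}\,\iota$. This gives exactness.

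For the equivalence statement, suppose first that $f_1-f_2=\delta u$ with $u\in{\rm Hom}_\bZ(C,A)$, where $(\delta u)(g)=gu-u$. Define $\Phi(a,c)=(a+u(c),c)$. This is compatible with $\iota$ and $\pi$, and we check $G$-equivariance $\Phi\circ g_{f_1}=g_{f_2}\circ\Phi$. Comparing first coordinates reduces it to $f_1(g)(gc)+u(gc)=gu(c)+f_2(g)(gc)$, which is exactly $(f_1-f_2)(g)=gu-u$ evaluated at $gc$. For this to come out correctly one must take $u$ or $-u$ according to the sign convention for $\delta$; I expect this sign bookkeeping to be the only delicate point. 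Conversely, any equivalence $\Phi\colon B_{f_1}\to B_{f_2}$ commutes with $\iota$ and $\pi$, so it has the form $(a,c)\mapsto(a+u(c),c)$ for some additive $u\colon C\to A$. Equivariance then forces the same identity, so $f_1-f_2\in B^1$.

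Finally, for surjectivity, take an extension $0\to A\to B\to C\to 0$ of $G$-lattices. Since $C$ is $\bZ$-free, choose a $\bZ$-linear section $s\colon C\to B$, which identifies $B\simeq A\times C$ as groups. Define $f(g)\in{\rm Hom}_\bZ(C,A)$ by $f(g)(gc)=g\,s(c)-s(gc)$; this lies in $A$ because $\pi$ of it vanishes. Then $g(a+s(c))=ga+f(g)(gc)+s(gc)$, so $B\simeq B_f$. The cocycle identity for $f$ follows either from associativity of the action by the same computation as above read backwards, or from the first step. Changing $s$ by some $u$ changes $f$ by a coboundary, so the map is well defined on classes, and it is inverse to $[f]\mapsto E_f$. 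Injectivity of the association is exactly the equivalence criterion proved in the previous paragraph.
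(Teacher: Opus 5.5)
Your proof is correct and is essentially the argument the paper intends. The paper only cites Brown's treatment of group extensions (IV.3) and says the same reasoning applies; your direct verification is that reasoning written out: the action axiom from the cocycle identity, equivalences of the form $\Phi(a,c)=(a+u(c),c)$, and a $\bZ$-linear section $s$ giving the cocycle $g\mapsto gs-s$, which is exactly the $f_s$ of Lemma~\ref{lem5.3}. Also, your sign worry is unfounded: with $(\delta u)(g)=gu-u$, your $\Phi$ works exactly as written, because $(gu)(gc)=g(u(c))$.
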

\begin{proof}
See Brown \cite[IV.3, pages 91--94]{Bro82} for the similar case 
where $A$, $B_f$, $C$ are groups and $A$ is $C$-module 
(we can make a proof by using the similar arguments). 
\end{proof}
\begin{definition}\label{def5.2}
Let $(E): 0\to A\to B\to C\to 0$ be an exact sequence of $G$-lattices. 
{\it The order ${\rm ord}(E)$ of $(E)$} is defined to be 
the order of $(E)$ as an element in the group 
$\Ext^1_{\bZ[G]}(C,A)\simeq H^1(G,{\rm Hom}_{\bZ}(C,A))$. 
\end{definition}
\begin{lemma}\label{lem5.3}
Let $(E): 0\to A\xrightarrow{\iota} B\xrightarrow{\pi} C\to 0$ 
be an exact sequence of $G$-lattices 
with $e:={\rm ord}(E)$.\\ 
{\rm (1)} Let $s \in \mathrm{Hom}_\bZ(C,B)$ be such that 
$\pi\circ s\in {\rm Hom}_{\bZ[G]}(C,C)$. 
Define $f_s\in {\rm Map}(G,{\rm Hom}_\bZ(C,A))$ by 
\begin{align*}
f_s(g)(c)=(gs)(c)-s(c)=g\left(s(g^{-1}c)-s(c)\right)\ 
(g\in G, c\in C).
\end{align*}
Then {\rm (i)} $f_s=0$ if and only if $s\in {\rm Hom}_{\bZ[G]}(C,B)$, 
and {\rm (ii)} we have $f_s\in Z^1(G,{\rm Hom}_\bZ(C,A))$.\\
{\rm (2)} Let $s$, $s^\prime\in \mathrm{Hom}_\mathbb{Z}(C,B)$,
be such that $\pi\circ s$, $\pi\circ s^\prime \in {\rm Hom}_{\bZ[G]}(C,C)$. 
Define $f_s$, $f_{s^\prime}\in Z^1(G,{\rm Hom}_\bZ(C,A))$ as in $(1)$. 
Then $\pi\circ s=\pi\circ s^\prime$ if and only if 
$(f_s-f_{s^\prime})\in B^1(G,{\rm Hom}_\bZ(C,A))$. 
Conversely, if $f\in B^1(G,{\rm Hom}_\bZ(C,A))$, 
then there exists $s\in {\rm Hom}_\bZ(C,B)$ such that 
 $\pi\circ s=0$ and $f_s=f$.\\
{\rm (3)} For $s_1\in {\rm Hom}_{\bZ}(C,B)$, 
we assume that $\pi\circ s_1={\rm Id}_C\in {\rm Hom}_{\bZ[G]}(C,C)$ 
and hence we can take the unique element 
$[f_{s_1}]\in H^1(G,{\rm Hom}_\bZ(C,A))
=Z^1(G,{\rm Hom}_\bZ(C,A))/B^1(G,{\rm Hom}_\bZ(C,A))$ 
with ${\rm ord}([f_{s_1}])=e$ 
as in $(1)$ and $(2)$. 
Then there exists $\widetilde{s}\in {\rm Hom}_{\bZ[G]}(C,B)$ such that 
$[f_{\widetilde{s}}]=[f_{es_1}]=e[f_{s_1}]=0$ and $\pi\circ\widetilde{s}=e\,\Id_C$. 
Moreover, there exists 
$\overline{s}\in {\rm Hom}_{\bZ[G]}(C,B)$ such that 
$\pi\circ\overline{s}=m\,\Id_C$ if and only if $e\mid m$, 
i.e. $e$ divides $m$. 
\end{lemma}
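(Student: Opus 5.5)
We work throughout with the identification $\Hom_\bZ(C,A)\simeq C^\circ\otimes A$ and with the explicit extension model $B_f$ of Lemma \ref{lem5.1}. The underlying observation is simple: the obstruction to $s$ being $G$-equivariant is exactly the coboundary-type map $g\mapsto gs-s$. With $G$ acting on $\Hom_\bZ(C,B)$ by $(gs)(c)=g\,s(g^{-1}c)$, a map $s$ is $G$-linear if and only if $gs=s$ for all $g$, and this gives (1)(i) at once.

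For (1)(ii), first check that $f_s(g)$ lands in $A$. We have
\[
\pi\bigl((gs)(c)-s(c)\bigr)=g\,\pi s(g^{-1}c)-\pi s(c)=0,
\]
because $\pi\circ s$ is $G$-linear; identifying $A$ with $\iota(A)=\ker\pi$, this places $f_s(g)$ in $\Hom_\bZ(C,A)$. The cocycle identity $f_s(gh)=f_s(g)+g f_s(h)$ then follows from $ghs-s=(gs-s)+g(hs-s)$, which is linearity of the $G$-action on $\Hom_\bZ(C,B)$.

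For (2), apply the same reasoning to $t=s-s'$. If $\pi\circ s=\pi\circ s'$, then $t$ takes values in $A$, so $t\in\Hom_\bZ(C,A)$ and $f_s-f_{s'}=f_t=(g\mapsto gt-t)$ is a coboundary. The converse direction, and the "conversely" clause, go through the same identity. A coboundary $f(g)=gt-t$ with $t\in\Hom_\bZ(C,A)$ arises as $f_{\iota\circ t}$, and $\pi\circ\iota\circ t=0$. For the "only if" in the first sentence, suppose $f_s-f_{s'}=f_t$ with $t$ valued in $A$. Then $u=s-s'-t$ satisfies $f_u=0$, so $u$ is $G$-linear by (1)(i). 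We must then deduce $\pi\circ u=0$, which needs care: $\pi\circ u=\pi\circ s-\pi\circ s'$ is some $G$-endomorphism of $C$ that need not vanish in general. The main obstacle is therefore to read the statement as intended, namely that $[f_s]$ depends only on $\pi\circ s$ modulo $G$-linear maps. I will prove the precise version: the class $[f_s]$ depends only on $\pi\circ s$, and $[f_s]=0$ exactly when some $G$-linear map $\tilde s$ has $\pi\circ\tilde s=\pi\circ s$.

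For (3), $s_1$ is a set-theoretic $\bZ$-section. Writing $B\simeq A\times C$ via $s_1$ shows $B\simeq B_{f_{s_1}}$, so $[f_{s_1}]$ is the class of $(E)$ under Lemma \ref{lem5.1}, of order $e$. Since $s\mapsto f_s$ is additive, $f_{es_1}=ef_{s_1}$, which is a coboundary, say $f_{\iota t}$. Then $\tilde s=es_1-\iota t$ has $f_{\tilde s}=0$, so $\tilde s$ is $G$-linear by (1)(i), and $\pi\circ\tilde s=e\,\Id_C$.

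For the final equivalence, if $e\mid m$, take $\bar s=(m/e)\tilde s$. Conversely, if $\bar s$ is $G$-linear with $\pi\bar s=m\,\Id_C$, then $\pi(\bar s-ms_1)=0$. Hence $\bar s-ms_1$ takes values in $A$, so $f_{\bar s}-mf_{s_1}$ is a coboundary. As $f_{\bar s}=0$, this gives $m[f_{s_1}]=0$, and therefore $e\mid m$.
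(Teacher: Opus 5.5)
Your proof is correct and follows the paper's argument essentially step for step: the same cocycle computation in (1), the coboundary $g\mapsto g(s-s')-(s-s')$ and the lift $\iota\circ r$ of a coboundary in (2), and $\widetilde{s}=es_1-\iota\circ t$ together with comparing $\overline{s}$ against $ms_1$ in (3); your identification $B\simeq B_{f_{s_1}}$ also makes explicit why ${\rm ord}([f_{s_1}])=e$, which the paper takes for granted. Your caution about (2) is justified: the implication $f_s-f_{s'}\in B^1(G,{\rm Hom}_\bZ(C,A))\Rightarrow \pi\circ s=\pi\circ s'$ is false as literally stated (take $s=\widetilde{s}$ and $s'=0$, so $f_s=f_{s'}=0$ while $\pi\circ s=e\,\Id_C\neq 0$ for $C\neq 0$), the paper's proof of (2) only establishes the forward direction, and the conclusions of (3) need only that direction plus rescaling $\widetilde{s}$, which is exactly how you argue.
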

\begin{proof}
(1) By $\pi\left(f_s(g)(c)\right)=(\pi\circ s)(c)-gg^{-1}(\pi\circ s)(c)=0$, 
we may regard 
$f_s(g)(c)\in A\simeq \iota(A)$ by the exactness of $(E)$ and hence 
$f_s\in {\rm Map}(G,{\rm Hom}_\bZ(C,A))$. 
For any $g_1,g_2\in G$, $c\in C$, we have 
\begin{align*}
f_s(g_1g_2)(c)&=g_1g_2\left(s(g_2^{-1}g_1^{-1}c)\right)-s(c)\\
&=g_1g_2\left(s(g_2^{-1}g_1^{-1}c)\right)
-g_1\left(s(g_1^{-1}c)\right)+g_1\left(s(g_1^{-1}c)\right)
-s(c)\\
&=(g_1f_s)(g_2)(c)+f_s(g_1)(c). 
\end{align*}
Hence $f_s$ becomes a $1$-cocyle of $G$ in ${\rm Hom}_\bZ(C,A)$, i.e. 
$f_s\in Z^1(G,{\rm Hom}_\bZ(C,A))$.\\
(2) We see that $\pi\circ s=\pi\circ s^\prime$ if and only if 
$(s-s^\prime)(c)\in \iota(A)$ for any $c\in C$. 
It follows from 
\begin{align*}
(f_s-f_{s^\prime})(g)(c)=g\left((s-s^\prime)(g^{-1}c)\right)-(s-s^\prime)(c)
=\left(g(s-s^\prime)\right)(c)-(s-s^\prime)(c)
\end{align*}
that $f_s-f_{s^\prime}$ is a $1$-coboundary of $G$ in ${\rm Hom}_\bZ(C,A)$, 
i.e. 
$f_s-f_{s^\prime}\in B^1(G,{\rm Hom}_\bZ(C,A))$.
Conversely, if $f\in B^1(G,{\rm Hom}_\bZ(C,A))$, 
then thre exists $r\in {\rm Hom}_\bZ(C,A)$ such that 
\begin{align*}
f(g)(c)=(gr)(c)-r(c)\ (g\in G, c\in C). 
\end{align*}
We can take $s=i\circ r\in {\rm Hom}_{\bZ}(C,B)$ with 
$\pi\circ s=(\pi\circ i)\circ r=0\circ r=0$ and 
$f_s=f_{i\circ r}=f$ because $\iota(A)\simeq A$.\\
(3) Because $C$ is free as a $\bZ$-module, 
we can take $s_1\in {\rm Hom}_\bZ(C,B)$ 
with $\pi\circ s_1=\Id_C$. 
It also follows from $\pi\circ s_1=\Id_C\in {\rm Hom}_{\bZ[G]}(C,C)$ 
and (1) that we can get the unique element 
$[f_{s_1}]\in H^1(G, {\rm Hom}_\bZ(C,A))$. 

For $m\in \bZ$, we define $ms_1\in {\rm Hom}_\bZ(C,B)$ 
by $(ms_1)(c)=m\left(s_1(c)\right)$. 
Then we have $\pi\circ(ms_1)=m(\pi\circ s_1)=m\,\Id_C$. 
We also see that 
\begin{align*}
f_{ms_1}(g)(c)=g\left((ms_1)(g^{-1}c)\right)-(ms_1)(c)
=mf_{s_1}(g)(c).
\end{align*}
In particular, for $m=e$, we have $[f_{es_1}]=e[f_{s_1}]=0$ 
because ${\rm ord}(f_{s_1})=e$. 
Hence it follows from (2) that there exists 
$s_0\in {\rm Hom}_{\bZ[G]}(C,A)$ such that 
$f_{s_0}=f_{es_1}$ 
and 
$\pi\circ s_0=0$ with $[f_{s_0}]=0$. 
Define $\widetilde{s}:=es_1-s_0$. 
Then we get $f_{\widetilde{s}}=f_{es_1}-f_{s_0}=0$, 
$\pi\circ\widetilde{s}=\pi(es_1)-\pi(s_0)=\pi(es_1)
=e(\pi\circ s_1)=e\,\Id_C$. 
For the last statement, we see that 
$\pi\circ\overline{s}=m\,\Id_C$ if and only if 
$[f_{\overline{s}}]=[f_{ms_1}]=m[f_{s_1}]=0$ 
if and only if $e\mid m$. 
\end{proof}
\begin{lemma}\label{lem5.4}
Let $(E): 0\to A\xrightarrow{\iota} B\xrightarrow{\pi} C\to 0$ 
be an exact sequence of $G$-lattices.\\ 
\noindent The following conditions are equivalent:\\
{\rm (1)} $e={\rm ord}(E)$, i.e. 
the order of an element 
$(E) \in \Ext^1_{\bZ[G]}(C,A)\simeq H^1(G,{\rm Hom}_{\bZ}(C,A))$ 
as in Definition \ref{def5.2};\\ 
{\rm (2)} The smallest positive integer $e\in \bZ$ satisfies that 
there exists 
$s\in {\rm Hom}_{\bZ[G]}(C,B)$ 
such that $\pi \circ s= e\,\Id_C$;\\
{\rm (3)} The smallest positive integer $e\in \bZ$ satisfies 
that there exists 
$t\in {\rm Hom}_{\bZ[G]}(B,A)$ such that $t\circ \iota= e\,\Id_A$. 
\end{lemma}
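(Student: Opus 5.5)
The plan is to prove (1)$\Leftrightarrow$(2) directly from Lemma \ref{lem5.3}, and then (2)$\Leftrightarrow$(3) by the dual argument. In both cases the key point is the same: the set of integers $m$ for which the relevant splitting exists is exactly the subgroup $e\bZ$ of $\bZ$, where $e={\rm ord}(E)$.

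For (1)$\Leftrightarrow$(2), fix a $\bZ$-linear section $s_1$ with $\pi\circ s_1=\Id_C$; it exists because $C$ is $\bZ$-free. By Lemma \ref{lem5.1} and Lemma \ref{lem5.3}(3), the class of $(E)$ corresponds to $[f_{s_1}]$. The last statement of Lemma \ref{lem5.3}(3) says that a $G$-map $\overline{s}$ with $\pi\circ\overline{s}=m\,\Id_C$ exists if and only if $e\mid m$. Hence the smallest positive such $m$ is $e$. Conversely, if $e'$ is the smallest positive integer as in (2), it divides every admissible $m$ and is divisible by $e$, so $e'=e$. For this step I also need that $[f_{s_1}]$ really is the class of $(E)$ under the bijection of Lemma \ref{lem5.1}. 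To see this, identify $B$ with $A\times C$ via $(a,c)\mapsto \iota(a)+s_1(c)$. A direct computation then shows that the action takes the form of $B_f$ with $f(g)(gc)=gs_1(c)-s_1(gc)$, that is, $f=f_{s_1}$.

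For (1)$\Leftrightarrow$(3), I would dualize. Apply $(-)^\circ=\Hom(-,\bZ)$ to get the exact sequence $(E^\circ):0\to C^\circ\to B^\circ\to A^\circ\to 0$ of $G$-lattices. It stays exact because all the modules are $\bZ$-free. The isomorphism $\Hom_\bZ(C,A)\simeq\Hom_\bZ(A^\circ,C^\circ)$, $u\mapsto u^\circ$, is $G$-equivariant. It sends the cocycle $f_{s_1}$ to (minus) the cocycle $f_{r^\circ}$ attached to the section $r^\circ$ of $\iota^\circ$, where $r\in\Hom_\bZ(B,A)$ is the retraction with $r\circ\iota=\Id_A$ and $r\circ s_1=0$. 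Therefore ${\rm ord}(E^\circ)={\rm ord}(E)=e$. A $G$-map $t:B\to A$ with $t\circ\iota=m\,\Id_A$ is the same thing as $t^\circ:A^\circ\to B^\circ$ with $\iota^\circ\circ t^\circ=m\,\Id_{A^\circ}$. Applying (1)$\Leftrightarrow$(2) to $(E^\circ)$ then gives (3).

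An alternative route to (3), avoiding duality, is to imitate the proof of Lemma \ref{lem5.3}. Take $r$ as above and set $f'_r(g)=gr-r$, which is valued in $\Hom_\bZ(C,A)$ via $\pi$. This has the same class up to sign as $f_{s_1}$. The same averaging argument then shows that a $G$-map $t$ with $t\circ\iota=m\,\Id_A$ exists if and only if $m[f'_r]=0$. The main obstacle is this sign and identification bookkeeping: checking that the classes obtained from sections and from retractions coincide up to sign, so that both yield the same order $e$. The rest follows formally from Lemma \ref{lem5.3}.
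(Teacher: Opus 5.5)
Your proposal is correct and takes the same approach as the paper: (1)$\Leftrightarrow$(2) from Lemma \ref{lem5.3}(3), and (1)$\Leftrightarrow$(3) by dualizing to $(E^\circ)$ and applying that equivalence there. You also check two points the paper only asserts: that $[f_{s_1}]$ is the class of $(E)$ under Lemma \ref{lem5.1}, and that $u\mapsto u^\circ$ sends $f_{s_1}$ to $-f_{r^\circ}$, which gives ${\rm ord}(E^\circ)={\rm ord}(E)$.
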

\begin{proof}
$(1)\Leftrightarrow (2)$: This follows from Lemma \ref{lem5.3} (3).\\
$(1)\Leftrightarrow (3)$: 
We can take the dual 
$(F):0\to C^\circ\xrightarrow{\pi^\circ} B^\circ 
\xrightarrow{\iota^\circ} A^\circ\to 0$ 
with ${\rm ord}(F)={\rm ord}(E)=e$. 
Then by Lemma \ref{lem5.3} there exists 
$s^\circ\in {\rm Hom}_{\bZ[G]}(A^\circ,B^\circ)$ 
such that $\iota^\circ\circ s^\circ=e\,\Id_{A^\circ}$. 
By taking the dual again, 
we have $t=(s^\circ)^{\circ}\in {\rm Hom}_{\bZ[G]}(B,A)$ 
such that $t\circ \iota=t \circ ((\iota^\circ)^\circ)
=e\,\Id_{(A^\circ)^\circ}=e\,\Id_A$. 
\end{proof}
%
\subsection{Tensor products of extensions of $G$-lattices}
\begin{proposition}\label{prop5.6}
Consider two exact sequences of $G$-lattices
\begin{align*}
&(E_1): 0 \to A_1 \xrightarrow{\iota_1}B_1 \xrightarrow{\pi_1}C_1\to 0,\\
&(E_2): 0 \to A_2 \xrightarrow{\iota_2}B_2 \xrightarrow{\pi_2}C_2\to 0
\end{align*}
with 
$e_1={\rm ord}(E_1)$, $e_2={\rm ord}(E_2)$. 
Then there exists an exact sequence of $G$-lattices
\begin{align*}
(E_{1,2}^{(1)}): 0 \to A_1 \otimes A_2 \xrightarrow{\iota} 
B_1 \otimes B_2 
 \xrightarrow{f} (C_1 \otimes B_2) \oplus (B_1 \otimes C_2)
 \xrightarrow{\pi}
 C_1 \otimes C_2 \to 0
\end{align*}
where 
\begin{align*}
\iota(a_1\otimes a_2)&=\iota_1(a_1)\otimes \iota_2(a_2),\\
f(b_1\otimes b_2)&=(\pi_1(b_1)\otimes b_2,b_1\otimes\pi_2(b_2)),\\
\pi(c_1\otimes b_2,b_1\otimes c_2)&=c_1\otimes\pi_2(b_2)-\pi_1(b_1)\otimes c_2. %
\end{align*}
{\rm (1)} If $\gcd(e_1,e_2)=1$, 
then the following exact sequence of $G$-lattices splits: 
\begin{align*}
(E_{1,2}^{(1R)}): 0\to {\rm Image}(f)\to (C_1 \otimes B_2) \oplus (B_1 \otimes C_2)
\mathrel{\mathop{\rightleftarrows}^{\mathrm{\pi}}_{s}}
 C_1 \otimes C_2 \to 0, 
\end{align*}
i.e. ${\rm Image}(f)\oplus (C_1 \otimes C_2)\simeq (C_1 \otimes B_2) \oplus (B_1 \otimes C_2)$, that is, there exists 
$s\in {\rm Hom}_{\bZ[G]}(C_1\otimes C_2, 
(C_1 \otimes B_2)\oplus (B_1 \otimes C_2))$ 
such that $\pi\circ s=\Id_{C_1\otimes C_2}$.\\ 
{\rm (2)} If $\gcd(e_1,e_2)=1$, 
then there exists an exact sequence of $G$-lattices
\begin{align*}
(E_{1,2}^{(1RS)}): 0 \to A_1 \otimes A_2 \xrightarrow{(\iota,0)} 
(B_1 \otimes B_2)\oplus(C_1 \otimes C_2)
 \xrightarrow{(f,s)} (C_1 \otimes B_2) \oplus (B_1 \otimes C_2)
 \to 0
\end{align*}
with ${\rm ord}(E_{1,2}^{(1RS)})\mid e_1e_2$. 
\end{proposition}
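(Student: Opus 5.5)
First I will establish exactness of $(E_{1,2}^{(1)})$. Since all modules are free over $\bZ$, I choose $\bZ$-splittings $B_i\simeq A_i\oplus C_i$ (as abelian groups, not $G$-equivariantly). Then $B_1\otimes B_2$ decomposes as $(A_1\otimes A_2)\oplus(A_1\otimes C_2)\oplus(C_1\otimes A_2)\oplus(C_1\otimes C_2)$, and likewise for the middle terms. The maps $\iota,f,\pi$ are clearly $G$-equivariant, and $\pi\circ f=0$ because $\pi_1(b_1)\otimes\pi_2(b_2)-\pi_1(b_1)\otimes\pi_2(b_2)=0$. Exactness can then be checked on underlying abelian groups. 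There it is the total complex of the tensor product of the two-term complexes $A_i\to B_i$, each quasi-isomorphic to $C_i$ (equivalently the Koszul-type complex); a direct check in the splitting coordinates gives injectivity of $\iota$, surjectivity of $\pi$, and exactness in the middle. I will do this coordinate check rather than invoke the Künneth formula, since it is routine.

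For (1), the plan is to use Lemma \ref{lem5.4}. By (2) of that lemma there are $G$-maps $s_1\colon C_1\to B_1$ with $\pi_1 s_1=e_1\Id$ and $s_2\colon C_2\to B_2$ with $\pi_2 s_2=e_2\Id$. Write $1=ue_1+ve_2$ and define $s\colon C_1\otimes C_2\to (C_1\otimes B_2)\oplus(B_1\otimes C_2)$ by
\[
s(c_1\otimes c_2)=\bigl(v\,c_1\otimes s_2(c_2),\,-u\,s_1(c_1)\otimes c_2\bigr).
\]
This is $G$-equivariant, and $\pi s(c_1\otimes c_2)=ve_2\,c_1\otimes c_2+ue_1\,c_1\otimes c_2=c_1\otimes c_2$. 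Hence the short exact sequence $0\to{\rm Image}(f)\to\cdots\to C_1\otimes C_2\to0$ splits, because ${\rm Image}(f)=\ker\pi$ by the exactness already shown. This step is essentially immediate once the sign convention in $\pi$ is matched.

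For (2), I splice the pieces. Define $(f,s)\colon (B_1\otimes B_2)\oplus(C_1\otimes C_2)\to (C_1\otimes B_2)\oplus(B_1\otimes C_2)$. Surjectivity follows since every $x$ decomposes as $(x-s\pi x)+s\pi x$ with $x-s\pi x\in\ker\pi={\rm Image}(f)$. For the kernel, if $f(y)+s(z)=0$ then applying $\pi$ gives $z=0$, so $f(y)=0$ and $y\in{\rm Image}(\iota)$. Thus $(E_{1,2}^{(1RS)})$ is exact.

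The remaining point, and the main obstacle, is the bound ${\rm ord}\mid e_1e_2$. By Lemma \ref{lem5.4}(3) it suffices to find a $G$-map $t\colon (B_1\otimes B_2)\oplus(C_1\otimes C_2)\to A_1\otimes A_2$ with $t\circ(\iota,0)=e_1e_2\Id$. Take $t_i\colon B_i\to A_i$ with $t_i\iota_i=e_i\Id$, again from Lemma \ref{lem5.4}(3), and set $t=(t_1\otimes t_2,0)$. Then
\[
t(\iota(a_1\otimes a_2),0)=t_1\iota_1(a_1)\otimes t_2\iota_2(a_2)=e_1e_2\,a_1\otimes a_2,
\]
so the minimal $e$ with such a retraction divides $e_1e_2$ (the set of such integers being a subgroup, by Lemma \ref{lem5.3}(3) dualized). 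I expect the real care to lie only in the exactness check of the four-term sequence; everything else reduces to Lemma \ref{lem5.4}.
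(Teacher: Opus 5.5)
Your proposal is correct and follows the paper's proof essentially step by step. It checks exactness via $\bZ$-splittings $B_i\simeq A_i\oplus C_i$, builds the section $s$ from the $G$-maps $s_i$ of Lemma~\ref{lem5.4} and a B\'ezout relation, and uses the retraction $t_1\otimes t_2$ to get ${\rm ord}(E_{1,2}^{(1RS)})\mid e_1e_2$; your explicit check that $(f,s)$ is onto with kernel ${\rm Image}(\iota,0)$, and your appeal to Lemma~\ref{lem5.3}(3) to pass from minimality to divisibility, are slightly more careful than the paper's text. One slip in your motivating aside, harmless because you do the coordinate check anyway: $(E_{1,2}^{(1)})$ is the tensor product of the complexes $B_i\xrightarrow{\pi_i}C_i$ (each quasi-isomorphic to $A_i$), whereas tensoring $A_i\to B_i$ gives the sequence of Proposition~\ref{prop5.5}.
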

\begin{proof}
We first show that $(E_{1,2}^{(1)})$ is exact. 
We see that $f \circ \iota=0$, $\pi \circ f=0$ because 
\begin{align*}
&f \circ \iota(a_1 \otimes a_2)
=f\big(\iota_1(a_1) \otimes \iota_2(a_2)\big)
=\big(\pi_1(\iota_1(a_1))\otimes \iota_2(a_2),\iota_1(a_1)\otimes \pi_2(\iota_2(a_2))\big)
=\big(0 \otimes \iota_2(a_2),\iota_1(a_1) \otimes 0\big)=0,\\
&\pi \circ f(b_1 \otimes b_2)
=\pi\big(\pi_1(b_1) \otimes b_2, b_1 \otimes \pi_2(b_2) \big)
=\pi_1(b_1) \otimes \pi_2(b_2)
-\pi_1(b_1) \otimes \pi_2(b_2)=0. 
\end{align*}
(i) ${\rm Ker}(\iota)=0$. 
Because $G$-lattices are flat (since $\bZ$-free), 
by tensoring $A_2$ and $B_1$, 
we see that 
$\iota\in {\rm Hom}_{\bZ[G]}(A_1 \otimes A_2,B_1 \otimes B_2)$ 
is injective because 
$\iota$ is the composition of two injections:
\begin{align*}
\iota: A_1\otimes A_2\xrightarrow{\iota_1\otimes\Id_{A_2}} B_1\otimes A_2\xrightarrow{\Id_{B_1}\otimes \iota_2} B_1\otimes B_2. 
\end{align*}
(ii) ${\rm Image}(\iota)={\rm Ker}(f)$. 
We can take a section $s_i\in {\rm Hom}_\bZ(C_i,B_i)$ $(i=1,2)$ 
with $\pi_i\circ s_i={\rm Id}_{C_i}$ 
because $C_i$ is free as a $\bZ$-module 
and hence we have $B_i\simeq \iota_i(A_i)\oplus s_i(C_i)\simeq A_i\oplus C_i$ 
$(i=1,2)$ as a $\bZ$-module. 
Then 
\begin{align*}
f: B_1 \otimes B_2\to (C_1 \otimes B_2) \oplus (B_1 \otimes C_2)
\end{align*} 
can be written as 
\begin{align*}
f:\ &(\iota_1(A_1)\oplus s_1(C_1)) \otimes (\iota_2(A_2)\oplus s_2(C_2))\\
&=(\iota_1(A_1)\otimes \iota_2(A_2))\oplus(\iota_1(A_1)\otimes s_2(C_2))
\oplus(s_1(C_1)\otimes \iota_2(A_2))\oplus(s_1(C_1)\otimes s_2(C_2))\\
\to\ &(C_1 \otimes (\iota_2(A_2)\oplus s_2(C_2))) \oplus ((\iota_1(A_1)\oplus s_1(C_1)) \otimes C_2)\\
&=(C_1 \otimes\iota_2(A_2))\oplus (C_1 \otimes s_2(C_2))\oplus 
(\iota_1(A_1) \otimes C_2)\oplus (s_1(C_1) \otimes C_2).
\end{align*}
We find that 
\begin{align*}
&f|_{\iota_1(A_1)\otimes \iota_2(A_2)}=0,\\ 
&f|_{\iota_1(A_1)\otimes s_2(C_2)}: \iota_1(A_1)\otimes s_2(C_2)\xrightarrow{\sim} \iota_1(A_1) \otimes C_2,\\ 
&f|_{s_1(C_1)\otimes \iota_2(A_2)}: s_1(C_1)\otimes \iota_2(A_2)\xrightarrow{\sim} C_1 \otimes\iota_2(A_2),\\ 
&f|_{s_1(C_1)\otimes s_2(C_2)}: s_1(C_1)\otimes s_2(C_2)\to 
(C_1 \otimes s_2(C_2))\oplus (s_1(C_1) \otimes C_2),\\ 
&\hspace*{36mm}x\mapsto \big((\pi_1\otimes\Id_{s_2(C_2)})(x), (\Id_{s_1(C_1)}\otimes\pi_2)(x)\big). 
\end{align*}
This implies that ${\rm Ker}(f)=\iota_1(A_1)\otimes \iota_2(A_2)={\rm Image}(\iota)$.\\
(iii) ${\rm Image}(f)={\rm Ker}(\pi)$ and ${\rm Image}(\pi)=C_1\otimes C_2$. 
As in (ii), we can take a section $s_i\in {\rm Hom}_\bZ(C_i,B_i)$ $(i=1,2)$ 
with $\pi_i\circ s_i={\rm Id}_{C_i}$ 
because $C_i$ is free as a $\bZ$-module 
and hence we have $B_i\simeq \iota_i(A_i)\oplus s_i(C_i)\simeq A_i\oplus C_i$ 
$(i=1,2)$ as a $\bZ$-module. 
Then 
\begin{align*}
\pi: (C_1 \otimes B_2) \oplus (B_1 \otimes C_2)\to C_1 \otimes C_2
\end{align*}
can be written as 
\begin{align*}
\pi:\ &(C_1 \otimes (\iota_2(A_2)\oplus s_2(C_2))) \oplus ((\iota_1(A_1)\oplus s_1(C_1)) \otimes C_2)\\
&=(C_1 \otimes\iota_2(A_2))\oplus (C_1 \otimes s_2(C_2))\oplus 
(\iota_1(A_1) \otimes C_2)\oplus (s_1(C_1) \otimes C_2)\to C_1 \otimes C_2. 
\end{align*}
It follows from (ii) and 
\begin{align}
\Id_{C_1\otimes C_2}: C_1\otimes C_2\xrightarrow[\sim]{s_1\otimes s_2}
s_1(C_1)\otimes s_2(C_2)\xrightarrow[\sim]{\pi_1\otimes\pi_2} C_1\otimes C_2\tag{$*$}
\end{align}
that 
\begin{align*}
{\rm Image}(f)\overset{\rm (ii)}{=}&\ \bigl\{
(x_1,x_2,x_3,x_4)\in (C_1 \otimes\iota_2(A_2))\oplus (C_1 \otimes s_2(C_2))\oplus 
(\iota_1(A_1) \otimes C_2)\oplus (s_1(C_1) \otimes C_2)\\
&\ \ \mid 
(s_1\otimes\Id_{s_2(C_2)})(x_2)=(\Id_{s_1(C_1)}\otimes s_2)(x_4)
\bigr\}\\
\overset{(*)}{=}&\ \bigl\{
(x_1,x_2,x_3,x_4)\in (C_1 \otimes\iota_2(A_2))\oplus (C_1 \otimes s_2(C_2))\oplus 
(\iota_1(A_1) \otimes C_2)\oplus (s_1(C_1) \otimes C_2)\\
&\ \ \mid 
(\Id_{C_1}\otimes \pi_2)(x_2)=(\pi_1\otimes\Id_{C_2})(x_4)
\bigr\}\\
\overset{(**)}{=}&\ {\rm Ker}(\pi).
\end{align*}
The last equality $(**)$ and ${\rm Image}(\pi)=C_1\otimes C_2$ follow from 
\begin{align*}
&\pi|_{C_1 \otimes\iota_2(A_2)}=0,\\ 
&\pi|_{C_1 \otimes s_2(C_2)}: C_1 \otimes s_2(C_2)
\xrightarrow{\sim} C_1\otimes C_2,\ x\mapsto (\Id_{C_1}\otimes \pi_2)(x),\\
&\pi|_{\iota_1(A_1) \otimes C_2}=0,\\ 
&\pi|_{s_1(C_1) \otimes C_2}: s_1(C_1) \otimes C_2
\xrightarrow{\sim} C_1\otimes C_2,\ x\mapsto -(\pi_1\otimes\Id_{C_2})(x).
\end{align*}
(1) By Lemma \ref{lem5.4}, we can take 
$s_i\in {\rm Hom}_{\bZ[G]}(C_i,B_i)$ 
such that $\pi_i \circ s_i= e_i\,\Id_{C_i}$ $(i=1,2)$. 
By the assumption $\gcd(e_1,e_2)=1$, there exist 
integers $u,v$ such that $ve_2-ue_1=1$. 
Then we can define the map 
\begin{align*}
s : &\ C_1\otimes C_2\rightarrow (C_1 \otimes B_2)\oplus (B_1 \otimes C_2)\\
&\ c_1\otimes c_2\mapsto (vc_1\otimes s_2(c_2), us_1(c_1)\otimes c_2)
\end{align*}
which becomes a splitting of $\pi$, 
i.e. $\pi\circ s=\Id_{C_1\otimes C_2}$. 
By Lemma \ref{lem5.4}, this implies that $(E_{1,2}^{(1R)})$ splits.\\ 
(2) It follows from ${\rm Coker}(f)=C_1\otimes C_2$ and (1) above 
that $(E_{1,2}^{(1R)})$ is exact. 
For the last assertion, 
by Lemma \ref{lem5.4}, we can take 
$t_i\in {\rm Hom}_{\bZ[G]}(B_i,A_i)$ 
such that $t_i \circ \iota_i= e_i\,\Id_{A_i}$ $(i=1,2)$. 
Then 
we see that 
\begin{align*}
A_1\otimes A_2\mathrel{\mathop{\rightleftarrows}^{\mathrm{(\iota,0)}}_{(t,0)}} (B_1 \otimes B_2)\oplus(C_1 \otimes C_2)
\end{align*}
where $(t,0):=(t_1\otimes t_2,0)$ 
satisfies that $(t,0)\circ (\iota,0)=e_1e_2\,\Id_{A_1\otimes A_2}$. 
By Lemma \ref{lem5.4}, we get ${\rm ord}(E_{1,2}^{(1RS)})\mid e_1e_2$. 
\end{proof}

We also get the dual version of Proposition \ref{prop5.6}: 

\begin{proposition}\label{prop5.5}
Consider two exact sequences of $G$-lattices
\begin{align*}
&(E_1): 0 \to A_1 \xrightarrow{\iota_1}B_1 \xrightarrow{\pi_1}C_1\to 0,\\
&(E_2): 0 \to A_2 \xrightarrow{\iota_2}B_2 \xrightarrow{\pi_2}C_2\to 0
\end{align*}
with 
$e_1={\rm ord}(E_1)$, $e_2={\rm ord}(E_2)$. 
Then there exists an exact sequence of $G$-lattices
\begin{align*}
(E_{1,2}^{(2)}): 0 \to A_1 \otimes A_2 \xrightarrow{\iota} (A_1 \otimes B_2) \oplus (B_1 \otimes A_2) \xrightarrow{f} 
B_1 \otimes B_2 \xrightarrow{\pi}
 C_1 \otimes C_2 \to 0
\end{align*}
where 
\begin{align*}
\iota(a_1 \otimes a_2) &= (a_1 \otimes \iota_2(a_2), -\iota_1(a_1) \otimes a_2),\\
f (a_1 \otimes b_2, b_1 \otimes a_2) &= \iota_1(a_1) \otimes b_2+ b_1 \otimes \iota_2(a_2),\\
\pi(b_1\otimes b_2) & =\pi_1(b_1)\otimes\pi_2(b_2).
\end{align*}
{\rm (1)} If $\gcd(e_1,e_2)=1$, 
then the following exact sequence of $G$-lattices splits: 
\begin{align*}
(E_{1,2}^{(2L)}): 0 \to A_1 \otimes A_2 
\mathrel{\mathop{\rightleftarrows}^{\mathrm{\iota}}_{t}}
(A_1 \otimes B_2)\oplus(B_1 \otimes A_2)\xrightarrow{f}{\rm Image}(f)\to 0, 
\end{align*}
i.e. $(A_1 \otimes A_2)\oplus {\rm Image}(f)\simeq (A_1 \otimes B_2)\oplus(B_1 \otimes A_2)$, that is, there exists 
$t\in {\rm Hom}_{\bZ[G]}((A_1 \otimes B_2)\oplus(B_1 \otimes A_2), 
A_1 \otimes A_2)$ 
such that $t\circ\iota=\Id_{A_1\otimes A_2}$.\\
{\rm (2)} If $\gcd(e_1,e_2)=1$, 
then there exists an exact sequence of $G$-lattices
\begin{align*}
(E_{1,2}^{(2LS)}): 0 \to (A_1 \otimes B_2) \oplus (B_1 \otimes A_2) \xrightarrow{(t,f)}  (A_1 \otimes A_2)\oplus (B_1 \otimes B_2) \xrightarrow{(0,\pi)}
 C_1 \otimes C_2 \to 0
\end{align*}
with ${\rm ord}(E_{1,2}^{(2LS)})\mid e_1e_2$. 
\end{proposition}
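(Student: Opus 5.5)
The plan is to prove Proposition \ref{prop5.5} by dualizing Proposition \ref{prop5.6}, with a direct check as a fallback. For $G$-lattices, taking $\Hom_\bZ(-,\bZ)$ is an exact contravariant functor, and $(M\otimes N)^\circ\simeq M^\circ\otimes N^\circ$ canonically. Dualizing $(E_i)$ gives
\begin{align*}
(E_i^\circ): 0\to C_i^\circ\xrightarrow{\pi_i^\circ}B_i^\circ\xrightarrow{\iota_i^\circ}A_i^\circ\to 0,
\end{align*}
with ${\rm ord}(E_i^\circ)={\rm ord}(E_i)=e_i$. This equality of orders follows from Lemma \ref{lem5.4}: a $G$-map $s$ with $\pi_i\circ s=e\,\Id$ dualizes to $s^\circ$ with $s^\circ\circ\pi_i^\circ=e\,\Id$, and conversely.

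Next, apply Proposition \ref{prop5.6} to $(E_1^\circ)$ and $(E_2^\circ)$. This yields the exact sequence
\begin{align*}
0\to C_1^\circ\otimes C_2^\circ\to B_1^\circ\otimes B_2^\circ\to (A_1^\circ\otimes B_2^\circ)\oplus(B_1^\circ\otimes A_2^\circ)\to A_1^\circ\otimes A_2^\circ\to 0.
\end{align*}
Dualizing once more gives $(E^{(2)}_{1,2})$. Identifying the dual maps with $\pi_1\otimes\pi_2$, $f$, and $\iota$ as stated is a routine check on pure tensors. The sign in $\iota$ comes from the sign in the map $\pi$ of Proposition \ref{prop5.6}, and an overall sign on a summand does not affect exactness. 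Alternatively, exactness can be checked directly, exactly as in the proof of Proposition \ref{prop5.6}, by choosing $\bZ$-splittings $B_i=\iota_i(A_i)\oplus s_i(C_i)$ and computing on the four blocks.

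For (1), the retraction is written down directly. By Lemma \ref{lem5.4}(3) there are $t_i\in\Hom_{\bZ[G]}(B_i,A_i)$ with $t_i\circ\iota_i=e_i\,\Id_{A_i}$. Choose integers $u,v$ with $ve_2-ue_1=1$ and set
\begin{align*}
t(a_1\otimes b_2,\,b_1\otimes a_2)=v\,a_1\otimes t_2(b_2)+u\,t_1(b_1)\otimes a_2 .
\end{align*}
Then $t\circ\iota(a_1\otimes a_2)=ve_2\,a_1\otimes a_2-ue_1\,a_1\otimes a_2=a_1\otimes a_2$. Hence $(E^{(2L)}_{1,2})$ splits, and $(A_1\otimes A_2)\oplus{\rm Image}(f)\simeq(A_1\otimes B_2)\oplus(B_1\otimes A_2)$.

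For (2), the first step is to check exactness of $(E^{(2LS)}_{1,2})$.
\begin{itemize}
\item The map $(t,f)$ is injective: if $f(x)=0$ then $x=\iota(y)$, and $t(x)=0$ forces $y=0$.
\item The image of $(t,f)$ equals $\ker(0,\pi)$, because $t$ is surjective onto $A_1\otimes A_2$ via $\iota$, and ${\rm Image}(f)=\ker\pi$. More precisely, given $(a,b)$ with $\pi(b)=0$, write $b=f(x)$; then $(a,b)=(t,f)(x+\iota(a-t(x)))$, since $f\circ\iota=0$.
\item Surjectivity of $(0,\pi)$ is inherited from $\pi$.
\end{itemize}
The second step is the order bound. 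By Lemma \ref{lem5.4}(2), it suffices to find a $G$-map $\sigma:C_1\otimes C_2\to(A_1\otimes A_2)\oplus(B_1\otimes B_2)$ with $(0,\pi)\circ\sigma=e_1e_2\,\Id$. Take $s_i\in\Hom_{\bZ[G]}(C_i,B_i)$ with $\pi_i\circ s_i=e_i\,\Id$ and set $\sigma=(0,s_1\otimes s_2)$; this gives ${\rm ord}(E^{(2LS)}_{1,2})\mid e_1e_2$.

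The main obstacle is bookkeeping rather than substance: confirming exactness in the middle of $(E^{(2)}_{1,2})$, i.e.\ $\ker f={\rm Image}(\iota)$ and $\ker\pi={\rm Image}(f)$. Here I would follow the block decomposition from the proof of Proposition \ref{prop5.6}, using $B_1\otimes B_2=\bigoplus$ of the four tensor blocks. On these blocks, ${\rm Image}(f)$ is everything except the $s_1(C_1)\otimes s_2(C_2)$ block, which is exactly $\ker\pi$.
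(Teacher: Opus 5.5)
Your proposal is correct and follows the paper's proof essentially verbatim. The paper also gets exactness of $(E_{1,2}^{(2)})$ either by direct check or by dualizing Proposition \ref{prop5.6}, uses the same retraction $t(a_1\otimes b_2,b_1\otimes a_2)=v\,a_1\otimes t_2(b_2)+u\,t_1(b_1)\otimes a_2$ from Lemma \ref{lem5.4} with $ve_2-ue_1=1$, and bounds ${\rm ord}(E_{1,2}^{(2LS)})$ by $e_1e_2$ via $s_1\otimes s_2$. Your explicit exactness check for $(E_{1,2}^{(2LS)})$ and the map $\sigma=(0,s_1\otimes s_2)$ simply spell out what the paper leaves implicit; for divisibility rather than minimality, the precise reference is the last assertion of Lemma \ref{lem5.3}(3).
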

\begin{proof}
That $(E_{1,2}^{(2)})$ is exact is a straightforward verification. 
Alternatively, we can get that $(E_{1,2}^{(2)})$ is exact 
by taking the dual of $(E_{1,2}^{(1)})$ as in Proposition \ref{prop5.6} 
(see also the proof of Proposition \ref{prop5.6}).\\
{\rm (1)} By Lemma \ref{lem5.4}, we can take 
$t_i\in {\rm Hom}_{\bZ[G]}(B_i,A_i)$ 
such that $t_i \circ \iota_i= e_i\,\Id_{A_i}$ $(i=1,2)$. 
By the assumption $\gcd(e_1,e_2)=1$, there exist 
integers $u,v$ such that $ve_2-ue_1=1$. 
Then we can define the map
\begin{align*}
t : &\ (A_1 \otimes B_2)\oplus (B_1 \otimes A_2) \rightarrow A_1 \otimes A_2,\\
&\ (a_1 \otimes b_2, b_1\otimes a_2)\mapsto v a_1\otimes t_2(b_2)+ut_1(b_1)\otimes a_2
\end{align*}
which becomes a retraction of $\iota$, 
i.e. $t\circ \iota=\Id_{A_1\otimes A_2}$. 
By Lemma \ref{lem5.4}, this implies that $(E_{1,2}^{(2L)})$ splits.\\
(2) It follows from ${\rm Ker}(f)={\rm Image}(\iota)\simeq A_1\otimes A_2$ 
and (1) above that $(E_{1,2}^{(2LS)})$ is exact. 
For the last assertion, 
by Lemma \ref{lem5.4}, we can take 
$s_i\in {\rm Hom}_{\bZ[G]}(C_i,B_i)$ 
such that $\pi_i\circ s_i= e_i\,\Id_{C_i}$ $(i=1,2)$. 
Then 
we see that 
\begin{align*}
B_1\otimes B_2\mathrel{\mathop{\rightleftarrows}^{\mathrm{\pi}}_{s_1\otimes s2}} (C_1 \otimes C_2)
\end{align*}
satisfies that $\pi\circ(s_1\otimes s_2)=e_1e_2\,\Id_{C_1\otimes C_2}$. 
By Lemma \ref{lem5.4}, we get ${\rm ord}(E_{1,2}^{(2LS)})\mid e_1e_2$. 
\end{proof}
%
Let $T=T_1\otimes \cdots \otimes T_r$ be 
the tensor product of algebraic $k$-tori $T_1,\ldots,T_r$ 
as in Definiiton \ref{deftensorT}. 

As an application of the preceding constructions as in 
Proposition \ref{prop5.6}, we derive the following result.
\begin{theorem}[{see also Voskresenskii \cite[Section 6.3, pages 69--71, Example 6, Example 7, pages 100--102]{Vos98}}]\label{thKly}
Let $k$ be a field and $G$ be a finite group. 
Let $X_i$ $(1\leq i\leq r)$ be a finite $G$-set with $|X_i|=n_i$. 
Recall the 
exact sequences of $G$-lattices 
\begin{align*}
&(E_{X_i}): 0\to I_{X_i}\to \bZ[X_i]\xrightarrow{\varepsilon_i} \bZ\to 0
\end{align*}
as in Definition \ref{def8.1}. 
Define the permutation $G$-lattices 
\begin{align*}
P^+:=\bigoplus_{A \subset \{1,\ldots,r\}, \;  \vert A \vert \equiv r (\mathrm{mod} \; 2)} \bZ [\Pi_{i\in A} X_i]\quad 
(e.g.\ P^+=\bZ[X_1 \times X_2] \oplus \bZ\ {\rm if}\ r=2)
\end{align*}
and 
\begin{align*}
P^-:=\bigoplus_{A \subset \{1,\ldots,r\}, \; \vert A \vert \equiv r+1 (\mathrm{mod} \; 2)} \bZ [\Pi_{i\in A} X_i]\quad 
(e.g.\ P^-=\bZ[X_1] \oplus \bZ[X_2]\ {\rm if}\ r=2).
\end{align*}
If $\gcd(n_i,n_j)=1$ for any $1\leq i<j\leq r$, 
then there exists a natural exact sequence of $G$-lattices
\begin{align*}
(E^{r}): 0\to I_{X_1}\otimes \cdots \otimes I_{X_r}\to P^+\to P^-\to 0
\end{align*}
with ${\rm ord}(E^r)\mid n_1 \cdots n_r$.
In particular, we have $[I_{X_1}\otimes\cdots\otimes I_{X_r}]^{fl}=[P^-]=0$. 

In other wards, 
let $h: \Gal(\overline k/k) \to G$ be a continuous homomorphism, 
$A_i$ $(1\leq i\leq r)$ be the \'etale $k$-algebra 
corresponding to $X_i$ via $h$ and 
$T_i:=R_{A_i/k}(\bG_m)/\bG_{m,k}$ be the algebraic $k$-torus 
with character lattice $\widehat{T}_i\simeq I_{X_i}$. 
Then the algebraic $k$-torus $T:=T_1 \otimes \ldots \otimes T_r$ 
with character lattice 
$\widehat{T}\simeq I_{X_1}\otimes\cdots\otimes I_{X_r}$ 
is stably $k$-rational. 
\end{theorem}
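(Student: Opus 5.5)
Induction on $r$, with Proposition \ref{prop5.6} (2) as the engine. The first step is to record the order of each augmentation sequence. A $G$-map $s:\bZ\to\bZ[X_i]$ must send $1$ to a $G$-invariant element, i.e. an integer combination of orbit sums. Then $\varepsilon_i\circ s=m\,\Id_\bZ$ forces $m$ to be a combination of orbit sizes. Conversely $1\mapsto N_{X_i}$ gives $\varepsilon_i\circ s=n_i\,\Id$. Hence, by Lemma \ref{lem5.4}, ${\rm ord}(E_{X_i})$ divides $n_i$. (In fact it equals the gcd of the orbit sizes, but divisibility is all we need.)

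For $r=2$, I apply Proposition \ref{prop5.6} (2) to $(E_{X_1})$ and $(E_{X_2})$, with $A_i=I_{X_i}$, $B_i=\bZ[X_i]$, $C_i=\bZ$. The orders are coprime because $e_i\mid n_i$ and $\gcd(n_1,n_2)=1$. The sequence $(E_{1,2}^{(1RS)})$ then reads
\[
0\to I_{X_1}\otimes I_{X_2}\to \bZ[X_1\times X_2]\oplus\bZ\to \bZ[X_1]\oplus\bZ[X_2]\to 0,
\]
using $\bZ[X]\otimes\bZ[Y]\simeq\bZ[X\times Y]$. This is exactly $P^+\to P^-$, and its order divides $e_1e_2\mid n_1n_2$.

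For the inductive step, set $M_{r-1}=I_{X_1}\otimes\cdots\otimes I_{X_{r-1}}$, so that $(E^{r-1})$ is $0\to M_{r-1}\to P^+_{r-1}\to P^-_{r-1}\to 0$ with order dividing $n_1\cdots n_{r-1}$. Since $\gcd(n_1\cdots n_{r-1},n_r)=1$, Proposition \ref{prop5.6} (2) applies to $(E^{r-1})$ and $(E_{X_r})$. It gives
\[
0\to M_{r-1}\otimes I_{X_r}\to (P^+_{r-1}\otimes\bZ[X_r])\oplus P^-_{r-1}\to P^-_{r-1}\oplus (P^+_{r-1}\otimes\bZ[X_r])\to0 .
\]
Wait — the middle term should be $(B_1\otimes B_2)\oplus(C_1\otimes C_2)=(P^+_{r-1}\otimes\bZ[X_r])\oplus P^-_{r-1}$. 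The right term should be $(C_1\otimes B_2)\oplus(B_1\otimes C_2)=(P^-_{r-1}\otimes\bZ[X_r])\oplus P^+_{r-1}$. So it remains to check the parity bookkeeping:
\begin{itemize}
\item a subset $A\subset\{1,\dots,r-1\}$ with $|A|\equiv r-1$ yields both $A\cup\{r\}$ (in $P^+_r$) and $A$ (in $P^-_r$);
\item a subset $A$ with $|A|\equiv r$ yields $A\cup\{r\}$ (in $P^-_r$) and $A$ (in $P^+_r$).
\end{itemize}
Together these match $P^+_r$ and $P^-_r$ exactly. The order divides $n_1\cdots n_r$, which completes the induction. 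The main care point is keeping this bookkeeping straight, including identifying tensor products of permutation lattices with permutation lattices on product sets; I expect no genuine obstacle.

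Finally, the sequence $(E^r)$ has permutation middle term and permutation cokernel $P^-$, which is flabby. So it is a flabby resolution of $I_{X_1}\otimes\cdots\otimes I_{X_r}$, giving $[I_{X_1}\otimes\cdots\otimes I_{X_r}]^{fl}=[P^-]=0$. Theorem \ref{th2.7} (1) (Endo–Miyata) then yields stable $k$-rationality of $T$, whose character lattice is this tensor product by Definition \ref{deftensorT}.
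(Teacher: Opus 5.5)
Your proof is correct and is essentially the paper's own argument. It applies Proposition \ref{prop5.6}~(2) first to $(E_{X_1}),(E_{X_2})$, then inductively to $(E^{r-1})$ and $(E_{X_r})$ (which is what the paper's ``$(E^r)$ and $(E^{r+1})$'' must mean), and concludes with Theorem \ref{th2.7}~(1). You also fill in the two details the paper leaves implicit: ${\rm ord}(E_{X_i})\mid n_i$, which makes the orders coprime, and the parity bookkeeping that identifies the resulting terms with $P^+$ and $P^-$.
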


\begin{proof}
If $r=2$, we have $P^+=\bZ[X_1 \times X_2] \oplus \bZ$, 
$P^-=\bZ[X_1] \oplus \bZ[X_2]$ and 
\begin{align*}
(E^2): 0\to I_{X_1}\otimes I_{X_2}\to\bZ[X_1\times X_2]\oplus\bZ\to\bZ[X_1]\oplus\bZ[X_2]\to 0.
\end{align*}
This follos from the exact sequence $(E_{1,2}^{(1RS)})$ as in 
Proposition \ref{prop5.6} (2) 
by applying $(E_i)=(E_{X_i})$ $(i=1,2)$. 
The general case is by induction on $r \geq 2$. 
For the induction step, observe that 
$\gcd({\rm ord}(E^r)$,${\rm ord}(E^{r+1}))=1$, 
so that one may apply Proposition \ref{prop5.6} (2) 
to the extensions $(E^r)$ and $(E^{r+1})$. 
It is a straightforward verification, that the resulting extension 
is the desired $(E^{r+1})$. 
In particular, we have $[\widehat{T}]^{fl}
=[I_{X_1}\otimes\cdots\otimes I_{X_r}]^{fl}=[P^-]=0$ 
and hence it follows from Theorem \ref{th2.7} that 
$T$ is stably $k$-rational. 
\end{proof}

\begin{remark}\label{r5.8}
When $r=2$, if $\gcd(n_1,n_2)=1$ where $n_i={\rm dim}\,(A_i)$ $(i=1,2)$, 
then the algebraic $k$-torus $T=T_1\otimes T_2$ as in Theorem \ref{thKly} 
is $k$-rational 
by a result of Klyachko \cite{Kly88} 
(see also Florence and van Garrel \cite[Theorem 2.1]{FvG17}). 
However, it is an open problem whether 
$T=T_1 \otimes \ldots \otimes T_r$ is $k$-rational or not when $r\geq 3$ 
(see Voskresenskii \cite[Section 6.3, page 71]{Vos98}). 
\end{remark}

The results given in Section \ref{S6} and Section \ref{S7} will improve 
on Theorem \ref{thKly}, using a similar method based on 
Proposition \ref{prop5.6}. Applications shall be given to the case of 
a tensor product of norm one tori $\T_{A_i}=R_{A_i}^{(1)}(\bG_m)$, 
which is arguably more interesting than the (dual) case of 
$R_{A_i/k}(\bG_m)/\bG_{m,k}$, dealt as in Theorem \ref{thKly}. 

\subsection{Tensor products of Chevalley modules $J_{G/H}$ and an application of Theorem \ref{thKly} and Remark \ref{r5.8}}

We here collect some 
results which are related to the tensor products of 
Chevalley modules $J_{G/H}$ 
and by applying Theorem \ref{thKly} and Remark \ref{r5.8}, 
we get the $k$-rationality of norm one tori 
$\T_K=R_{K/k}^{(1)}(\bG_m)$ with ${\rm Gal}(L/k)\simeq D_n$ 
for odd integers $n\geq 3$.

\begin{theorem}[{Endo and Miyata \cite[Lemma 2.3]{EM82}}]
Let $G$ be a finite group. 
We have an exact sequence of $G$-lattices 
\begin{align*}
0\to J_G\to \bZ[G]^{\oplus |G|-1}\to (J_G)^{\otimes 2}\to 0
\end{align*}
and $(J_G)^{\otimes 2}$ is flabby and coflabby. 
In particular, $[J_G]^{fl}=[(J_G)^{\otimes 2}]$. 
\end{theorem}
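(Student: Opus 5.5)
We need to prove: there is an exact sequence $0\to J_G\to \bZ[G]^{\oplus |G|-1}\to (J_G)^{\otimes 2}\to 0$, and $(J_G)^{\otimes 2}$ is flabby and coflabby; hence $[J_G]^{fl}=[(J_G)^{\otimes 2}]$.

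The plan is to tensor the co-augmentation sequence $(F_G)\colon 0\to\bZ\to\bZ[G]\to J_G\to 0$ of Definition \ref{def8.1} (with $X=G$) by $J_G$. Since $J_G$ is $\bZ$-free, tensoring is exact, and we obtain
\begin{align*}
0\to J_G\to \bZ[G]\otimes J_G\to J_G\otimes J_G\to 0 .
\end{align*}
The key point is that $\bZ[G]\otimes M\simeq \bZ[G]\otimes M_0$ for any $G$-lattice $M$, where $M_0$ denotes $M$ with trivial action. The isomorphism is given by $[g]\otimes m\mapsto [g]\otimes g^{-1}m$, which is $G$-equivariant by a direct check. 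Since $\mathrm{rank}_\bZ J_G=|G|-1$, this yields $\bZ[G]\otimes J_G\simeq \bZ[G]^{\oplus |G|-1}$, and the exact sequence of the statement follows.

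Next, I show that $(J_G)^{\otimes 2}$ is flabby and coflabby. Let $H\le G$. Since $\bZ[G]$ is free as a $\bZ[H]$-module, $\bZ[G]^{\oplus |G|-1}$ is induced, hence cohomologically trivial for Tate cohomology. The long exact sequence then gives dimension shifts
\begin{align*}
\widehat H^{i}(H,(J_G)^{\otimes 2})\simeq \widehat H^{i+1}(H,J_G).
\end{align*}
Similarly, $(F_G)$ gives $\widehat H^{i}(H,J_G)\simeq \widehat H^{i+1}(H,\bZ)$. Therefore
\begin{align*}
\widehat H^{-1}(H,(J_G)^{\otimes 2})\simeq \widehat H^{1}(H,\bZ)=H^1(H,\bZ)=\Hom(H,\bZ)=0,
\end{align*}
so $(J_G)^{\otimes 2}$ is flabby. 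For coflabbiness,
\begin{align*}
H^1(H,(J_G)^{\otimes 2})\simeq \widehat H^{3}(H,\bZ),
\end{align*}
which is generally nonzero. Thus this naive shift does not give coflabbiness, and this is the main obstacle.

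To get around it I would use duality. The dual of $(J_G)^{\otimes 2}$ is $(I_G)^{\otimes 2}$, and $(J_G)^{\otimes 2}$ is coflabby if and only if $(I_G)^{\otimes 2}$ is flabby. Tensoring $(E_G)\colon 0\to I_G\to\bZ[G]\to\bZ\to 0$ with $I_G$ gives
\begin{align*}
0\to I_G^{\otimes 2}\to \bZ[G]^{\oplus |G|-1}\to I_G\to 0,
\end{align*}
so
\begin{align*}
\widehat H^{-1}(H,I_G^{\otimes 2})\simeq \widehat H^{-2}(H,I_G)\simeq \widehat H^{-3}(H,\bZ),
\end{align*}
which again need not vanish. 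So I doubt the coflabby claim as I have read it. I would recheck the intended statement; presumably Endo--Miyata use a modified lattice, or the statement holds as I read it only under an extra hypothesis such as cyclic Sylow subgroups, where $\widehat H^3(H,\bZ)=0$. Given that uncertainty, I would at least secure $[J_G]^{fl}=[(J_G)^{\otimes 2}]$. This follows as soon as $(J_G)^{\otimes 2}$ is flabby, which was proved above: the displayed sequence is then a flabby resolution of $J_G$ with permutation middle term $\bZ[G]^{\oplus |G|-1}$, so by definition of the flabby class we get $[J_G]^{fl}=[(J_G)^{\otimes 2}]$.
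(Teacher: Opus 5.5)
The paper gives no proof of this statement (it is only quoted from Endo--Miyata), so there is no argument to compare yours with. What you do prove is correct and is the standard argument. You tensor $(F_G)$ with $J_G$, and the untwisting $[g]\otimes m\mapsto [g]\otimes g^{-1}m$ gives $\bZ[G]\otimes J_G\simeq \bZ[G]^{\oplus |G|-1}$. The shift then gives $\widehat H^{-1}(H,(J_G)^{\otimes 2})\simeq \widehat H^{0}(H,J_G)\simeq H^1(H,\bZ)=0$. Your deduction of $[J_G]^{fl}=[(J_G)^{\otimes 2}]$ indeed uses only flabbiness.

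Your doubt about the coflabby assertion is justified, and you should state it as a counterexample rather than a suspicion. Your own shift gives, for every subgroup $H\le G$,
\[
H^1(H,(J_G)^{\otimes 2})\simeq H^2(H,J_G)\simeq H^3(H,\bZ),
\]
so coflabbiness is \emph{equivalent} to $H^3(H,\bZ)=0$ for all $H\le G$.

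For $G=C_2\times C_2$ one has $H^3(G,\bZ)\simeq H^2(G,\bQ/\bZ)\simeq\bZ/2\bZ$, the Schur multiplier. Hence $H^1(G,(J_G)^{\otimes 2})\neq 0$, and the coflabby claim is false as stated.

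This agrees with facts recalled in the paper. Theorem \ref{thCTS87}, together with $H^2(G,J_G)\simeq H^3(G,\bZ)$ from Section \ref{S4}, gives $H^1(G,[J_G]^{fl})\simeq \Sha^2_\omega(G,J_G)\simeq H^3(G,\bZ)$. Since $H^1(G,-)$ vanishes on permutation lattices, this group is an invariant of the flabby class. Its nonvanishing for $C_2\times C_2$ is what makes Hasse's counterexample $\bQ(\sqrt{-39},\sqrt{-3})/\bQ$ possible, so no representative of $[J_G]^{fl}$ can be coflabby there.

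The correct statement is as follows. $(J_G)^{\otimes 2}$ is always flabby, and it is coflabby if and only if every Sylow subgroup of $G$ is cyclic or generalized quaternion. For sufficiency, restrict to Sylow subgroups of each $H$: subgroups of cyclic or generalized quaternion groups are again of that form, and those have trivial Schur multiplier. For necessity, a $p$-group containing no $C_p\times C_p$ is cyclic or generalized quaternion, while $H^3(C_p\times C_p,\bZ)\simeq\bZ/p\bZ$.

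So your cyclic-Sylow hypothesis is sufficient but not necessary; $G=Q_8$ is an example. The ``in particular'' clause $[J_G]^{fl}=[(J_G)^{\otimes 2}]$ is unaffected.
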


\begin{theorem}[{Lemire \cite[Proposition 5.11]{Lem}, see also Colliot-Th\'{e}l\`{e}ne and Sansuc \cite[Proposition 9.1]{CTS87}, Bessenrodt and Le Bruyn \cite[page 188]{BLB91}}]
Let $p$ be a prime number 
and $S_p$ be the symmetric group of degree $p$. 
Then we have an exact sequence of $S_p$-lattices 
\begin{align*}
0\to J_{S_p/S_{p-1}}\to \bZ[S_p/S_{p-2}]\to 
(J_{S_p/S_{p-1}})^{\otimes 2}\to 0
\end{align*}
and $(J_{S_p/S_{p-1}})^{\otimes 2}$ is invertible. 
In particular, 
$[J_{S_p/S_{p-1}}]^{fl}=[(J_{S_p/S_{p-1}})^{\otimes 2}]$ is invertible 
and hence the norm one torus 
$\T_K$ with $\widehat{\T}_K\simeq J_{S_p/S_{p-1}}$ is retract $k$-rational. 
\end{theorem}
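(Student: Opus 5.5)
We would first build the exact sequence by hand and then test invertibility one Sylow subgroup at a time. Put $X=\{1,\ldots,p\}$, so $J:=J_{S_p/S_{p-1}}=J_X$. The $S_p$-set $S_p/S_{p-2}$ is the set $X^{(2)}$ of ordered pairs $(i,j)$ with $i\neq j$.

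\emph{Step 1 (the surjection).} Tensor the co-augmentation sequence $(F_X)$ of Definition \ref{def8.1} with itself. This identifies $J\otimes J$ with the quotient of $\bZ[X]\otimes\bZ[X]\simeq\bZ[X\times X]$ by $N_X\otimes\bZ[X]+\bZ[X]\otimes N_X$. Let $\rho:\bZ[X^{(2)}]\to J\otimes J$ be the composite of the inclusion $\bZ[X^{(2)}]\subset\bZ[X\times X]$ with this quotient map. It is $S_p$-equivariant. It is surjective, because in the quotient the diagonal basis element satisfies
\begin{align*}
[(i,i)]\equiv-\sum_{j\neq i}[(i,j)],
\end{align*}
which follows from $[i]\otimes N_X\equiv 0$. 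Both lattices are free, so counting ranks gives that $\mathrm{Ker}(\rho)$ has rank $p(p-1)-(p-1)^2=p-1=\mathrm{rank}\,J$.

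\emph{Step 2 (the kernel).} Define $\lambda:\bZ[X]\to\bZ[X^{(2)}]$ by
\begin{align*}
\lambda([i])=\sum_{j\neq i}\bigl([(i,j)]-[(j,i)]\bigr).
\end{align*}
This map is equivariant. It kills $N_X$, since the terms cancel in pairs, so it factors through $J=\bZ[X]/\bZ N_X$. Its image lies in $\mathrm{Ker}(\rho)$: modulo $[i]\otimes[i]$, the element $\lambda([i])$ equals $[i]\otimes N_X-N_X\otimes[i]$, which is $0$ in $J\otimes J$. Next we would check that the induced map $J\to\bZ[X^{(2)}]$ is injective with saturated image. The coefficient of $[(i,j)]$ in $\lambda(\sum a_k[k])$ is $a_i-a_j$. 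So a class $\sum a_k[k]$ goes to $0$ iff all $a_k$ are equal, i.e. iff it lies in $\bZ N_X$. Saturation holds because a vector $(a_i-a_j)_{i\neq j}$ divisible by $m$ forces all $a_i$ to be congruent mod $m$. A saturated sublattice of $\mathrm{Ker}(\rho)$ of the same rank equals $\mathrm{Ker}(\rho)$, and this gives the claimed exact sequence. We expect this identification of the kernel, and in particular the saturation check, to be the main point that needs care.

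\emph{Step 3 (invertibility).} By Endo--Miyata \cite[Lemma 1.4]{EM75} it suffices to prove that $J\otimes J$ is invertible over each Sylow subgroup $Q$ of $S_p$. If $q\neq p$, then $|Q|$ divides $(p-1)!$, so $Q$ is conjugate into $S_{p-1}$ and has a fixed point on $X$. By Proposition \ref{prop4.3}, $J|_Q$ is then permutation, hence so is $(J\otimes J)|_Q$. For $Q=C_p$, which acts regularly on $X$ and freely on $X^{(2)}$, the lattice $\bZ[X^{(2)}]$ is $Q$-free. The long exact cohomology sequence for the subgroups $1$ and $Q$ then gives
\begin{align*}
\widehat H^{-1}(Q,J\otimes J)\simeq\widehat H^{0}(Q,J)\simeq\widehat H^{1}(Q,\bZ)=0,\qquad
H^1(Q,J\otimes J)\simeq H^2(Q,J)\simeq H^3(Q,\bZ)=0.
\end{align*}
So $(J\otimes J)|_{C_p}$ is flabby and coflabby. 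Since $C_p$ is cyclic, this implies invertibility by \cite[Theorem 1.5]{EM75}. The sequence of Step 2 is then a flabby resolution of $J$, so $[J]^{fl}=[J\otimes J]$ is invertible, and Theorem \ref{th2.7} (3) gives that $\T_K$ is retract $k$-rational.
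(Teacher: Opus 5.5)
Your proof is correct. The paper gives no proof of this statement; it only quotes Lemire and Colliot-Th\'el\`ene--Sansuc. So the comparison is with the standard argument from those sources.

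There, the sequence comes from tensoring the co-augmentation sequence $(F_X)$ with $J$. Since $J$ is $\bZ$-free, the sequence $0\to J\to \bZ[X]\otimes J\to J\otimes J\to 0$ is exact automatically. By Proposition \ref{prop4.3}, the middle term is
\[
\bZ[X]\otimes J\simeq \mathrm{Ind}_{S_{p-1}}^{S_p}(J|_{S_{p-1}})\simeq \mathrm{Ind}_{S_{p-1}}^{S_p}\bZ[S_{p-1}/S_{p-2}]=\bZ[S_p/S_{p-2}].
\]
Concretely, $\{[i]\otimes\overline{[j]} : j\neq i\}$ is a permuted $\bZ$-basis of $\bZ[X]\otimes J$. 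This is the dual form of the Beneish isomorphism the paper records just afterwards.

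Under this identification the two maps are exactly your $\rho$ and $-\lambda$. Your rank count and saturation check are correct, but flatness of $J$ makes them unnecessary. In exchange, your version gives fully explicit formulas for the maps. It also shows plainly that exactness does not use the primality of $p$.

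Your Step 3 is the usual Sylow-by-Sylow argument. For $q\neq p$, a fixed point on $X$ makes $J|_Q$, and hence $(J\otimes J)|_Q$, permutation. For $Q=C_p$, your Tate-cohomology computation combined with the cyclic case of Endo--Miyata \cite{EM75} gives invertibility; the paper uses that result the same way in Remark \ref{rem15}. The sequence is then a flabby resolution of $J$, and Theorem \ref{th2.7} (3) finishes the proof.
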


\begin{theorem}[{Beneish \cite[page 3572]{Ben98}, Lemire \cite[Lemma 5.1]{Lem}, see also Bessenrodt and Le Bruyn \cite[page 180]{BLB91}, Lorenz \cite[page 46, page 146]{Lor05}}]
Let $S_n$ be the symmetric group of degree $n$. 
Let $u_i$ $(1\leq i\leq n)$ be a basis of $\bZ[S_n/S_{n-1}]$ 
with $\sigma(u_i)=u_{\sigma(i)}$ for any $\sigma\in S_n$. 
Let $y_{i,j}$ $(1\leq i,j\leq n, i\neq j)$ be a basis of $\bZ[S_n/S_{n-2}]$ 
with $\sigma(y_{i,j})=y_{\sigma(i),\sigma(j)}$ for any $\sigma\in S_n$. 
Then we have an isomorphism 
\begin{align*}
f: I_{S_n/S_{n-1}}\otimes\bZ[S_n/S_{n-1}]\xrightarrow{\sim}\bZ[S_n/S_{n-2}],\ 
(u_i-u_j)\otimes u_i\mapsto y_{i,j}.
\end{align*}
\end{theorem}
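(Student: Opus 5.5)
The plan is to realize $I_{S_n/S_{n-1}}\otimes\bZ[S_n/S_{n-1}]$ inside the permutation lattice $\bZ[X\times X]$, where $X=S_n/S_{n-1}=\{1,\ldots,n\}$. I would then exhibit an explicit $\bZ$-basis of it that $S_n$ permutes exactly as it permutes the ordered pairs $(i,j)$ with $i\neq j$. Nothing beyond the Remark identifying $\bZ[X]\otimes\bZ[Y]\simeq\bZ[X\times Y]$ and the augmentation sequence $(E_X)$ of Definition \ref{def8.1} should be needed.

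First, tensor $(E_X)$ with the $\bZ$-free lattice $\bZ[X]$. Since $\bZ[X]$ is flat, this gives an exact sequence of $S_n$-lattices
\begin{align*}
0\to I_X\otimes\bZ[X]\to \bZ[X]\otimes\bZ[X]\simeq\bZ[X\times X]\xrightarrow{\varepsilon\otimes\Id}\bZ[X]\to 0,
\end{align*}
where $u_a\otimes u_b$ is identified with $[(a,b)]$ and $\varepsilon\otimes\Id$ sends $[(a,b)]$ to $u_b$. Thus $I_X\otimes\bZ[X]$ is identified with $\mathrm{Ker}(\varepsilon\otimes\Id)$, which has rank $n^2-n$.

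Second, I would show that the elements $w_{i,j}:=(u_i-u_j)\otimes u_i=[(i,i)]-[(j,i)]$, for $i\neq j$, form a $\bZ$-basis of this kernel. Each $w_{i,j}$ visibly lies in the kernel. Conversely, let $x=\sum_{a,b}c_{a,b}[(a,b)]$ be in the kernel, so that $\sum_a c_{a,b}=0$ for every $b$. Then
\begin{align*}
x=\sum_b\sum_{a\neq b}c_{a,b}\bigl([(a,b)]-[(b,b)]\bigr)=-\sum_b\sum_{a\neq b}c_{a,b}\,w_{b,a}.
\end{align*}
Hence the $w_{i,j}$ span the kernel. There are exactly $n^2-n$ of them, equal to the rank, so they are linearly independent and form a basis.

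Third, I would check equivariance. For $\sigma\in S_n$ we have $\sigma(w_{i,j})=(u_{\sigma(i)}-u_{\sigma(j)})\otimes u_{\sigma(i)}=w_{\sigma(i),\sigma(j)}$, which matches $\sigma(y_{i,j})=y_{\sigma(i),\sigma(j)}$. Therefore the $\bZ$-linear map $f$ defined on bases by $w_{i,j}\mapsto y_{i,j}$ is a well-defined, bijective $S_n$-homomorphism, as claimed.

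The only point requiring care is the second step, namely that the $w_{i,j}$ span over $\bZ$ and not merely over $\bQ$; the explicit column-by-column decomposition above settles it. As a consistency check, the diagonal map $u_i\mapsto[(i,i)]$ splits $\varepsilon\otimes\Id$, and the resulting complement is the permutation lattice on the off-diagonal pairs $\{(i,j): i\neq j\}\simeq S_n/S_{n-2}$.
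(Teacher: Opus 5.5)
Your proof is correct and complete. Tensoring $(E_X)$ with $\bZ[X]$ correctly identifies $I_X\otimes\bZ[X]$ with $\mathrm{Ker}(\varepsilon\otimes\Id)\subset\bZ[X\times X]$. The expansion $x=-\sum_b\sum_{a\neq b}c_{a,b}\,w_{b,a}$ shows that the $w_{i,j}$ span over $\bZ$, and the rank count then makes them a basis, since a surjection $\bZ^{n^2-n}\to\bZ^{n^2-n}$ is injective. So the formula defining $f$ on these elements is legitimate, and $\sigma(w_{i,j})=w_{\sigma(i),\sigma(j)}$ gives equivariance.

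The paper does not prove this statement itself; it quotes it from Beneish and Lemire. So there is no internal argument to compare yours with. A conceptual alternative combines two facts:
\begin{itemize}
\item the identity $M\otimes\bZ[G/H]\simeq\mathrm{Ind}_H^G(M|_H)$;
\item the observation in the proof of Proposition \ref{prop4.3} that $I_X$ becomes the permutation lattice $\bZ[X\setminus\{x_0\}]$ over any subgroup fixing a point $x_0$.
\end{itemize}
With $H=S_{n-1}$ this gives $\mathrm{Ind}_{S_{n-1}}^{S_n}\bZ[S_{n-1}/S_{n-2}]\simeq\bZ[S_n/S_{n-2}]$. Your splitting of the kernel according to the second coordinate $b$ is exactly this argument written out in coordinates. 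Your version has the advantage of being completely elementary and of verifying the stated formula for $f$ directly.

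Your argument also never uses $S_n$ except to identify ordered pairs of distinct points with $S_n/S_{n-2}$. It therefore proves $I_X\otimes\bZ[X]\simeq\bZ[(X\times X)\setminus\Delta_X]$ for every finite $G$-set $X$, where $\Delta_X\subset X\times X$ is the diagonal. Your closing consistency check is already a one-line proof of this. The diagonal section gives $\bZ[X\times X]=\mathrm{Ker}(\varepsilon\otimes\Id)\oplus\bZ[\Delta_X]$, so $\mathrm{Ker}(\varepsilon\otimes\Id)\simeq\bZ[X\times X]/\bZ[\Delta_X]\simeq\bZ[(X\times X)\setminus\Delta_X]$. This composite sends $w_{i,j}$ to $-[(j,i)]$, which recovers $f$ after the equivariant relabeling $-[(j,i)]\mapsto y_{i,j}$.
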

\begin{theorem}[{Beneish \cite[Proposition 1.1]{Ben98}, see also Lorenz \cite[Proposition 2.12.2]{Lor05}, Lemire \cite[Lemma 5.2]{Lem}}]
Let $p$ be a prime number and $S_p$ be the symmetric group of degree $p$.  
Then 
\begin{align*}
(J_{S_p/S_{p-1}}\otimes I_{S_p/S_{p-1}})\oplus\bZ[S_p/S_{p-1}]\simeq \bZ[S_p/S_{p-2}]\oplus\bZ.
\end{align*}
\end{theorem}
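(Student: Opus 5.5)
The plan is to realise both sides of the isomorphism as the middle term of one lattice $D$, built as a pushout of two extensions of $I:=I_{S_p/S_{p-1}}$, and then to show that $D$ splits in two different ways. Write $X=S_p/S_{p-1}$, $J:=J_X$, $H:=S_{p-1}$ and $P:=\bZ[S_p/S_{p-2}]$.

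\emph{Step 1: the two extensions.} Tensoring the co-augmentation sequence $(F_X)\colon 0\to\bZ\to\bZ[X]\to J\to 0$ with the $\bZ$-free lattice $I$ keeps it exact. This gives
\begin{align*}
0\to I\to I\otimes\bZ[X]\to J\otimes I\to 0 .
\end{align*}
By the preceding theorem (Beneish, Lemire), $I\otimes\bZ[X]\simeq P$, so we obtain an extension $(\alpha)\colon 0\to I\xrightarrow{j} P\to J\otimes I\to 0$. The second extension is the augmentation sequence $(E_X)\colon 0\to I\xrightarrow{\iota}\bZ[X]\xrightarrow{\varepsilon}\bZ\to 0$.

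\emph{Step 2: the pushout.} Let $D:=(P\oplus\bZ[X])/\{(j(a),-\iota(a))\mid a\in I\}$, which is a $\bZ$-free $G$-lattice. The standard pushout diagram yields two exact sequences:
\begin{align*}
0\to \bZ[X]\to D\to J\otimes I\to 0,\qquad 0\to P\to D\to \bZ\to 0 .
\end{align*}
The second sequence is classified by $\Ext^1_{\bZ[G]}(\bZ,P)\simeq H^1(G,P)$. This group vanishes because $P$ is permutation: by Shapiro's lemma $H^1(G,\bZ[G/K])=H^1(K,\bZ)=\Hom(K,\bZ)=0$. Hence $D\simeq P\oplus\bZ$.

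\emph{Step 3: splitting the first sequence.} This is the main point. The first sequence is classified by
\begin{align*}
\Ext^1_{\bZ[G]}(J\otimes I,\bZ[X])\simeq H^1\bigl(G,(J\otimes I)^\circ\otimes\bZ[X]\bigr)\simeq H^1\bigl(H,(J\otimes I)^\circ\bigr),
\end{align*}
where the last isomorphism is Shapiro's lemma, since $(J\otimes I)^\circ\otimes\bZ[G/H]$ is induced from $H$. Restricted to $H=S_{p-1}$, the $G$-set $X$ has a fixed point. By Proposition \ref{prop4.3} and its proof, $I|_H$ is then permutation, and dually $J|_H$ is permutation as well. A tensor product of permutation lattices is permutation, since $\bZ[Y]\otimes\bZ[Z]\simeq\bZ[Y\times Z]$. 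So $(J\otimes I)^\circ|_H\simeq (I\otimes J)|_H$ is permutation, and its $H^1$ vanishes by the same Shapiro argument as in Step 2. Therefore $D\simeq (J\otimes I)\oplus\bZ[X]$.

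Comparing Steps 2 and 3 gives $(J\otimes I)\oplus\bZ[X]\simeq P\oplus\bZ=\bZ[S_p/S_{p-2}]\oplus\bZ$, which is the claim. The step I expect to need the most care is Step 3. One must check that the identification of $\Ext^1$ with $H^1(G,\Hom_\bZ(-,-))$ (Lemma \ref{lem5.1}) commutes with Shapiro's lemma for the coinduced module $\Hom_\bZ(J\otimes I,\bZ[G/H])$. One must also check the restriction of $J$ and $I$ to the point stabiliser carefully. Primality of $p$ does not seem to enter this argument beyond the input isomorphism $I\otimes\bZ[X]\simeq P$, which the preceding theorem gives for every $n$. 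If the argument is correct, it proves the identity for all $S_n$, so I would double-check this point against the source.
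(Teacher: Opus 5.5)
Your proof is correct and complete. The paper gives no proof of this statement; it only quotes it from Beneish, Lorenz and Lemire. So there is no internal route to compare against.

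What you have is a self-contained, Schanuel-type argument. You take two extensions of the same lattice $I$, namely $0\to I\to \bZ[S_p/S_{p-2}]\to J\otimes I\to 0$ and $0\to I\to\bZ[X]\to\bZ\to 0$. Their pushout $D$ splits in both directions, because:
\begin{itemize}
\item $\Ext^1_{\bZ[G]}(\bZ,P)=H^1(G,P)=0$;
\item $\Ext^1_{\bZ[G]}(J\otimes I,\bZ[G/H])\simeq\Ext^1_{\bZ[H]}((J\otimes I)|_H,\bZ)=0$, since $I|_H$ and $J|_H$ are permutation by Proposition~\ref{prop4.3}.
\end{itemize}
The compatibility check you flag in Step 3 is not needed. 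You only use that the group $\Ext^1_{\bZ[G]}(J\otimes I,\bZ[X])$ vanishes, and any chain of abstract isomorphisms establishes that. In particular you do not need naturality of the bijection in Lemma~\ref{lem5.1} with respect to Shapiro's lemma.

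You are also right that primality plays no role. Your argument proves the identity for $S_n$ with any $n\geq 2$. As a consistency check, compare $\widehat H^0(S_n,-)$ on both sides. Since $I\otimes\bQ$ is absolutely irreducible, $\widehat H^0(S_n,J\otimes I)=\bZ/n(n-2)!$. So the left side gives $\bZ/n(n-2)!\oplus\bZ/(n-1)!$ and the right side gives $\bZ/(n-2)!\oplus\bZ/n!$. These agree prime by prime because $\gcd(n,n-1)=1$.

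The same proof works for an arbitrary finite $G$-set $X$ with orbit stabilisers $H_i$. Each $H_i$ fixes a point of $X$, so:
\begin{itemize}
\item $I_X|_{H_i}$ and $J_X|_{H_i}$ are permutation;
\item $I_X\otimes\bZ[X]\simeq\bigoplus_i \bZ[G]\otimes_{\bZ[H_i]}(I_X|_{H_i})$ is permutation;
\item $\Ext^1_{\bZ[G]}(J_X\otimes I_X,\bZ[X])=0$.
\end{itemize}
This gives $(J_X\otimes I_X)\oplus\bZ[X]\simeq (I_X\otimes\bZ[X])\oplus\bZ$. In particular $J_X\otimes I_X$ is always stably permutation.
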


\begin{proposition}[{Lemire \cite[Proposition 4.5]{Lem}}]\label{prop5.14}
Let $n\geq 3$ be an odd integer and $D_n$ be the dihedral group of order $2n$. 
Then 
$J_{D_n/C_2}\simeq I_{D_n/C_2}\otimes I_{D_n/C_n}$. 
\end{proposition}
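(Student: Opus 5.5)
The plan is to write down an explicit $D_n$-equivariant surjection $\bZ[X]\to I_X\otimes I_{D_n/C_n}$, where $X=D_n/C_2$, and to check that its kernel is exactly $\bZ N_X$. Since $J_X=\bZ[X]/\bZ N_X$ by the co-augmentation sequence $(F_X)$ of Definition \ref{def8.1}, this gives the isomorphism.

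First I identify $I_{D_n/C_n}$. The set $D_n/C_n=\{y_1,y_2\}$ has two elements, the rotations fix both and every reflection swaps them. So $I_{D_n/C_n}=\bZ(y_1-y_2)$ has rank one, with rotations acting trivially and reflections acting by $-1$. Write $\bZ^-=\bZ e$ for this sign lattice. Then $I_X\otimes I_{D_n/C_n}\simeq I_X\otimes\bZ^-$ is $I_X$ with the action twisted by the sign character; it has rank $n-1$, the same as $J_X$.

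Next I set up coordinates. Let $D_n=\langle \sigma,\tau\mid \sigma^n=\tau^2=1,\ \tau\sigma\tau^{-1}=\sigma^{-1}\rangle$ with $C_2=\langle\tau\rangle$. Label $X=\{x_i\mid i\in\bZ/n\bZ\}$ so that $\sigma x_i=x_{i+1}$ and $\tau x_i=x_{-i}$. Because $n$ is odd, there is $h\in\bZ/n\bZ$ with $2h\equiv 1\pmod n$; take $h=(n+1)/2$. Define
\begin{align*}
\varphi:\bZ[X]\to I_X\otimes\bZ^-,\quad x_i\mapsto (x_{i+h}-x_{i-h})\otimes e .
\end{align*}
Equivariance under $\sigma$ is immediate. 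For $\tau$ we compute
\begin{align*}
\tau\varphi(x_i)=(x_{-i-h}-x_{-i+h})\otimes(-e)=(x_{-i+h}-x_{-i-h})\otimes e=\varphi(x_{-i})=\varphi(\tau x_i).
\end{align*}
Here the sign twist exactly compensates for $\tau$ interchanging $x_{i+h}$ and $x_{i-h}$; this is the only point where the twist matters.

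Finally I compute the kernel and image. Forgetting $\tau$, identify $\bZ[X]$ with $R=\bZ[t]/(t^n-1)$, where $t$ corresponds to $\sigma$. Then $\varphi$ is multiplication by $t^h-t^{-h}=t^{-h}(t^{2h}-1)=t^{-h}(t-1)$, using $2h\equiv 1\pmod n$. Since $t^{-h}$ is a unit in $R$, $\varphi$ has the same kernel and image as multiplication by $t-1$. That map has kernel $\bZ\cdot(1+t+\cdots+t^{n-1})=\bZ N_X$: if $(t-1)a=0$ then all coefficients of $a$ are equal. Its image is the augmentation ideal, which is $I_X$, because $I_X$ is spanned by the differences $x_{i+1}-x_i$. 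Hence $\varphi$ induces a $D_n$-isomorphism $J_X=\bZ[X]/\bZ N_X\xrightarrow{\sim} I_X\otimes\bZ^-$.

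I expect the main obstacle to be finding the correct map and checking its $\tau$-equivariance. The odd-$n$ hypothesis is used only to make $2h\equiv1$ solvable, which is what makes $\varphi$ both $\tau$-compatible and a unit multiple of $t-1$.
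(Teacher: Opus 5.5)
Your proof is correct and complete. The map $\varphi$ takes values in $I_X\otimes\bZ^-$, and your computations show it is $\sigma$- and $\tau$-equivariant. After forgetting the action, it is multiplication by $t^{-h}(t-1)$ on $R=\bZ[t]/(t^n-1)$. That map has kernel $\bZ N_X$ and image $(t-1)R=I_X$. Together with $J_X=\bZ[X]/\bZ N_X$ from $(F_X)$, this gives the isomorphism.

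The paper does not prove this proposition. It only quotes Lemire \cite[Proposition 4.5]{Lem}, then combines it with Theorem~\ref{thKly} and Klyachko's theorem (Remark~\ref{r5.8}) to get $k$-rationality of $\T_K$ when $G\simeq D_n$. Your route is therefore a genuine alternative: a self-contained, explicit construction using only the definitions of $I_X$ and $J_X$. It writes the isomorphism down by a formula and shows exactly where the parity of $n$ matters, whereas the paper imports the result as a black box.

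One small correction to your closing remark. The $\tau$-equivariance computation you display holds for every $h\in\bZ/n\bZ$: the sign twist alone compensates for $\tau$ interchanging $x_{i+h}$ and $x_{i-h}$, and $2h\equiv 1$ is never used there. Oddness of $n$ is needed only in the kernel/image step, to make $t^h-t^{-h}$ a unit multiple of $t-1$. For even $n$, multiplication by $t^{2h}-1$ has kernel of rank $\gcd(2h,n)\geq 2$, so the analogous map would fail there, not at equivariance.
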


By combining Theorem \ref{thKly}, Remark \ref{r5.8} and 
Propositon \ref{prop5.14}, 
we get the $k$-rationality of $\T_K=R_{K/k}^{(1)}(\bG_m)$ 
with ${\rm Gal}(L/k)\simeq D_n$ for odd integers $n\geq 3$ 
(see also Theorem \ref{th1.14} for the stable $k$-rationality): 
\begin{proposition}
Let $n\geq 3$ be an odd integer and $D_n$ be the dihedral group of order $2n$. 
Let $K/k$ be a separable 
field extension of degree $n$ and $L/k$ be the Galois closure 
of $K/k$ with $G={\rm Gal}(L/k)\simeq D_n$.  
Let $\T_K=R_{K/k}^{(1)}(\bG_m)$ be the norm one torus of $K/k$ with 
$\widehat{\T}_K\simeq J_{D_n/C_2}$ and 
$T^\prime$ $($resp. $T^{\prime\prime}$$)$ be an algebraic $k$-torus with 
$\widehat{T}^\prime\simeq I_{D_n/C_2}$ 
$($resp. $\widehat{T}^{\prime\prime}\simeq I_{D_n/C_n}$$)$. 
Then $\T_K=R_{K/k}^{(1)}(\bG_m)\simeq T^\prime\otimes T^{\prime\prime}$ 
with $\widehat{\T}_K\simeq J_{D_n/C_2}\simeq I_{D_n/C_2}\otimes I_{D_n/C_n}$ 
is $k$-rational. 
\end{proposition}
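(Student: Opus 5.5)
The plan is to combine Proposition \ref{prop5.14} with Theorem \ref{thKly} (for $r=2$) and Remark \ref{r5.8}. Put $G={\rm Gal}(L/k)\simeq D_n$, $H={\rm Gal}(L/K)\simeq C_2$, and let $C_n\lhd D_n$ be the rotation subgroup. Consider the two transitive $G$-sets $X_1=D_n/C_2$ and $X_2=D_n/C_n$, of sizes $n_1=n$ and $n_2=2$. Since $n$ is odd, $\gcd(n_1,n_2)=1$, which is exactly the coprimality hypothesis in Theorem \ref{thKly}.

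\textbf{Step 1: identify the \'etale algebras and tori.} Under the anti-equivalence of Section \ref{ss23}, $X_1$ corresponds to $A_1=K$, since $X(K)=\mathcal{G}/{\rm Gal}(\overline{k}/K)$, which is $D_n/C_2$ as a $G$-set. Likewise $X_2$ corresponds to $A_2=F:=L^{C_n}$, the quadratic subextension of $L/k$. The tori $T^\prime=R_{K/k}(\bG_m)/\bG_{m,k}$ and $T^{\prime\prime}=R_{F/k}(\bG_m)/\bG_{m,k}$ then have character lattices $I_{D_n/C_2}$ and $I_{D_n/C_n}$. Because these lattices determine the tori, they agree with the $T^\prime,T^{\prime\prime}$ of the statement.

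\textbf{Step 2: the isomorphism.} Proposition \ref{prop5.14} gives $J_{D_n/C_2}\simeq I_{D_n/C_2}\otimes I_{D_n/C_n}$ as $D_n$-lattices. Here both factors are lattices for the same group $G$, so the subdirect product in Definition \ref{deftensorT} can be taken to be $G$ embedded diagonally in $G\times G$. By the duality between $k$-tori split by $L$ and $G$-lattices, $\widehat{\T}_K\simeq \widehat{T^\prime\otimes T^{\prime\prime}}$ yields an isomorphism of $k$-tori $\T_K\simeq T^\prime\otimes T^{\prime\prime}$.

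\textbf{Step 3: rationality.} Theorem \ref{thKly} with $r=2$ applies since $\gcd(n,2)=1$. It gives the exact sequence
\[
0\to I_{X_1}\otimes I_{X_2}\to \bZ[X_1\times X_2]\oplus\bZ\to \bZ[X_1]\oplus\bZ[X_2]\to 0,
\]
so $[J_{D_n/C_2}]^{fl}=0$, and $\T_K$ is stably $k$-rational by Theorem \ref{th2.7}. For honest $k$-rationality we invoke Remark \ref{r5.8}, that is, Klyachko's theorem: for coprime degrees $\dim A_1=n$ and $\dim A_2=2$, the torus $T^\prime\otimes T^{\prime\prime}$ is $k$-rational. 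Therefore $\T_K$ is $k$-rational.

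The main point needing care is Step 1. We must check that the $G$-sets produced by the fields match the cosets used in Lemire's isomorphism. We must also check that Klyachko's result is stated for tori of the form $R_{A_1/k}(\bG_m)/\bG_m\otimes R_{A_2/k}(\bG_m)/\bG_m$ where the $A_i$ are split by a common Galois extension, here $L$. All of this is bookkeeping through Section \ref{ss23}.
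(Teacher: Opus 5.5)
Your proposal is correct and is exactly the paper's argument: the paper obtains this proposition by combining Lemire's isomorphism $J_{D_n/C_2}\simeq I_{D_n/C_2}\otimes I_{D_n/C_n}$ (Proposition \ref{prop5.14}) with Theorem \ref{thKly} and Klyachko's theorem as recorded in Remark \ref{r5.8}, using $\gcd(n,2)=1$. Your Step 1, which realises $D_n/C_n$ as the $G$-set of the quadratic subfield $L^{C_n}$ so that $T^{\prime\prime}=R_{L^{C_n}/k}(\bG_m)/\bG_{m,k}$, only makes explicit the bookkeeping that the paper leaves implicit.
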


\section{Rationality of tensor products $T_1\otimes T_2$ of algebraic $k$-tori $T_1$ and $T_2$}\label{S6} 
In this section, $G$ is a finite group.

\subsection{Permutation order ${\rm p}$-${\rm ord}(M)$ and invertibility of $G$-lattices $M$}

The following characterisation of invertible $G$-lattices is well-known. 

\begin{lemma}[{Lenstra \cite[Proposition 1.2]{Len74}, see also 
Endo and Miyata \cite[Lemma 1.2]{EM75}, 
Colliot-Th\'{e}l\`{e}ne and Sansuc \cite[Lemme 9, page 182]{CTS77}, 
Swan \cite[Corollary 2.5]{Swa10}}]\label{lem6.3} 
Let $E$ be a $G$-lattice. 
Then $E$ is invertible if and only if  
any short exact sequence $0\to C\to N\to E\to 0$ of $G$-lattices 
with $C$ coflabby splits, 
i.e. ${\rm Ext}^1_{\bZ[G]}(E,C)=0$ for any coflabby $G$-lattice $C$. 
\end{lemma}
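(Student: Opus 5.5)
We prove the two implications separately. Throughout we use that $H^1(H,\cdot)$, coflabbiness and $\Ext$ are compatible with the standard identifications
\begin{align*}
\Ext^1_{\bZ[G]}(E,C)\simeq H^1(G,\Hom_\bZ(E,C)) \quad\text{and}\quad \Ext^1_{\bZ[G]}(\bZ[G/H],C)\simeq H^1(H,C),
\end{align*}
the latter being Shapiro's lemma.

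\emph{Invertible implies the $\Ext$ condition.} Suppose $E$ is invertible, so $E\oplus E'\simeq P$ with $P$ permutation. Then $\Ext^1_{\bZ[G]}(E,C)$ is a direct summand of $\Ext^1_{\bZ[G]}(P,C)$. Write $P=\oplus_i \bZ[G/H_i]$. By Shapiro's lemma,
\begin{align*}
\Ext^1_{\bZ[G]}(P,C)\simeq \oplus_i H^1(H_i,C),
\end{align*}
and each summand vanishes because $C$ is coflabby. Hence every sequence $0\to C\to N\to E\to 0$ with $C$ coflabby splits.

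\emph{The $\Ext$ condition implies invertible.} The idea is to construct a surjection from a permutation lattice onto $E$ whose kernel is coflabby. Splitting it then exhibits $E$ as a direct summand of a permutation lattice. This is the dual of the flabby resolution.

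We take the surjection
\begin{align*}
\phi:\ P:=\bigoplus_{H\le G}\bZ[G/H]\otimes_\bZ E^H \to E,\qquad gH\otimes x\mapsto gx,
\end{align*}
where $E^H$ is regarded with trivial action and is free of finite rank. Only finitely many $H$ occur, so $P$ is a permutation lattice. The map $\phi$ is surjective already from the $H=\{1\}$ summand. Moreover, $\phi$ is surjective on $H$-fixed points for every $H\le G$: the element $H\otimes x$ with $x\in E^H$ is $H$-fixed and maps to $x$.

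Set $C:=\Ker\phi$. It is a $G$-lattice, being a subgroup of a free $\bZ$-module. For each $H$ we have the long exact sequence
\begin{align*}
P^H\to E^H\to H^1(H,C)\to H^1(H,P).
\end{align*}
The first map is surjective, and $H^1(H,P)=0$ since $P$ is permutation: each $\bZ[G/K]$ restricts to $H$ as a sum of $\bZ[H/(H\cap gKg^{-1})]$, and $H^1(H',\bZ)=\Hom(H',\bZ)=0$ for finite $H'$. Therefore $H^1(H,C)=0$ for all $H$, i.e. $C$ is coflabby.

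By hypothesis the sequence $0\to C\to P\to E\to 0$ splits, so $E$ is a direct summand of the permutation lattice $P$, hence invertible.

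The main obstacle is ensuring surjectivity on all fixed points, so that the kernel is coflabby. This is exactly why $P$ is built from all subgroups $H$ together with their invariants $E^H$.
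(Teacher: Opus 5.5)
Your proof is correct and complete. The paper gives no proof of this lemma; it only quotes it from Lenstra, Endo--Miyata, Colliot-Th\'el\`ene--Sansuc and Swan. Your argument is the standard one from those sources:
\begin{itemize}
\item For the forward direction, Shapiro's lemma gives $\Ext^1_{\bZ[G]}(P,C)\simeq\bigoplus_i H^1(H_i,C)=0$, and $\Ext^1_{\bZ[G]}(E,C)$ is a direct summand of this group.
\item For the converse, you use the explicit coflabby resolution $\bigoplus_{H\le G}\bZ[G/H]\otimes E^H\to E$. This is the construction behind Endo--Miyata's Lemma 1.2, which the paper itself invokes in Proposition~\ref{prop6.5}~(4).
\end{itemize}
Your key checks are both right: surjectivity on every $E^H$, and $H^1(H,P)=0$ via the Mackey decomposition together with $\Hom(H',\bZ)=0$. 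These are exactly the tools the paper relies on elsewhere, namely Shapiro's lemma in Lemma~\ref{lem6.5}~(6) and coflabby resolutions in Proposition~\ref{prop6.5}. Your write-up therefore makes the cited result self-contained rather than taking a different route.
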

The following definition appears to be new.
\begin{definition}\label{DefiPermutationOrder}
Let $M$ be a $G$-lattice. 
{\it The permutation order} ${\rm p}$-${\rm ord}(M)$ of $M$ 
is the smallest integer $a \geq 1$ 
such that 
$a\,\Id_M: M\to M$ 
factors through a permutation $G$-lattice 
as a $\bZ[G]$-homomorphism 
where $\Id_M: M\to M$ is the identity map of $M$, 
i.e. there exists a permutation $G$-lattice $P$ such that 
$a\,\Id_M: M\xrightarrow{f} P\xrightarrow{g} M$ with $g\circ f=a\,\Id_M$. 
\end{definition}
\begin{lemma}\label{lem6.2}
Let $M$ be a $G$-lattice and $M^\circ={\rm Hom}(M,\bZ)$ be the dual of $M$. 
Then ${\rm p}$-${\rm ord}(M)={\rm p}$-${\rm ord}(M^\circ)$.
\end{lemma}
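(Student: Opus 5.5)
The plan is to dualize a factorization of $a\,\Id_M$ through a permutation lattice, and use that permutation lattices are self-dual. It suffices to prove ${\rm p}$-${\rm ord}(M^\circ)\leq {\rm p}$-${\rm ord}(M)$. The reverse inequality then follows by applying the same argument to $M^\circ$, because the canonical evaluation map $M\to (M^\circ)^\circ$ is an isomorphism of $G$-lattices ($M$ is $\bZ$-free of finite rank, and the map is visibly $G$-equivariant for the given action on duals).

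Let $a={\rm p}$-${\rm ord}(M)$, and choose a permutation $G$-lattice $P$ with $\bZ[G]$-homomorphisms $f\colon M\to P$ and $g\colon P\to M$ such that $g\circ f=a\,\Id_M$. Apply the contravariant functor $(-)^\circ={\rm Hom}_{\bZ}(-,\bZ)$. For a $\bZ[G]$-map $u\colon N\to N'$, the transpose $u^\circ(\phi)=\phi\circ u$ is again $\bZ[G]$-linear. Indeed,
\[
(g\phi)\circ u\,(n)=\phi(g^{-1}u(n))=\phi(u(g^{-1}n))=\bigl(g(\phi\circ u)\bigr)(n).
\]
Functoriality then gives $f^\circ\circ g^\circ=(g\circ f)^\circ=a\,\Id_{M^\circ}$. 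Here $g^\circ\colon M^\circ\to P^\circ$ and $f^\circ\colon P^\circ\to M^\circ$, so $a\,\Id_{M^\circ}$ factors through $P^\circ$.

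It remains to check that $P^\circ$ is permutation. Write $P=\bZ[X]$ for a finite $G$-set $X$, and let $\{[x]^*\}$ be the dual basis. We compute $(g[x]^*)([y])=[x]^*([g^{-1}y])=\delta_{x,g^{-1}y}=[gx]^*([y])$, so $g[x]^*=[gx]^*$. Thus $P^\circ\simeq\bZ[X]$, which is the identification $\bZ[X]^\circ\simeq\bZ[X]$ already used in Definition \ref{def8.1}. Hence ${\rm p}$-${\rm ord}(M^\circ)\leq a$.

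There is no real obstacle. The only points needing care are the equivariance of transposes and of the double-dual isomorphism under the convention $(g\phi)(m)=\phi(g^{-1}m)$, and both are one-line checks.
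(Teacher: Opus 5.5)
Your proposal is correct and follows the same route as the paper: dualize a factorization $a\,\Id_M=g\circ f$ through a permutation lattice $P$, and use that $P^\circ$ is again permutation. You also spell out checks the paper leaves implicit, namely the $G$-equivariance of transposes, the self-duality $\bZ[X]^\circ\simeq\bZ[X]$, and the double-dual isomorphism that gives the reverse inequality; all of these are correct.
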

\begin{proof}
By considering the dual 
$\Id_{M^\circ}: M^\circ\rightarrow P^{\circ}\rightarrow M^\circ$ 
of $\Id_M: M\to P\to M$ where $P$ is permutation, 
we see the assertion 
because $P^\circ$ is permutation. 
\end{proof}
\begin{lemma}\label{lemRes}
Let $G$ be a finite group, 
$p$ be a prime number 
and $G_p={\rm Syl}_p(G)$ be a $p$-Sylow subgroup of $G$. 
Let $M$ be a $G$-lattice and 
$M|_{G_p}$ be a $G_p$-lattice 
obtained by restricting the action of $G$ on $M$ to $G_p$. 
Then $H^1(G,M)\xrightarrow{\oplus {\rm res}_p} \oplus_{p\mid |G|} H^1(G_p,M|_{G_p})$ becomes injective. 
In particular, if $H^1(G_p,M|_{G_p})=0$ for any prime divisor $p\mid |G|$, then $H^1(G,M)=0$.  
\end{lemma}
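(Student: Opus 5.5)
The plan is the classical restriction–corestriction argument, applied one prime at a time. The key fact is that for any subgroup $H\leq G$ the composite
\[
H^1(G,M)\xrightarrow{{\rm res}} H^1(H,M|_H)\xrightarrow{{\rm cores}} H^1(G,M)
\]
is multiplication by $[G:H]$. This holds on the cocycle level, or via dimension shifting from $H^0$, where cores$\circ$res is the norm $\sum_{gH\in G/H} g$ acting on $G$-invariants, hence multiplication by $[G:H]$. We will take this as standard, citing for example Brown \cite{Bro82} or Neukirch, Schmidt and Wingberg \cite{NSW00}.

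The proof then runs as follows.

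\begin{enumerate}[(i)]
\item We first observe that $H^1(G,M)$ is killed by $|G|$. Apply the identity above with $H=\{1\}$, and note $H^1(\{1\},M)=0$. Since $H^1(G,M)$ is a torsion group of exponent dividing $|G|$, it splits as the direct sum of its $p$-primary components $H^1(G,M)_{(p)}$, over the primes $p\mid |G|$.

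\item Next let $x\in H^1(G,M)$ satisfy ${\rm res}_p(x)=0$ for every $p\mid |G|$, where ${\rm res}_p$ is restriction to $G_p$. Write $x=\sum_p x_p$ with $x_p\in H^1(G,M)_{(p)}$. Restriction is a homomorphism and preserves $p$-primary parts. So for a fixed prime $p$ it suffices to show that ${\rm res}_p$ is injective on $H^1(G,M)_{(p)}$. Indeed, ${\rm res}_p(x_q)$ for $q\neq p$ is $q$-primary in the $p$-group $H^1(G_p,M)$, hence zero. Therefore ${\rm res}_p(x)={\rm res}_p(x_p)$, and if this vanishes then $x_p=0$.

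\item Suppose $y\in H^1(G,M)_{(p)}$ and ${\rm res}_p(y)=0$. Then $[G:G_p]\,y={\rm cores}({\rm res}_p(y))=0$. But $[G:G_p]$ is prime to $p$, and $y$ has $p$-power order, so $y=0$.

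\item Hence every $x_p=0$, so $x=0$, which proves the injectivity. The "in particular" statement is immediate, because the target of $\oplus\,{\rm res}_p$ is then $0$.
\end{enumerate}

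The only non-formal ingredient is the identity cores$\circ$res$=[G:H]$. This is the step needing justification, and the plan is to cite it rather than reprove it. The rest is bookkeeping with primary components.
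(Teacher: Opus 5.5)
Your proof is correct and uses the same restriction--corestriction argument as the paper, namely ${\rm cores}_p\circ{\rm res}_p=[G:G_p]$. The paper's final step is shorter: an element $\alpha$ of the kernel satisfies $[G:G_p]\alpha=0$ for every $p\mid |G|$, and since these indices have greatest common divisor $1$, $\alpha=0$ follows without the primary decomposition.
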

\begin{proof}
We see that the composite map 
${\rm cores}_p\circ {\rm res}_p: H^1(G,M)\xrightarrow{{\rm res}_p} H^1(G_p,M|_{G_p})\xrightarrow{{\rm cores}_p} H^1(G,M)$ 
is the multiplication by $[G:G_p]$ which is coprime to $p$ 
(see e.g. Serre \cite[Proposition 4, page 130]{Ser79}, Neukirch, Schmidt and Wingberg \cite[Corollary 1.5.7]{NSW00}). 
If $\alpha \in {\rm Ker}\{H^1(G,M)\xrightarrow{\oplus {\rm res}_p} \oplus_{p\mid |G|} H^1(G_p,M|_{G_p})\}$, 
then $[G:G_p]\alpha=0$ for all $p\mid |G|$. 
This implies that $\alpha=0\in H^1(G,M)$. 
We get the injection $H^1(G,M)\hookrightarrow \oplus_{p\mid |G|} H^1(G_p,M|_{G_p})$. 
\end{proof}

\begin{proposition}\label{prop6.5}
Let $G$ be a finite group 
and $M$, $N$ be $G$-lattices. 
Let $(E): 0 \to C \xrightarrow{\iota} N \xrightarrow{\pi} M \to 0$ 
be an exact sequence of $G$-lattices with $C$ coflabby. 
Let $p$ be a prime number, 
$G_p={\rm Syl}_p(G)$ be a $p$-Sylow subgroup of $G$ 
and 
$M|_{G_p}$ be a $G_p$-lattice 
obtained by restricting the action of $G$ on $M$ to $G_p$. 
Then\\
{\rm (1)} ${\rm ord}(E)\mid |G|$, i.e. ${\rm ord}(E)$ divides $|G|$.\\ 
{\rm (2)} ${\rm ord}(E)\mid {\rm p}$-${\rm ord}(M)$.\\
{\rm (3)} ${\rm p}$-${\rm ord}(M)={\rm ord}(E)$ if 
$N$ is permutation, i.e. $(E)$ is a coflabby resolution of $M$. 
In particular, if $a\,\Id_M: M\to M$ 
factors through a permutation $G$-lattice 
as a $\bZ[G]$-homomorphism, 
i.e. there exists a permutation $G$-lattice $P$ such that 
$a\,\Id_M: M\xrightarrow{f} P\xrightarrow{g} M$ with $g\circ f=a\,\Id_M$, 
then ${\rm p}$-${\rm ord}(M)\mid a$.\\
{\rm (4)} {${\rm p}$-${\rm ord}(M)$} $\mid$ $|G|$.\\
{\rm (5)} ${\rm p}$-${\rm ord}(M)=\prod_{p\mid |G|}{\rm p}$-${\rm ord}(M|_{G_p})$ where ${\rm p}$-${\rm ord}(M|_{G_p})\mid |G_p|$.\\
{\rm (6)} The following conditions are equivalent:\\
{\rm (i)} ${\rm p}$-${\rm ord}(M)=1$;\\
{\rm (ii)} ${\rm p}$-${\rm ord}(M|_{G_p})=1$ for any prime number $p$;\\
{\rm (iii)} ${\rm p}$-${\rm ord}(M|_{G_p})=1$ for any prime divisor $p\mid |G|$;\\
{\rm (iv)} $(E)$ with $N$ permutation splits;\\
{\rm (v)} $M$ is invertible. 

In particular, $M$ is invertible if and only if 
$M|_{G_p}$ is invertible for any prime  number $p$ if and only if 
$M|_{G_p}$ is invertible for any prime divisor $p\mid |G|$ 
$($due to Endo and Miyata \cite[Lemma 1.4]{EM75}$)$. 
\end{proposition}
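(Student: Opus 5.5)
The argument proceeds through items (1)--(6) in order. Lemma \ref{lem5.4} translates orders of extensions into the existence of equivariant partial sections, and the characterisation of invertibility in Lemma \ref{lem6.3} links these to the permutation order.

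For (1), let $e={\rm ord}(E)$, viewed as the order of a class in $H^1(G,{\rm Hom}_\bZ(M,C))$. This group is killed by $|G|$, so $e$ divides $|G|$.

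For (2), suppose $a\,\Id_M=g\circ f$ with $f\colon M\to P$, $g\colon P\to M$ and $P$ permutation. Pulling $(E)$ back along $g$ gives a class in $\Ext^1_{\bZ[G]}(P,C)\simeq H^1(G,{\rm Hom}(P,C))$. Writing $P=\oplus\bZ[G/H_i]$, Shapiro's lemma identifies this group with $\oplus H^1(H_i,C)$, which vanishes because $C$ is coflabby. So the pullback splits, and there is an equivariant $h\colon P\to N$ with $\pi\circ h=g$. Then $s=h\circ f$ satisfies $\pi\circ s=a\,\Id_M$, and Lemma \ref{lem5.3}(3) gives ${\rm ord}(E)\mid a$. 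Applying this with $a={\rm p}$-${\rm ord}(M)$ proves (2).

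For (3), assume $N$ is permutation. By Lemma \ref{lem5.4}(2) there is an equivariant $s$ with $\pi\circ s=e\,\Id_M$, so $e\,\Id_M$ factors through $N$. Hence ${\rm p}$-${\rm ord}(M)\le e$, and combined with (2) this gives equality. For the "in particular" clause, the argument of (2) shows ${\rm ord}(E)\mid a$, and ${\rm ord}(E)={\rm p}$-${\rm ord}(M)$. A coflabby resolution always exists: dualise a flabby resolution of $M^\circ$, using that the dual of a flabby lattice is coflabby. Item (4) then follows from (1) and (3).

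For (5), take a coflabby resolution $(E)$ of $M$. Its restriction to $G_p$ is again a coflabby resolution, so ${\rm p}$-${\rm ord}(M|_{G_p})$ equals the order of ${\rm res}_p[E]$. The group $H^1(G,{\rm Hom}(M,C))$ is finite of order dividing a power of $|G|$, so it splits into $p$-primary parts. On the $p$-primary part, ${\rm res}_p$ is injective, because ${\rm cores}_p\circ{\rm res}_p=[G:G_p]$ is prime to $p$ (as in Lemma \ref{lemRes}). The image of ${\rm res}_p$ lies in $H^1(G_p,-)$, which is $p$-primary. Hence the order of ${\rm res}_p[E]$ is exactly the $p$-part of $e$. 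Taking the product over $p$ gives $e$, and divisibility by $|G_p|$ follows from (4) applied to $G_p$. This $p$-primary bookkeeping is the step that needs the most care.

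For (6), (i)$\Leftrightarrow$(iii) follows from (5), and (ii)$\Leftrightarrow$(iii) holds because $G_p$ is trivial for $p\nmid|G|$. (i)$\Leftrightarrow$(iv) is (3) with $e=1$. For (iv)$\Rightarrow$(v), a split $(E)$ makes $M$ a direct summand of the permutation lattice $N$. For (v)$\Rightarrow$(iv), apply Lemma \ref{lem6.3} with $C$ coflabby. The final sentence is (v)$\Leftrightarrow$(ii)$\Leftrightarrow$(iii), applying the equivalence (i)$\Leftrightarrow$(v) to each $G_p$.
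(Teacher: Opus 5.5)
Your proposal is correct and follows essentially the same route as the paper: the pullback-splitting argument for (2) (you check $\Ext^1_{\bZ[G]}(P,C)=0$ directly via Shapiro's lemma, where the paper cites Lemma~\ref{lem6.3}), Lemma~\ref{lem5.4} for (3), and the corestriction--restriction argument of Lemma~\ref{lemRes} for (5). Your $p$-primary bookkeeping in (5) is slightly more explicit than the paper's, since it shows that ${\rm ord}(E|_{G_p})$ is exactly the $p$-part of ${\rm ord}(E)$.
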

\begin{proof}
Define 
$d:={\rm p}$-${\rm ord}(M)$ and 
$e:={\rm ord}(E)$, i.e. 
the order of $(E)$ as an element 
$(E) \in \Ext^1_{\bZ[G]}(M,C)\simeq H^1(G,{\rm Hom}_{\bZ}(M,C))$.\\ 
(1) We see $e\mid |G|$ because 
$\Ext^1_{\bZ[G]} (M,C)\simeq H^1(G,{\rm Hom}_{\bZ}(M,C))$  
is killed by $|G|$ 
(see Neukirch, Schmidt and Wingberg \cite[Proposition 1.6.1]{NSW00}).\\ 
(2) We will show $e\mid d$. 
It also follows from the definition of $d$ that 
there exist a permutation $G$-lattice $Q$, 
$\varphi\in {\rm Hom}_{\bZ[G]}(M,Q)$, 
$\psi\in {\rm Hom}_{\bZ[G]}(Q,M)$ 
such that 
$d\,\Id_M=\psi\circ\varphi: M \xrightarrow{\varphi} Q \xrightarrow{\psi} M$. 
Form the pull-back diagram 
(see e.g. Hilton and Stammbach \cite[II.6, page 59]{HS71}): 
\begin{align*}
\xymatrix@C=45pt@R=20pt{
(\psi\circ\varphi)^\ast(E): 
0 \ar[r] & C \ar[r]^-{(\psi\circ\varphi)^\ast(\iota)} \ar@{=}[d] & (\psi\circ\varphi)^\ast(N) \ar[r]^-{(\psi\circ\varphi)^\ast(\pi)} \ar[d]^{\varphi_N} & M\ar[r] \ar[d]^\varphi & 0 \\
\psi^\ast(E): 
0 \ar[r] & C \ar[r]^-{\psi^\ast(\iota)} \ar@{=}[d] & \psi^\ast(N) \ar[r]^-{\psi^\ast(\pi)} \ar[d]^{\psi_N} & Q\ar[r] \ar[d]^\psi & 0 \\
(E):  0 \ar[r] & C \ar[r]^-\iota & N \ar[r]^-\pi & M \ar[r]  & 0
}
\end{align*}
where 
\begin{align*}
\psi^\ast(N)&=\{(n,q)\in N\times Q\mid \pi(n)=\psi(q) \},\\
\psi_N(n,q)&=n,\ 
\psi^\ast(\pi)(n,q)=q,\ 
\psi^\ast(\iota)(c)=\big(\iota(c),0\big),\\
(\psi\circ\varphi)^\ast(N)&=\{(n,q,m)\in N\times Q\times M \mid 
\pi(n)=\psi(q),\ q=\varphi(m) \},\\
\varphi_N(n,q,m)&=(n,q),\ 
(\psi\circ\varphi)^\ast(\pi)(n,q,m)=m,\ 
(\psi\circ\varphi)^\ast(\iota)(c)=\big(\iota(c),0,0\big).
\end{align*}
Because $C$ is coflabby and $Q$ is permutation, by Lemma \ref{lem6.3}, 
$\psi^\ast(E)$ splits, i.e. $\Ext^1_{\bZ[G]}(Q,C)=0$. 
Hence there exists $t\in {\rm Hom}_{\bZ[G]}(Q,\psi^\ast(N))$ 
such that $\psi^\ast(\pi)\circ t=\Id_Q$. 
We can take 
$\varphi^\ast(t)\in {\rm Hom}_{\bZ[G]}(M,(\psi\circ\varphi)^\ast(N))$ 
as $\varphi^\ast(t)(m)=(\psi_N\circ t\circ\varphi(m),\varphi(m),m)$. 
Then we have $(\psi\circ\varphi)^\ast(\pi)\circ\varphi^\ast(t)=\Id_M$. 
Now we can take $\psi_\ast(t)\in {\rm Hom}_{\bZ[G]}(M,N)$ 
defined by $\psi_\ast(t)(m)=\psi_N\circ t\circ \varphi(m)$ 
which satisfies $\pi\circ\psi_\ast(t)=\pi\circ(\psi_N\circ t\circ \varphi)
=(\psi\circ\psi^\ast(\pi))\circ(t\circ \varphi)=(\psi\circ \varphi)=d\,\Id_M$. 
This implies that $(\psi\circ\varphi)^\ast(E)=d(E)$. 
Because $d(E)$ splits, we conclude that $e\mid d$.\\
(3) By (2), we should prove $d\leq e$. 
If $N=P$ is permutation, then it follows from Lemma \ref{lem5.4} that 
there exists $s\in {\rm Hom}_{\bZ[G]}(M,P)$ 
such that $\pi \circ s= e\,\Id_M$. 
This implies $d \leq e$ from the definition of $d$.
The last statement follows from Lemma \ref{lem5.3} (3). 
\\
(4) it follows from the existence of a coflabby resolution 
(see Endo and Miyata \cite[Lemma 1.2]{EM75}), (1) and (3) that 
{${\rm p}$-${\rm ord}(M)$} $\mid$ $|G|$.\\
(5) By (3), we have 
 ${\rm p}$-${\rm ord}(M)={\rm ord}(E)$ and 
 ${\rm p}$-${\rm ord}(M|_{G_p})={\rm ord}(E|_{G_p})$. 
Hence the assertion follows from Lemma \ref{lemRes}.\\
(6) By (5), we have ${\rm (i)}\Leftrightarrow {\rm (ii)}
\Leftrightarrow {\rm (iii)}$. 
By (3), we have ${\rm (i)}\Leftrightarrow {\rm (iv)}$.\\
${\rm (iv)}\Rightarrow {\rm (v)}$: 
By (3), if 
${\rm ord}(E)=1$, i.e. $(E)$ splits, then $C\oplus M\simeq N$ permutation 
and hence $M$ is invertible.\\
${\rm (v)}\Rightarrow {\rm (iv)}$: 
If $M$ is invertible, then it follows from 
Lemma \ref{lem6.3} that $(E)$ splits. 

The last assertion follows from (6). 
\end{proof}

Let $p$ be a prime number. 
Let $\bZ_{(p)}$ be the localization of $\bZ$ at a prime ideal $(p)$ and 
$\bZ_p$ be the completion of $\bZ$ at a prime $p$. 
It is known that 
Krull-Schmidt-Azumaya theorem holds for $\bZ_p[G]$-lattices, 
i.e. $\bZ_p[G]$-lattices have the unique decomposition into 
indecomposable ones up to isomorphism and numbering 
(see Curtis and Reiner \cite[Theorem 6.12]{CR81}, 
see also Azumaya \cite[Theorem 1]{Azu50}, 
Hoshi and Yamasaki \cite[Chapter 4]{HY17}, \cite[Section 7]{HY1}) 
although it does not hold for $G$-lattices in general 
(see Swan \cite[Section 10]{Swa60}, Curtis and Reiner \cite[Section 36]{CR81}, 
Hoshi and Yamasaki \cite[page 9 and Chapter 4]{HY17}). 
If $G$ is a $p$-group with $p\neq 2$, then 
Krull-Schmidt-Azumaya theorem holds for $\bZ_{(p)}[G]$-lattices 
(see Jones \cite[{Theorem 2 with $q=1$}]{Jon65}, 
Curtis and Reiner \cite[Theorem 36.1]{CR81}). 

\begin{proposition}\label{prop6.6}
Let $G$ be a finite group, $M$ be a $G$-lattice and 
$(E): 0 \to C \xrightarrow{\iota} P \xrightarrow{\pi} M \to 0$ 
be a coflabby resolution of $M$ as $G$-lattices 
with $P$ permutation and $C$ coflabby. 
Let $p$ be a prime number and 
$G_p={\rm Syl}_p(G)$ be a $p$-Sylow subgroup of $G$. 
Let $M|_{G_p}$ be a $G_p$-lattice 
obtained by restricting the action of $G$ on $M$ to $G_p$. 
Let 
\begin{align*}
(E|_{G_p}): 0 \to C|_{G_p} \xrightarrow{\iota|_{G_p}} P|_{G_p} \xrightarrow{\pi|_{G_p}} M|_{G_p} \to 0
\end{align*}
be a coflabby resolution of $M|_{G_p}$ as $G_p$-lattices, 
\begin{align*}
(E_{(p)}): 0 \to C_{(p)}:=C\otimes \bZ_{(p)} \xrightarrow{\iota_{(p)}} P_{(p)}:=P\otimes \bZ_{(p)}\xrightarrow{\pi_{(p)}} M_{(p)}:=M\otimes \bZ_{(p)}\to 0
\end{align*}
be a coflabby resolution of $M_{(p)}=M\otimes \bZ_{(p)}$ 
as $\bZ_{(p)}[G]$-lattices 
and 
\begin{align*}
(E_p): 0 \to C_p:=C\otimes \bZ_p \xrightarrow{\iota_p} P_p:=P\otimes \bZ_p 
\xrightarrow{\pi_p} M_p:=M\otimes \bZ_p \to 0
\end{align*}
be a coflabby resolution of $M_p=M\otimes \bZ_p$ as $\bZ_p[G]$-lattices. 
Then 
${\rm p}$-${\rm ord}(M|_{G_p})={\rm p}$-${\rm ord}(M_{(p)})
={\rm p}$-${\rm ord}(M_p)={\rm ord}(E|_{G_p})
={\rm ord}(E_{(p)})={\rm ord}(E_p)$ 
which is the $p$-part of ${\rm p}$-${\rm ord}(M)={\rm ord}(E)$. 
\end{proposition}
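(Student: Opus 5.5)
The plan is to reduce everything to the group $\Ext^1_{\bZ[G]}(M,C)\simeq H^1(G,{\rm Hom}_\bZ(M,C))$ and its $p$-primary part. Put $e:={\rm ord}(E)$, which equals ${\rm p}$-${\rm ord}(M)$ by Proposition \ref{prop6.5} (3), and let $e_p$ be its $p$-part.

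\textbf{Step 1: the orders ${\rm ord}(E_{(p)})$ and ${\rm ord}(E_p)$.} Since $M$ and $C$ are $\bZ$-free of finite rank, ${\rm Hom}_{\bZ_{(p)}}(M_{(p)},C_{(p)})={\rm Hom}_\bZ(M,C)\otimes\bZ_{(p)}$. Because $\bZ_{(p)}$ is flat, cohomology commutes with this base change, so
\begin{align*}
H^1(G,{\rm Hom}(M,C))\otimes\bZ_{(p)}\simeq H^1(G,{\rm Hom}(M_{(p)},C_{(p)})).
\end{align*}
The left side is the $p$-primary part of a finite group, and the class of $(E_{(p)})$ is the image of the class of $(E)$. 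Hence ${\rm ord}(E_{(p)})=e_p$, and the same argument with $\bZ_p$ gives ${\rm ord}(E_p)=e_p$.

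\textbf{Step 2: the order ${\rm ord}(E|_{G_p})$.} The class of $(E|_{G_p})$ is ${\rm res}_p$ of the class of $(E)$. Since ${\rm cores}\circ{\rm res}=[G:G_p]$ is prime to $p$, restriction is injective on the $p$-primary part. Restriction also kills the prime-to-$p$ part, because $H^1(G_p,-)$ is $p$-torsion. Therefore ${\rm ord}(E|_{G_p})=e_p$.

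\textbf{Step 3: the resolutions are coflabby resolutions.} Next I check that each of the three sequences is a coflabby resolution. For $(E|_{G_p})$, $P|_{G_p}$ is permutation and $C|_{G_p}$ is coflabby, since coflabbiness only involves subgroups. For $(E_{(p)})$ and $(E_p)$, $P\otimes\bZ_{(p)}$ and $P\otimes\bZ_p$ are permutation $\bZ_{(p)}[G]$- and $\bZ_p[G]$-lattices, and $H^1(H,C\otimes R)=H^1(H,C)\otimes R=0$ by flatness.

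\textbf{Step 4: permutation orders.} Now I apply the analogue of Proposition \ref{prop6.5} (3) over $R=\bZ_{(p)}$ or $\bZ_p$, which says ${\rm p}$-${\rm ord}={\rm ord}$ of a coflabby resolution. Its proof uses only three ingredients. First, Lemma \ref{lem5.4}, which holds verbatim for $R$-free modules. Second, the vanishing $\Ext^1_{R[G]}(Q,C)=H^1(G,{\rm Hom}_R(Q,C))=0$ for $Q$ permutation and $C$ coflabby, which follows from Shapiro's lemma. Third, the pull-back argument, which is formal. This yields ${\rm p}$-${\rm ord}(M_{(p)})={\rm ord}(E_{(p)})$, ${\rm p}$-${\rm ord}(M_p)={\rm ord}(E_p)$ and ${\rm p}$-${\rm ord}(M|_{G_p})={\rm ord}(E|_{G_p})$. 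Combining with Steps 1 and 2, all six quantities equal $e_p$, which is the $p$-part of ${\rm p}$-${\rm ord}(M)$ by Proposition \ref{prop6.5} (3) and (5).

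One subtlety is that over $R$ the permutation order is only meaningful up to units, so ${\rm p}$-${\rm ord}(M_{(p)})$ should be read as the smallest positive integer, equivalently the largest $p$-power dividing it. Any integer prime to $p$ becomes a unit, so ${\rm p}$-${\rm ord}$ over $R$ is automatically a power of $p$.

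The main obstacle is Step 4, transporting Lemma \ref{lem6.3} / Shapiro-type vanishing and Lemma \ref{lem5.4} to $R[G]$-lattices. I would handle it by rerunning those proofs with $\bZ$ replaced by $R$; nothing beyond freeness and flatness is used.
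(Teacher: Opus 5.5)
Your proposal is correct and takes essentially the paper's approach. You identify each permutation order with the order of the corresponding coflabby-resolution class by rerunning the argument of Proposition \ref{prop6.5} (3), and you compare the Ext classes via flat base change and via injectivity of restriction to $G_p$ on $p$-primary parts. The one difference is organizational: you tie every quantity directly to the $p$-part $e_p$ of ${\rm ord}(E)$, so the final ``$p$-part of ${\rm p}$-${\rm ord}(M)$'' clause is proved explicitly rather than left to Proposition \ref{prop6.5} (5).
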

\begin{proof}
For ${\rm p}$-${\rm ord}(M|_{G_p})={\rm p}$-${\rm ord}(M_{(p)})
={\rm ord}(E|_{G_p})={\rm ord}(E_{(p)})$, 
by a similar argument of the proof of Proposition \ref{prop6.5} (3), 
we have 
${\rm p}$-${\rm ord}(M|_{G_p})={\rm ord}(E|_{G_p})$ and 
${\rm p}$-${\rm ord}(M_{(p)})={\rm ord}(E_{(p)})$. 
Because ${\rm ord}(E_{(p)})$ is the order as an element 
$(E_{(p)}) \in \Ext^1_{\bZ_{(p)}[G]}(M|_{(p)},C|_{(p)})\simeq 
H^1(G,{\rm Hom}_{\bZ_{(p)}}(M_{(p)},C_{(p)}))$ and 
${\rm ord}(E|_{G_p})$ is the order as an element 
$(E|_{G_p}) \in \Ext^1_{\bZ[{G_p}]}(M|_{G_p},C|_{G_p})\simeq 
H^1(G_p,{\rm Hom}_{\bZ}(M|_{G_p},C|_{G_p}))$, 
${\rm ord}(E_{(p)})={\rm ord}(E|_{G_p})$ follows from 
$(E_{(p)})=(E|_{G_p})\in H^1(G,{\rm Hom}_{\bZ_{(p)}}(M_{(p)},C_{(p)}))\simeq 
H^1(G,{\rm Hom}_{\bZ_{(p)}}(M_{(p)},C_{(p)}))|_{G_p}\hookrightarrow
H^1(G_p,{\rm Hom}_{\bZ_{(p)}}(M_{(p)}|_{G_p},C_{(p)}|_{G_p}))\simeq 
H^1(G_p,{\rm Hom}_{\bZ}(M|_{G_p},C|_{G_p}))$ 
which (the last one) is obtained as the $p$-part of the Smith normal form of 
\begin{align*}
\left[
\begin{array}{cccc}
[a_{i,j}(g_1)]-I_n\ \mid\ \cdots\ \mid\ [a_{i,j}(g_r)]-I_n
\end{array}
\right]
\end{align*}
where $G_p=\langle g_1,\ldots,g_r\rangle$, 
$u_i^\sigma=\sum_{j=1}^n a_{i,j}(\sigma) u_j$ $(1\leq i\leq n, \sigma\in G_p)$ 
with $\bZ$-basis $\{u_1,\ldots,u_n\}$ of $M$ 
and $I_n$ is the $n\times n$ identity matrix 
via invariant factor theory (see Hoshi and Yamasaki 
\cite[Section 5.0, Algorithm F0]{HY17}). 

For ${\rm p}$-${\rm ord}(M|_{G_p})={\rm p}$-${\rm ord}(M_p)
={\rm ord}(E|_{G_p})={\rm ord}(E_p)$,  
we get the assertion by the similar manner. 
\end{proof}

\begin{definition}[{Scavia \cite[Definition 2.1]{Sca20}}]
Let $G$ be a finite group and $M$ be a $G$-lattice. 
Let $p$ be a prime number and 
$M_{(p)}=M\otimes \bZ_{(p)}$ be a $\bZ_{(p)}[G]$-lattice. 
A $G$-lattice $M$ is called $p$-invertible if there exists permutation 
$G$-lattice $P$ such that $M_{(p)}$ is a direct summand of 
$P_{(p)}=P\otimes \bZ_{(p)}$, i.e. 
there exists $\bZ_{(p)}[G]$-lattice $M^\prime$ such that 
$M_{(p)}\oplus M^\prime\simeq P_{(p)}$. 
\end{definition}
Scavia \cite[Lemma 2.5]{Sca20} showed that 
$M$ is $p$-invertible if and only if $M|_{G_p}$ is invertible. 
The concept of $p$-invertiblity of a $G$-lattice $M$ can be well-understood 
via the permutation order {${\rm p}$-${\rm ord}(M)$} of $M$ 
(note that $M$ is invertible if and only if ${\rm p}$-${\rm ord}(M)=1$ 
by Proposition \ref{prop6.5} (6)):
\begin{proposition}[{see also Scavia \cite[Definition 2.1]{Sca20}}]
Let $G$ be a finite group and $M$ be a $G$-lattice. 
Let $p$ be a prime number and 
$G_p={\rm Syl}_p(G)$ be a $p$-Sylow subgroup of $G$. 
Let $M|_{G_p}$ be a $G_p$-lattice 
obtained by restricting the action of $G$ on $M$ to $G_p$. 
The following conditions are equivalent:\\
{\rm (i)} $M$ is $p$-invertible;\\
{\rm (ii)} $\gcd(p,{\rm p}$-${\rm ord}(M))=1$;\\
{\rm (iii)} $p$ ${\not |}$ ${\rm p}$-${\rm ord}(M)$;\\
{\rm (iv)} ${\rm p}$-${\rm ord}(M|_{G_p})=1$. 
\end{proposition}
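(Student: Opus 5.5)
The plan is to reduce everything to Proposition \ref{prop6.6} and Proposition \ref{prop6.5}, and to use Scavia's characterisation \cite[Lemma 2.5]{Sca20} for condition (i). Fix a coflabby resolution $(E): 0\to C\to P\to M\to 0$ with $P$ permutation and $C$ coflabby; such a resolution exists by Endo and Miyata \cite[Lemma 1.2]{EM75}.

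The equivalence (ii)$\Leftrightarrow$(iii) needs no argument, since $p$ is prime. For (iii)$\Leftrightarrow$(iv), Proposition \ref{prop6.5} (5) writes ${\rm p}$-${\rm ord}(M)=\prod_{q\mid |G|}{\rm p}$-${\rm ord}(M|_{G_q})$, where each factor divides $|G_q|$ and is therefore a power of $q$. Hence the $p$-part of ${\rm p}$-${\rm ord}(M)$ is exactly ${\rm p}$-${\rm ord}(M|_{G_p})$; this is also stated in Proposition \ref{prop6.6}. (If $p\nmid |G|$, then $G_p$ is trivial, every $G_p$-lattice is permutation, and both sides are trivially true.) So $p\nmid{\rm p}$-${\rm ord}(M)$ if and only if ${\rm p}$-${\rm ord}(M|_{G_p})=1$.

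For (i)$\Leftrightarrow$(iv), Proposition \ref{prop6.5} (6) applied to the group $G_p$ shows that (iv) is equivalent to $M|_{G_p}$ being invertible. Scavia's lemma then says this is equivalent to $M$ being $p$-invertible. To avoid relying on the citation, I would instead argue directly with $M_{(p)}$, using ${\rm p}$-${\rm ord}(M_{(p)})={\rm ord}(E_{(p)})$ from Proposition \ref{prop6.6}.

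If ${\rm ord}(E_{(p)})=1$, then $(E_{(p)})$ splits, so $M_{(p)}$ is a direct summand of $P_{(p)}$, which is (i).

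Conversely, assume (i), so that $M_{(p)}\oplus M'\simeq Q_{(p)}$ for some permutation $G$-lattice $Q$. This gives $\bZ_{(p)}[G]$-maps $M_{(p)}\to Q_{(p)}\to M_{(p)}$ whose composite is the identity. Clearing denominators, there is an integer $a$ prime to $p$ together with $\bZ[G]$-maps $M\to Q\to M$ whose composite is $a\,\Id_M$. Here one checks that $\Hom_{\bZ[G]}(M,Q)\otimes\bZ_{(p)}=\Hom_{\bZ_{(p)}[G]}(M_{(p)},Q_{(p)})$, which holds because $\bZ_{(p)}$ is flat over $\bZ$ and these are finitely generated lattices. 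Proposition \ref{prop6.5} (3) now gives ${\rm p}$-${\rm ord}(M)\mid a$, which is prime to $p$, so (ii) holds.

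The main obstacle is precisely this denominator-clearing step. A naive clearing of denominators produces maps $u: M\to Q$ and $v: Q\to M$ with $v\circ u=b\,\Id_M$, where $b$ is a product of the two denominators; each denominator is prime to $p$, so $b$ is prime to $p$. The delicate point is that the scaled maps stay $G$-equivariant over $\bZ$. This holds because $G$-equivariance is a linear condition over $\bZ$, so it is preserved when an equivariant $\bZ_{(p)}$-map is multiplied by an integer to land in the integral lattice.
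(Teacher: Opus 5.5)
Your proof is correct and follows essentially the paper's route: the paper derives all the equivalences from Proposition~\ref{prop6.5} (3), (6) and Proposition~\ref{prop6.6}, namely that the $p$-part of ${\rm p}$-${\rm ord}(M)$ equals ${\rm p}$-${\rm ord}(M|_{G_p})={\rm ord}(E_{(p)})$. The one variation is in (i)$\Rightarrow$(ii): you clear denominators to obtain an integral factorisation of $a\,\Id_M$ through a permutation lattice with $p\nmid a$, and then apply Proposition~\ref{prop6.5} (3) over $\bZ$; this makes explicit a step the paper leaves implicit, namely the $\bZ_{(p)}$-analogue of Proposition~\ref{prop6.5} (3), (6) identifying $p$-invertibility with ${\rm ord}(E_{(p)})=1$.
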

\begin{proof}
The assertion follows from 
Proposition \ref{prop6.5} (3), (6) and Proposition \ref{prop6.6}.
\end{proof}
\begin{definition}[{Merkurjev \cite[Section 2]{Mer20}}]
Let $k$ be a field. 
An integral $k$-variety $X$ is called {\it $p$-retract $k$-rational} if 
there exists a rational dominant morphism 
$f:\mathbb{P}^n \dashrightarrow X$ 
for some $n$ such that for every nonempty open subset 
$U \subset \mathbb{P}^n$ in the domain of definition of $f$, 
there is a morphism 
$g:Y \to \mathbb{P}^n$ such that $\operatorname{Im}(g) \subset U$ and 
the composition $f\circ g:Y \to X$ is dominant of finite degree prime to $p$: 
\begin{align*}
\xymatrix{
{ } 
&{Y} \ar[dl]_g \ar[d]^{\mathrm{of \ degree \ prime \ to}\ p } \\
{\mathbb{P}^n} \ar@{-->}[r]^f
&{X}.
}
\end{align*}
\end{definition}
The following result is due to Scavia \cite{Sca20} 
(see also Proposition \ref{prop6.5} (5), (6)): 
\begin{theorem}[{Scavia \cite[Proposition 3.1, Theorem 1.1]{Sca20}}]
Let $T$ be an algebraic $k$-torus with the minimal splitting field $L$ 
and the $G$-lattice $\widehat{T}\simeq {\rm Hom}(T\times_k L, \bG_{m,L})$ 
where $G={\rm Gal}(L/k)$. 
Then 
$T$ is $p$-retract $k$-rational if and only if 
$[\widehat{T}]^{fl}$ is $p$-invertible. 
In particular, 
$T$ is retract $k$-rational if and only if $T$ is $p$-retract $k$-rational 
for any prime number $p$ if and only if $T$ is $p$-retract $k$-rational 
for any prime divisor $p\mid |G|$. 
\end{theorem}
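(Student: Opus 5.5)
The statement is the theorem attributed to Scavia: $T$ is $p$-retract $k$-rational if and only if $[\widehat{T}]^{fl}$ is $p$-invertible. The plan is to prove first the ``in particular'' part, which reduces to results already established in this section, and to treat the main equivalence as Scavia's input.

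\emph{Reduction of the ``in particular'' part.} Fix a flabby resolution $0\to\widehat{T}\to P\to F\to 0$, so that $[\widehat{T}]^{fl}=[F]$. By Theorem \ref{th2.7}(3), $T$ is retract $k$-rational exactly when $F$ is invertible (invertibility is a similarity invariant, since adding permutation summands preserves it). By Proposition \ref{prop6.5}(6), $F$ is invertible if and only if ${\rm p}$-${\rm ord}(F)=1$. This holds if and only if ${\rm p}$-${\rm ord}(F|_{G_p})=1$ for every prime $p$, and equally if and only if this holds for every prime $p\mid |G|$.

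By the preceding proposition, ${\rm p}$-${\rm ord}(F|_{G_p})=1$ is equivalent to $F$ being $p$-invertible. By the main equivalence, that is equivalent to $T$ being $p$-retract $k$-rational. For $p\nmid|G|$ the group $G_p$ is trivial, so $F|_{G_p}$ is a trivial lattice, hence permutation. The condition is therefore automatic at such primes, which accounts for the final equivalence. One small check is needed: $p$-invertibility depends only on the similarity class. This holds because adding a permutation summand does not change ${\rm p}$-${\rm ord}$ restricted to $G_p$; a factorization through a permutation lattice extends by the identity on the added summand.

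\emph{The main equivalence.} I would follow Scavia and Merkurjev. For ``$\Leftarrow$'': from $p$-invertibility, ${\rm p}$-${\rm ord}(F)$ is prime to $p$. Say $a\,\Id_F$ factors through a permutation lattice $Q$ with $\gcd(a,p)=1$. Geometrically, as in Proposition \ref{prop2.8}(2), this gives a torus $S$ with $\widehat S\simeq F$ such that $S\times T$ is rational up to the degree-$a$ isogeny induced by the factorization. Composing rational parametrizations with multiplication-by-$a$ type maps then yields a degree prime to $p$ dominant map, as required by Merkurjev's definition.

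For ``$\Rightarrow$'': I would use that $p$-retract rationality is detected after a prime-to-$p$ extension. Passing to the fixed field $k'=L^{G_p}$, which has degree prime to $p$, one expects $T_{k'}$ to become retract rational up to a prime-to-$p$ correspondence. One would then argue that its flabby class is invertible, i.e.\ ${\rm p}$-${\rm ord}(F|_{G_p})=1$.

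\emph{Main obstacle.} The hardest step is this geometric converse: showing that a prime-to-$p$ degree dominant correspondence forces invertibility of $F|_{G_p}$. I would try a transfer argument on unramified/Picard data. A degree-$a$ map with $p\nmid a$ gives ${\rm cores}\circ{\rm res}=a$ on $H^1(H,{\rm Pic})$ for all $H\le G_p$, which kills the $p$-primary obstructions. I would then combine this with the local characterization of invertibility over $p$-groups. In the paper itself I would simply cite Scavia's Proposition 3.1 and Theorem 1.1 for this step.
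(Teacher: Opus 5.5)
Your proposal agrees with the paper, which gives no proof beyond citing Scavia. The main equivalence is quoted from Scavia, and the ``in particular'' part is exactly your chain through Theorem \ref{th2.7}(3), Proposition \ref{prop6.5}(5),(6) and the preceding characterisation of $p$-invertibility; your check that $p$-invertibility depends only on the similarity class is a useful addition.

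One caution about your optional sketch of ``$\Rightarrow$''. A transfer argument on $H^1(H,{\rm Pic}\,X_L)\simeq H^1(H,F)$ for $H\leq G_p$ can at most show that $F|_{G_p}$ is coflabby. Flabby plus coflabby does not imply invertible for general $p$-groups; it does for special ones, e.g.\ cyclic $G_p$ as in Remark \ref{rem15}. So that route cannot replace the citation.
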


\subsection{Tensor products $T_1\otimes T_2$ of algebraic $k$-tori $T_1$ and $T_2$}
\begin{lemma}\label{lem6.5}
Let $P$ be a permutation $G$-lattice.\\
{\rm (1)} If $P^\prime$ is a permutation $G$-lattice, 
then $P\otimes P^\prime$ is a permutation $G$-lattice;\\
{\rm (2)} If $S$ is a stably permutation $G$-lattice, 
then $S\otimes P$ is a stably permutation $G$-lattice;\\
{\rm (3)} If $S$, $S^\prime$ are stably permutation $G$-lattices, 
then $S\otimes S^\prime$ is a stably permutation $G$-lattice;\\
{\rm (4)} If $I$ is an invertible $G$-lattice, 
then $I\otimes P$ is an invertible $G$-lattice;\\
{\rm (5)} If $I$, $J$ are invertible $G$-lattices, 
then $I\otimes J$ is an invertible $G$-lattice;\\
{\rm (6)} If $C$ is a coflabby $G$-lattice, 
then $C\otimes P$ is a coflabby $G$-lattice;\\
{\rm (7)} If $F$ is a flabby $G$-lattice, 
then $F\otimes P$ is a flabby $G$-lattice. 
\end{lemma}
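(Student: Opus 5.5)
The plan is to reduce everything to the basic case (1) and to the additivity of $\otimes$ over $\oplus$, using throughout that $-\otimes P$ and $-\otimes \bZ[X]$ are additive functors on $G$-lattices.

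For (1), write $P\simeq\bZ[X]$ and $P^\prime\simeq\bZ[Y]$ for finite $G$-sets $X,Y$. The isomorphism $\varphi:\bZ[X]\otimes\bZ[Y]\xrightarrow{\sim}\bZ[X\times Y]$ in the Remark after Definition \ref{def8.1} is $G$-equivariant, because $g([x]\otimes[y])=[gx]\otimes[gy]\mapsto[(gx,gy)]=g[(x,y)]$. Hence $P\otimes P^\prime$ is permutation.

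For (2) and (3), take $S\oplus Q\simeq Q^\prime$ with $Q,Q^\prime$ permutation. Tensoring with $P$ gives $(S\otimes P)\oplus(Q\otimes P)\simeq Q^\prime\otimes P$, and both $Q\otimes P$ and $Q^\prime\otimes P$ are permutation by (1); this proves (2). For (3), take also $S^\prime\oplus R\simeq R^\prime$ and tensor the two relations:
\[
(S\otimes S^\prime)\oplus(S\otimes R)\oplus(Q\otimes S^\prime)\oplus(Q\otimes R)\simeq Q^\prime\otimes R^\prime .
\]
By (2) the summands $S\otimes R$ and $Q\otimes S^\prime$ are stably permutation, so adding suitable permutation lattices to both sides shows that $S\otimes S^\prime$ is stably permutation. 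Items (4) and (5) work the same way with direct summands: if $I\oplus I^\prime\simeq Q$ and $J\oplus J^\prime\simeq R$, then $I\otimes P$ is a summand of $Q\otimes P$, and $I\otimes J$ is a summand of $Q\otimes R$, both permutation by (1).

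For (6) and (7), the reduction is to the case $P=\bZ[G/H]$, since cohomology commutes with finite direct sums. The key observation is $M\otimes\bZ[G/H]\simeq\mathrm{Ind}_H^G(M|_H)$, realised by $m\otimes[gH]\mapsto g\otimes g^{-1}m$. Shapiro's lemma then gives, for any subgroup $K\leq G$ together with the Mackey decomposition,
\[
\widehat H^i(K,\mathrm{Ind}_H^G M)\simeq\bigoplus_{KgH}\widehat H^i\bigl(K\cap gHg^{-1},\,{}^gM\bigr)\simeq\bigoplus_{KgH}\widehat H^i\bigl(g^{-1}Kg\cap H,\,M\bigr),
\]
where the decomposition is taken over a set of double coset representatives. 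The case $i=1$ vanishes because $C$ is coflabby, which gives (6); the case $i=-1$ vanishes because $F$ is flabby, which gives (7).

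The main obstacle is checking that the Mackey–Shapiro step holds for Tate cohomology in degree $-1$. I would handle this by noting that $\mathrm{Ind}=\mathrm{CoInd}$ for finite groups, and that Shapiro's lemma holds for Tate cohomology in all degrees.
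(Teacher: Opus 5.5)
Your proposal is correct and follows the paper's proof essentially step by step: (1) via $\bZ[X]\otimes\bZ[Y]\simeq\bZ[X\times Y]$, (2)--(5) by distributing $\otimes$ over $\oplus$, and (6)--(7) by Shapiro's lemma. The differences are minor. In (6)--(7) the paper first restricts $P$ to the subgroup and decomposes it there into transitive permutation lattices, which makes your Mackey step implicit. In (3) you correctly treat $S\otimes R$ and $Q\otimes S^\prime$ as only stably permutation and pad with permutation lattices, which is slightly more careful than the paper's wording, since the paper calls them permutation.
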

\begin{proof}
(1) If $P\simeq\bZ[X]$, $P^\prime\simeq\bZ[Y]$ are permutation, 
then there exists an isomorphism 
$\varphi: \bZ[X] \otimes \bZ[Y] \xrightarrow{\sim} \bZ[X\times Y]$, 
$(\sum_{x\in X}a_x[x])\otimes(\sum_{y\in Y}b_y[y])\mapsto 
\sum_{x\in X}\sum_{y\in Y}a_xb_y[(x,y)]$. 
Hence $P\otimes P^\prime\simeq\bZ[X\times Y]$ is permutation.\\
(2) If $S$ is stably permutation, then 
there exists a permutation $G$-lattice $Q$ 
such that $S\oplus Q$ is permutation. 
By (1), $(S\oplus Q)\otimes P\simeq (S\otimes P)\oplus(Q\otimes P)$ 
and $Q\otimes P$ are permutation. 
Hence $S\otimes P$ is stably permutation.\\
(3) If $S$, $S^\prime$ are stably permutation, then 
there exist permutation $G$-lattices $Q$, $Q^\prime$ 
such that $S\oplus Q$, $S^\prime\oplus Q^\prime$ are permutation. 
By (1), $(S\oplus Q)\otimes P$, $(S^\prime\oplus Q^\prime)\otimes P$ 
are permutation. 
We have 
$(S\oplus Q)\otimes (S^\prime\oplus Q^\prime)\simeq 
(S\otimes S^\prime)\oplus(S\otimes Q^\prime)\oplus
(Q\otimes S^\prime)\oplus(Q\otimes Q^\prime)$. 
By (1), (2), we find that 
$(S\oplus Q)\otimes (S^\prime\oplus Q^\prime)$, 
$S\otimes Q^\prime$, $Q\otimes S^\prime$, $Q\otimes Q^\prime$ 
are permutation. 
Hence $S\otimes S^\prime$ is stably permutation.\\
(4) If $I$ is invertible, then there exists $G$-lattice $I^\prime$ 
such that $I\oplus I^\prime$ is permutation. 
By (1), $(I\oplus I^\prime)\otimes P$ is permutation. 
Hence $I\otimes P$ is invertible because 
$(I\oplus I^\prime)\otimes P\simeq (I\otimes P)\oplus(I^\prime\otimes P)$.\\ 
(5) 
If $I$ is invertible, then there exists $G$-lattice $I^\prime$ 
such that $I\oplus I^\prime$ is permutation. 
By (4), $(I\oplus I^\prime)\otimes J$ is invertible. 
Hence $I\otimes J$ is invertible because 
$(I\oplus I^\prime)\otimes J\simeq (I\otimes J)\oplus(I^\prime\otimes J)$. 
Alternatively, 
if $I$, $J$ are invertible, then there exist $G$-lattice $I^\prime$, 
$J^\prime$ such that $I\oplus I^\prime\simeq P$, 
$J\oplus J^\prime\simeq Q$ are permutation. 
Then $P\otimes Q\simeq (I\otimes J)\oplus (I\otimes J^\prime)\oplus 
(I^\prime\otimes J)\oplus (I^\prime\otimes J^\prime)$. 
By (1), $P\otimes Q$ is permutation. 
Hence $I\otimes J$ is invertible.\\
(6) 
Let $H\leq G$ be a subgroup. 
We need to show that $H^1(H,C\otimes P)=0$. 
Restricted to an $H$-lattice, 
$P=\oplus_{i=1}^r\bZ[H/H_i]$ for subgroups $H_i\leq H$. 
By Shapiro's lemma (see Brown \cite[Proposition 6.2, page 73]{Bro82}, 
Neukirch, Schmidt and Wingberg \cite[Proposition 1.6.3]{NSW00}), 
we have $H^1(H,C\otimes P)=H^1(H,C\otimes (\oplus_{i=1}^r\bZ[H/H_i]))
=H^1(H,\oplus_{i=1}^r(C\otimes\bZ[H/H_i]))
=\oplus_{i=1}^r H^1(H,C\otimes\bZ[H/H_i])
=\oplus_{i=1}^r H^1(H,C[H/H_i])=\oplus_{i=1}^rH^1(H_i,C)=0$ 
because $C$ is coflabby.\\
(7) The proof is the same as for (6), 
using $H^{-1}$ instead of $H^1$.
\end{proof}
%
\begin{theorem}\label{th6.7}
Let $M_1$ and $M_2$ be $G$-lattices with 
$d_1={\rm p}$-${\rm ord}(M_1)$ and $d_2={\rm p}$-${\rm ord}(M_2)$. 
We take flabby resolutions of $M_1$ and $M_2$ respectively:  
\begin{align*}
&(E_1): 0 \to M_1\xrightarrow{\iota_1}  P_1 \xrightarrow{\pi_1} F_1 \to 0,\\
&(E_2): 0 \to M_2\xrightarrow{\iota_2}  P_2 \xrightarrow{\pi_2} F_2 \to 0
\end{align*}
with $F_i$ flabby and $P_i$ permutation $(i=1,2)$. 
Then there exists an exact sequence of $G$-lattices 
\begin{align*}
(E_{1,2}^{(1)}): 0 \to M_1 \otimes M_2 \xrightarrow{\iota} P_1 \otimes P_2 \xrightarrow{f} (F_1 \otimes P_2) \oplus (P_1 \otimes F_2) \xrightarrow{\pi} F_1 \otimes F_2 \to 0
\end{align*}
where 
\begin{align*}
\iota(m_1\otimes m_2)&=\iota_1(m_1)\otimes \iota_2(m_2),\\
f(p_1\otimes p_2)&=(\pi_1(p_1)\otimes p_2,p_1\otimes\pi_2(p_2)),\\
\pi(f_1\otimes p_2,p_1\otimes f_2)&=f_1\otimes\pi_2(p_2)-\pi_1(p_1)\otimes f_2. 
\end{align*}
Define
\[
F:={\rm Image}(f)={\rm Ker}(\pi)\leq (F_1 \otimes P_2) \oplus (P_1 \otimes F_2).
\]
{\rm (1)} 
If $\gcd(d_1,d_2)=1$, then 
the exact sequence 
\begin{align*}
(E_{1,2}^{(1R)}): 0\to F\xrightarrow{f|_F} (F_1 \otimes P_2) \oplus (P_1 \otimes F_2) \xrightarrow{\pi} F_1 \otimes F_2 \to 0
\end{align*}
splits, i.e. $F\oplus(F_1 \otimes F_2)\simeq (F_1 \otimes P_2) \oplus (P_1 \otimes F_2)$. 
Hence the $G$-lattices $F$ and $F_1 \otimes F_2$ are flabby 
and the exact sequence 
\begin{align*}
(E_{1,2}^{(1L)}): 0 \to M_1 \otimes M_2 \xrightarrow{\iota} 
P_1\otimes P_2  \xrightarrow{f} F \to 0
\end{align*} 
gives a flabby resolution of $M_1 \otimes M_2$ 
with $F$ flabby and $P_1\otimes P_2$ permutation, i.e. 
$[M_1\otimes M_2]^{fl}=F$. 
In particular, ${\rm p}$-${\rm ord}(M_1\otimes M_2)
={\rm ord}(E_{1,2}^{(1L)})\mid d_1d_2$.\\
{\rm (2)} If $\gcd(d_1,d_2)=1$ and 
$F_1$ and $F_2$ are stably permutation, 
then $F$ is stably permutation, i.e. $[F]=0$.\\
{\rm (3)} If $\gcd(d_1,d_2)=1$ and 
$F_1$ and $F_2$ are invertible, 
then $F$ is invertible.
\end{theorem}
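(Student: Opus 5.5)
The plan is to apply Proposition \ref{prop5.6} to the two flabby resolutions, with $A_i=M_i$, $B_i=P_i$, $C_i=F_i$. This immediately gives the exact sequence $(E_{1,2}^{(1)})$ with the stated maps. The real content is then to identify the orders $e_i:={\rm ord}(E_i)$ with the permutation orders $d_i$, so that the hypothesis $\gcd(d_1,d_2)=1$ becomes the hypothesis $\gcd(e_1,e_2)=1$ needed in Proposition \ref{prop5.6}(1).

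\textbf{Step 1: $e_i=d_i$.} This is a dual analogue of Proposition \ref{prop6.5}(3), and I expect it to be the main point requiring care.
\begin{itemize}
\item[(a)] By Lemma \ref{lem5.4}(3), $e_i$ is the least positive integer for which there is $t_i\in{\rm Hom}_{\bZ[G]}(P_i,M_i)$ with $t_i\circ\iota_i=e_i\,\Id_{M_i}$. Thus $e_i\,\Id_{M_i}$ factors through the permutation lattice $P_i$, and Proposition \ref{prop6.5}(3) gives $d_i\mid e_i$.
\item[(b)] Conversely, suppose $d_i\,\Id_{M_i}=g\circ h$ with $h\colon M_i\to Q$, $g\colon Q\to M_i$ and $Q$ permutation. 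I will extend $h$ along $\iota_i$. The obstruction lies in $\Ext^1_{\bZ[G]}(F_i,Q)\simeq H^1(G,F_i^\circ\otimes Q)$.
\item[(c)] Since $F_i$ is flabby, its dual $F_i^\circ$ is coflabby. By Lemma \ref{lem6.5}(6), $F_i^\circ\otimes Q$ is coflabby, so this $H^1$ vanishes.
\item[(d)] Hence $h=h'\circ\iota_i$ for some $h'\colon P_i\to Q$. Then $t:=g\circ h'$ satisfies $t\circ\iota_i=d_i\,\Id_{M_i}$, and the minimality in Lemma \ref{lem5.4} gives $e_i\le d_i$.
\end{itemize}
Together these give $e_i=d_i$.

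\textbf{Step 2: part (1).} With $\gcd(e_1,e_2)=1$, Proposition \ref{prop5.6}(1) shows that $(E_{1,2}^{(1R)})$ splits, so
\[
F\oplus(F_1\otimes F_2)\simeq(F_1\otimes P_2)\oplus(P_1\otimes F_2).
\]
The right-hand side is flabby by Lemma \ref{lem6.5}(7), and $\widehat H^{-1}$ is additive, so both summands $F$ and $F_1\otimes F_2$ are flabby. Also $P_1\otimes P_2$ is permutation by Lemma \ref{lem6.5}(1). Therefore $(E_{1,2}^{(1L)})$ is a flabby resolution of $M_1\otimes M_2$, and $[M_1\otimes M_2]^{fl}=[F]$.

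For the orders:
\begin{itemize}
\item The retraction $t_1\otimes t_2$ satisfies $(t_1\otimes t_2)\circ\iota=d_1d_2\,\Id$. This is the argument in Proposition \ref{prop5.6}(2), and it gives ${\rm ord}(E_{1,2}^{(1L)})\mid d_1d_2$.
\item The argument of Step 1 applies verbatim to any flabby resolution. It therefore gives ${\rm ord}(E_{1,2}^{(1L)})={\rm p}$-${\rm ord}(M_1\otimes M_2)$.
\end{itemize}

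\textbf{Step 3: parts (2) and (3).} Both are read off from the isomorphism in Step 2.

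For (2), if $F_1,F_2$ are stably permutation, then:
\begin{itemize}
\item $F_1\otimes P_2$ and $P_1\otimes F_2$ are stably permutation by Lemma \ref{lem6.5}(2);
\item $F_1\otimes F_2$ is stably permutation by Lemma \ref{lem6.5}(3).
\end{itemize}
Choose a permutation $Q$ with $(F_1\otimes F_2)\oplus Q$ permutation, and add it to both sides of the isomorphism. Then $F\oplus(\text{permutation})$ is stably permutation, so $F$ is stably permutation.

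For (3), $F$ is a direct summand of $(F_1\otimes P_2)\oplus(P_1\otimes F_2)$. This lattice is invertible by Lemma \ref{lem6.5}(4), and a direct summand of an invertible lattice is invertible, so $F$ is invertible.
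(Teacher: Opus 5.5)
Your proof is correct and follows the paper's route: you apply Proposition \ref{prop5.6} to the two flabby resolutions, then read off (2) and (3) from the resulting splitting via Lemma \ref{lem6.5}. The only variation is how you get ${\rm ord}(E_i)=d_i$: you prove it directly by extending $h$ along $\iota_i$, using $\Ext^1_{\bZ[G]}(F_i,Q)\simeq H^1(G,F_i^\circ\otimes Q)=0$, whereas the paper dualizes $(E_i)$ into a coflabby resolution of $M_i^\circ$ and invokes Lemma \ref{lem6.2} with Proposition \ref{prop6.5}(3), and the two arguments are dual to each other.
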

\begin{proof}
(1) It follows from Lemma \ref{lem6.2} an Proposition \ref{prop6.5} that 
$d_1={\rm ord}(E_1)$ and $d_2={\rm ord}(E_2)$. 
Then the assertion follows from Proposition \ref{prop5.6} 
with the help of Lemma \ref{lem6.5} because for $G$-lattices $M$, $N$, 
we have $\widehat{H}^{-1}(H,M\oplus N)\simeq \widehat{H}^{-1}(H,M)\oplus \widehat{H}^{-1}(H,N)$ for any $H\leq G$. 
In particular, by (the dual version of) 
Proposition \ref{prop6.5} (3), we see that 
{${\rm p}$-${\rm ord}$}$(M_1\otimes M_2)
={\rm ord}(E_{1,2}^{(1L)})\mid d_1d_2$.\\ 
(2) By Lemma \ref{lem6.5} (3), 
$F_1 \otimes F_2$ is stably permutation. 
By (1), we have  
$F\oplus(F_1 \otimes F_2)\simeq (F_1 \otimes P_2) \oplus (P_1 \otimes F_2)$. 
Use Lemma \ref{lem6.5} again. 
Then we find that $F$ is stably permutation.\\
(3) By (1) and Lemma \ref{lem6.5}, we see 
$F\oplus(F_1 \otimes F_2)\simeq (F_1 \otimes P_2) \oplus (P_1 \otimes F_2)$ 
is invertible.
\end{proof}
\begin{remark}
(1) 
In general, for $G$-lattice $M_i$ $(i=1,2)$ with 
$d_i={\rm p}$-${\rm ord}(M_i)$, we see that 
{${\rm p}$-${\rm ord}$}$(M_1\otimes M_2)\mid d_1d_2$ 
without the assumption $\gcd(d_1,d_2)=1$ because 
if $a_i\,{\rm id}_{M_i}: M_i\xrightarrow{f_i} P_i\xrightarrow{g_i} M_i$ 
$(i=1,2)$, then 
$a_1a_2{\rm id}_{M_1\otimes M_2}
=(a_1{\rm id}_{M_1})\otimes (a_2{\rm id}_{M_2}): 
M_1\otimes M_2\xrightarrow{f_1\otimes f_2} P_1\otimes P_2\xrightarrow{g_1\otimes g_2} M_1\otimes M_2$ and 
$(g_1\otimes g_2)\circ (f_1\otimes f_2)=(g_1\circ f_1)\otimes (g_2\circ f_2)$.\\
(2) 
We will see that 
there exist stably $k$-rational algebraic $k$-tori $T_1$, $T_2$ 
such that $T_1\otimes T_2$ is not retract $k$-rational when 
$\gcd(d_1,d_2)\neq 1$ (see Example \ref{ex7.7}). 
This gives an example which 
$(E_{1,2}^{(1R)})$ does not split and 
$[F]$ is not the flabby class of $M_1\otimes M_2$, 
i.e. the assumption $\gcd(d_1,d_2)=1$ 
of Theorem \ref{th6.7} is necessary. 
\end{remark}

We also get the dual version of Theorem \ref{th6.7} by the same manner. 
\begin{theorem}\label{th6.6}
Let $M_1$ and $M_2$ be $G$-lattices with 
$d_1={\rm p}$-${\rm ord}(M_1)$ and $d_2={\rm p}$-${\rm ord}(M_2)$. 
We take coflabby resolutions of $M_1$ and $M_2$:  
\begin{align*}
(E_1): 0 \to C_1\xrightarrow{\iota_1}  P_1 \xrightarrow{\pi_1} M_1 \to 0,\\
(E_2): 0 \to C_2\xrightarrow{\iota_2}  P_2 \xrightarrow{\pi_2} M_2 \to 0
\end{align*}
with $C_i$ coflabby and $P_i$ permutation $(i=1,2)$. 
Then there exists an 
exact sequence of $G$-lattices 
\begin{align*}
(E_{1,2}^{(2)}): 0 \to C_1 \otimes C_2 \xrightarrow{\iota} (C_1 \otimes P_2) \oplus (P_1 \otimes C_2) \xrightarrow{f} P_1 \otimes P_2 \xrightarrow{\pi} M_1 \otimes M_2 \to 0
\end{align*}
where 
\begin{align*}
\iota(c_1 \otimes c_2) &= (c_1 \otimes \iota_2(c_2), -\iota_1(c_1) \otimes c_2),\\
f (c_1 \otimes p_2, p_1 \otimes c_2) &= \iota_1(c_1) \otimes p_2 + p_1 \otimes \iota_2(c_2),\\
\pi(p_1,p_2) & =\pi_1(p_1) \otimes \pi_2(p_2).
\end{align*}
Define
\[
C:={\rm Image}(f)={\rm Ker}(\pi)\leq P_1 \otimes P_2.
\]
{\rm (1)} 
If $\gcd(d_1,d_2)=1$, then 
the exact sequence 
\begin{align*}
(E_{1,2}^{(2L)}): 0 \to C_1 \otimes C_2 \xrightarrow{\iota} 
(C_1 \otimes P_2) \oplus (P_1 \otimes C_2)  \xrightarrow{f} C \to 0 
\end{align*}
splits, i.e. $(C_1 \otimes C_2)\oplus C\simeq (C_1 \otimes P_2) \oplus (P_1 \otimes C_2)$. 
Hence the $G$-lattices $C_1 \otimes C_2$ and $C$ are coflabby 
and the exact sequence 
\begin{align*}
(E_{1,2}^{(2R)}): 0 \to C \xrightarrow{f|_C} P_1\otimes P_2 \xrightarrow{\pi} M_1 \otimes M_2 \to 0
\end{align*} 
gives a coflabby resolution of $M_1 \otimes M_2$ 
with $C$ coflabby and $P_1\otimes P_2$ permutation. 
In particular, ${\rm p}$-${\rm ord}(M_1\otimes M_2)
={\rm ord}(E_{1,2}^{(2R)})\mid d_1d_2$.\\
{\rm (2)} If $\gcd(d_1,d_2)=1$ and 
$C_1$ and $C_2$ are stably permutation, 
then $C$ is stably permutation, i.e. $[C]=0$.\\ 
{\rm (3)} If $\gcd(d_1,d_2)=1$ and 
$C_1$ and $C_2$ are invertible, 
then $C$ is invertible. 
\end{theorem}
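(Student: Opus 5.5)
The plan is to dualize Theorem \ref{th6.7}: I will apply Proposition \ref{prop5.5} to the two coflabby resolutions $(E_1)$, $(E_2)$, and check that the resulting lattices have the required coflabby and permutation properties.

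First, Proposition \ref{prop6.5} (3) gives $d_i={\rm p}$-${\rm ord}(M_i)={\rm ord}(E_i)$, because $(E_i)$ is a coflabby resolution with permutation middle term. Proposition \ref{prop5.5}, applied with $A_i=C_i$, $B_i=P_i$, $C_i\mapsto M_i$, then yields the exact sequence $(E_{1,2}^{(2)})$ with exactly the maps in the statement. This settles the existence part.

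For (1), assume $\gcd(d_1,d_2)=1$. By Proposition \ref{prop5.5} (1), $\iota$ admits a retraction $t$, so $(E_{1,2}^{(2L)})$ splits and
\[
(C_1\otimes C_2)\oplus C\simeq (C_1\otimes P_2)\oplus(P_1\otimes C_2).
\]
By Lemma \ref{lem6.5} (6), each summand $C_i\otimes P_j$ is coflabby. Since $H^1(H,-)$ commutes with finite direct sums, both direct summands $C_1\otimes C_2$ and $C$ are coflabby. By Lemma \ref{lem6.5} (1), $P_1\otimes P_2$ is permutation, so $(E_{1,2}^{(2R)})$ is a coflabby resolution of $M_1\otimes M_2$.

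Proposition \ref{prop6.5} (3) then gives ${\rm p}$-${\rm ord}(M_1\otimes M_2)={\rm ord}(E_{1,2}^{(2R)})$. For the divisibility by $d_1d_2$, I will use the $G$-sections $s_i$ with $\pi_i\circ s_i=d_i\,\Id_{M_i}$ from Lemma \ref{lem5.4}. Then $s_1\otimes s_2$ satisfies $\pi\circ(s_1\otimes s_2)=d_1d_2\,\Id$, and Lemma \ref{lem5.3} (3) gives ${\rm ord}\mid d_1d_2$. Alternatively, this follows directly from Proposition \ref{prop5.5} (2).

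For (2), Lemma \ref{lem6.5} (2) and (3) show that $C_1\otimes C_2$ and each $C_i\otimes P_j$ are stably permutation. Adding a suitable permutation lattice $Q$ to both sides of the splitting above gives $C\oplus(\text{permutation})\simeq(\text{permutation})$, so $C$ is stably permutation.

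For (3), Lemma \ref{lem6.5} (4) shows the right-hand side of the splitting is invertible, and $C$ is a direct summand of it, so $C$ is invertible.

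The only step needing care is (2), where the cancellation has to be done correctly. A direct summand of a stably permutation lattice need not be stably permutation, so I cannot argue by summands there. The fix is to use that $C_1\otimes C_2$ is itself stably permutation: choose a permutation $Q$ with $(C_1\otimes C_2)\oplus Q$ permutation, and add $Q$ to both sides of the isomorphism. The left side becomes $C\oplus(\text{permutation})$, and the right side is (stably permutation)$\oplus Q$, which becomes permutation after adding one more permutation lattice. This is the same manipulation as in Theorem \ref{th6.7} (2), so it goes through verbatim.
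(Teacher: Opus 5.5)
Your proposal is correct and follows the route the paper intends: the paper proves Theorem \ref{th6.6} only by saying it is the dual of Theorem \ref{th6.7} obtained ``by the same manner''. That means applying Proposition \ref{prop5.5} to the coflabby resolutions, with ${\rm ord}(E_i)=d_i$ from Proposition \ref{prop6.5}~(3) and the permanence properties of Lemma \ref{lem6.5}, exactly as you do; your handling of the cancellation in (2), adding a permutation complement of $C_1\otimes C_2$ rather than arguing via direct summands, is the same manipulation as in the proof of Theorem \ref{th6.7}~(2).
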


By Theorem \ref{th6.7}, we get one of the main  contribution  of this paper 
(see Section \ref{S3} for examples of stably (resp. retract) $k$-rational algebraic $k$-tori $T$): 
\begin{theorem}\label{th6.15}
Let $T_1$ and $T_2$ be algebraic $k$-tori with 
permutation order 
$d_1=\rm{p}$-${\rm ord}(\widehat{T}_1)$, 
$d_2=\rm{p}$-${\rm ord}(\widehat{T}_2)$. 
If ${\rm gcd}(d_1,d_2)=1$ and 
$T_1$ and $T_2$ are stably $($resp. retract$)$ $k$-rational, 
then $T_1 \otimes T_2$ is stably $($resp. retract$)$ $k$-rational. 
\end{theorem}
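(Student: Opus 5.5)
The plan is to reduce Theorem \ref{th6.15} to Theorem \ref{th6.7} together with the dictionary of Theorem \ref{th2.7}. First I place everything over a single finite group. Let $L_1$, $L_2$ be the minimal splitting fields of $T_1$, $T_2$, and let $L=L_1L_2$ with $G={\rm Gal}(L/k)$. Then $G$ is a subdirect product of $G_i={\rm Gal}(L_i/k)$, and $\widehat{T}_1$, $\widehat{T}_2$, $\widehat{T}_1\otimes\widehat{T}_2=\widehat{T_1\otimes T_2}$ are all $G$-lattices via the surjections $\varphi_i$ (Definition \ref{deftensorT}). The torus $T_1\otimes T_2$ splits over $L$, so by Theorem \ref{th2.7} its stable (resp. retract) rationality is detected by $[\widehat{T}_1\otimes\widehat{T}_2]^{fl}$ computed as a $G$-lattice.

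One point must be checked before Theorem \ref{th6.7} can be applied: the hypotheses are stated for $G_i$-lattices, but that theorem needs them for the inflated $G$-lattices. I would verify the following.
\begin{enumerate}
\item The permutation order does not increase under inflation. A factorization $d_i\,\Id=g\circ f$ through a permutation $G_i$-lattice $P$ inflates to a factorization through the inflation of $P$, which is again permutation. Hence the $G$-permutation order of $\widehat{T}_i$ divides $d_i$ by Proposition \ref{prop6.5} (3), and the coprimality $\gcd(d_1,d_2)=1$ persists.
\item Flabby resolutions inflate to flabby resolutions. For $H\leq G$ we have $\widehat H^{-1}(H,{\rm Inf}\,F)=\widehat H^{-1}(\varphi_i(H),F)$ up to the standard norm/augmentation identification, because $H$ acts through its image. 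Since inflation also preserves permutation, stably permutation and invertible lattices, $[\widehat{T}_i]^{fl}$ over $G$ is the inflation of the one over $G_i$.
\end{enumerate}
The flabby case in the second item is the step I expect to need the most care. If the direct check is awkward, I would instead use the fact that stable (resp. retract) rationality of a torus does not depend on which splitting Galois extension is used, so that Theorem \ref{th2.7} gives $[\widehat{T}_i]^{fl}_G$ stably permutation (resp. invertible) directly.

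Now take flabby resolutions $0\to\widehat{T}_i\to P_i\to F_i\to0$ of $G$-lattices, where $F_i$ is stably permutation (resp. invertible) by Theorem \ref{th2.7} (1) (resp. (3)). Since $\gcd(d_1,d_2)=1$, Theorem \ref{th6.7} (1) shows that
\[
0\to\widehat{T}_1\otimes\widehat{T}_2\to P_1\otimes P_2\to F\to0
\]
is a flabby resolution, so $[\widehat{T}_1\otimes\widehat{T}_2]^{fl}=[F]$. Theorem \ref{th6.7} (2) (resp. (3)) gives that $F$ is stably permutation (resp. invertible). Applying Theorem \ref{th2.7} (1) (resp. (3)) once more yields that $T_1\otimes T_2$ is stably (resp. retract) $k$-rational.
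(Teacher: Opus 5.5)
Your proposal is correct and follows the paper's own proof. You take flabby resolutions of $\widehat{T}_1,\widehat{T}_2$, use Theorem~\ref{th6.7}~(1) to obtain the flabby resolution $0\to\widehat{T}_1\otimes\widehat{T}_2\to P_1\otimes P_2\to F\to 0$, and conclude with Theorem~\ref{th6.7}~(2)/(3) and Theorem~\ref{th2.7}. Your explicit justification for working over the common group $G=\mathrm{Gal}(L_1L_2/k)$ is sound: the permutation order over $G$ divides $d_i$, flabby resolutions inflate (or, more simply, Theorem~\ref{th2.7} applies directly over $L_1L_2$). This fills in a step the paper leaves implicit.
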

\begin{proof}
Define $M_i:=M_{G_i}=\widehat{T}_i$ $(i=1,2)$ 
where $G_i\leq {\rm GL}(n_i,\bZ)$ as in Definition \ref{defMG}, 
and introduce a flabby resolution of $M_i$ with $[M_i]^{fl}=F_i$ 
as in  Theorem \ref{th6.7}. 
Applying Theorem \ref{th6.7} yields a flabby resolution 
\begin{align*}
(E_{1,2}^{(1L)}): 0 \to M_1 \otimes M_2 \xrightarrow{\iota} 
P_1\otimes P_2  \xrightarrow{f} F \to 0.
\end{align*}
Again, Theorem \ref{th6.7} (2) (resp. (3)) shows that, 
if $F_1$ and $F_2$ are stably permutation (resp. invertible), then so is $F$. 
By Theorem \ref{th2.7}, the stable (resp. retract) $k$-rationality 
of $T_1$ and $T_2$, indeed implies that of $T_1 \otimes T_2$.
\end{proof} 

There is a straightforward partial converse to Theorem \ref{th6.15} 
(without the assumption  ${\rm gcd}(d_1,d_2)=1$): 

\begin{proposition}\label{convth6.15}
Let $G_i\leq \GL(n_i,\bZ)$ $(i=1,2)$ be a finite subgroup 
and 
$M_{G_i}$ be the $G_i$-lattice as in Definition \ref{defMG}. 
Define $G:= G_1 \times G_2$. 
Let $h: \Gal(\overline k/k) \to G$ be a continuous surjective homomorphism 
and $T_i$ be the algebraic $k$-torus 
with $\widehat{T}_i=M_{G_i}$ $($via $h$$)$. 
If 
$T_1\otimes T_2$ is retract $k$-rational, 
then $T_1$ and $T_2$ are retract $k$-rational. 
\end{proposition}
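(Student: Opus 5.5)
The plan is to restrict to the subgroup $G_1\times\{1\}\leq G$. On this subgroup the tensor product lattice becomes a direct sum of copies of $M_{G_1}$, and invertibility of flabby classes survives both restriction to subgroups and passage to direct summands. By symmetry it suffices to treat $T_1$.

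First, set $M:=M_{G_1}\otimes M_{G_2}=\widehat{T_1\otimes T_2}$, viewed as a $G$-lattice. By Theorem \ref{th2.7} (3), retract $k$-rationality of $T_1\otimes T_2$ means $[M]^{fl}$ is invertible as a $G$-lattice. Here I use the $G$-lattice structure coming from the Galois group $G$ of the compositum. This is harmless: the flabby class being invertible does not depend on passing to a quotient acting faithfully, since inflation preserves permutation lattices, flabby resolutions and direct summands, and Theorem \ref{th2.7} may be applied with any Galois splitting field. Choose a flabby resolution $0\to M\to P\to F\to 0$ of $G$-lattices, so that $F\oplus P'\simeq E\oplus P''$ with $E$ invertible and $P',P''$ permutation.

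Second, restrict to $H:=G_1\times\{1\}$. Restrictions of permutation lattices are permutation, and by the definition of flabbiness (a condition on all subgroups) restrictions of flabby lattices are flabby. Hence $0\to M|_H\to P|_H\to F|_H\to 0$ is a flabby resolution, and $F|_H\oplus P'|_H\simeq E|_H\oplus P''|_H$ with $E|_H$ invertible, because $E$ is a summand of a permutation lattice. Since $H$ acts trivially on $M_{G_2}\simeq\bZ^{n_2}$, we get
\[
M|_H\simeq M_{G_1}^{\oplus n_2}
\]
as $G_1$-lattices, identifying $H$ with $G_1$ via $\varphi_1$. Taking a flabby resolution $0\to M_{G_1}\to Q\to F_1\to 0$, the direct sum of $n_2$ copies is a flabby resolution of $M_{G_1}^{\oplus n_2}$. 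Uniqueness of the flabby class then gives $[F_1^{\oplus n_2}]=[F|_H]$, which is invertible.

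Third, I conclude that $F_1$ is invertible. We have $F_1^{\oplus n_2}\oplus R\simeq I\oplus R'$ with $I$ invertible and $R,R'$ permutation. The right-hand side is a direct summand of a permutation lattice, so $F_1$, being a direct summand of it (note $n_2\geq 1$), is invertible. Finally, the torus $T_1$ is split by the subfield of the splitting field corresponding to $\ker(G\to G_1)$, with Galois group $G_1$ since $h$ is surjective, and its character lattice is $M_{G_1}$. Theorem \ref{th2.7} (3) then gives that $T_1$ is retract $k$-rational. The same argument with $\{1\}\times G_2$ handles $T_2$.

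The main point needing care is the bookkeeping between $G$-lattices and $G_1$-lattices: the flabby class of $T_1$ is computed over $G_1$, while the hypothesis lives over $G$. This is resolved by noting that $G_1\times\{1\}$ maps isomorphically onto $G_1$ and acts on $M_{G_1}$ exactly as $G_1$ does. This is also exactly where $G=G_1\times G_2$ is needed: for a general subdirect product, such a complement subgroup need not exist.
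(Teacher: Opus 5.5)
Your proof is correct and is the same as the paper's: restrict a flabby resolution of $M_{G_1}\otimes M_{G_2}$ to $G_1\times\{1\}$, identify the restricted lattice with $M_{G_1}^{\oplus n_2}$, and deduce that $[M_{G_1}]^{fl}$ is invertible from the invertibility of $n_2[M_{G_1}]^{fl}$. You also spell out steps the paper leaves implicit: that restriction preserves flabbiness and invertibility, the direct-summand argument (using $n_2\geq 1$), and why a non-faithful $G$-action is harmless.
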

\begin{proof}
Consider a flabby resolution of $G$-lattices
\begin{align*}
 0 \to M_{G_1}\otimes M_{G_2} \to 
P \to F \to 0.
\end{align*}

Assume that $T$ is retract $k$-rational, 
i.e. (by  Theorem \ref{th2.7}) that $[F]$ is an invertible $G$-lattice. 
After restricting the action of $G$ to $G_1$ $(=G_1 \times \{ \Id\})$, 
we get the exact sequence of $G_1$-lattices 
\begin{align*}
0 \to (M_{G_1})^{\oplus n_2} \to P|_{G_1} \to F|_{G_1} \to 0 
\end{align*} 
where $P|_{G_1}$ is permutation and 
$F|_{G_1}$ is invertible (as $G_1$-lattices). 
Thus, $[(M_{G_1})^{\oplus n_2}]^{fl}=n_2[M_{G_1}]^{fl}$ is invertible. Equivalently, $[M_{G_1}]^{fl}$ is invertible. 
Thus, by Theorem \ref{th2.7}, 
the algebraic $k$-torus $T_1$ is retract $k$-rational.
\end{proof} 
%

\section{Rationality of norm one tori $\T_{A\otimes B}=R^{(1)}_{A\otimes B/k}(\bG_m)$ of tensor products of \'etale algebras $A/k$ and $B/k$}\label{S7}

Let $A/k$ be an \'etale $k$-algebra. 
Write  $A=\prod_{i=1}^rK_i$ 
where $K_i/k$ $(1\leq i\leq r)$ 
are finite separable field extensions with $[K_i:k]=n_i$. 
Let $L \subset \overline k$ be the splitting field of $A/k$. 
It is the Galois closure of the compositum of the $K_i$'s, 
inside $\overline k$ (see Section \ref{ss23} and Section \ref{ss272}). 
Then the finite $G$-set $X:=X(A)=\Hom_k(A,\overline{k})$ 
is a disjoint union of $G$-orbits $X_i$, 
with $|X_i|=n_i$ and $n=\sum_{i=1}^r n_i$. 

Let $M$ be a $G$-lattice. 
Recall that the permutation order ${\rm p}$-${\rm ord}(M)$ of $M$ 
is the smallest integer $a \geq 1$ 
such that 
$a\,\Id_M: M\to M$ 
factors through a permutation $G$-lattice 
as a $\bZ[G]$-homomorphism (see Definition \ref{DefiPermutationOrder}). 

\begin{theorem}\label{th8.3}
Let 
$\T_A=R^{(1)}_{A/k}(\bG_m)$ be the norm one torus 
of an \'etale algebra $A/k$ 
with ${\rm dim}_k\, A=n$ where $A=\prod_{i=1}^rK_i$ with $[K_i:k]=n_i$. 
Let $X=X(A)=\Hom_k(A,\overline{k})$ be the corresponding 
$G$-set with $\widehat{\T}_A\simeq J_X$ and $|X|=n$ where 
$X=\coprod_{i=1}^r X_i$ is the disjoint union of $G$-orbits 
with $|X_i|=n_i$ and $n=\sum_{i=1}^r n_i$. 
Let 
\begin{align*}
(E_X): 0\to I_X\xrightarrow{\iota}\bZ[X]\xrightarrow{\varepsilon}\bZ\to 0
\end{align*} 
be an exact sequence of $G$-lattices and its dual 
\begin{align*}
(F_X): 0 \to\bZ\xrightarrow{\varepsilon^\circ}
\bZ[X]\xrightarrow{\iota^\circ} J_X\to 0
\end{align*}
given as in Definition \ref{def8.1}. 
Then\\ 
{\rm (1)} 
${\rm p}$-${\rm ord}(I_X)={\rm p}$-${\rm ord}(J_X)={\rm ord}(E_X)={\rm ord}(F_X)$. 
In particular, ${\rm p}$-${\rm ord}(I_X)\mid |G|$.\\ 
{\rm (2)} If $r=1$, then ${\rm p}$-${\rm ord}(J_X)=n$. 
In particular, we have ${\rm p}$-${\rm ord}(J_{G/H})=n$ with 
$J_X\simeq J_{G/H}$ when $G\leq S_n$ transitive and $[G:H]=n$.\\
{\rm (3)} If $r\geq 2$, then ${\rm p}$-${\rm ord}(J_X)=\gcd(n_1,\ldots,n_r) \mid n$.\\
{\rm (4)} If $\gcd(n_1,\ldots,n_r)=1$, e.g. $n=p$; prime, 
then $\bZ\oplus J_X\simeq \bZ[X]$ and hence 
$\T_A\times \bG_{m,k}$ is $k$-rational. 
In particular, 
$\T_A$ 
is stably $k$-rational. 
\end{theorem}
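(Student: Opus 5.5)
We reduce everything to computing ${\rm ord}(E_X)$, using Lemma \ref{lem5.4} and Proposition \ref{prop6.5}.

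For (1), note that $(F_X)$ is a flabby resolution of $\bZ$? No — better: $(E_X)$ is an extension $0\to I_X\to\bZ[X]\to\bZ\to 0$ with permutation middle term. Dualizing gives $(F_X)$, and ${\rm ord}(F_X)={\rm ord}(E_X)$ since dualization is an isomorphism on the relevant $\Ext^1$ groups (as in the proof of Lemma \ref{lem5.4}). By Lemma \ref{lem6.2}, ${\rm p}$-${\rm ord}(I_X)={\rm p}$-${\rm ord}(J_X)$, so it suffices to show ${\rm p}$-${\rm ord}(J_X)={\rm ord}(F_X)$. I would argue directly from Lemma \ref{lem5.4}(2),(3). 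For ``$\le$'': if $s\colon J_X\to\bZ[X]$ satisfies $\iota^\circ\circ s=e\,\Id$, then $e\,\Id_{J_X}$ factors through $\bZ[X]$. For ``$\ge$'': suppose $d\,\Id_{J_X}=\psi\circ\varphi$ through a permutation lattice $Q$. Pulling $(F_X)$ back along $\psi$ gives an extension of $Q$ by $\bZ$, which splits because $\Ext^1_{\bZ[G]}(Q,\bZ)=H^1(G,Q^\circ)=0$ by Shapiro's lemma. The pullback argument of Proposition \ref{prop6.5}(2) then gives $d(F_X)=0$, so ${\rm ord}(F_X)\mid d$. The bound by $|G|$ follows from Proposition \ref{prop6.5}(1).

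For (2) and (3), Lemma \ref{lem5.4}(2) applied to $(E_X)$ says that ${\rm ord}(E_X)$ is the least $e\ge1$ for which there is a $G$-map $s\colon\bZ\to\bZ[X]$ with $\varepsilon\circ s=e$. Such $s$ correspond to $G$-invariant elements of $\bZ[X]$. Since $X=\coprod X_i$ with each $X_i$ an orbit, $\bZ[X]^G$ is spanned by the orbit norms $N_{X_i}$, and $\varepsilon(N_{X_i})=n_i$. The set of attainable values $e$ is therefore the ideal $(n_1,\dots,n_r)=\gcd(n_1,\dots,n_r)\bZ$. This gives ${\rm ord}=\gcd(n_i)$, which equals $n$ when $r=1$. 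The transitive case $X=G/H$ is exactly $r=1$.

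For (4), if $\gcd(n_i)=1$, then (3) gives ${\rm ord}(F_X)=1$, so $(F_X)$ splits and $\bZ\oplus J_X\simeq\bZ[X]$. Concretely, choosing $a_i$ with $\sum a_in_i=1$, the element $\sum a_iN_{X_i}$ yields the retraction. Then $\T_A\times\bG_{m,k}\simeq R_{A/k}(\bG_m)$ is $k$-rational, and $\T_A$ is stably $k$-rational (also via Theorem \ref{th2.7}, since $[J_X]^{fl}=0$).

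The only delicate point is the ``$\ge$'' direction in (1), because Proposition \ref{prop6.5}(3) is stated for coflabby resolutions with coflabby kernel. Here the relevant fact is that the kernel $\bZ$ of $(F_X)$ satisfies $\Ext^1(Q,\bZ)=0$ for every permutation $Q$, which is all the pullback argument needs. Alternatively, one can apply Proposition \ref{prop6.5}(3) directly to $(E_X)$, where $\bZ$ is the quotient, after noting that $I_X$ fits the dual setup.
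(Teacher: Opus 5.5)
Your argument is correct. For (1) it coincides with the paper's, which simply applies Proposition \ref{prop6.5}(3) to $(F_X)$. The ``delicate point'' you flag is not actually an issue: $\bZ$ is itself coflabby, since $H^1(H,\bZ)=\Hom(H,\bZ)=0$ for every finite $H$. So $(F_X)$ is literally a coflabby resolution of $J_X$, and your Shapiro-lemma check $\Ext^1_{\bZ[G]}(Q,\bZ)=0$ just re-derives this instance of Lemma \ref{lem6.3}.

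Your suggested alternative of applying Proposition \ref{prop6.5}(3) to $(E_X)$ does not work as stated. Its kernel $I_X$ is not coflabby when $g:=\gcd(n_1,\ldots,n_r)>1$. What is true is the dual statement: $(E_X)$ is a flabby resolution of $I_X$, because $\bZ$ is flabby.

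The genuine difference is in (2)--(3).
\begin{itemize}
\item You read ${\rm ord}(E_X)$ off Lemma \ref{lem5.4}(2) as the least positive value of $\varepsilon$ on $\bZ[X]^G=\bigoplus_i\bZ N_{X_i}$, which is $g$.
\item The paper instead computes the whole group $\Ext^1_{\bZ[G]}(J_X,\bZ)\simeq H^1(G,I_X)\simeq\bZ/g\bZ$ from the long exact sequence of $(E_X)$; this is the same cokernel of $\varepsilon\colon\bZ[X]^G\to\bZ$. It then deduces $e\mid g$ from the group order, and obtains $g\mid d$ by choosing an extension of order $g$ and invoking Proposition \ref{prop6.5}(2).
\end{itemize}
Your route is shorter and needs only one divisibility. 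The paper's route additionally shows that this Ext group is cyclic of order $g$, with $(F_X)$ as a generator.

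Likewise in (4), you split $(F_X)$ directly from ${\rm ord}(F_X)=1$, with an explicit retraction from a B\'ezout relation. The paper instead goes through the invertibility of $J_X$ and Lemma \ref{lem6.3}. Both are fine.
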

\begin{proof}
Define 
$d:={\rm p}$-${\rm ord}(J_X)$ and 
$e:={\rm ord}(F_X)$, i.e. 
the order of $(F_X)$ as an element 
$(F_X) \in \Ext^1_{\bZ[G]}(J_X,\bZ)\simeq H^1(G,{\rm Hom}_{\bZ}(J_X,\bZ))$.\\ 
(1) It follows from Proposition \ref{prop6.5} that $d=e$. 
Hence we get the assertion by Lemma \ref{lem6.2} 
and ${\rm ord}(E_X)={\rm ord}(F_X)$ because $(F_X)$ is the dual of $(E_X)$. 
The last assertion follows from Proposition \ref{prop6.5}.\\
(2), (3) 
Define 
\begin{align*}
g:=\begin{cases}
n & {\rm if}\ \ r=1\\
\gcd(n_1,\ldots,n_r) & {\rm if}\ \ r\geq 2. 
\end{cases}
\end{align*}
We will show that (i) $e\mid g$ and (ii) $g\mid d$. 
Then the assertion follows from (1) because $d=e$. 

(i) If $r\geq 2$, then 
pick a B\'ezout relation $g=b_1n_1 + \cdots + b_rn_r$ $(b_i \in \bZ)$. 
By the exact sequence $(E_X)$, we get a long exact sequence 
\begin{align*}
H^0(G,\bZ[X])=\bZ[X]^G=\bigoplus_{i=1}^r
\left\langle\sum_{x\in X_i}[x]\right\rangle_{\!\!\!\bZ}
\simeq \bZ^r\xrightarrow{\varepsilon}
H^0(G,\bZ)=\bZ^G\simeq \bZ
\xrightarrow{\delta}H^1(G,I_X) \to H^1(G,\bZ[X])=0
\end{align*}
where the map $\varepsilon: \bZ^r \to \bZ$ is given by 
\begin{align*}
(\lambda_1, \ldots, \lambda_r) \mapsto \lambda_1 n_1+ \ldots+ \lambda_r n_r.
\end{align*}
This implies that 
$H^1(G,I_X)\simeq \bZ/{\rm Image}(\varepsilon)\simeq \bZ/g\bZ$. 
By (1) and the computation
$$(F_X)\in {\rm Ext}^1_{\bZ[G]}(J_X,\bZ)\simeq H^1(G,{\rm Hom}_\bZ(J_X,\bZ))\simeq H^1(G,(J_X)^\circ)\simeq H^1(G,I_X)\simeq\bZ/g\bZ,$$ 
we find that $e\mid g$. 

(ii) From the proof of (i) as above, 
we have ${\rm Ext}^1_{\bZ[G]}(J_X,\bZ)\simeq\bZ/g\bZ$ 
and hence 
there exists an exact sequence of $G$-lattices 
\begin{align*}
(G_X): 0 \to\bZ\to N\to J_X\to 0
\end{align*}
with $\langle (G_X)\rangle={\rm Ext}^1_{\bZ[G]}(J_X,\bZ)$ and 
${\rm ord}(G_X)=g$. 
By applying Proposition \ref{prop6.5}, we get $g\mid d$.\\
(4) If $\gcd(n_1,\ldots,n_r)=1$, then 
${\rm p}$-${\rm ord}(J_X)=1$ and hence it follows from 
Proposition \ref{prop6.5} that $J_X$ is invertible. 
By Lemma \ref{lem6.3}, $(F_X)$ splits and hence $\bZ\oplus J_X\simeq \bZ[X]$. 
This implies that $J_X\oplus\bZ$ is permutation and 
$k(\T_A\times \bG_{m,k})\simeq L(J_X\oplus\bZ)^G\simeq L(J_X)^G(t)$ 
is $k$-rational. 
\end{proof}

\begin{example}\label{ex8.4}
Let $p$ be a prime number and $X$ be a $G$-set with $|X|=p$. 
It follows from Theorem \ref{th8.3} (3) that 
${\rm p}$-${\rm ord}(J_X)={\rm p}$-${\rm ord}(I_X)=p$ or $1$. 
We see that ${\rm p}$-${\rm ord}(J_X)=p$ 
if and only if $X$ is transitive, i.e. 
$J_X\simeq J_{G/H}$ with $G\leq S_p$ transitive and $[G:H]=p$. 
When ${\rm p}$-${\rm ord}(J_X)=p$, by Theorem \ref{thS}, 
the norm one torus $\T_A$ with 
$\widehat{\T}_A\simeq J_X$ 
is retract $k$-rational, i.e. $[J_X]^{fl}$ is invertible 
(that is, ${\rm p}$-${\rm ord}([J_X]^{fl})=1$, 
see Proposition \ref{prop6.5} (6). 
When ${\rm p}$-${\rm ord}(J_X)=1$, by Theorem \ref{th8.3}, 
$\T_A$ is stably $k$-rational. 
\end{example}

By Proposition \ref{prop5.5} (Proposition \ref{prop5.6}) 
and Theorem \ref{th8.3}, we get a relation between $J_{X\times Y}$ 
and $J_X\otimes J_Y$: 
\begin{theorem}\label{th7.4}
Let $X$ and $Y$ be 
$G$-sets with $|X|=m$, $|Y|=n$ and 
$X=\coprod_{i=1}^r X_i$, 
$Y=\coprod_{j=1}^s Y_j$ 
be the disjoint the unions of $G$-orbits 
with $|X_i|=m_i$, $|Y_j|=n_j$ and 
$m=\sum_{i=1}^r m_i$, $n=\sum_{j=1}^s n_j$.
Let 
\begin{align*}
(E_X):&\ 0\to I_X\xrightarrow{\iota_X}\bZ[X]\xrightarrow{\varepsilon_X}\bZ\to 0,\\
(E_Y):&\ 0\to I_Y\xrightarrow{\iota_Y}\bZ[Y]\xrightarrow{\varepsilon_Y}\bZ\to 0,\\
(E_{X\times Y}):&\ 0\to I_{X\times Y}\xrightarrow{\iota_{X\times Y}}\bZ[X\times Y]\xrightarrow{\varepsilon_{X\times Y}}\bZ\to 0
\end{align*}
be exact sequences of $G$-lattices as in Definition \ref{def8.1}.  
There exist natural inclusions of $G$-lattices
\begin{align*}
I_X \otimes I_Y 
&\subset\bZ[X]\otimes I_Y\subset I_{X\times Y}\subset\bZ[X\times Y]\simeq \bZ[X]\otimes\bZ[Y],\\
I_X \otimes I_Y 
&\subset I_X\otimes \bZ[Y]\subset I_{X\times Y}\subset\bZ[X\times Y]\simeq\bZ[X]\otimes\bZ[Y]
\end{align*}
which we use without further notice. 
There exists an exact sequence of $G$-lattices 
\begin{align*}
(E_{X,Y}^{(2)}): 0 \to I_X \otimes I_Y \xrightarrow{\iota} (I_X \otimes \bZ[Y]) \oplus (\bZ[X] \otimes I_Y) \xrightarrow{f} \bZ[X] \otimes \bZ[Y]\simeq\bZ[X\times Y] \xrightarrow{\pi} \bZ \otimes \bZ\simeq \bZ \to 0
\end{align*}
where 
\begin{align*}
\iota(\iota_x \otimes \iota_y) &= (\iota_x \otimes \iota_Y(\iota_y), -\iota_X(\iota_x) \otimes \iota_y)
=(\iota_x\otimes \iota_y, -\iota_x\otimes \iota_y),\\
f (\iota_x \otimes y, x \otimes \iota_y)
&= \iota_X(\iota_x) \otimes y+ x \otimes \iota_Y(\iota_y)= \iota_x\otimes y+x\otimes \iota_y,\\
\pi(x,y) & =\varepsilon_X(x) \otimes \varepsilon_Y(y).
\end{align*}
Then we have 
\begin{align*}
{\rm Image}(f)={\rm Ker}(\pi)=I_{X\times Y}\leq \bZ[X] \otimes \bZ[Y].
\end{align*}
{\rm (1)} 
If $\gcd(m_i,n_j\mid 1\leq i\leq r, 1\leq j\leq s)=1$, 
e.g. $\gcd(m,n)=1$, 
then the exact sequence of $G$-lattices 
\begin{align*}
(E_{X,Y}^{(2L)}): 0 \to I_X \otimes I_Y 
\mathrel{\mathop{\rightleftarrows}^{\mathrm{\iota}}_{t}}
(I_X \otimes \bZ[Y]) \oplus (\bZ[X] \otimes I_Y)  \xrightarrow{f} I_{X\times Y} \to 0
\end{align*}
splits, i.e. 
$(I_X \otimes I_Y)\oplus I_{X\times Y}\simeq 
(I_X \otimes \bZ[Y]) \oplus (\bZ[X] \otimes  I_Y)$, 
that is, there exists 
$t\in {\rm Hom}_{\bZ[G]}((I_X \otimes \bZ[Y]) \oplus (\bZ[X] \otimes I_Y) \to I_X \otimes I_Y)$ 
such that $t\circ\iota=\Id_{I_X \otimes I_Y}$. 
In particular, we also get the dual exact sequence of $G$-lattices 
\begin{align*}
(F_{X,Y}^{(1R)}): 0 \to J_{X\times Y}\xrightarrow{f^\circ}  
(J_X \otimes \bZ[Y]) \oplus (\bZ[X] \otimes J_Y)
\mathrel{\mathop{\rightleftarrows}^{\mathrm{\iota^\circ}}_{t^\circ}}
J_X \otimes J_Y  \to 0
\end{align*}
splits, i.e. 
$J_{X\times Y}\oplus (J_X \otimes J_Y)\simeq 
(J_X \otimes \bZ[Y]) \oplus (\bZ[X] \otimes  J_Y)$, 
that is, there exists 
$t^\circ\in {\rm Hom}_{\bZ[G]}(J_X \otimes J_Y, 
(J_X \otimes \bZ[Y]) \oplus (\bZ[X] \otimes J_Y))$ 
such that $\iota^\circ\circ t^\circ=\Id_{J_X \otimes J_Y}$.\\
{\rm (2)} 
If $\gcd(m_i,n_j\mid 1\leq i\leq r, 1\leq j\leq s)=1$, 
e.g. $\gcd(m,n)=1$, 
then there exists an exact sequence of $G$-lattices
\begin{align*}
(E_{X,Y}^{(2LS)}): 0 \to (I_X \otimes \bZ[Y]) \oplus (\bZ[X] \otimes I_Y) \xrightarrow{(t,f)}  (I_X \otimes I_Y)\oplus (\bZ[X] \otimes \bZ[Y]) \xrightarrow{(0,\pi)}
 \bZ\otimes\bZ \to 0. 
\end{align*}
In particular, we also get the dual exact sequence of $G$-lattices 
\begin{align*}
(E_{X,Y}^{(1RS)}): 0 \to \bZ\otimes\bZ \xrightarrow{(\pi^\circ,0)} 
(\bZ[X] \otimes \bZ[Y])\oplus(J_X \otimes J_Y)
 \xrightarrow{(f^\circ,t^\circ)} (J_X \otimes \bZ[Y]) \oplus (\bZ[X] \otimes J_Y)
 \to 0. 
\end{align*}
\end{theorem}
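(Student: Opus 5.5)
The plan is to specialise Proposition \ref{prop5.5} to $(E_1)=(E_X)$ and $(E_2)=(E_Y)$, so that $A_1=I_X$, $B_1=\bZ[X]$, $C_1=\bZ$, and likewise for $Y$. The four-term exact sequence $(E_{X,Y}^{(2)})$ is then exactly $(E_{1,2}^{(2)})$, with $\bZ[X]\otimes\bZ[Y]$ identified with $\bZ[X\times Y]$ through the isomorphism $\varphi$ of the Remark after Definition \ref{def8.1}. Under $\varphi$, the map $\pi=\varepsilon_X\otimes\varepsilon_Y$ becomes $\varepsilon_{X\times Y}$, because $[x]\otimes[y]\mapsto[(x,y)]\mapsto 1$. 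Hence ${\rm Ker}(\pi)=I_{X\times Y}$. Exactness of $(E_{1,2}^{(2)})$ gives ${\rm Image}(f)={\rm Ker}(\pi)=I_{X\times Y}$. The inclusions listed in the statement are just the images of the two summands under $f$, each of which is a subset of ${\rm Ker}(\pi)$.

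For (1), I cannot apply Proposition \ref{prop5.5}(1) directly. That proposition requires $\gcd({\rm ord}(E_X),{\rm ord}(E_Y))=1$. Theorem \ref{th8.3} gives ${\rm ord}(E_X)=\gcd(m_1,\ldots,m_r)$ when $r\ge2$ and $m$ when $r=1$, and these orders need not be coprime under the weaker hypothesis $\gcd(m_i,n_j)=1$. (For example, $X$ with orbits of sizes $2,3$ has order $1$, so that case is fine; but if $X$ and $Y$ are transitive, the hypothesis is just $\gcd(m,n)=1$, which is exactly coprimality of the orders.) In general, I will construct the retraction $t$ by hand, imitating the proof of Proposition \ref{prop5.5}(1) orbit by orbit.

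For each orbit $X_i$, the map $t_{X_i}\colon\bZ[X]\to I_X$ should send $[x]\mapsto m_i[x]-N_{X_i}$ for $x\in X_i$ and send $[x]\mapsto 0$ otherwise. Its restriction to $I_X\cap\bZ[X_i]$ is multiplication by $m_i$, but on elements such as $[x]-[x']$ with $x,x'$ in different orbits it is not a multiple of the identity. So the plan is first to decompose $I_X$ as a sum of the pieces $I_{X_i}$ together with the differences between orbits. It is cleaner to work orbit-pair-wise. By Theorem \ref{th4.2} (Endo), $I_X\simeq I_{X_1}\oplus\bigoplus_{i\ge2}\bZ[X_i]$ up to reordering in the relevant sense, and the same holds for $Y$. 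Then $I_X\otimes I_Y$ splits into $I_{X_1}\otimes I_{Y_1}$ plus terms in which at least one factor is permutation.

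Choosing the base orbits $X_1,Y_1$ with $\gcd(m_1,n_1)=1$ is the step I expect to be the main obstacle: such a pair need not exist, although the gcd taken over all pairs is $1$. The pair $(X_i,Y_j)$ can be chosen freely, since any orbit can serve as the base in Theorem \ref{th4.2}. I would therefore use a Bézout combination $1=\sum_{i,j}c_{ij}m_in_j$. The candidate is $t=\sum_{i,j}c_{ij}\,t^{(ij)}$, where $t^{(ij)}$ is built from $t_{X_i}$ and $t_{Y_j}$ as in Proposition \ref{prop5.5}(1). Each $t^{(ij)}$ satisfies $t^{(ij)}\circ\iota=m_in_j\,\Id$ modulo a correction. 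The correction comes from choosing, for each pair, integers $u_{ij},v_{ij}$ with $v_{ij}n_j-u_{ij}m_i$ tuned so that everything sums to $1$. Making this work may require bilinear weights $c_{ij}=a_ib_j$, which would not always be possible. The fallback is an order argument: the $\Ext$ class of $(E_{X,Y}^{(2L)})$ restricted to each Sylow subgroup $G_p$ vanishes. Since $p$ divides not all of the $m_in_j$, some pair $(i,j)$ has $p\nmid m_in_j$. The $G_p$-orbits inside $X_i$ and $Y_j$ then include an orbit of $p$-prime length, hence a $G_p$-fixed point. By Proposition \ref{prop4.3}, $I_X|_{G_p}$ or $I_Y|_{G_p}$ is then permutation, so the order is prime to $p$, and Lemma \ref{lemRes} finishes the argument.

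Finally, dualising gives $(F_{X,Y}^{(1R)})$, and (2) follows exactly as in Proposition \ref{prop5.5}(2), using $t$ in place of the earlier retraction.
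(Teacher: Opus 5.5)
The first part of your plan matches the paper: you identify $(E_{X,Y}^{(2)})$ with $(E_{1,2}^{(2)})$ and show ${\rm Image}(f)={\rm Ker}(\pi)=I_{X\times Y}$. The gap is in (1), and it starts with your claim that Proposition \ref{prop5.5}(1) cannot be applied directly. That claim is false.

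By Theorem \ref{th8.3}, ${\rm ord}(E_X)=\gcd(m_1,\ldots,m_r)$ for every $r$; when $r=1$ this is just $m=m_1$. Likewise ${\rm ord}(E_Y)=\gcd(n_1,\ldots,n_s)$. Hence $\gcd({\rm ord}(E_X),{\rm ord}(E_Y))$ is the gcd of the whole list $m_1,\ldots,m_r,n_1,\ldots,n_s$, which is exactly $\gcd(m_i,n_j\mid 1\leq i\leq r,1\leq j\leq s)$. This number also equals $\gcd_{i,j}\gcd(m_i,n_j)$, so your ``gcd taken over all pairs'' is the same quantity. The hypothesis is therefore precisely coprimality of the two orders. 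Proposition \ref{prop5.5}(1),(2) then give (1) and (2) at once, and dualising gives the $J$-sequences. This one-line reduction is the paper's entire proof.

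The replacement argument you sketch does not work as written.
\begin{itemize}
\item A B\'ezout relation $1=\sum_{i,j}c_{ij}m_in_j$ needs $\gcd_{i,j}(m_in_j)=\gcd_i(m_i)\cdot\gcd_j(n_j)=1$. This already fails for transitive $X,Y$ of sizes $2$ and $3$.
\item The decomposition $I_X\simeq I_{X_1}\oplus\bigoplus_{i\geq 2}\bZ[X_i]$ is not what Theorem \ref{th4.2} gives. That theorem requires the stabiliser containment $H_{t-1}\geq H_t$, and the decomposition fails in general, whichever orbit is chosen as base. For example, take $G=C_6$ with orbits $C_6/C_3$ and $C_6/C_2$. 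Then $H^1(G,I_X)=0$, while the two candidate decompositions have $H^1$ equal to $\bZ/2\bZ$ and $\bZ/3\bZ$.
\item In your fallback, the step ``some pair $(i,j)$ has $p\nmid m_in_j$'' fails in the same $2,3$ example, for both $p=2$ and $p=3$.
\end{itemize}

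The fallback can be repaired. For each prime $p$, the hypothesis gives some $m_i$ or some $n_j$ prime to $p$. This produces a $G_p$-fixed point of $X$ or of $Y$, so $(E_X)|_{G_p}$ or $(E_Y)|_{G_p}$ splits. Proposition \ref{prop5.5}(1) over $G_p$, with one order equal to $1$, then splits the restriction of $(E_{X,Y}^{(2L)})$. Lemma \ref{lemRes}, applied to $H^1(G,\Hom_\bZ(I_{X\times Y},I_X\otimes I_Y))$, concludes. However, this is only a Sylow-by-Sylow rerun of the direct application you dismissed.
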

\begin{proof}
This is a particular case of Proposition \ref{prop5.5}, 
applied to $(E_1)=(E_X)$ and $(E_2)=(E_Y)$. 
Indeed, if $A_1=I_X$, $A_2=I_Y$, $B_1=\bZ[X]$, $B_2=\bZ[Y]$, 
then we get that $(E_{1,2}^{(2)})=(E_{X,Y}^{(2)})$ and 
\begin{align*}
{\rm Image}(f)={\rm Ker}(f)=I_{X\times Y}. 
\end{align*} 
The assertion follows from Theorem \ref{th8.3} because we have 
${\rm ord}(E_X)={\rm p}$-${\rm ord}(I_X)=\gcd(m_1,\ldots,m_r)$, 
${\rm ord}(E_Y)={\rm p}$-${\rm ord}(I_Y)=\gcd(n_1,\ldots,n_s)$ 
and hence $\gcd({\rm ord}(E_X),{\rm ord}(E_Y))
=\gcd(m_i,n_j\mid 1\leq i\leq r, 1\leq j\leq s)$. 
The dual exact sequence follows from 
$(M\otimes N)^\circ\simeq M^\circ\otimes N^\circ$ 
and 
$(I_X)^\circ\simeq J_X$, 
$(I_Y)^\circ\simeq J_Y$, 
$(I_{X\times Y})^\circ\simeq J_{X\times Y}$, 
$\bZ[X]^\circ\simeq\bZ[X]$, 
$\bZ[Y]^\circ\simeq \bZ[Y]$ 
where $M^\circ={\rm Hom}(M,\bZ)$ is the dual of a $G$-lattice $M$ 
(see also Proposition \ref{prop5.6}). 
\end{proof}
%
The next Theorem is the main application of our results 
to the algebraic $k$-torus $\T_A\otimes \T_B$ and 
the norm one tori $\T_{A\otimes B}$ where 
$\T_A=R^{(1)}_{A/k}(\bG_m)$ is the norm one torus 
of the \'etale algebra $A/k$. 
\begin{theorem}\label{th7.5}
Let $k$ be a field. 
Let $A=\prod_{i=1}^r K_i$ and $B=\prod_{j=1}^s E_j$ be \'etale $k$-algebras 
with ${\rm dim}_k\, A=m$, ${\rm dim}_k\, B=n$  
where $K_i$ and $E_j$ are finite separable field extensions of $k$ with 
$[K_i:k]=m_i$, $[E_j:k]=n_j$ and 
$m=\sum_{i=1}^r m_i$, 
$n=\sum_{j=1}^s n_j$. 
Let $\T_A=R^{(1)}_{A/k}(\bG_m)$ be 
the norm one torus of the \'etale algebra $A/k$. 
If $\gcd(m_i,n_j\mid 1\leq i\leq r, 1\leq j\leq s)=1$, 
e.g. $\gcd(m,n)=1$, and 
$\T_A$ and $\T_B$ are stably $($resp. retract$)$ $k$-rational, then 
the algebraic $k$-torus $\T_A\otimes \T_B$ and 
the norm one torus $\T_{A \otimes B}$ are stably 
$($resp. retract$)$ $k$-rational.

In particular, if $r=s=1$, 
i.e. $A$, $B$ are finite separable field extensions of $k$, 
then 
$A\otimes B$ is a finite separable 
field extension of $k$ with $[A\otimes B:k]=mn$ and 
the norm one torus $\T_{A\otimes B}$ 
of a field extension $(A\otimes B)/k$ of dimension $mn-1$ 
is stably $($resp. retract$)$ $k$-rational. 
\end{theorem}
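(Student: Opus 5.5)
We place $X=X(A)$ and $Y=X(B)$ inside one finite Galois group: let $L$ be the compositum of the splitting fields of $A$ and $B$ and $G={\rm Gal}(L/k)$. Then $X$ and $Y$ are $G$-sets with orbits of sizes $m_i$ and $n_j$. We have $\widehat{\T}_A\simeq J_X$ and $\widehat{\T}_B\simeq J_Y$, and $\widehat{\T}_{A\otimes B}\simeq J_{X\times Y}$ because $X(A\otimes B)=X\times Y$. The tensor torus $\T_A\otimes\T_B$ has character lattice $J_X\otimes J_Y$ by Definition \ref{deftensorT}.

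The argument has two steps.

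\textbf{Step 1: the torus $\T_A\otimes\T_B$.} We apply Theorem \ref{th6.15} with $d_1={\rm p}$-${\rm ord}(J_X)$ and $d_2={\rm p}$-${\rm ord}(J_Y)$. By Theorem \ref{th8.3}(2),(3), $d_1$ equals $\gcd(m_1,\ldots,m_r)$ (this is $m$ when $r=1$), and likewise $d_2=\gcd(n_1,\ldots,n_s)$. Hence $\gcd(d_1,d_2)=\gcd(m_i,n_j\mid 1\leq i\leq r,1\leq j\leq s)=1$. Theorem \ref{th6.15} then gives that $\T_A\otimes\T_B$ is stably (resp. retract) $k$-rational.

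\textbf{Step 2: the norm one torus $\T_{A\otimes B}$.} We use the split sequence $(F^{(1R)}_{X,Y})$ of Theorem \ref{th7.4}(1), which gives
\begin{align*}
J_{X\times Y}\oplus (J_X\otimes J_Y)\simeq (J_X\otimes \bZ[Y])\oplus(\bZ[X]\otimes J_Y).
\end{align*}
Taking flabby classes, which are additive on direct sums, yields
\begin{align*}
[J_{X\times Y}]^{fl}+[J_X\otimes J_Y]^{fl}=[J_X\otimes\bZ[Y]]^{fl}+[\bZ[X]\otimes J_Y]^{fl}.
\end{align*}
We need two facts about flabby classes here.

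First, tensoring a flabby resolution $0\to J_X\to P\to F\to 0$ with the permutation lattice $\bZ[Y]$ gives the exact sequence $0\to J_X\otimes\bZ[Y]\to P\otimes\bZ[Y]\to F\otimes\bZ[Y]\to0$. By Lemma \ref{lem6.5}, the middle term is permutation and the right term is flabby, so this is a flabby resolution. If $F$ is stably permutation (resp. invertible), then so is $F\otimes\bZ[Y]$, again by Lemma \ref{lem6.5}. The same argument applies to $\bZ[X]\otimes J_Y$.

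Second, $[J_X\otimes J_Y]^{fl}$ is $0$ (resp. invertible) by Step 1.

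In the stable case, the right-hand side of the class identity is $0$, and so $[J_{X\times Y}]^{fl}=0$, since the extra summand also has class $0$. In the retract case, $[J_{X\times Y}]^{fl}$ plus an invertible class equals an invertible class. Then $[J_{X\times Y}]^{fl}$ is represented by a direct summand of an invertible lattice up to permutation summands, and is therefore invertible. Theorem \ref{th2.7} then gives the conclusion for $\T_{A\otimes B}$.

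\textbf{Main obstacle.} The delicate point is this cancellation of flabby classes. It is harmless only because similarity classes form a monoid in which $[M]+[N]$ is represented by $M\oplus N$, and a summand of an invertible lattice is invertible. For the stable case one must make sure the class equation really yields $[J_{X\times Y}]^{fl}=0$. This works because $[J_X\otimes J_Y]^{fl}=0$, so no cancellation of a nonzero class is needed.

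\textbf{Field case $r=s=1$.} When $r=s=1$, the coprimality $\gcd(m,n)=1$ makes $G$ act transitively on $X\times Y$, since the stabilizer indices combine. Hence $A\otimes B$ is a field of degree $mn$.
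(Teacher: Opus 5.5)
Your proof is correct and follows the paper's argument. For $\T_A\otimes\T_B$ you apply Theorem \ref{th6.15} with the permutation orders from Theorem \ref{th8.3}, and for $\T_{A\otimes B}$ you use the splitting $J_{X\times Y}\oplus(J_X\otimes J_Y)\simeq(J_X\otimes\bZ[Y])\oplus(\bZ[X]\otimes J_Y)$ from Theorem \ref{th7.4} together with additivity of flabby classes. The differences are minor. You treat $J_X\otimes\bZ[Y]$ by tensoring a flabby resolution of $J_X$ with $\bZ[Y]$ and citing Lemma \ref{lem6.5}, where the paper reapplies Theorem \ref{th6.15} with ${\rm p}$-${\rm ord}(\bZ[Y])=1$. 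In the field case you show the stabilizer $G_x\cap G_y$ of $(x,y)$ has index $mn$, whereas the paper computes the degree of the compositum.
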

\begin{proof}
Set $X:=X(A)$ and $Y:=X(B)$. 
Recall that the character lattice of $\T_A$ (resp. $\T_B$) is $J_X$ 
(resp. $J_Y$)
and the permutation order $\rm{p}$-${\rm ord}(J_X)$ 
(resp. $\rm{p}$-${\rm ord}(J_Y)$) 
has been computed in Theorem \ref{th8.3} (3). 
Define $d_X:={\rm p}$-${\rm ord}(J_X)=\gcd(m_i\mid 1\leq i\leq r)$ 
and 
$d_Y:={\rm p}$-${\rm ord}(J_Y)=\gcd(n_j\mid 1\leq j\leq s)$. 
Then we see $\gcd(d_X,d_Y)=\gcd(m_i,n_j\mid 1\leq i\leq r, 1\leq j\leq s)$. 

By the assumption $\gcd(d_X,d_Y)=1$, 
it follows from Theorem \ref{th6.15} that 
if $\T_A$ and $\T_B$ are stably $($resp. retract$)$ $k$-rational, 
then $\T_A\otimes \T_B$ is stably $($resp. retract$)$ $k$-rational, 
i.e. $[J_X\otimes J_Y]^{fl}=0$ (resp.  $[J_X\otimes J_Y]^{fl}$ is invertible). 

Next, for the norm one torus $\T_{A\otimes B}$, 
we consider the extensions of $G$-lattices 
\begin{align*}
&(F_X): 0 \to \bZ \to \bZ[X]  \to J_{X} \to 0\\
&(F_Y): 0 \to \bZ \to \bZ[Y] \to J_{Y} \to 0
\end{align*}
with $\gcd({\rm ord}(F_X), {\rm ord}(F_Y))=1$ as in Definition \ref{def8.1}.   
%
We apply Theorem \ref{th7.4}. 
Then the isomorphism of $G$-lattices thereof, can be rewritten here as: 
\begin{align*}
J_{X\times Y} \oplus (J_X \otimes J_Y)\simeq
(J_X \otimes \bZ[Y])\oplus (\bZ[X] \otimes J_Y).
\end{align*}
In particular, we have 
\begin{align*}
[J_{X\times Y}]^{fl}+[J_X\otimes J_Y]^{fl}=
[J_X \otimes \bZ[Y]]^{fl}+[\bZ[X] \otimes J_{Y}]^{fl}.
\end{align*}
In geometric wording, this corresponds to an isomorphism of $k$-tori 
\begin{align*}
\T_{A \otimes B} \times (\T_A\otimes \T_B)\simeq 
(\T_A\otimes R_{B/k}(\bG_m))\times (R_{A/k}(\bG_m)\otimes \T_B).
\end{align*}

By Theorem \ref{th6.15} again, 
because $R_{A/k}(\bG_{m,A})$ 
with character lattice $\bZ[X]$ and ${\rm p}$-${\rm ord}(\bZ[X])=1$ 
and $R_{B/k}(\bG_{m,B})$ 
with character lattice $\bZ[Y]$ and ${\rm p}$-${\rm ord}(\bZ[Y])=1$ 
are $k$-rational, 
if $\T_A$ and $\T_B$ are stably (resp. retract) $k$-rational, 
then $\T_A\otimes R_{B/k}(\bG_{m,B})$ and 
$R_{A/k}(\bG_{m,A})\otimes \T_B$ are stably (resp. retract) $k$-rational, 
i.e. $[J_X\otimes \bZ[Y]]^{fl}=[\bZ[X]\otimes J_Y]^{fl}=0$ 
(resp. $[J_X\otimes \bZ[Y]]^{fl}$ and $[\bZ[X]\otimes J_Y]^{fl}$ 
are invertible). 

This implies that if 
$\gcd(d_X,d_Y)=1$ 
and 
$\T_A$ and $\T_B$ are stably (resp. retract) $k$-rational, then 
$\T_{A\otimes B}$ is stably (resp. retract) $k$-rational, 
i.e. $[J_{X\times Y}]^{fl}=0$ 
(resp. $[J_{X\times Y}]^{fl}$ is invertible). 

For the last assertion, if $r=s=1$, then $A$, $B$ 
are finite separable field extensions of $k$. 
If $\gcd(m,n)=1$, then 
the surjective map $A\otimes B\to AB$ becomes isomorphic 
because ${\rm dim}_k\, A\otimes B=mn$ and 
it follows from $[A:k]=m$, $[B:k]=n$, $A\cap B=k$ that 
${\rm dim}_k\,AB=[AB:k]=mn$
where $AB$ is the composite field of $A, B$, 
i.e. the smallest field which contains $A$, $B$. 
\end{proof}

There is a partial converse to Theorem \ref{th7.5}: 
\begin{proposition}\label{propconv}
Let $A$ and $B$ be \'etale $k$-algebras. 
Denote by $k \subset L_1 \subset \overline k$ 
$($resp. $k \subset L_2 \subset \overline k$$)$ 
the minimal Galois splitting field of $A$ $($resp. $B$$)$. 
Define $G_i:=\Gal(L_i/k)$ $(i=1,2)$. 
Let $\T_A=R^{(1)}_{A/k}(\bG_m)$ be 
the norm one torus of the \'etale algebra $A/k$. 
If $L_1$ and $L_2$ are $k$-linearly disjoint, 
i.e. $L_1 \otimes L_2=L_1L_2$ is a Galois field extension of $k$ 
with $\Gal(L_1L_2/k)\simeq G_1 \times G_2$ 
and $\T_{A\otimes B}$ is stably $($resp. retract$)$ $k$-rational, then 
$\T_A$ and $\T_B$ are stably $($resp. retract$)$ $k$-rational.
\end{proposition}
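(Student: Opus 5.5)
The plan is to restrict everything to the first factor of $G:=\Gal(L_1L_2/k)\simeq G_1\times G_2$, in the spirit of Proposition \ref{convth6.15}. By symmetry it suffices to treat $\T_A$.

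Set $X:=X(A)$ and $Y:=X(B)$. These are $G$-sets on which $G$ acts through the projections $\varphi_1\colon G\to G_1$ and $\varphi_2\colon G\to G_2$ respectively. Then $X(A\otimes B)=X\times Y$ and $\widehat{\T}_{A\otimes B}\simeq J_{X\times Y}$. Take a flabby resolution of $G$-lattices
\begin{align*}
0\to J_{X\times Y}\to P\to F\to 0 .
\end{align*}
By hypothesis and Theorem \ref{th2.7}, $F$ is stably permutation (resp. invertible).

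Restrict this sequence to the subgroup $G_1=G_1\times\{1\}\leq G$. Restriction of a permutation lattice is permutation, so $P|_{G_1}$ is permutation. Flabbiness is a condition on $\widehat H^{-1}(H,-)$ for all subgroups $H$, so it survives restriction, and $F|_{G_1}$ is flabby. Likewise being stably permutation or invertible is preserved under restriction. Hence $[J_{X\times Y}|_{G_1}]^{fl}=[F|_{G_1}]$ is $0$ (resp. invertible) as a $G_1$-lattice class.

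Next, compute $J_{X\times Y}|_{G_1}$. Since $G_1\times\{1\}$ acts trivially on $Y$, we have, as $G_1$-sets,
\begin{align*}
X\times Y\simeq \coprod_{y\in Y}X\times\{y\},
\end{align*}
which is $n=|Y|$ copies of the $G_1$-set $X$. Applying Theorem \ref{th4.2} repeatedly (removing one orbit whose stabiliser occurs again in the list), or simply its example case with all $K_i$ equal, gives
\begin{align*}
J_{X\times Y}|_{G_1}\simeq J_X\oplus\bZ[X]^{\oplus (n-1)} .
\end{align*}
Here $X$ itself may have several $G_1$-orbits, so we apply Theorem \ref{th4.2} orbit-by-orbit to the duplicated copies. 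Hence $[J_X]^{fl}=[J_{X\times Y}|_{G_1}]^{fl}$ as $G_1$-lattices, and this class is $0$ (resp. invertible).

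Finally, $J_X$ regarded as a $G_1$-lattice is exactly the character lattice of $\T_A$, which is split by $L_1$ with $\Gal(L_1/k)=G_1$. Theorem \ref{th2.7} (1) (resp. (3)) then gives that $\T_A$ is stably (resp. retract) $k$-rational. The same argument with $G_2$ gives the claim for $\T_B$.

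The main point to check carefully is the identification of $G_1\times\{1\}\leq G$ with $\Gal(L_1/k)$ acting on $J_X$. This is where linear disjointness is used: it makes $G$ the full product, so $G_1\times\{1\}$ is a subgroup that fixes $Y$ pointwise and maps isomorphically onto $G_1$. Everything else is formal.
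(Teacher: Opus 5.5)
Your proposal is correct and follows the paper's proof: you restrict a flabby resolution of $J_{X\times Y}$ to $G_1\times\{1\}$, use Theorem \ref{th4.2} to get $J_{X\times Y}|_{G_1}\simeq J_X\oplus\bZ[X]^{\oplus(|Y|-1)}$, and conclude via Theorem \ref{th2.7}. Your orbit-by-orbit application of Theorem \ref{th4.2} and your explicit check that flabby resolutions, stable permutation and invertibility all survive restriction supply details the paper leaves implicit.
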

\begin{proof}
We can apply 
Theorem \ref{th4.2} and use a technique of 
\cite[The proof of Theorem 2.1, page 89]{End11} by Shizuo Endo. 
Denote by $X:=X(A)$ the $G_1$-set associated to $A$ and 
$Y:=X(B)$ the $G_2$-set associated to $B$ with $n_1:=|X|$, $n_2:=|Y|$. 
Define $G:=G_1 \times G_2$.  
The character lattice of the algebraic $k$-torus $\T_A \otimes \T_B$ is 
the $G$-lattice $J_{X \times Y}$, fitting into the extension
\begin{align*}
(F_{X \times Y}): 0 \to\bZ\to
\bZ[X \times Y]\to J_{X \times Y}\to 0.
\end{align*}
Restricting the action of $G$ to $G_1$, we get the exact sequence of 
$G_1$-lattices: 
\begin{align*}
 0 \to\bZ\to
\bZ[X]^{\oplus n_2}\to J_{X \times Y}|_{G_1}\to 0.
\end{align*}
By Theorem \ref{th4.2}, we get an isomorphism of $G_1$-lattices
\begin{align*}
J_{X \times Y}|_{G_1} \simeq J_{X} \bigoplus \bZ[X]^{\oplus n_2-1}
\end{align*}
and $L(J_{X\times Y})^G|_{G_1}=L(J_{X})^{G_1}(u_1,\ldots,u_t)$, 
i.e. $[J_{X\times Y}]^{fl}|_{G_1}=[J_{X}]^{fl}$ as $G_1$-lattices. 
Hence if $[J_{X\times Y}]^{fl}=0$ (resp. $[J_{X\times Y}]^{fl}$ 
is invertible) as a $G$-lattice, 
then $[J_{X}]^{fl}=0$ (resp. $[J_{X}]^{fl}$ is invertible) 
as a $G_1$-lattice. 
This implies that $\T_A$ is stably (resp. retract) $k$-rational 
(via Theorem \ref{th2.7}). 
Similarly, we obtain the same result for $\T_B$ by restricting 
the action of $G$ to $G_2$. 
\end{proof}

\begin{example}\label{ex7.7}
The condition  $\gcd(m,n)=1$ 
in Theorem \ref{th7.5} is necessary.\\ 
(1) $m={\rm p}$-${\rm ord}(J_X)=|X|=3$, $n={\rm p}$-${\rm ord}(J_Y)=|Y|=3$, 
$\gcd(m,n)=3$ and $k=\bQ$, we see that 
if $A=\bQ(\sqrt[3]{2})$, $B=\bQ(\theta)$ 
with $\theta^3-\theta-1=0$ 
(resp. $A=\bQ(\sqrt[3]{2})$, $B=\bQ(\sqrt[3]{5})$), 
then ${\rm Gal}(L/\bQ)\simeq S_3\times S_3$ 
(resp. ${\rm Gal}(L/\bQ)={\rm Gal}(\bQ(\sqrt[3]{2},\sqrt[3]{5},\sqrt{-3})/\bQ)
\simeq (C_3\times C_3)\rtimes C_2$) and 
$\T_A\otimes\T_B$ of 
dimension $2\cdot 2=4$ is not retract $k$-rational 
although $\T_A=R^{(1)}_{A/k}(\bG_m)$ and $\T_B=R^{(1)}_{B/k}(\bG_m)$ are 
stably $k$-rational.\\ 
(2) 
$m={\rm p}$-${\rm ord}(J_X)=|X|=2$, $n={\rm p}$-${\rm ord}(J_Y)=|Y|=2$, 
$\gcd(m,n)=2$ and $k=\bQ$, we see that 
if $A=\bQ(\sqrt{2})$, $B=\bQ(\sqrt{3})$, then 
$\T_{A\otimes B}=R^{(1)}_{A\otimes B/k}(\bG_m)
=R^{(1)}_{\bQ(\sqrt{2},\sqrt{3})/\bQ}(\bG_m)$ 
of dimension $2\cdot 2-1=3$ is not retract $k$-rational 
(see Kunyavskii \cite[Theorem 1]{Kun90}, 
Hoshi and Yamasaki \cite[Theorem 1.2]{HY17} for the case $U_1$) 
although $\T_A=R^{(1)}_{A/k}(\bG_m)$ and $\T_B=R^{(1)}_{B/k}(\bG_m)$ are 
stably $k$-rational.\\
(3) 
Let 
$A=K_1=\bQ(\theta)$ (resp. $B=F_1=\bQ(\eta)$) 
be the cyclic cubic field with discriminant $7^2$ (resp. $9^2$) 
where $\theta^3+\theta^2-2\theta-1=0$ 
(resp. $\eta^3-3\eta-1=0$) with 
$[A:k]=[B:k]=3$, 
${\rm Gal}(A/k)\simeq {\rm Gal}(B/k)\simeq C_3$ and $A\cap B=k$. 
Then we find that $G={\rm Gal}(L/k)\simeq C_3\times C_3$ and 
$m={\rm p}$-${\rm ord}(J_X)=3$, $n={\rm p}$-${\rm ord}(J_Y)=3$, $\gcd(m,n)=3$. 
We see that 
$\T_A\otimes\T_B$ of 
dimension $2\cdot 2=4$ 
and $\T_{A\otimes B}=R^{(1)}_{A\otimes B/k}(\bG_m)$ of 
dimension $3\cdot 3-1=8$ are not retract $k$-rational 
although $\T_A=R^{(1)}_{A/k}(\bG_m)$ and $\T_B=R^{(1)}_{B/k}(\bG_m)$ are 
stably $k$-rational. 

We can confirm the claims above by using the GAP (\cite{GAP}) function 
{\tt IsInvertibleF} as in Hoshi and Yamasaki 
\cite[Algorithm F2, page 84]{HY17}. 
The related GAP functions are also available in \cite{RatProbAlgTori}. 
\end{example}

\begin{example}\label{ex7.5}
If $F_j=k$ $(1\leq j\leq s)$, then $B=\prod_{j=1}^s k\simeq k\times \cdots\times k$ and  
$A\otimes B\simeq \prod_{j=1}^s A=A\times\cdots\times A$. 
It follows from $n_j=1$ $(1\leq j\leq s)$ that 
$\gcd(m_i,n_j\mid 1\leq i\leq r, 1\leq j\leq s)=1$. 
By Theorem \ref{th7.5}, we see that 
if $\T_A=R^{(1)}_{A/k}(\bG_m)$ is stably $($resp. retract$)$ $k$-rational, 
then $\T_{A\otimes B}=R^{(1)}_{(A \otimes B)/k}(\bG_m)\simeq R^{(1)}_{(\prod_{j=1}^s A)/k}(\bG_m)
\simeq R^{(1)}_{(A\times\dots\times A)/k}(\bG_m)$ 
is stably $($resp. retract$)$ $k$-rational. 
This is a special case of a theorem  of Endo \cite[Proposition 1.3, Corollary 1.4]{End11}. 
We note that the inverse direction of the statement also holds by 
Endo's theorem.

For example, if $A=\bQ(\sqrt{2})$ and $B=\bQ\times\cdots\times \bQ$, then 
$A\otimes B\simeq \bQ(\sqrt{2})\otimes(\bQ\times\cdots\times \bQ)
\simeq \bQ(\sqrt{2})\times\cdots\times \bQ(\sqrt{2})$ and 
$\T_{A\otimes B}=R^{(1)}_{(\bQ(\sqrt{2})\times\dots\times \bQ(\sqrt{2}))/\bQ}(\bG_m)$ 
is stably $\bQ$-rational.
\end{example}

\section{Applications to Hasse norm principle for field extensions $K/k$ where $K=K_1\otimes\cdots\otimes K_t$}

We treat the case where $r=s=1$, i.e. $A=K_1$, $B=K_2$, 
of Theorem \ref{th7.5} but for general 
$K=K_1\otimes \cdots\otimes K_t$. 

\begin{theorem}\label{th8.1}
Let $k$ be a field, 
$\overline{k}$ be a fixed separable closure of $k$,  
$K_i/k$ $(1\leq i\leq t)$ be a finite separable field extension 
with $[K_i:k]=n_i$ 
and $L_i/k$ 
be the Galois closure of $K_i/k$ in $\overline{k}$ 
with Galois groups $G_i={\rm Gal}(L_i/k)$ and $H_i={\rm Gal}(L_i/K_i)\leq G_i$ 
with $[G_i:H_i]=n_i$.  
Let $L=L_1\cdots L_t\subset \overline{k}$ be the composite field of 
$L_1,\ldots,L_t$, 
i.e. the smallest field which contains all $L_i$, 
with Galois group $G={\rm Gal}(L/k)$ which is 
a subdirect product of $G_1,\ldots,G_t$, i.e. 
a subgroup $G\leq G_1\times\cdots\times G_t$ 
with surjections $\varphi_i: G\rightarrow G_i$ $(1\leq i\leq t)$. 

Assume that 
$\gcd(n_i,n_j)=1$ for any $1\leq i<j\leq t$. 
Then $K:=K_1\otimes\cdots\otimes K_t=K_1\cdots K_t$ 
is a finite separable field extension of $k$ with 
$[K:k]=n:=n_1\cdots n_t$, 
$G={\rm Gal}(L/k)=nTm\leq S_n$ transitive and 
$H={\rm Gal}(L/K)\leq G$ with 
$H=\cap_{i=1}^t\varphi_i^{-1}(H_i)$ and $[G:H]=n$.
Let $\T_{K_i}=R^{(1)}_{K_i/k}(\bG_m)$ $(1\leq i\leq t)$ be 
the norm one torus of $K_i/k$. 
If $\T_{K_i}$ 
is stably $($resp. retract$)$ $k$-rational 
for any $1\leq i\leq t$, 
then the algebraic $k$-torus $\T_{K_1}\otimes\cdots\otimes \T_{K_t}$ 
with character lattice 
$J_{G_1/H_1}\otimes\cdots\otimes J_{G_t/H_t}$
and 
the norm one torus $\T_K$ with $\widehat{\T}_K\simeq J_{G/H}$ 
are stably $($resp. retract$)$ $k$-rational. 

In particular, by Theorem \ref{thV} 
$($see also Section \ref{S2} and Section \ref{S4}$)$, 
if $k$ is a global field and $\T_K=R^{(1)}_{K/k}(\bG_m)$ 
is stably $($resp. retract$)$ $k$-rational, then 
$\Sha^2_\omega(G,J_{G/H})\simeq H^1(k,{\rm Pic}\, \overline{X})\simeq 
H^1(G,{\rm Pic}\, X_K)\simeq H^1(G,[J_{G/H}]^{fl})\simeq 
{\rm Br}(X)/{\rm Br}(k)\simeq 
{\rm Br}_{\rm nr}(k(X))/{\rm Br}(k)=0$ 
where $X$ is a smooth $k$-compactification of $\T_K$ 
and hence 
$A(\T_K)=0$, i.e. $\T_K$ has the weak approximation property, 
and $\Sha(\T_K)=0$, i.e. the Hasse norm principle holds for $A/k$ 
$($that is, Hasse principle holds for all torsors $E$ under $\T_K$$)$ 
where $\Sha(\T_K)^\vee\leq \Sha^2_\omega(G,J_{G/H})$ 
and $\Sha(\T_K)^\vee={\rm Hom}(\Sha(\T_K),\bQ/\bZ)$. 
The condition $\Sha(\T_K)=0$ means that 
for the corresponding norm hypersurface 
\begin{align*}
f(x_1,\ldots,x_n)=b, 
\end{align*}
it has a $k$-rational point 
if and only if it has a $k_v$-rational point 
for any place $v$ of $k$ where 
$f(x_1,\ldots,x_n)\in k[x_1,\ldots,x_n]$ 
is the polynomial of total degree $n$ 
defined as 
\begin{align*}
f(x_1,\ldots,x_n)=N_{K/k}(x_1w_1+\cdots+x_nw_n)=\prod_{\overline{g}\in G/H}\overline{g}(x_1w_1+\cdots+x_nw_n)
\end{align*}
where 
$\{w_1,\ldots,w_n\}$ is a basis of $K/k$, 
$N_{K/k}:K^\times\to k^\times$ is the norm map 
and $b\in k^\times$ $($see Voskresenskii \cite[Example 4, page 122]{Vos98}$)$. 
\end{theorem}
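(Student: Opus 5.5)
The plan is to induct on $t$, with Theorem \ref{th7.5} (the case $r=s=1$) as the engine. The field-theoretic part comes first. Pairwise coprime degrees force $K:=K_1\otimes\cdots\otimes K_t$ to be a field equal to $K_1\cdots K_t$. For $t=2$ this is the last paragraph of the proof of Theorem \ref{th7.5}: $[K_1K_2:k]$ is divisible by both $n_1$ and $n_2$, hence by $n_1n_2$, so the surjection $K_1\otimes K_2\to K_1K_2$ is an isomorphism. Inductively, put $K':=K_1\otimes\cdots\otimes K_{t-1}=K_1\cdots K_{t-1}$, of degree $n_1\cdots n_{t-1}$. Since $\gcd(n_1\cdots n_{t-1},n_t)=1$, the same argument applies to $K'\otimes K_t$. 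So $K$ is a field of degree $n=n_1\cdots n_t$ and its Galois closure is $L=L_1\cdots L_t$. The $G$-set $X(K)=\prod_i X(K_i)=\prod_i G_i/H_i$ is transitive, and the stabiliser of the base point is $\bigcap_i\varphi_i^{-1}(H_i)=H$. Hence $[G:H]=n$, $G$ acts faithfully (being a subgroup of $\prod G_i$ acting on the product of faithful sets), and $G\leq S_n$ is transitive.

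Next comes the rationality of $\T_K$, by induction on $t$. Assume $\T_{K'}$ is stably (resp. retract) $k$-rational. Apply Theorem \ref{th7.5} with $A=K'$, $B=K_t$, $r=s=1$, $m=n_1\cdots n_{t-1}$ and $n=n_t$, so that $\gcd(m,n)=1$. This gives that $\T_{K'\otimes K_t}=\T_K$ is stably (resp. retract) $k$-rational. Here all lattices should be viewed as $G$-lattices for the common group $G=\Gal(L/k)$ acting through the $\varphi_i$; this is harmless by Definition \ref{deftensorT}.

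For the tensor product torus, I will also use induction with Theorem \ref{th6.15}. Write $M'=J_{G_1/H_1}\otimes\cdots\otimes J_{G_{t-1}/H_{t-1}}$. By Theorem \ref{th8.3}(2), $\mathrm{p}$-$\mathrm{ord}(J_{G_i/H_i})=n_i$. The Remark after Theorem \ref{th6.7} gives $\mathrm{p}$-$\mathrm{ord}(M')\mid n_1\cdots n_{t-1}$, which is coprime to $n_t$. By induction the torus with character lattice $M'$ is stably (resp. retract) rational, and Theorem \ref{th6.15} then yields the claim for $M'\otimes J_{G_t/H_t}$. One point needs checking: permutation orders must be computed for $G$ rather than $G_i$. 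Inflation along $G\to G_i$ preserves a factorisation of $a\,\mathrm{Id}$ through a permutation lattice, so the order over $G$ divides the order over $G_i$, and divisibility is all that is needed.

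Finally, the global field consequences follow formally. Retract rationality makes $[J_{G/H}]^{fl}$ invertible, so $H^1(G,[J_{G/H}]^{fl})=0$. The chain of isomorphisms with $\Sha^2_\omega$, $\mathrm{Br}(X)/\mathrm{Br}(k)$ and so on comes from Theorem \ref{thCTS87}. Theorem \ref{thV} then gives $A(\T_K)=0$ and $\Sha(\T_K)=0$, and Theorem \ref{thOno} translates $\Sha(\T_K)=0$ into the Hasse norm principle, i.e. the statement about the norm hypersurface. The main obstacle is bookkeeping rather than anything deep: making sure the induction is carried out uniformly over a single group $G$, and that the coprimality hypothesis on permutation orders survives each induction step. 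The divisibility argument above handles the second point.
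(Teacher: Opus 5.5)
Your proposal is correct and follows the paper's proof, which consists simply of applying Theorem \ref{th7.5} repeatedly under the pairwise coprimality assumption. Your treatment of the $t$-fold torus $\T_{K_1}\otimes\cdots\otimes\T_{K_t}$ makes explicit a step the paper leaves implicit, since $\T_{K_1}\otimes\T_{K_2}$ is not itself a norm one torus: you use Theorem \ref{th6.15}, the divisibility ${\rm p}$-${\rm ord}(M_1\otimes M_2)\mid d_1d_2$, and the observation that permutation orders can only decrease under inflation to $G$.
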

\begin{proof}
By the assumption $\gcd(n_i,n_j)=1$ for any $1\leq i<j\leq t$, 
this follows from Theorem \ref{th7.5} repeatedly. 
\end{proof}

We wiil use the following lemma which gives 
the set of all subdirect products of $G_1$, $G_2$. 
\begin{lemma}[{Hoshi and Yamasaki \cite[Proof of Theorem 8.1 (1)]{HY1}}]\label{lem2.2} 
Let $G_1$ and $G_2$ be finite groups. 
For a subdirect product $G\leq G_1\times G_2$ of $G_1$, $G_2$ 
with surjections $\varphi_1: G\to G_1$, $\varphi_2: G\to G_2$, define 
\begin{align*}
N_1:=\varphi_1({\rm Ker}(\varphi_2))\lhd G_1, 
N_2:=\varphi_2({\rm Ker}(\varphi_1))\lhd G_2, 
\pi_1:G_1\rightarrow G_1/N_1, 
\pi_2:G_2\rightarrow G_2/N_2.
\end{align*}
Then we have 
$\overline{\varphi}_1=\pi_1\circ\varphi_1: G\rightarrow G_1/N_1$, 
$\overline{\varphi}_2=\pi_2\circ\varphi_2: G\rightarrow G_2/N_2$, 
and $\overline{\varphi}=(\overline{\varphi}_2)(\overline{\varphi}_1)^{-1}:
G_1/N_1\xrightarrow{\sim}G_2/N_2$. 
We find that a subdirect product 
$G\leq G_1\times G_2$ of $G_1$, $G_2$ 
with surjections $\varphi_1: G\to G_1$, $\varphi_2: G\to G_2$ 
is given by 
\begin{align*}
G=\{(g_1,g_2)\in G_1\times G_2\mid \overline{\varphi}(\pi_1(g_1))=\pi_2(g_2), 
\overline{\varphi}=(\overline{\varphi}_2)(\overline{\varphi}_1)^{-1}:
G_1/N_1\xrightarrow{\sim}G_2/N_2\}
\end{align*}
and hence there exists a one-to-one correspondence 
between the set of all subdirect products $G$ of $G_1$, $G_2$ 
with surjections $\varphi_1: G\to G_1$, $\varphi_2: G\to G_2$ and 
\begin{align*}
\{(N_1,N_2,\overline{\varphi})\mid N_1\lhd G_1, N_2\lhd G_2, 
\overline{\varphi}=(\overline{\varphi}_2)(\overline{\varphi}_1)^{-1}:
G_1/N_1\xrightarrow{\sim}G_2/N_2\}. 
\end{align*}
\end{lemma}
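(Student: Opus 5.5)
The statement is Lemma \ref{lem2.2}, a purely group-theoretic description of subdirect products (Goursat's lemma). I will prove it directly from the definitions.

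\textbf{Step 1: the $N_i$ are normal.} Since $\mathrm{Ker}(\varphi_2)\lhd G$ and $\varphi_1$ is surjective, $N_1=\varphi_1(\mathrm{Ker}(\varphi_2))$ is normal in $G_1$. Symmetrically, $N_2\lhd G_2$.

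\textbf{Step 2: the induced isomorphism.} Define $\overline{\varphi}:G_1/N_1\to G_2/N_2$ by $\pi_1(\varphi_1(g))\mapsto \pi_2(\varphi_2(g))$ for $g\in G$.
\begin{itemize}
\item It is well defined. If $\varphi_1(g)N_1=\varphi_1(g')N_1$, there is $h\in\mathrm{Ker}(\varphi_2)$ with $\varphi_1(g^{-1}g')=\varphi_1(h)$. Then $g^{-1}g'h^{-1}\in\mathrm{Ker}(\varphi_1)$, so $\varphi_2(g^{-1}g')=\varphi_2(g^{-1}g'h^{-1})\in N_2$.
\item It is a surjective homomorphism, because $\overline{\varphi}_1$ and $\overline{\varphi}_2$ are surjective homomorphisms.
\item It is injective by the symmetric argument, which shows that the rule $\overline{\varphi}_2(g)\mapsto\overline{\varphi}_1(g)$ is also well defined and gives the inverse.
\end{itemize}
Hence $\overline{\varphi}=\overline{\varphi}_2\overline{\varphi}_1^{-1}$ is an isomorphism.

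\textbf{Step 3: $G$ equals the displayed set $\Gamma$.} For $g\in G$ we have $\overline{\varphi}(\pi_1\varphi_1(g))=\pi_2\varphi_2(g)$ by construction, so $G\subseteq\Gamma$.

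For the reverse inclusion, take $(g_1,g_2)\in\Gamma$.
\begin{itemize}
\item Choose $g\in G$ with $\varphi_1(g)=g_1$.
\item Then $\pi_2(\varphi_2(g))=\pi_2(g_2)$, so $g_2=\varphi_2(g)n_2$ for some $n_2\in N_2$.
\item Write $n_2=\varphi_2(h)$ with $h\in\mathrm{Ker}(\varphi_1)$.
\item Then $gh\in G$ and $gh=(g_1,g_2)$, viewing $G\le G_1\times G_2$ via $(\varphi_1,\varphi_2)$.
\end{itemize}
This is the only step needing care: one must use that the kernel of the first projection restricted to $G$ maps onto $N_2$.

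\textbf{Step 4: the bijection.} The map $G\mapsto(N_1,N_2,\overline{\varphi})$ is injective by Step 3.

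For surjectivity, take any triple with $N_i\lhd G_i$ and an isomorphism $\theta:G_1/N_1\xrightarrow{\sim}G_2/N_2$, and define $\Gamma_\theta=\{(g_1,g_2)\mid \theta\pi_1(g_1)=\pi_2(g_2)\}$.
\begin{itemize}
\item $\Gamma_\theta$ is a subgroup, being the preimage of the graph of $\theta$.
\item It is subdirect, since $\theta$ is a bijection and the $\pi_i$ are onto.
\item $\mathrm{Ker}(\varphi_2|_{\Gamma_\theta})=\{(g_1,1)\mid \theta\pi_1(g_1)=1\}=N_1\times 1$, because $\theta$ is injective. Hence its image under $\varphi_1$ is $N_1$, and likewise for $N_2$.
\item The induced isomorphism is $\theta$ by construction.
\end{itemize}
This completes the correspondence.
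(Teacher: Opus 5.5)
Your proof is correct and complete: it is the standard proof of Goursat's lemma, with the right key point in Step 3, namely that $\varphi_2(\mathrm{Ker}(\varphi_1))=N_2$ lets you adjust the second coordinate without changing the first. The paper gives no proof of this lemma, only a citation to Hoshi and Yamasaki, so there is no competing argument to compare against.
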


\subsection{Stably $k$-rational norm one tori $\T_K=R^{(1)}_{K/k}(\bG_m)$ of field extensions $K/k$ where $K=K_1\otimes\cdots\otimes K_t$}
\begin{theorem}[Application of Theorem \ref{th8.1}: Stably $k$-rational norm one tori $\T_K=R^{(1)}_{K/k}(\bG_m)$ of field extensions $K/k$ where $K=K_1\otimes\cdots\otimes K_t$]\label{th8.2}
Let the notation be given as in Theorem \ref{th8.1}.
%
Assume that 
$\gcd(n_i,n_j)=1$ for any $1\leq i<j\leq t$. 
Then $K:=K_1\otimes\cdots\otimes K_t=K_1\cdots K_t$ is a finite separable 
field extension of $k$ with 
$[K:k]=n:=n_1\cdots n_t$, 
$G={\rm Gal}(L/k)\leq G_1\times\cdots\times G_{n_t}$, 
$G=nTm\leq S_n$ transitive and 
$H={\rm Gal}(L/K)\leq G$ with $H=\cap_{i=1}^t\varphi_i^{-1}(H_i)$ 
and $[G:H]=n$.
If we choose $G_i\leq S_{n_i}$ transitive 
and $H_i$ with $[G_i:H_i]=n_i$ $(1\leq i\leq t)$ and 
$\gcd(n_i,n_j)=1$ for any $1\leq i<j\leq t$ from the following table 
\begin{align*}
\begin{tabular}{c|cccccccc} 
$G_i$ & $C_{m_i}$ & $D_{m_i}$ & $D_{m_i}$ & $MC_{u,2^d}$ & $A_5$ & $A_5$ & $\PSL_2(\bF_8)$ & $\PSL_2(\bF_8)$ \\\hline
$H_i$ & $\{1\}$ & $\{1\}$ & $C_2$ & $\{1\}$ & $C_2^2$ & $A_4$ & $C_2^3$ & $C_2^3\rtimes C_7$\\\hline
$n_i$ & $m_i$ & $2m_i$ & $m_i$ & $2^du$ & $15$ & $5$ & $63$ & $9$
\end{tabular}
\end{align*}
where 
\begin{align*}
MC_{u,2^d}=\langle\sigma,\tau\mid\sigma^u=\tau^{2^d}=1,
\tau\sigma\tau^{-1}=\sigma^{-1}\rangle\simeq C_u\rtimes C_{2^d}\ 
(d\geq 1, u\geq 3: {\rm odd}).
\end{align*} 
Then the norm one torus 
$\T_{K_i}=R^{(1)}_{K_i/k}(\bG_m)$ of $K_i/k$  
is stably $k$-rational 
for any $1\leq i\leq t$ 
and hence 
the norm one torus 
$\T_K=R^{(1)}_{K/k}(\bG_m)$ of $K/k$ 
with $\widehat{\T}_K\simeq J_{G/H}$ 
is stably $k$-rational. 
In particular, if $k$ is a global field, then 
$\Sha^2_\omega(J_{G/H})=0$ and hence 
$\T_K$ has the weak approximation property and 
the Hasse norm principle holds for $K/k$.  
\end{theorem}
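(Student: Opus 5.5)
The plan is to reduce everything to Theorem \ref{th8.1}: once we know that each $\T_{K_i}$ is stably $k$-rational, Theorem \ref{th8.1} (applied inductively via Theorem \ref{th7.5}, using the pairwise coprimality of the $n_i$) gives stable $k$-rationality of $\T_K$ with $K=K_1\otimes\cdots\otimes K_t=K_1\cdots K_t$, $[K:k]=n_1\cdots n_t$. The Hasse norm principle and weak approximation statements then follow exactly as recorded at the end of Theorem \ref{th8.1}. Stable rationality gives $[J_{G/H}]^{fl}=0$. Hence $H^1(G,[J_{G/H}]^{fl})=0$, so $\Sha^2_\omega(G,J_{G/H})=0$ by Theorem \ref{thCTS87}, and Theorem \ref{thV} together with Theorem \ref{thOno} yields $A(\T_K)=0$ and $\Sha(\T_K)=0$. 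So the only real content is verifying stable $k$-rationality for each entry of the table. The structural facts about $G$ (subdirect product of the $G_i$, transitive of degree $n$, $H=\cap\varphi_i^{-1}(H_i)$) are already part of Theorem \ref{th8.1}.

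I will check the columns one by one against the earlier results.
\begin{itemize}
\item $(C_{m_i},\{1\})$: $K_i/k$ is cyclic Galois, so Theorem \ref{th1.12} (condition (3) with $G\simeq C_m$) applies.
\item $(D_{m_i},\{1\})$: $K_i/k$ is Galois with group $D_{m_i}$. This is the case $d=1$ of Theorem \ref{th1.12} (4), taking $m$ odd and $r=-1$ so that $r^2\equiv1$. So here $m_i$ must be odd, $m_i\ge3$, which is presumably the implicit convention for $D_{m_i}$ in the table.
\item $(D_{m_i},C_2)$ with $m_i$ odd: use Theorem \ref{th1.14} (3), or directly the $k$-rationality proposition following Proposition \ref{prop5.14}.
\item $(MC_{u,2^d},\{1\})$: this is Galois and falls under Theorem \ref{th1.12} (4) with $m=u$ odd and $r=-1$.
\item $(A_5,C_2^2)$, $(A_5,A_4)$, $(\PSL_2(\bF_8),C_2^3)$, $(\PSL_2(\bF_8),C_2^3\rtimes C_7)$: these are exactly the exceptional stably rational cases of Theorem \ref{th1.24} (1) and (6). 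They also appear in Table $2$ as $15T5$, $5T4$ and $9T27$ in Theorem \ref{th1.23}.
\end{itemize}
In each case $n_i=[G_i:H_i]$ matches the table.

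With every $\T_{K_i}$ stably rational and $\gcd(n_i,n_j)=1$, the chain is as follows. Theorem \ref{th8.1} gives that the compositum is a field of degree $n$, and iterated application of Theorem \ref{th7.5} to $K_1\cdots K_{i-1}$ and $K_i$ gives the conclusion. The degree $n_1\cdots n_{i-1}$ is coprime to $n_i$, and stable rationality of $\T_{K_1\cdots K_{i-1}}$ holds by induction.

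The main obstacle is purely bookkeeping. I must make sure the hypotheses of the cited classification theorems really match the table entries, in particular the parity conventions. $D_{m_i}$ with $H_i=\{1\}$ is stably rational only for odd $m_i$, by Theorem \ref{th1.12}: for even $m_i$ the Sylow $2$-subgroup of $D_{m_i}$ is non-cyclic. So I would state explicitly that $m_i$ is odd in the dihedral columns, and note that $m_i$ may be arbitrary in the cyclic column.
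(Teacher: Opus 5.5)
Your proposal is correct and follows the paper's proof: like the paper, you deduce the result from Theorem \ref{th8.1} after checking, column by column via Theorems \ref{th1.12}, \ref{th1.14}, \ref{th1.23} and \ref{th1.24}, that each $\T_{K_i}$ is stably $k$-rational. Your remark that $m_i$ must be odd in the two dihedral columns is correct; the paper leaves this condition implicit.
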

\begin{proof}
This follows from Theorem \ref{th8.1} with the aid of 
Theorem \ref{th1.11}, 
Theorem \ref{th1.12}, 
Theorem \ref{th1.14}, 
Theorem \ref{th1.17}, 
Theorem \ref{th1.18}, 
Theorem \ref{th1.23} 
and Theorem \ref{th1.24}. 
\end{proof}
\begin{example}[Example of Theorem \ref{th8.2}: stably $k$-rational tori $\T_K=R^{(1)}_{K/k}(\bG_m)$ for $G={\rm Gal}(L/k)=nTm\leq S_n$ $(2\leq n\leq 15)$ as in Theorem \ref{th1.23} (Table $2$)]\label{ex8.3}~\\
Assume that $n=n_1\cdots n_t$ with $\gcd(n_i,n_j)=1$ for any $1\leq i<j\leq t$, and that  
each $\T_{K_i}=R^{(1)}_{K_i/k}(\bG_m)$ $(1\leq i\leq t)$ is stably $k$-rational. 
By applying Theorem \ref{th8.1}, 
we see that $R^{(1)}_{K/k}(\bG_m)$ with $K=K_1 \otimes \cdots \otimes  K_t$ is stably $k$-rational. 
Then the Galois group $G={\rm Gal}(L/K)=nTm\leq S_n$ is a subdirect product of 
$G_i={\rm Gal}(L_i/k)=n_iTm_i\leq S_{n_i}$ $(1\leq i\leq t)$. 
By Lemma \ref{lem2.2}, 
all of the subdirect product 
$G=nTm\leq S_n$ $(n=6$, $10$, $12$, $14$, $15)$ of $G_1=n_1Tm_1\leq S_{n_1}$, 
$G_2=n_2Tm_2\leq S_{n_2}$ with $G\leq G_1\times G_2$, 
$n=n_1n_2$ and $\gcd(n_1,n_2)=1$ 
when $\T_{K_i}=R^{(1)}_{K_i/k}(\bG_m)$ $(i=1,2)$ is 
stably $k$-rational 
as in Theorem \ref{th1.23} (Table $2$) 
are given as follows:\\
(1) ($n=6=n_1n_2=2\cdot 3$) $G=6Tm\leq S_2\times S_3$;\\ 
$6T1\simeq C_2\times C_3$, $6T3\simeq C_2\times S_3$, 
$6T2\simeq S_3\leq C_2\times S_3$ with $C_2\simeq S_3/C_3$.\\
(2) $n=10=n_1n_2=2\cdot 5$: $G=10Tm\leq S_2\times S_5$;\\ 
$10T1\simeq C_2\times C_5$, $10T3\simeq C_2\times D_5\simeq D_{10}$, 
$10T11\simeq C_2\times A_5$,\\ 
$10T2\simeq D_5\leq C_2\times D_5$ with $C_2\simeq D_5/C_5$.\\
(3) $n=12=n_1n_2=3\cdot 4$: $G=12Tm\leq S_3\times S_4$;\\ 
$12T1\simeq C_3\times C_4\simeq C_{12}$, $12T11\simeq S_3\times C_4$,\\ 
$12T5\simeq Q_{12}\simeq C_3\rtimes C_4\leq S_3\times C_4$ with $S_3/C_3\simeq C_4/C_2$.\\ 
(4) $n=14=n_1n_2=2\cdot 7$: $G=14Tm\leq S_2\times S_7$;\\ 
$14T1\simeq C_2\times C_7\simeq C_{14}$, 
$14T3\simeq C_2\times D_7\simeq D_{14}$,\\ 
$14T2\simeq D_7\leq C_2\times D_7$ with $C_2\simeq D_7/C_7$.\\
(5) $n=15=n_1n_2=3\cdot 5$: $G=15Tm\leq S_3\times S_5$;\\ 
$15T1\simeq C_3\times C_5\simeq C_{15}$, 
$15T3\simeq C_3\times D_5$, 
$15T4\simeq S_3\times C_5$, 
$15T7\simeq S_3\times D_5$, $15T16\simeq C_3\times A_5$, 
$15T23\simeq S_3\times A_5$,\\ 
$15T2\simeq D_{15}\leq S_3\times D_5$ with $S_3/C_3\simeq D_5/C_5$. 
\end{example}
\begin{remark}\label{rem8.4}
In Theorem \ref{th1.23}, 
the case where 
$15T5\simeq A_5$ as in Table $2$ (stably $k$-rational cases) 
can not be obtained as a subdirect product 
$G=nTm\leq S_n$ 
of $G_1=n_1Tm_1\leq S_{n_1}$, 
$G_2=n_2Tm_2\leq S_{n_2}$ with $n=n_1n_2$ and $\gcd(n_1,n_2)=1$ 
as in Example \ref{ex8.3}. 
\end{remark}
\subsection{Retract $k$-rational norm one tori $\T_K=R^{(1)}_{K/k}(\bG_m)$ of field extensions $K/k$ where $K=K_1\otimes\cdots\otimes K_t$}
\begin{theorem}[Application of Theorem \ref{th8.1}: Retract $k$-rational norm one tori $\T_K=R^{(1)}_{K/k}(\bG_m)$ of field extensions $K/k$ where $K=K_1\otimes\cdots\otimes K_t$]\label{th8.5}
Let the notation be given as in Theorem \ref{th8.1}. 
Assume that 
$\gcd(n_i,n_j)=1$ for any $1\leq i<j\leq t$. 
Then $K:=K_1\otimes\cdots\otimes K_t=K_1\cdots K_t$ is a finite separable 
field extension of $k$ with 
$[K:k]=n:=n_1\cdots n_t$, 
$G={\rm Gal}(L/k)\leq G_1\times\cdots\times G_{n_t}$, 
$G=nTm\leq S_n$ transitive and 
$H={\rm Gal}(L/K)\leq G$ with $H=\cap_{i=1}^t\varphi_i^{-1}(H_i)$ 
and $[G:H]=n$. 
If we choose $G_i$ and $H_i$ with $[G_i:H_i]=n_i$ $(1\leq i\leq t)$ and 
$\gcd(n_i,n_j)=1$ for any $1\leq i<j\leq t$ from the following table 
\begin{align*}
\begin{tabular}{c|cccccccc|cccccc} 
$G_i$ & $C_{m_i}$ & $D_{m_i}$ & $D_{m_i}$ & $MC_{u,2^d}$ & $A_5$ & $A_5$ & $\PSL_2(\bF_8)$ & $\PSL_2(\bF_8)$ & $G^\prime_i$ & $p_iTm_i$ & $S_5$ & $S_5$ & $S_5$ & $\PSL_2(\bF_7)$\\\hline
$H_i$ & $\{1\}$ & $\{1\}$ & $C_2$ & $\{1\}$ & $C_2^2$ & $A_4$ & $C_2^3$ & $C_2^3\rtimes C_7$ & $H^\prime_i$ & $H^{\prime\prime}_i$ & $H^{\prime\prime\prime}_i$ & $D_4$ & $A_4$ & $D_4$\\\hline
$n_i$ & $m_i$ & $2m_i$ & $m_i$ & $2^du$ & $15$ & $5$ & $63$ & $9$ & $n_i$ & $p_i$ & $30$ & $15$ & $10$ & $21$
\end{tabular}
\end{align*}
where 
\begin{align*}
MC_{u,2^d}=\langle\sigma,\tau\mid\sigma^u=\tau^{2^d}=1,
\tau\sigma\tau^{-1}=\sigma^{-1}\rangle\simeq C_u\rtimes C_{2^d}\ 
(d\geq 1, u\geq 3: {\rm odd}), 
\end{align*}
$G^\prime_i$ is a group whose all the Sylow subgroups are cyclic 
$($see also Remark \ref{rem3.6}$)$, 
$H^\prime_i\leq G^\prime_i\leq S_{n_i}$ 
with $[G_i^\prime:H_i^\prime]=n_i$, 
$H^{\prime\prime}_i={\rm Stab}_{p_iT_{m_i}}(1)$; 
the stabilizer of $1$ in $p_iT_{m_i}\leq S_{p_i}$ with 
$[p_iT_{m_i}:H^{\prime\prime}_i]=p_i$; prime and 
$H^{\prime\prime\prime}_i\simeq V_4\leq D(S_5)\simeq A_5\leq S_5$. 
Then the norm one torus 
$\T_{K_i}=R^{(1)}_{K_i/k}(\bG_m)$ of $K_i/k$  
is retract $k$-rational 
for any $1\leq i\leq t$ 
and hence 
the norm one torus 
$\T_K=R^{(1)}_{K/k}(\bG_m)$ of $K/k$ 
with $\widehat{\T}_K\simeq J_{G/H}$ is retract $k$-rational. 
In particular, if $k$ is a global field, then 
$\Sha^2_\omega(J_{G/H})=0$ and hence 
$\T_K$ has the weak approximation property and 
the Hasse norm principle holds for $K/k$.   
\end{theorem}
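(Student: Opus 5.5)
The proof will mirror that of Theorem \ref{th8.2}: reduce to Theorem \ref{th8.1}, whose hypotheses are that $\gcd(n_i,n_j)=1$ for $i\neq j$ and that each $\T_{K_i}$ is retract $k$-rational. Theorem \ref{th8.1} then gives retract $k$-rationality of $\T_K$ with $\widehat{\T}_K\simeq J_{G/H}$, together with the consequence $\Sha^2_\omega(G,J_{G/H})=0$ over a global field (via Theorems \ref{thV} and \ref{thCTS87}). So the real content is to check, column by column in the table, that the norm one torus of a degree $n_i$ extension with Galois group $G_i$ and stabiliser $H_i$ is retract $k$-rational. Here $K_i=L_i^{H_i}$ and $\widehat{\T}_{K_i}\simeq J_{G_i/H_i}$.

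The first eight columns coincide with the table of Theorem \ref{th8.2}. There Theorem \ref{th8.2} already gives stable $k$-rationality, and stable $k$-rationality implies retract $k$-rationality, so nothing new is needed. For the remaining columns I will argue as follows.

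\begin{itemize}
\item \emph{Column $(G_i^\prime,H_i^\prime)$, all Sylow subgroups of $G_i^\prime$ cyclic.} If $H_i^\prime=\{1\}$, use Theorem \ref{th1.11}. If $\{1\}\neq H_i^\prime\neq G_i^\prime$, use Theorem \ref{th1.14}, whose first assertion (Remark \ref{rem15}) gives retract rationality. The tacit requirement that $H_i^\prime$ be core-free is automatic, since $L_i$ is the Galois closure.
\item \emph{Column $(p_iTm_i,H_i^{\prime\prime})$ with $p_i$ prime.} For odd $p_i$, Theorem \ref{th1.18} states that every such norm one torus is retract $k$-rational. The case $p_i=2$ is $C_2$, which is covered by the cyclic case. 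Alternatively, Example \ref{ex8.4} gives invertibility of $[J_X]^{fl}$ for $|X|=p$.
\item \emph{Columns $(S_5,V_4)$ with $V_4\leq A_5$, $(S_5,D_4)$ and $(S_5,A_4)$.} These are the exceptional retract-rational cases in Theorem \ref{th1.24} (2), of degrees $30$, $15$ and $10$.
\item \emph{Column $(\PSL_2(\bF_7),D_4)$ of degree $21$.} This is one of the two exceptional retract-rational cases in Theorem \ref{th1.24} (5).
\end{itemize}

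Once every $\T_{K_i}$ is known to be retract $k$-rational, we apply Theorem \ref{th8.1}, which uses Theorem \ref{th7.5} iteratively with the coprimality hypothesis. This yields that $K=K_1\otimes\cdots\otimes K_t$ is a field of degree $n$ with the stated $G$ and $H$, and that $\T_K$ is retract $k$-rational. The global-field statements (weak approximation and the Hasse norm principle) then follow exactly as stated in Theorem \ref{th8.1}.

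The main obstacle is bookkeeping rather than mathematics. One must make sure that each listed pair $(G_i,H_i)$ matches the hypotheses of the cited classification theorem exactly, in particular the precise conjugacy class of the subgroup $V_4$ in $S_5$ (the one contained in $A_5$) and of $D_4\leq\PSL_2(\bF_7)$ giving index $21$. One must also confirm that the cited results cover all primes $p_i$ and all metacyclic $G_i^\prime$.
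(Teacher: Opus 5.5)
Your proposal is correct and is the paper's own argument. The paper's proof simply invokes Theorem \ref{th8.1} once each $\T_{K_i}$ is known to be retract $k$-rational via Theorems \ref{th1.11}, \ref{th1.12}, \ref{th1.14}, \ref{th1.17}, \ref{th1.18}, \ref{th1.23} and \ref{th1.24}, and your column-by-column check, with Theorem \ref{th8.2} covering the first eight columns, just spells out that citation.
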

\begin{proof}
This follows from Theorem \ref{th8.1} with the aid of 
Theorem \ref{th1.11}, 
Theorem \ref{th1.12}, 
Theorem \ref{th1.14}, 
Theorem \ref{th1.17}, 
Theorem \ref{th1.18}, 
Theorem \ref{th1.23} 
and Theorem \ref{th1.24}. 
\end{proof}

\begin{example}[Example of Theorem \ref{th8.5}: retract $k$-rational tori $\T_K=R^{(1)}_{K/k}(\bG_m)$ for $G={\rm Gal}(L/k)=nTm\leq S_n$ $(2\leq n\leq 15)$ as in Theorem \ref{th1.23} (Table $3$)]\label{ex8.6}~\\
By Theorem \ref{th8.1}, 
when $n=n_1\cdots n_t$ with $\gcd(n_i,n_j)=1$ for any $1\leq i<j\leq t$, 
if $\T_{K_i}=R^{(1)}_{K_i/k}(\bG_m)$ $(1\leq i\leq t)$ is retract $k$-rational, 
then $\T_K=R^{(1)}_{K/k}(\bG_m)$ with $K=K_1\cdots K_t$ is retract $k$-rational. 
Then the Galois group $G={\rm Gal}(L/K)=nTm\leq S_n$ is a subdirect product of 
$G_i={\rm Gal}(L_i/k)=n_iTm_i\leq S_{n_i}$ $(1\leq i\leq t)$. 
By Lemma \ref{lem2.2}, all of the subdirect product 
$G=nTm\leq S_n$ $(n=10$, $14$, $15)$ of $G_1=n_1Tm_1\leq S_{n_1}$, 
$G_2=n_2Tm_2\leq S_{n_2}$ with $G\leq G_1\times G_2$, 
$n=n_1n_2$ and $\gcd(n_1,n_2)=1$ 
when $\T_{K_i}=R^{(1)}_{K_i/k}(\bG_m)$ $(i=1,2)$ 
is not stably but retract $k$-rational 
as in Theorem \ref{th1.23} (Table $3$) 
are given as follows:\\
%
(1) $n=10=n_1n_2=2\cdot 5$: $G=10Tm\leq S_2\times S_5$.\\ 
$10T5\simeq C_2\times F_{20}$, $10T22\simeq C_2\times S_5$,\\
$10T4\simeq F_{20}\leq C_2\times F_{20}$ with $C_2\simeq F_{20}/D_5$, 
$10T12\simeq S_5\leq C_2\times S_5$ with $C_2\simeq S_5/A_5$.\\ 
(2) $n=14=n_1n_2=2\cdot 7$: $G=14Tm\leq S_2\times S_7$.\\ 
$14T5\simeq C_2\times F_{21}$, $14T7\simeq C_2\times F_{42}$, 
$14T19\simeq C_2\times {\rm PSL}_3(\bF_2)$, 
$14T47\simeq C_2\times A_7$, 
$14T49\simeq C_2\times S_7$,\\ 
$14T4\simeq F_{42}\leq C_2\times F_{42}$ with $C_2\simeq F_{42}/F_{21}$, 
$14T46\simeq S_7\leq C_2\times S_7$ with $C_2\simeq S_7/A_7$.\\ 
(3) $n=15=n_1n_2=3\cdot 5$: $G=15Tm\leq S_3\times S_5$.\\ 
$15T8\simeq C_3\times F_{20}$, $15T11\simeq S_3\times F_{20}$, 
$15T24\simeq C_3\times S_5$, $15T29\simeq S_3\times S_5$,\\ 
$15T6\simeq C_{15}\rtimes C_4\leq S_3\times F_{20}$ with $S_3/C_2\simeq F_{20}/D_5$, 
$15T22\simeq (A_5\times C_3)\rtimes C_2\leq S_3\times S_5$ with $S_3/C_3\simeq S_5/A_5$. 

It follows from Proposition \ref{propconv} that 
if $G=G_1\times G_2$, then 
$\T_{K_1K_2}=R^{(1)}_{K_1K_2/k}(\bG_m)$ as in (1), (2), (3) 
above is not stably $k$-rational. 
For the remaining cases $G=10T4$, $10T12$, $14T4$, $14T46$, $15T6$, $15T22$ 
where $G\lneq G_1\times G_2$, 
$\T_{K_1K_2}=R^{(1)}_{K_1K_2/k}(\bG_m)$ as in (1), (2), (3) 
above is also not stably $k$-rational, see 
Theorem \ref{th1.23} (Table $3$). 
\end{example}
\begin{remark}\label{rem8.7}
In Theorem \ref{th1.23}, two cases 
$14T16\simeq \PSL_2(\bF_7)\rtimes C_2$ and 
$15T10\simeq S_5$ as in Table $3$ (not stably but retract $k$-rational cases) 
can not be obtained as a subdirect product 
$G=nTm\leq S_n$ 
of $G_1=n_1Tm_1\leq S_{n_1}$, 
$G_2=n_2Tm_2\leq S_{n_2}$ with $n=n_1n_2$ and $\gcd(n_1,n_2)=1$ 
as in Example \ref{ex8.4}. 
\end{remark}

\subsection{Stably/retract $k$-rational norm one tori $\T_K=R^{(1)}_{K/k}(\bG_m)$ of field extensions $K/k$ where $K=K_1\otimes\cdots\otimes K_t$ with $[K:k]=n\leq 30$}
By Theorem \ref{th8.1}, we obtain the following theorem for 
$18\leq n\leq 30$ which is not a prime power, 
i.e. $n=18$, $20$, $21$, $22$, $24$, $26$, $28$, $30$ 
(see Theorem \ref{th1.23} for $2\leq n\leq 16$ and Theorem \ref{th1.18} 
for $n=p$; prime): 

\begin{theorem}[{Application of Theorem \ref{th8.1}: Stably/retract $k$-rational norm one tori $\T_K=R^{(1)}_{K/k}(\bG_m)$ of field extensions $K/k$ where $K=K_1\otimes\cdots\otimes K_t$ with $[K:k]=n\leq 30$}]\label{th8.8}
Let the notation be given as in Theorem \ref{th8.1}.
%
Assume that $18\leq n\leq 30$ and $n$ is not a prime power, 
i.e. $n=18$, $20$, $21$, $22$, $24$, $26$, $28$, $30$. 
Assume also that 
$\gcd(n_i,n_j)=1$ for any $1\leq i<j\leq t$. 
Then $K:=K_1\otimes\cdots\otimes K_t=K_1\cdots K_t$ 
is a finite separable field extension of $k$ with 
$[K:k]=n:=n_1\cdots n_t$, 
$G={\rm Gal}(L/k)\leq G_1\times\cdots\times G_{n_t}$, 
$G=nTm\leq S_n$ transitive and 
$H={\rm Gal}(L/K)\leq G$ with 
$H=\cap_{i=1}^t\varphi_i^{-1}(H_i)$ and $[G:H]=n$.
Then 
all of the subdirect product 
$G=nTm\leq S_n$ $(n=18$, $20$, $21$, $22$, $24$, $26$, $28$, $30)$ of 
$G_1,\ldots,G_t$ 
with $G\leq G_1\times\cdots\times G_t$, 
$n=n_1\cdots n_t$, 
$G_i=n_iTm_i\leq S_{n_i}$ 
and $\gcd(n_i,n_j)=1$ $(1\leq i<j\leq t)$ 
when $R^{(1)}_{K_i/k}(\bG_m)$ $(1\leq i\leq t)$ is stably 
$($resp. retract$)$ $k$-rational 
are given 
as in Table $4$ $($resp. Table $4$ and Table $5)$.\\
{\rm (1)} $\T_K=R_{K/k}^{(1)}(\bG_m)$ is stably $k$-rational if $G$ is given as in Table $4$;\\
{\rm (2)} $\T_K=R_{K/k}^{(1)}(\bG_m)$ is not stably but 
retract $k$-rational if $G$ is given as in Table $5$. 

In particular, if $k$ is a global field and 
$\T_K=R^{(1)}_{K/k}(\bG_m)$ 
is stably $($resp. retract$)$ $k$-rational, then 
$\Sha^2_\omega(J_{G/H})=0$ and hence 
$\T_K$ has the weak approximation property and 
the Hasse norm principle holds for $K/k$. 
\end{theorem}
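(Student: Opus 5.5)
This theorem is essentially an enumeration. Its content is (a) listing which transitive groups $G=nTm$ arise as subdirect products of groups $G_i=n_iTm_i$ with pairwise coprime degrees $n_i$, where each $R^{(1)}_{K_i/k}(\bG_m)$ is stably (resp. retract) rational; and (b) concluding rationality from Theorem \ref{th8.1}.

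First, for each $n\in\{18,20,21,22,24,26,28,30\}$, I will list all factorizations $n=n_1\cdots n_t$ into pairwise coprime prime-power factors $>1$. The factorizations into coarser coprime factors are also allowed, so I will include every coprime factorization. Examples:
\begin{itemize}
\item $18=2\cdot 9$;
\item $20=4\cdot 5$;
\item $21=3\cdot 7$;
\item $30=2\cdot 15=3\cdot 10=5\cdot 6=2\cdot3\cdot5$.
\end{itemize}
For each factor $n_i$ I then choose $G_i=n_iTm_i$ with $T_{K_i}$ stably rational from Table $2$ (and, for the retract case, also from Table $3$). This uses Theorem \ref{th1.23} for $n_i\le 16$; the factors that occur here are all at most $15$, so the tables cover them. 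Since stable rationality is tested by Theorem \ref{th1.23} on the $n_i$ directly, coarser factorizations can produce groups that are not visible from the prime-power factorization alone. Such groups must also be listed, or argued to be subsumed by an iterated application of Theorem \ref{th8.1}.

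Next, for each tuple $(G_1,\dots,G_t)$, I will enumerate the subdirect products $G\le G_1\times\cdots\times G_t$ using Lemma \ref{lem2.2}, iterating it for $t=3$. Concretely, I take normal subgroups $N_1\lhd G_1$ and $N_2\lhd G_2$ together with an isomorphism $G_1/N_1\simeq G_2/N_2$, with $H=\cap\varphi_i^{-1}(H_i)$. Because $\gcd(n_i,n_j)=1$, $K=K_1\cdots K_t$ is a field of degree $n$ and $G$ acts transitively on $G/H$. So each $G$ defines a transitive group of degree $n$, which I identify as $nTm$ with GAP (TransitiveIdentification), discarding duplicates. This produces Tables $4$ and $5$.

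Then rationality follows directly. If all $T_{K_i}$ are stably rational, Theorem \ref{th8.1} gives that $T_K$ is stably rational, which is (1). If all are retract rational, it gives retract rationality. For the claim in (2) that these $T_K$ are not stably rational, there are two routes. When $G=G_1\times G_2$ and some $T_{K_i}$ is not stably rational, Proposition \ref{propconv} applies. For the remaining proper subdirect products I will compute directly with the GAP flabby-class algorithms of \cite{HY17}, for instance by showing that $[J_{G/H}]^{fl}$ is not stably permutation via the functions in \cite{RatProbAlgTori}. The global-field statement then follows from Theorem \ref{thV} and Theorem \ref{thOno}, exactly as in Theorem \ref{th8.1}.

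I expect the main obstacle to be the exhaustive, duplicate-free enumeration of subdirect products, especially for $n=24$ and $n=30$ with many factor choices. A second obstacle is the non-stable-rationality check for proper subdirect products in Table $5$, which is not covered by Proposition \ref{propconv} and has to rely on computation.
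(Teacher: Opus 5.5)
Your enumeration via Lemma \ref{lem2.2}, the rationality conclusions from Theorem \ref{th8.1}, and the use of Proposition \ref{propconv} for the Table $5$ entries with $G=G_1\times\cdots\times G_t$ match the paper's argument. The gap is in the last step. You propose to prove non-stable rationality for \emph{every} bold entry of Table $5$ by running the flabby-class algorithms of \cite{HY17} on $J_{G/H}$ directly. Those algorithms need an explicit flabby resolution of $J_{G/H}$ together with data attached to the conjugacy classes of subgroups of $G$. That is out of reach for $22T45\simeq S_{11}$ (order $11!$) and $26T83\simeq S_{13}$ (order $13!$). It is at best very costly for $21T58\simeq A_7\rtimes S_3$ and $28T360\simeq A_7\rtimes C_4$ (orders $15120$ and $10080$). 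So, as written, your proof does not settle these cases.

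The missing idea is a restriction argument that avoids computation.
\begin{itemize}
\item Let $N_1=\varphi_1({\rm Ker}(\varphi_2))$, $N_2=\varphi_2({\rm Ker}(\varphi_1))$ and $N=\langle {\rm Ker}(\varphi_1),{\rm Ker}(\varphi_2)\rangle=N_1\times N_2\lhd G$.
\item Restricting a flabby resolution to a subgroup gives a flabby resolution. So $[J_{G/H}]^{fl}=0$ would force its restriction to $N$, and further to ${\rm Ker}(\varphi_1)=1\times N_2$, to vanish.
\item Over $1\times N_2$ the set $G/H\simeq X\times Y$ is a disjoint union of $|X|$ copies of the transitive $N_2$-set $Y$. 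Theorem \ref{th4.2} then gives $J_{X\times Y}\simeq J_Y\oplus\bZ[Y]^{\oplus |X|-1}$ there, exactly as in the proof of Proposition \ref{propconv}.
\item Hence it suffices that the norm one torus for $N_2$ acting on $Y$ is not stably rational.
\end{itemize}
For $21T10$, $21T58$, $22T4$, $22T45$, $26T6$, $26T8$, $26T83$, $28T12$, $28T360$ one has $[G:N]=2$. In these cases $N_2\simeq F_{21}$, $A_7$, $F_{55}$, $A_{11}$, $F_{39}$, $F_{78}$, $A_{13}$, $F_{21}$, $A_7$ respectively, all non-stably-rational by Theorems \ref{th1.17} and \ref{th1.23}; this is how the paper handles them.

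The paper uses the GAP function \texttt{PossibilityOfStablyPermutationF} only for the other eleven bold groups: $20T5$, $20T9$, $20T66$, $21T2$, $21T4$, $26T4$, $30T6$, $30T23$, $30T25$, $30T89$, $30T165$. There this reduction gives nothing; for instance $20T5$, $21T2$, $26T4$ give $N_2\simeq C_5$, $C_7$, $D_{13}$, whose norm one tori are stably rational. These groups are small enough for the computation.
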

\begin{proof}
By Theorem \ref{th8.1}, 
when $n=n_1\cdots n_t$ with $\gcd(n_i,n_j)=1$ for any $1\leq i<j\leq t$, 
if $\T_{K_i}=R^{(1)}_{K_i/k}(\bG_m)$ $(1\leq i\leq t)$ is stably 
(resp. retract) $k$-rational, 
then $\T_K=R^{(1)}_{K/k}(\bG_m)$ with $K=K_1\cdots K_t$ is stably 
(resp. retract) $k$-rational. 
Then the Galois group $G={\rm Gal}(L/K)=nTm\leq S_n$ is a subdirect product of 
$G_i={\rm Gal}(L_i/k)=n_iTm_i\leq S_{n_i}$ $(1\leq i\leq t)$. 
By Lemma \ref{lem2.2}, 
all of the subdirect product 
$G=nTm\leq S_n$ $(n=18$, $20$, $21$, $22$, $24$, $26$, $28$, $30)$ of 
$G_1,\ldots,G_t$ 
with $G\leq G_1\times\cdots\times G_t$, 
$n=n_1\cdots n_t$, 
$G_i=n_iTm_i\leq S_{n_i}$ 
and $\gcd(n_i,n_j)=1$ $(1\leq i<j\leq t)$ 
when $\T_{K_i}=R^{(1)}_{K_i/k}(\bG_m)$ $(1\leq i\leq t)$ 
is stably (resp. retract) $k$-rational 
are given as in Table $4$ (resp. Table $4$ and Table $5$). 

For (2), we should check that $T=R^{(1)}_{K/k}(\bG_m)$ 
is not stably $k$-rational in the cases as in Table $5$. 
It follows from Proposition \ref{propconv} that 
if $G=G_1\times\cdots\times G_t$, then 
$\T_K=R^{(1)}_{K/k}(\bG_m)$ 
is not stably $k$-rational. 
For the cases with $G\lneq G_1\times\cdots\times G_t$ 
(the bold cases as in Table $5$), we split them into two cases:\\
(i) $G=21T10$, $21T58$, $22T4$, $22T45$, 
$26T6$, $26T8$, $26T83$, $28T12$, $28T360$. 
We can find 
$N=\langle {\rm Ker}(\varphi_1),{\rm Ker}(\varphi_2)\rangle \lhd G$, 
$N_1=\varphi_1({\rm Ker}(\varphi_2))\lhd G_1$, 
$N_2=\varphi_2({\rm Ker}(\varphi_1))\lhd G_2$ 
with $N=N_1\times N_2$ and $[G:N]=2$ 
which satisfy that 
$[J_{G/H}]^{fl}|_N=[J_{N/(H\cap N)}]^{fl}\neq 0$. 
This implies that $\T_K=R^{(1)}_{K/k}(\bG_m)$ is not stably $k$-rational;\\
(ii) $G=20T5$, $20T9$, $20T66$, $21T2$, $21T4$, $26T4$, $30T6$, 
$30T23$, $30T25$, $30T89$, $30T165$. 
We can confirm that 
$\T_K=R^{(1)}_{K/k}(\bG_m)$ is not stably $k$-rational 
by using the GAP (\cite{GAP}) function 
{\tt PossibilityOfStablyPermutationF} as in Hoshi and Yamasaki 
\cite[Algorithm F4, page 90]{HY17} 
(see also Hasegawa, Hoshi and Yamasaki \cite[Algorithm 4.1, page 930]{HHY20}). 
The related GAP functions are also available in \cite{RatProbAlgTori}. 
\end{proof}
%
\begin{center}
Table $4$: $\T_K=R_{K/k}^{(1)}(\bG_m)$ 
is stably $k$-rational where $G={\rm Gal}(L/k)=nTm\leq S_n$ 
$(n=18$, $20$, $21$, $22$, $24$, $26$, $28$, $30)$\vspace*{2mm}\\
\begin{tabular}{l} 
$G=nTm$ $(n=18$, $20$, $21$, $22$, $24$, $26$, $28$, $30)$: $\T_K=R_{K/k}^{(1)}(\bG_m)$ is stably $k$-rational\\\hline
$18T1\simeq C_{18}$, 
${\bf 18T5}\simeq D_9$, 
$18T13\simeq D_9\times C_2\simeq D_{18}$, 
$18T260\simeq \PSL_2(\bF_8)\times C_2$\\
$20T1\simeq C_{20}$, 
${\bf 20T2}\simeq Q_{20}\simeq C_5\rtimes C_4$, 
$20T6\simeq D_5\times C_4$, 
$20T63\simeq A_5\times C_4$\\
$21T1\simeq C_{21}$, 
$21T3\simeq D_7\times C_3$, 
${\bf 21T5}\simeq D_{21}$, 
$21T6\simeq C_7\times S_3$, 
$21T8\simeq D_7\times S_3$\\
$22T1\simeq C_{22}$, 
${\bf 22T2}\simeq D_{11}$, 
$22T3\simeq D_{11}\times C_2\simeq D_{22}$\\
$24T1\simeq C_{24}$,  
${\bf 24T8}\simeq C_3\rtimes C_8$, 
$24T32\simeq S_3\times C_8$\\
$26T1\simeq C_{26}$, 
${\bf 26T2}\simeq D_{13}$, 
$26T3\simeq D_{13}\times C_2\simeq D_{26}$\\
$28T1\simeq C_{28}$, 
${\bf 28T3}\simeq Q_{28}\simeq C_7\rtimes C_4$, 
$28T8\simeq D_7\times C_4$\\
$30T1\simeq C_{30}$, 
$30T2\simeq C_5\times S_3$, 
${\bf 30T3}\simeq D_{15}$, 
$30T4\simeq D_5\times C_3$, 
$30T5\simeq D_5\times C_6$,\\ 
$30T8\simeq D_5\times S_3$, 
$30T10\simeq D_5 \times S_3$, 
$30T12\simeq C_5\times D_6$, 
${\bf 30T13}\simeq D_5 \times S_3$,\\
$30T14\simeq D_{30}$,
$30T21\simeq D_5\times D_6$, 
$30T30\simeq A_5\times C_2$, 
$30T85\simeq A_5\times S_3$,\\ 
$30T87\simeq A_5\times C_6$, 
$30T177\simeq A_5\times D_6$
\end{tabular}
\end{center}\vspace*{2mm}

\begin{center}
Table $5$: $\T_K=R_{K/k}^{(1)}(\bG_m)$ 
is not stably but 
retract $k$-rational where $G={\rm Gal}(L/k)=nTm\leq S_n$ 
$(n=18$, $20$, $21$, $22$, $24$, $26$, $28$, $30)$\vspace*{2mm}\\
\begin{tabular}{l} 
$G=nTm$ $(n=18$, $20$, $21$, $22$, $24$, $26$, $28$, $30)$: $\T_K=R_{K/k}^{(1)}(\bG_m)$ is not stably but retract $k$-rational\\\hline
${\bf 20T5}\simeq F_{20}$,
${\bf 20T9}\simeq F_{20}\times C_2$,
$20T20\simeq F_{20}\times C_4$,
${\bf 20T66}\simeq A_5\rtimes C_4$,
$20T123\simeq S_5 \times C_4$\\
${\bf 21T2}\simeq F_{21}$, 
${\bf 21T4}\simeq F_{42}$, 
$21T7\simeq F_{21}\times C_3$, 
$21T9\simeq F_{42}\times C_3$, 
${\bf 21T10}\simeq C_7\rtimes (S_3\times C_3)$,\\
$21T11\simeq F_{21}\times S_3$,
$21T15\simeq F_{42}\times S_3$,
$21T22\simeq \PSL_2(\bF_7)\times C_3$,
$21T27\simeq \PSL_2(\bF_7)\times S_3$,\\
$21T44\simeq A_7\times C_3$, 
$21T56\simeq S_7\times C_3$, 
$21T57\simeq A_7\times S_3$, 
${\bf 21T58}\simeq A_7\rtimes S_3$, 
$21T74\simeq S_7\times S_3$\\
${\bf 22T4}\simeq F_{110}$, 
$22T5\simeq F_{55}\times C_2$, 
$22T6\simeq F_{110}\times C_2$, 
$22T13\simeq \PSL_2(\bF_{11})\times C_2$,\\ 
$22T27\simeq M_{11}\times C_2$,
${\bf 22T45}\simeq S_{11}$, 
$22T46\simeq A_{11}\times C_2$, 
$22T47\simeq S_{11}\times C_2$\\
${\bf 26T4}\simeq F_{52}$, 
$26T5\simeq F_{39}\times C_2$, 
${\bf 26T6}\simeq F_{78}$, 
$26T7\simeq F_{52}\times C_2$, 
${\bf 26T8}\simeq F_{156}$, 
$26T9\simeq F_{78}\times C_2$,\\ 
$26T10\simeq F_{156}\times C_2$, 
$26T48\simeq \mathrm{GL}_3(\bF_3)$, 
${\bf 26T83}\simeq S_{13}$, 
$26T84\simeq A_{13}\times C_2$, 
$26T85\simeq S_{13}\times C_2$\\
${\bf 28T12}\simeq C_7\rtimes C_{12}$, 
$28T13\simeq F_{21}\times C_4$, 
$28T26\simeq F_{42}\times C_4$, 
$28T87\simeq \PSL_2(\bF_7)\times C_4$,\\ 
${\bf 28T360}\simeq A_7\rtimes C_4$, 
$28T362\simeq A_7\times C_4$, 
$28T429\simeq S_7\times C_4$\\
${\bf 30T6}\simeq C_{15}\rtimes C_4$, 
$30T7\simeq F_{20}\times C_3$, 
$30T17\simeq (C_{15}\rtimes C_4)\times C_2$, 
${\bf 30T23}\simeq F_{20}\times S_3$,\\ 
$30T24\simeq F_{20}\times S_3$,
${\bf 30T25}\simeq S_5$, 
$30T26\simeq F_{20}\times C_6$, 
$30T32\simeq F_{20}\times C_3$, 
$30T51\simeq F_{20}\times D_6$,\\ 
$30T60\simeq S_5\times C_2$,
${\bf 30T89}\simeq S_5\rtimes C_3$,
$30T103\simeq S_5\times C_3$,
${\bf 30T165}\simeq S_5\times S_3$, 
$30T167\simeq S_5\times S_3$,\\ 
$30T168\simeq (A_5 \rtimes S_3)\times C_2$, 
$30T170\simeq S_5\times S_3$, 
$30T180\simeq S_5\times C_6$, 
$30T263\simeq S_5 \times D_6$
\end{tabular}
\end{center}\vspace*{2mm}

In Table $4$ and Table $5$, 
{\bf the bold} means that the case which can not be obtained 
as a direct product 
$G=nTm\leq S_n$ of 
$G_1,\ldots,G_t$ 
with $G= G_1\times\cdots\times G_t$, 
$n=n_1\cdots n_t$, 
$G_i=n_iTm_i\leq S_{n_i}$ 
and $\gcd(n_i,n_j)=1$ $(1\leq i<j\leq t)$. 
In Table $5$, 
$F_{pl}\simeq C_p\rtimes C_l$ $(2<l\mid p-1)$ 
is the Frobenius group of order $pl$ where $p$ is a prime number 
and $M_{11}$ is the Mathieu group of degree $11$.


\end{document}